\definecolor{blou}{rgb}{0.66, 0.44, 0.98}
\pgfplotsset{width=7cm, compat=1.10}
\newtheoremstyle{pourdef}
  {10pt}
  {10pt}
  {}
  {}
  {\bf}
  {.~}
  { }
  {}
\newtheoremstyle{pourth}{10pt}{10pt}{\em}{}{\sc}{.~}{ }{}
\newtheoremstyle{pourpp}{10pt}{10pt}{\em}{}{\bf \em}{.~}{ }{}
\newtheoremstyle{pourrk}{10pt}{10pt}{}{}{\em}{.~}{ }{}
\newtheoremstyle{pourlm}{10pt}{10pt}{\em}{}{\bf \em}{.~}{ }{}
\newtheoremstyle{pourco}{10pt}{10pt}{\em}{}{\bf \em}{.~}{ }{}
\definecolor{black}{cmyk}{1,1,1,1}
\definecolor{colordef}{rgb}{0.2,0.5,0.07} 
\definecolor{colorprop}{rgb}{0.2,0.1,0.5}
\definecolor{color1}{rgb}{0.6,0.4,0.8}
\definecolor{color1}{rgb}{0.9,0.6,0.4}
\definecolor{color1}{rgb}{0.36, 0.54, 0.66}
\definecolor{color2}{rgb}{0.2,0.1,0.5}
\definecolor{color2}{rgb}{0.91, 0.84, 0.42}
\definecolor{color2}{rgb}{0.87, 0.36, 0.51}
\definecolor{color2}{rgb}{0.4, 0.69, 0.2}
\definecolor{color2}{rgb}{0.84, 0.23, 0.24}
\definecolor{color3}{rgb}{1.0, 0.13, 0.32}
\definecolor{color3}{rgb}{0.54, 0.17, 0.89}
\definecolor{babypink}{rgb}{0.96, 0.76, 0.76}
\definecolor{babyblue}{rgb}{0.76, 0.76, 0.95}
\renewenvironment{proof}[1][\proofname]{\par
  \normalfont
  \trivlist
  \item[\hskip\labelsep\itshape
    #1.]\ignorespaces
}{%
  \endtrivlist
}
\newtheorem{lem}{Lemma}[section]
\newtheorem{cor}[lem]{Corollary}
\newtheorem{thm}[lem]{Theorem}
\newtheorem*{reptheorem}{Theorem}
\newtheorem{fact}[lem]{Fact}
\newtheorem{prop}[lem]{Proposition}
\theoremstyle{definition}
\newtheorem{definition}[lem]{Definition}
\newtheorem{ex}[lem]{Example}
\newtheorem{rmk}[lem]{Remark}
\newtheorem{questions}[lem]{Questions}
\newtheorem{notation}[lem]{Notation}
\numberwithin{equation}{section}
\newcommand{\compact}{\mathsf{K}}
\newcommand{\LP}{\mathfrak{p}}
\newcommand{\SB}{\mathbf{Sb}}
\newcommand{\Ein}{\operatorname{Ein}}
\newcommand{\Lag}{\operatorname{Lag}}
\def\O{\Omega}
\newcommand{\Diams}{\mathbf{D}}
\newcommand{\I}{\mathbf I}
\newcommand{\id}{\operatorname{id}}
\newcommand{\J}{\mathbf J}
\newcommand{\Fl}{\mathcal{F}}
\newcommand{\Phot}{\Lambda}
\newcommand{\Affstd}{\mathbb{A}}
\newcommand{\Cartinv}{\sigma_{\g}}
\newcommand{\hyp}{\operatorname{Z}}
\newcommand{\uu}{\mathfrak{u}}
\newcommand{\g}{\mathfrak{g}}
\newcommand{\aaa}{\mathfrak{a}}
\newcommand{\oppinv}{\mathsf{i}}
\newcommand{\Mat}{\operatorname{Mat}}
\newcommand{\tr}{\mathsf{t}}
\newcommand{\FS}{\operatorname{\Delta}}
\newcommand{\Kf}{\mathbb{K}}
\newcommand{\Cf}{\mathbb{C}}
\newcommand{\Rf}{\mathbb{R}}
\newcommand{\Hf}{\mathbb{H}}
\def\b{\mathbf{b}}
\newcommand{\Ad}{\operatorname{Ad}}
\newcommand{\ad}{\operatorname{ad}}
\newcommand{\leng}{\mathsf{len}_{\O}}
\newcommand{\plongsl}{\operatorname{j}}
\newcommand{\E}{\operatorname{E}}
\newcommand{\F}{\operatorname{F}}
\newcommand{\He}{\operatorname{H}}
\newcommand{\e}{\varepsilon}
\newcommand{\Aut}{\mathsf{Aut}}
\newcommand{\HTT}{\operatorname{HTT}}
\newcommand{\SL}{\operatorname{SL}}
\newcommand{\PO}{\operatorname{PO}}
\newcommand{\GL}{\operatorname{GL}}
\newcommand{\PGL}{\operatorname{PGL}}
\newcommand{\SU}{\operatorname{SU}}
\newcommand{\Sp}{\operatorname{Sp}}
\newcommand{\spp}{\mathfrak{sp}}
\newcommand{\sll}{\mathfrak{sl}}
\newcommand{\SO}{\operatorname{SO}}
\newcommand{\soo}{\mathfrak{so}}
\renewcommand{\qed}{\hfill\square}
\DeclareMathOperator{\Ker}{ker}
\newcommand{\Proj}{\mathbf{p}}
\newcommand{\borc}{\partial_i \mathcal{C}}
\newcommand{\suu}{\mathfrak{su}}
\newcommand{\eeee}{\mathfrak{e}}
\newcommand{\opp}{-}
\newcommand{\std}{\mathsf{std}}
\newcommand{\Gambor}{\partial_{\infty}\Gamma}
\newcommand{\limorb}{\Lambda_{\O}^{\operatorname{orb}}(\Gamma)}
\newcommand{\Od}{\O^{**}_0}
\newcommand{\indx}{\mathsf{idx}}
\newcommand{\inver}{\mathsf{s}}
\newcommand{\typ}{\mathsf{typ}}
\newcommand{\pos}{\mathsf{pos}}
\newcommand{\tpitchfork}{%
  \vbox{
    \baselineskip\z@skip
    \lineskip-.52ex
    \lineskiplimit\maxdimen
    \m@th
    \ialign{##\crcr\hidewidth\smash{$-$}\hidewidth\crcr$\pitchfork$\crcr}
  }%
}
\title[Transverse groups preserving proper domains]{Transverse groups preserving proper domains in flag manifolds}
\author{Blandine Galiay}
\date{}
\begin{document}

\begin{abstract}
Given a semisimple Lie group~$G$ and a self-opposite flag manifold~$\Fl$ of~$G$, we establish a necessary condition for an infinite subgroup~$H$ of~$G$ to preserve a proper domain in~$\Fl$. In the case where $G$ is a Hermitian Lie group of tube type, we introduce and study a notion of causal convexity in the Shilov boundary~$\SB(\g)$ of the symmetric space of~$G$, inspired by the one already existing in conformal Lorentzian geometry. We show that subgroups~$H$ of~$G$ that are transverse with respect to a parabolic subgroup of~$G$ defining~$\SB(\g)$ and that preserve a proper domain in~$\SB(\g)$ satisfy a geometric property with respect to this causal convexity, close to the strong projective convex cocompactness defined by Danciger--Guéritaud--Kassel. This result highlights the spatial nature of the dynamics of~$H$. We construct Zariski-dense examples of such transverse subgroups.  
\end{abstract}

\maketitle

\section{Introduction}

A manifold $M$ admits a \emph{$(G,X)$-structure}, where $G$ is a real Lie group and $X$ is a $G$-homogeneous space, if there exists an atlas of charts on $M$ with values in $X$ whose changes of charts are given by elements of $G$. The manifold $M$ endowed with this structure is called a \emph{$(G,X)$-manifold}. In this paper, we investigate $(G,X)$-structures, where~$G$ is a real semisimple Lie group and~$X$ is a \emph{flag manifold of~$G$}. A \emph{flag manifold} of~$G$ is the quotient~$G/P$ of~$G$ by a \emph{parabolic subgroup}~$P$ of $G$, i.e.\ the stabilizer of a point $x$ on the visual boundary of the Riemannian symmetric space~$\mathbb{X}_G$ associated with~$G$. If there exists a bi-infinite geodesic in~$\mathbb{X}_G$ connecting $x$ to another point $y \in \partial \mathbb{X}_G$, then the stabilizer~$P^\opp$ of~$y$ in~$G$ is a \emph{parabolic subgroup opposite} to~$P$, and the manifold~$G/P^\opp$ is the flag manifold \emph{opposite} to~$G/P$. When~$P$ and~$P^\opp$ are conjugate in~$G$, we say that~$P$ and~$G/P$ are \emph{self-opposite}, in which case~$G/P \simeq G/P^\opp$.

\subsection{Quotients of proper domains}\label{sect_intro_falg_mfds} We are particularly interested in~$(G, G/P)$-manifolds~$M$ that are quotients of the form~$\Omega / \Gamma$, where~$\Gamma \leq G$ preserves a \emph{proper domain}~$\Omega \subset G/P$, i.e.\ a nonempty connected open subset of~$G/P$ whose closure avoids a subset of~$G/P$ of the form  
\vspace{-3pt}
\begin{equation}\label{eq_hyp_Zz}
    \hyp_z = \{ x \in G/P \mid x \text{ is not transverse to } z \}, \quad \text{where } z \in G/P^\opp,
\vspace{-3pt}
\end{equation}
called a \emph{maximal proper Schubert subvariety}. These algebraic subvarieties of~$G/P$ play a key role in this paper, through the notion of \emph{dual convexity} introduced by Zimmer~\cite{zimmer2018proper}: a domain~$\O \subset G/P$ is said to be \emph{dually convex} if for every point~$x \in \partial \O$, there exists~$\xi \in G/P^\opp$ such that~$ x \in \hyp_\xi$ and~$\O \cap \hyp_\xi = \emptyset$. In other words, the hypersurface~$\hyp_\xi$ is a supporting hypersurface to~$\O$ at~$x$. This definition generalizes the classical notion of convexity in real projective space, through its dual characterization. 

The case where~$\Omega/\Gamma$ is compact has been the subject of extensive study. When~$G = \PGL(n, \mathbb{R})$ and~$X = \mathbb{P}(\mathbb{R}^n)$, this is part of the rich theory of \emph{divisible convex sets}, which has yielded many examples; see e.g.\ \cite{vinberg1965structure, goldman1990convex, benoist2000automorphismes, benoist2003convexes, benoist2005convexes, benoist2006convexes, cooper2015convex, choi2020convex, zimmer2020higher, blayac2024boundary, islam2019rank}. When~$X$ is a flag manifold other than the real projective space or the conformal sphere, one expects a strong rigidity on the domain~$\O$, as conjectured by Limbeek--Zimmer \cite{van2019rigidity} and proved in certain cases in \cite{zimmer2013rigidity, zimmer2018proper, van2019rigidity, galiay2024rigidity, chalumeau2024rigidity}. In this paper, we are interested in more flexible situations: rather than requiring~$\Gamma$ to act cocompactly on~$\O$, we instead ask that~$\Gamma$ be a~\emph{$P$-transverse} group, in the sense of Section~\ref{sect_anosov_geometric} below.

\subsection{Quotients of symmetric spaces and rigidity} 
Another well-studied example of~$(G,X)$-manifolds is that of \emph{locally symmetric spaces}; these are quotients of the form~$\mathbb{X}_G / \Gamma$, where~$\mathbb{X}_G$ is the symmetric space associated with a semisimple Lie group~$G$, and~$\Gamma \leq G$ is a discrete subgroup. 

For instance, a hyperbolic~$n$-manifold~$M$ is \emph{convex cocompact} (resp.\ \emph{geometrically finite}) if its fundamental group~$\Gamma \leq \PO(n, 1)$ preserves and acts cocompactly (resp.\ with finite-volume fundamental domain) on a closed convex subset of the real hyperbolic space~$\mathbb{X}_{\PO(n, 1)} = \mathbb{H}^n$. In this case, the manifold~$M$ can be identified with the quotient~$\mathbb{H}^n / \Gamma$ and its convex core is compact (resp.\ the union of a compact subset and a finite number of \emph{cusps}). Many examples of noncompact convex cocompact (resp.\ geometrically finite) hyperbolic manifolds exist (see e.g.\ \cite{kassel2018geometric}). However, any Zariski-dense discrete subgroup of a real simple Lie group $G$ of real rank $r \geq 2$ acting cocompactly on a closed (geodesically) convex subset of $\mathbb{X}_G$ is a lattice in $G$ \cite{quint2005groupes, kleiner2006rigidity}. Thus, the intuitive geometric generalization of convex cocompactness becomes rigid in higher rank.

\subsection{Anosov representations and geometric structures}\label{sect_anosov_geometric}
By contrast, a generalization of convex cocompactness based on the dynamical properties of convex cocompact discrete subgroups of $\PO(n,1)$ has been developed over the past twenty years, through \emph{Anosov representations} (see Section~\ref{sect_anosov} for a definition). These were first introduced by Labourie \cite{labourie2006anosov} in his study of Hitchin representations of surface groups and were later further investigated by Guichard--Wienhard \cite{guichard2012anosov} and many other authors. These representations are discrete, with finite kernel, and structurally stable, making them a key concept in recent developments in higher Teichmüller theory \cite{guichard2012anosov, wienhard2018invitation, labourie2021positivity, beyrer2023hp, beyrer2024positivity} and geometric structures \cite{guichard2012anosov, kapovich2017dynamics, danciger2018convex, DGKproj}. The property of being Anosov depends on the choice of a conjugacy class of parabolic subgroups $P$ of $G$; a representation is said to be \emph{$P$-Anosov} if it is Anosov with respect to $P$. If $ G = \PO(n,1)$, then $P$ is necessarily the unique (up to conjugation) proper parabolic subgroup of $G$, and $P$-Anosov representations are exactly the convex cocompact representations.

The images of $P$-Anosov representations are called \emph{$P$-Anosov subgroups of $G$}, and they lie in the wide family of \emph{$P$-transverse subgroups} (see e.g.\ \cite{canary2023patterson, kapovich2017anosov} and Section~\ref{sect_divergent_groups}). This family is characterized by weaker dynamical properties than Anosov representations and includes, for instance, relatively Anosov subgroups in the sense of \cite{zhu2022relatively, kapovich2023relativizing}. The question of whether such groups with strong dynamical properties provide examples of $(G, X)$-manifolds $M$ where $X$ is a $G$-homogeneous manifold, has been investigated by numerous authors, in particular when $X$ is a flag manifold of $G$ and $M$ is a quotient $\O / \Gamma$, where $\O$ is an open subset of $X$, and $\Gamma$ is a $P$-transverse subgroup of $G$ preserving $\O$:
\begin{enumerate}
    \item In \cite{frances2005lorentzian, guichard2012anosov}, for some semisimple Lie groups $G$ and parabolic subgroups $P, P'$ of $G$, examples of compact $(G, G/P)$-manifolds are built as quotients $\O/\Gamma$ of a well-chosen open subset $\O \subset G/P'$ preserved by a $P$-Anosov subgroup $\Gamma \leq G$. A general approach is developed in \cite{kapovich2017anosov}. These open sets are in general not proper.
    
    \item Danciger--Guéritaud--Kassel \cite{danciger2018convex, DGKproj} and Zimmer \cite{zimmer2021projective} introduce a notion of convex cocompactness in the projective space: if $\Gamma$ preserves a properly convex open subset $\O$ of $\mathbb{P}(\mathbb{R}^n)$ and acts cocompactly on its convex core $\mathcal{C} \subset \O$, and the ideal boundary of $\mathcal{C}$ contains no projective segment, then $\Gamma$ is said to be \emph{strongly convex cocompact in $\mathbb{P}(\mathbb{R}^n)$}. A discrete subgroup $\Gamma \leq \PGL(n, \mathbb{R})$ is strongly convex cocompact if and only if it is $P_1$-Anosov and preserves a proper domain in $\mathbb{P}(\mathbb{R}^n)$ \cite{DGKproj} (see also \cite{zimmer2021projective} for the Zariski-dense case).

    \item Crampon--Marquis and Cooper--Long--Tillman define a \emph{geometrically finite projective manifold} as the quotient $\O / \Gamma$ of a strictly convex domain $\O \subset \mathbb{P}(\mathbb{R}^n)$ by a discrete subgroup $\Gamma \leq \PGL(n, \mathbb{R})$, such that the convex core of $\O / \Gamma$ is the union of a compact set and a finite number of ends, called \emph{cusps} \cite{blayac2012finitude, cooper2015convex}. Fléchelles--Islam--Zhu prove that a discrete subgroup $\Gamma \leq \PGL(n, \mathbb{R})$ preserving a proper domain in $\mathbb{P}(\mathbb{R}^n)$ is relatively $P_1$-Anosov if and only if there exists a round (i.e.\ strictly convex and admitting a unique supporting hyperplane at any boundary point) properly convex $\Gamma$-invariant domain $\O \subset \mathbb{P}(\mathbb{R}^n)$ on which the action of $\Gamma$ is geometrically finite (see \cite{flechelles2024geometric}).
\end{enumerate}

In fact, by \cite{canary2023patterson}, every discrete $P$-transverse subgroup $\Gamma \leq G$ preserves a proper domain $\O$ in a certain real projective space, and thus gives rise to a projective manifold $\O / \Gamma$. Point (2) (resp.\ (3)) above implies that if $\Gamma$ is moreover $P$-Anosov (resp.\ relatively $P$-Anosov), this manifold can be asked to be convex cocompact (resp.\ geometrically finite). However, this manifold is \emph{a priori} modelled on the real projective space, not on $G/P$. It is therefore natural to seek to construct $(G, G/P)$-manifolds of the form $\O' / \Gamma$, where $\O'$ is a  domain of $G/P$. We may also ask if this domain~$\O'$ can be taken to be proper, as for Poins (2) and (3). Thus the following questions naturally arises:
\begin{questions}\label{question_P_transverse} 
Let $G$ be a real semisimple Lie group and~$P$ a self-opposite proper parabolic subgroup of~$G$, and $H \leq G$ a discrete $P$-transverse subgroup.  
\begin{enumerate}
    \item Under what conditions does $H$ preserve a proper domain in $G/P$?  
    \item If $H$ is discrete and preserves a proper domain $\O \subset G/P$, what additional geometric conditions on the action of $H$ on $\O$ are necessary to ensure that $H$ is $P$-Anosov?  
\end{enumerate}
\end{questions}

\subsection{Content of the paper}\label{sect_content_paper}
This paper is motivated by Questions~\ref{question_P_transverse}. We begin by focusing on Point~(1), and observe that the property of preserving a proper domain is in fact quite restrictive (see Theorem~\ref{prop_restrictions_maslov_index} below). A natural approach to addressing Point~(2) is to draw inspiration from the notion of convex cocompactness introduced in~\cite{DGKproj}. For this, an appropriate notion of convexity in flag manifolds is required. We define such a notion in the so-called \emph{causal flag manifolds}, which generalizes the concept of \emph{causal convexity} from Lorentzian geometry. While the condition of preserving a proper domain in a causal flag manifold is restrictive, we show that once a group preserves a proper domain~$\O$, the property of acting cocompactly on a closed convex subset of~$\O$ is, by contrast, quite weak (see Theorem~\ref{thm_equiv_anosov} and Remark~\ref{rmk_convex_envelop_projective} below).

\subsubsection{Causal flag manifolds} A substantial part of this paper focuses on transverse groups preserving proper domains in \emph{causal flag manifold}. These flag manifolds appear naturally in several contexts, such as Euclidean Jordan algebras and complex analysis (see e.g.\ \cite{faraut1994analysis}), or $\Theta$-positivity and higher Teichmüller Theory \cite{guichard2018positivity}; and their structure is well understood (see e.g.\ \cite{kaneyuki1988sylvester, kaneyuki2006causal, neeb2025open}). In order to state the results of this paper in the rest of the introduction, let us recall the definition and basic structure of these spaces.

Let $G$ be a Hermitian simple Lie group of tube type --- denoted by $\HTT$ --- and of real rank $r \geq 1$ (the complete list of the corresponding Lie algebras is given in Table~\ref{table_shilov_bnds}). Let $\FS$ denote the set of simple restricted roots of $G$, and let $\alpha_r \in \FS$ be the unique long root. Then the flag manifold $\Fl(\g, \{\alpha_r\})$, defined by $\{\alpha_r\}$ (see Section~\ref{sect_flag_mfds}), is the \emph{Shilov boundary} of the symmetric space $\mathbb{X}_G$ of $G$, denoted by~$\SB(G)$. It admits a \emph{causal structure}, i.e.\ there exists (up to taking an index-two subgroup of $G$) a $G$-equivariant smooth family $(c_x)_{x \in \SB(G)}$ of properly convex open cones in the tangent bundle $T(\SB(G))$. This property is specific to Shilov boundaries of Hermitian symmetric spaces of tube type (see \cite{neeb2025open}), thus we also call them the \emph{causal flag manifolds}.

\subsubsection{Topological Restrictions}

Section~\ref{sect_topo_restrictions} is devoted to the study of Question~\ref{question_P_transverse}.(1). In \cite[Prop.\ 1.2]{benoist2000automorphismes}, Benoist provides a necessary and sufficient condition for a strongly irreducible subgroup of~$\PGL(n, \mathbb{R})$ to preserve a properly convex open subset of~$\mathbb{P}(\mathbb{R}^n)$. The proof relies on some fundamental properties of convex sets in projective space that no longer hold in the more general setting of flag manifolds~$G/P$. An analogue of his necessary condition can nevertheless be recovered, as expressed in Proposition~\ref{prop_restrictions_maslov_index_2} below.

Let~$G$ be a real semisimple Lie group and~$P \leq G$ a self-opposite parabolic subgroup. There exists an involution~$\inver : ((G/P) \smallsetminus \hyp_{P^\opp}) \to ((G/P) \smallsetminus \hyp_{P^\opp})$ which acts as~$-\operatorname{id}$ on the affine chart~$((G/P) \smallsetminus \hyp_{P^\opp})$ based at~$P$, and permutes the connected components of~$(G/P) \smallsetminus (\hyp_{P} \cup \hyp_{P^\opp})$.

If~$x \in G/P$ is transverse to both~$P$ and~$P^\opp$, the \emph{type}~$\typ(P, x, P^\opp)$ of the triple~$(P, x, P^\opp)$ is defined as the orbit under~$P \cap P^\opp$ of the connected component of~$(G/P) \smallsetminus (\hyp_{P} \cup \hyp_{P^\opp})$ containing~$x$. This notion of type extends in a~$G$-invariant way to any triple~$(a, b, c)$ of pairwise transverse points in~$G/P$, and encodes their relative position. When~$G$ is a Hermitian simple Lie group of tube type and~$G/P = \SB(\g)$, this type is described by the classical \emph{Maslov index}~$\indx(a, b, c)$ of the triple~$(a, b, c)$ (see~\cite{lion1980weil} and Section~\ref{sect_maslov}). We prove the following:

\begin{thm}[see Proposition~\ref{prop_restrictions_maslov_index_2} and Corollary~\ref{cor_restrictions_maslov_index}]\label{prop_restrictions_maslov_index} 
Let~$G$ be a real semisimple Lie group and~$P \leq G$ a self-opposite parabolic subgroup. Let~$H \leq G$ be a subgroup preserving a proper domain~$\O \subset G/P$ such that the limit set~$\Lambda_P(H)$ contains at least three pairwise transverse points. Then there exists an~$\inver$-invariant connected component~$\mathcal{O} \subset (G/P) \smallsetminus (\hyp_P \cup \hyp_{P^\opp})$ such that for any triple~$(a, b, c) \in \Lambda_P(H)^3$ of pairwise transverse points, we have~$\typ(a, b, c) = [\mathcal{O}]$.

In the case where~$G$ is a Hermitian simple Lie group of tube type and real rank~$r \geq 2$, with~$G/P = \SB(\g)$, then~$r$ is even, and~$\indx(x, y, z) = 0$ for every triple of pairwise distinct points~$(x, y, z) \in \Lambda_P(H)^3$.
\end{thm}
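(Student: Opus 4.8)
The plan is to prove the two assertions in turn: first that a single, necessarily $\inver$-invariant, type is realized by all pairwise transverse triples of $\Lambda_P(H)$, and then to translate this into the vanishing of the Maslov index and the parity of $r$ in the tube-type case.

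\emph{Setup.} I would begin by locating the limit set inside the domain. Since $\O$ is nonempty, open and $H$-invariant, the attracting fixed point of any $P$-proximal sequence in $H$ is a limit of points $h_n\cdot x$ with $x\in\O$, hence lies in $\overline{\O}$; as such points are dense in $\Lambda_P(H)$, we obtain $\Lambda_P(H)\subset\overline{\O}$. Properness then furnishes $z_0\in G/P^\opp$ with $\overline{\O}\cap\hyp_{z_0}=\emptyset$, so that $\Lambda_P(H)$ sits in the affine chart $(G/P)\smallsetminus\hyp_{z_0}$ and, in particular, every point of $\Lambda_P(H)$ is transverse to $z_0$. I would also record that $\typ$, being a $G$-invariant function on the open set of pairwise transverse triples, is locally constant: its level sets are unions of connected components of that open set, so the whole task is to show that the pairwise transverse triples of $\Lambda_P(H)^3$ meet a single such component up to the action of $P\cap P^\opp$.

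\emph{A single type (the crux).} Fixing a transverse pair $(a,c)$ in $\Lambda_P(H)$ and working in the affine chart based at $a$ with $\hyp_c$ at infinity, a third point $b$ transverse to both records a connected component of $(G/P)\smallsetminus(\hyp_a\cup\hyp_c)$, and $\typ(a,b,c)$ is the $(P\cap P^\opp)$-orbit of that component. The heart of the argument is that all admissible $b$, which lie in $\overline{\O}$, occupy a single component: passing from one component to another would force a path inside $\overline{\O}$ to cross $\hyp_a\cup\hyp_c$, and the confinement of $\overline{\O}$ to the chart $(G/P)\smallsetminus\hyp_{z_0}$ together with the connectedness of $\O$ prevents $\overline{\O}$ from wrapping around these Schubert subvarieties and realizing two components of distinct type. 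This yields independence of $b$; independence of the reference pair $(a,c)$ then follows from the $H$-invariance of $\typ$ and the density of attracting fixed points in $\Lambda_P(H)$, which lets one transport any transverse pair near any other. I expect this confinement-to-one-component step to be the main obstacle, precisely because $\Lambda_P(H)$ itself need not be connected and deleting $\hyp_a\cup\hyp_c$ can a priori disconnect $\O$; the resolution is to argue with the connected set $\O$ rather than with $\Lambda_P(H)$, and to control exactly how $\overline{\O}$ abuts $\hyp_a$ and $\hyp_c$. Writing $[\mathcal{O}]$ for the common value, the $\inver$-invariance is then immediate: for any pairwise transverse $(a,b,c)\in\Lambda_P(H)^3$ the reversed triple $(c,b,a)$ again lies in $\Lambda_P(H)^3$, and swapping the two reference points $a\leftrightarrow c$ induces on components exactly the permutation attached to $\inver$ (the involution acting as $-\id$ on the chart), so that $[\mathcal{O}]=\typ(a,b,c)=\typ(c,b,a)=\inver_*[\mathcal{O}]$.

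\emph{Maslov index and parity of $r$.} In the tube-type case $G/P=\SB(\g)$ the type is computed by the Maslov index $\indx$, which is $G$-invariant and antisymmetric under transpositions of its three arguments, so $\indx(c,b,a)=-\indx(a,b,c)$. Since the previous step gives $\typ(a,b,c)=\typ(c,b,a)$, equivalently $\indx(a,b,c)=\indx(c,b,a)$, we deduce $\indx(a,b,c)=-\indx(a,b,c)$, hence $\indx(a,b,c)=0$ for every triple of pairwise distinct points of $\Lambda_P(H)^3$. Finally, the Maslov index of a pairwise transverse triple in $\SB(\g)$ is the signature of a nondegenerate quadratic form of rank $r$, and therefore has the parity of $r$; the vanishing of $\indx$ forces $r$ to be even, which completes the proof.
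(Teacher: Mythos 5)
There are two genuine gaps in your proposal, one in each half of the argument.

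\emph{First gap (your ``crux'' step).} To confine all admissible third points to a single connected component of $(G/P)\smallsetminus(\hyp_a\cup\hyp_c)$, what you actually need is that $\O$ itself is disjoint from $\hyp_a$ and $\hyp_c$; then connectedness of $\O$ finishes the job in one line. But nothing in your setup gives this: you only establish $\Lambda_P(H)\subset\overline{\O}$ and that $\overline{\O}$ avoids the single Schubert variety $\hyp_{z_0}$, and neither prevents $\hyp_a$ from cutting through $\O$ when $a\in\partial\O$. Your proposed mechanism (connectedness plus confinement to one affine chart ``prevents wrapping around'') is not a proof, as you yourself flag. The missing ingredient is a duality argument (Lemma~\ref{lem_proper_inter_hyp} in the paper): every point of $\Lambda_P(H)$ lies in the dual $\O^*$, because a contracting sequence in $H$ pushes an interior point of the compact, closed, $H$-invariant set $\O^*$ (which has nonempty interior by properness) onto the limit point. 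Hence $\hyp_p\cap\O=\emptyset$ for \emph{every} limit point $p$, and the one-component confinement becomes immediate.

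\emph{Second gap (the $\inver$-invariance and the Maslov index).} Your symmetry argument proves the wrong statement. Swapping the reference points yields at best $[\mathcal{O}]=\inver_*[\mathcal{O}]$ as $L$-orbits, whereas the theorem asserts the existence of a connected \emph{component} fixed by $\inver$. In the causal case this distinction is fatal: $\inver$ is induced by an element of the Levi (Remark~\ref{rmk_typ_nagano}), so every $L$-orbit is $\inver$-invariant and $\typ(a,b,c)=\typ(c,b,a)$ holds for \emph{all} pairwise transverse triples whatsoever --- your identity carries no information. Equivalently, by Equation~\eqref{eq_typ_shilov} the type is the unordered pair $\{\varphi_\std(\mathcal{O}_i),\varphi_\std(\mathcal{O}_{r-i})\}$ and only determines the \emph{unsigned} index $|r-2i|$; your pivot ``equality of types is equivalent to equality of signed indices'' is false, so the deduction $\indx=-\indx$ cannot be run. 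The paper instead proves invariance at the level of components: writing two transverse limit points as $x=\varphi_\std(X)$, $y=\varphi_\std(Y)$ with $\LP^+$ a further limit point, the inclusions $\O\subset\exp(Z)\cdot\inver(\mathcal{O})$ for $Z\in\{X,Y\}$ force both $\exp(-Y)\exp(X)\cdot\LP^+$ and its image under $\inver$ to lie in $\inver(\mathcal{O})$, whence $\inver(\mathcal{O})=\mathcal{O}$; then Lemma~\ref{lem_inclusion_Ois}.(4) (namely $-\mathcal{O}_i=\mathcal{O}_{r-i}$) shows the only $\inver$-invariant component is $\varphi_\std(\mathcal{O}_{r/2})$, which is what forces $r$ to be even and $\indx\equiv 0$. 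Your proposal has no counterpart of this step, and the parity and vanishing conclusions both depend on it.
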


\begin{rmk}
Theorem~\ref{prop_restrictions_maslov_index} is closely related to the notion of \emph{Property I} (``I" for ``fully involutive") introduced in~\cite{dey2024restrictions}. A flag manifold~$G/P$ is said to satisfy \emph{Property I} if no connected component of~$(G/P) \smallsetminus (\hyp_P \cup \hyp_{P^\opp})$ is invariant under~$\inver$; see Section~\ref{sect_stable_connected components} for further details. According to Theorem~\ref{prop_restrictions_maslov_index}, if there exists a subgroup~$H \leq G$ preserving a proper domain in~$G/P$, then~$G/P$ does \emph{not} satisfy Property I. Question~\ref{question_P_transverse}.(1) is therefore closely related to the following question, asked by Dey--Greenberg--Riestenberg and studied by Dey~\cite{dey2022borel}, Dey--Greenberg--Riestenberg~\cite{dey2024restrictions}, and Kineider--Troubat~\cite{kineider2024connected} (see Remark~\ref{rmk_prop_I_anosov}): \emph{which self-opposite flag manifolds~$G/P$ satisfy Property I?}
\end{rmk}

\subsubsection{Examples}\label{sect_intro_exemples}  
In Section~\ref{sect_examples}, we construct Zariski-dense $P$-Anosov subgroups of Hermitian Lie groups~$G$ that preserve a proper domain in~$\SB(\g) = G/P$:

\begin{thm}\label{prop_existence_CC_subgroups_1}
Let~$r = 2p$, with~$p \in \mathbb{N}_{>0}$. If~$G$ is a Hermitian simple Lie group of tube type and of real rank~$r$, and~$P \leq G$ is a parabolic subgroup such that~$G/P = \SB(\g)$, then there exist Zariski-dense $P$-Anosov surface groups in~$G$ that preserve a proper domain in~$\SB(\g)$.
\end{thm}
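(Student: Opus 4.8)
The plan is to produce the required representations by first building an explicit, highly symmetric model representation inside a subgroup isomorphic to $\SL(2,\mathbb{R})^{r}$, arranged so that its limit curve has vanishing Maslov index (hence is acausal), and then to deform it by bending into a Zariski-dense representation while preserving both the Anosov property and the acausality of the limit curve. Since $G$ is Hermitian of tube type and of rank $r$, its restricted root system contains $r$ strongly orthogonal long roots, yielding a commuting family of copies of $\SL(2,\mathbb{R})$ and an embedding $\iota\colon \SL(2,\mathbb{R})^{r}\hookrightarrow G$ along the Cayley directions. Writing $r=2p$, I would fix a Fuchsian representation $\rho_0\colon \Gamma\to \PSL(2,\mathbb{R})$ of a closed surface group, let $\tau$ denote the orientation-reversing outer automorphism of $\PSL(2,\mathbb{R})$ (realised by conjugation by $\mathrm{diag}(1,-1)$), and set
\[
\rho \;=\; \iota\circ\bigl(\rho_0,\dots,\rho_0,\ \tau\circ\rho_0,\dots,\tau\circ\rho_0\bigr)\colon \Gamma\longrightarrow G,
\]
with $p$ unflipped and $p$ flipped factors. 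The sign flip in half of the factors is precisely what the parity $r=2p$ makes possible.

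First I would verify that $\rho$ is $P$-Anosov for the Shilov parabolic $P=P_{\{\alpha_r\}}$. In the strongly orthogonal Cartan subspace the Cartan projection $\mu(\rho(\gamma))$ has all coordinates equal to the $\mathbb{H}^2$-translation length $\ell(\rho_0(\gamma))$ (the flip $\tau$ leaves translation lengths unchanged), so the long root satisfies $\alpha_r(\mu(\rho(\gamma)))=2\,\ell(\rho_0(\gamma))\gtrsim|\gamma|$, which is exactly the $\{\alpha_r\}$-Anosov gap. Next, the boundary map $\xi\colon\partial_\infty\Gamma\to\SB(\g)$ is the $\iota$-image of the diagonal of the $r$ circle maps, and by additivity of the Maslov index over a strongly orthogonal decomposition one gets $\indx(\xi(x),\xi(y),\xi(z))=p\cdot(+1)+p\cdot(-1)=0$ for every triple. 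Thus $\xi(\partial_\infty\Gamma)$ is a Maslov-zero, hence acausal, topological circle in $\SB(\g)$, consistent with the necessary condition of Theorem~\ref{prop_restrictions_maslov_index}; by the causal-convexity machinery behind Theorem~\ref{thm_equiv_anosov}, such an acausal Anosov limit curve bounds a proper (causally convex) $\rho(\Gamma)$-invariant domain in $\SB(\g)$.

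Since $\rho$ takes values in the proper subgroup $\iota(\SL(2,\mathbb{R})^{r})$, I would finally deform it to achieve Zariski-density. Using a bending deformation along a pants decomposition by one-parameter subgroups of $G$ transverse to the normaliser of $\iota(\SL(2,\mathbb{R})^{r})$, one obtains a family $\rho_t$ with $\rho_0=\rho$ whose Zariski closure strictly grows; as $G$ is simple, finitely many generic bendings force the closure to be all of $G$. Crucially, both needed conditions survive small deformations: the Anosov property is open by structural stability, and acausality of the limit curve is an open condition, since the causal cones $c_x$ are open and the boundary maps $\xi_t$ converge uniformly to the uniformly acausal $\xi_0$. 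Hence each $\rho_t$ with $t$ small still bounds a proper domain via Theorem~\ref{thm_equiv_anosov}, and intersecting the open set of domain-preserving Anosov $\rho_t$ with the generic Zariski-dense locus yields the desired examples.

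I expect the last step to be the main obstacle: arranging Zariski-density simultaneously with the persistence of a proper invariant domain. The delicate point is not the openness of the Anosov condition but controlling the limit curve's acausality under bending while pushing the Zariski closure up to all of $G$ — this requires choosing bending directions that both escape every proper subgroup containing $\iota(\SL(2,\mathbb{R})^{r})$ and keep the perturbed curve inside the open acausal locus of the causal flag manifold.
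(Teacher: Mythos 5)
Your model representation and the computations about it are fine: the flipped polydisk diagonal is $\{\alpha_r\}$-Anosov by the Cartan-projection gap, its limit circle has Maslov index $0$, and in fact it is conjugate to the paper's own model (for $G=\Sp(4p,\mathbb{R})$, say, both are the isotypic symplectic representation $V\otimes\mathbb{R}^{2p}$ of $\SL(2,\mathbb{R})$ with invariant form $\omega_V\otimes q$, $q$ symmetric of signature $(p,p)$; equivalently, your model fixes a pair of transverse points of $\SB(\g)$ and so lies in a conjugate of the Levi subgroup $L$). The genuine gap is the sentence ``by the causal-convexity machinery behind Theorem~\ref{thm_equiv_anosov}, such an acausal Anosov limit curve bounds a proper invariant domain.'' No result in the paper gives this implication: all three equivalent conditions of Theorem~\ref{thm_equiv_anosov} \emph{presuppose} the existence of a proper invariant domain, so the theorem can never be used to produce one; Theorem~\ref{prop_restrictions_maslov_index} is only the necessary direction (proper invariant domain $\Rightarrow$ Maslov index $0$); and the converse is essentially Question~\ref{question_P_transverse}.(1), which the paper settles only for $G=\SO(n,2)$ and under extra hypotheses (Proposition~\ref{cor_cohomological_dimension}, via \cite{smai2022anosov}), not for a general tube-type $G$. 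The same unjustified implication is invoked a second time for the deformed representations $\rho_t$, so openness of acausality — which you identify as the crux — does not repair anything: even perfect control of the limit curve's acausality under bending would not, by any result available, yield an invariant proper domain.

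The paper's route avoids this entirely and shows where the actual work lies. Since the model lies in (a conjugate of) $L$, it preserves the diamond $\Diams_\std$ (Fact~\ref{fact_autom_diam}), which \emph{is} a proper domain — no causal analysis of the limit set is needed. The deformation step then rests not on acausality but on Corollary~\ref{cor_preserving_proper_domain_open}: the property of preserving a proper domain is open among $\Theta$-Anosov representations, proved via the Hausdorff continuity of orbit closures (Lemma~\ref{lem_convergence_orbit_anosov_hausdorff}), with the invariant domain for a nearby representation built explicitly as $\rho'(\Gamma)\cdot\mathcal{V}$ for a suitable neighborhood $\mathcal{V}$ whose closure avoids all hyperplanes $\hyp_x$, $x$ in the dual orbit closure. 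Finally, Zariski-dense small deformations are supplied by Kim--Pansu flexibility \cite{kim2015flexibility} (valid for large genus and classical $G$, which holds since $r$ even excludes $\eeee_{7(-25)}$), rather than by the unproved claim that finitely many generic bendings force the Zariski closure up to $G$. To repair your argument you would need to (i) replace the acausality step by the observation that your model preserves a diamond, and (ii) replace openness of acausality by openness of domain-preservation in the sense of Corollary~\ref{cor_preserving_proper_domain_open}.
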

Here, the condition that~$r$ is even is necessary, as in the odd case no examples exist by Theorem~\ref{prop_restrictions_maslov_index}.

The strategy of the proof of Theorem~\ref{prop_existence_CC_subgroups_1} is, first, to construct examples preserving diamonds (where \emph{diamonds} are specific proper domains in~$\SB(\g)$, defined in next Section~\ref{sect_intro_convexity}). Then, we can slightly deform these examples into Zariski-dense ones, by \cite{kim2015flexibility}. After small deformation, they still preserve a proper domain, as ensured by the following openness property:

\begin{prop}[see Corollary~\ref{cor_preserving_proper_domain_open}]\label{prop_preserving_proper_domain_open}
    Let~$G$ be a noncompact real semisimple Lie group and~$P$ be a parabolic subgroup of~$G$. Let~$\Gamma$ be a Gromov-hyperbolic group. Then the property of preserving a proper domain in~$G/P$ is open in the set of~$P$-Anosov representations of~$\Gamma$ into~$G$.
\end{prop}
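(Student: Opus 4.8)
The plan is to reduce the property of preserving a proper domain to a condition on the $P$-limit set that is manifestly open among $P$-Anosov representations, namely the existence of a point of $G/P^\opp$ transverse to the entire limit set. First I would record the following necessary condition. Let $\rho$ be $P$-Anosov and suppose $H=\rho(\Gamma)$ preserves a proper domain $\O\subset G/P$, so that $\overline{\O}\cap\hyp_{z_0}=\emptyset$ for some $z_0\in G/P^\opp$. Now $\overline{\O}$ is a nonempty closed $H$-invariant subset of $G/P$, and since the $P$-limit set $\Lambda_P(H)$ is the minimal closed $H$-invariant subset (it is contained in every nonempty closed $H$-invariant set), we obtain $\Lambda_P(H)\subseteq\overline{\O}$, whence $\Lambda_P(H)\cap\hyp_{z_0}=\emptyset$. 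In other words, $z_0$ is transverse to every point of $\Lambda_P(H)$. The converse — that a $P$-transverse group admitting such a transverse point preserves a proper domain — is precisely the characterization of proper-domain-preservation established earlier in the paper, so that for $P$-Anosov $\rho$ the property of preserving a proper domain is equivalent to the existence of $z\in G/P^\opp$ with $\hyp_z\cap\Lambda_P(\rho)=\emptyset$.

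Next I would prove that this last condition is open, which is where the argument becomes soft. Two standard inputs are used: the $P$-Anosov locus is open in $\Hom(\Gamma,G)$, and along a continuous family of $P$-Anosov representations the equivariant boundary map $\xi_\rho\colon\partial_\infty\Gamma\to G/P$ varies continuously in the uniform topology; as $\partial_\infty\Gamma$ is compact and independent of $\rho$, the limit set $\Lambda_P(\rho)=\xi_\rho(\partial_\infty\Gamma)$ then varies continuously for the Hausdorff distance. Fix $\rho_0$ preserving a proper domain and let $z_0\in G/P^\opp$ be transverse to the compact set $\Lambda_P(\rho_0)$, as provided by the necessary condition above. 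The non-transversality locus $\{(x,z)\in G/P\times G/P^\opp : x\in\hyp_z\}$ is Zariski-closed, hence closed; since $\{z_0\}\times\Lambda_P(\rho_0)$ is a compact subset of its open complement, it lies at positive distance from it. By Hausdorff-continuity, $\{z_0\}\times\Lambda_P(\rho)$ remains in this complement for $\rho$ near $\rho_0$, so the fixed point $z_0$ stays transverse to $\Lambda_P(\rho)$. Applying the converse direction of the characterization to each such $\rho$, we conclude that $\rho(\Gamma)$ preserves a proper domain, proving openness.

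The main obstacle is the converse (sufficiency) direction invoked above, i.e.\ actually building an $H$-invariant proper domain out of a single transverse point $z_0$. It is tempting to take the $H$-invariant open set of points transverse to the whole dual limit set $\Lambda_{P^\opp}(\rho)$, but this set need not even contain the original domain: transversality is only an open condition, so a point of $\O$ transverse to each element of the orbit $H\cdot z_0$ may fail to be transverse to the limit points of that orbit lying in $\Lambda_{P^\opp}(\rho)$, and indeed $\Lambda_P(H)\subseteq\overline{\O}$ already meets every $\hyp_{w}$ with $w\in\Lambda_{P^\opp}(\rho)$. This is exactly where the convexity machinery of the paper is required, both to produce and control an invariant connected component and to verify that its closure avoids a maximal proper Schubert subvariety. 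Once that characterization is in hand the continuity–compactness argument of the previous paragraph is routine, and the Proposition follows.
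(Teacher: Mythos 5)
Your proof hinges on the claim that, for a $P$-Anosov representation $\rho$, preserving a proper domain in $G/P$ is \emph{equivalent} to the existence of a point $z\in G/P^\opp$ with $\hyp_z\cap\Lambda_P(\rho)=\emptyset$. The necessity direction is essentially fine, although your justification is not: the limit set is \emph{not} contained in every nonempty closed invariant set (for instance, the $P_1$-Anosov representation $(\text{Fuchsian})\oplus(\text{trivial})$ into $\PGL(3,\mathbb{R})$ has a global fixed point disjoint from its limit set); what saves the step is that $\overline{\O}$ has nonempty interior, hence is not contained in any $\hyp_\xi$, so the contraction argument at the start of the proof of Proposition~\ref{prop_restrictions_maslov_index_2} applies. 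The sufficiency direction, however, is the heart of your argument, and it is neither ``the characterization established earlier in the paper'' (no such characterization appears: Theorem~\ref{prop_restrictions_maslov_index} is a necessary condition only, and Theorem~\ref{thm_equiv_anosov} concerns causal flag manifolds and assumes preservation of a proper domain in each of its three equivalent statements) nor true. Concretely, take $G=\SL(3,\mathbb{R})$ and $\Theta=\Delta$ (full flags). By ping-pong one can produce a Borel-Anosov free Schottky subgroup whose four attracting/repelling flags are pairwise transverse and all transverse to a fixed flag $z_0$, with limit set contained in small neighborhoods of these four flags; since transversality to $z_0$ is an open condition, the whole limit set is then transverse to $z_0$. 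Yet this group preserves no proper domain in $\Fl(\g,\Delta)$: its limit set contains three pairwise transverse points, so by Proposition~\ref{prop_restrictions_maslov_index_2}.(2) a preserved proper domain would force an $\inver_\Theta$-invariant element of $\mathcal{E}_\Theta$, and by \cite{dey2022borel} (see Remark~\ref{rmk_prop_I_anosov}) the full flag manifold of $\sll(3,\mathbb{R})$ has none. So your second paragraph, which is correct as far as it goes, establishes the openness of a condition that does not imply the conclusion, and the gap you yourself flag in the third paragraph cannot be filled by any machinery, in the paper or elsewhere.

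It is worth seeing how the paper avoids this trap: it never attempts a limit-set characterization of proper-domain preservation, precisely because properness of $\O$ carries strictly more information than transversality of one point to $\Lambda_P(\rho)$. The paper chooses $z_0$ in the \emph{interior of the dual} $\O^*$, so that every $\hyp_w$ with $w\in\overline{\O^*}\supset\overline{\rho(\Gamma)\cdot z_0}$ misses $\O$; it proves (Lemma~\ref{lem_convergence_orbit_anosov_hausdorff}, via the projective embedding of Fact~\ref{prop_ggkw} and uniform Anosov estimates) that $\rho'\mapsto\overline{\rho'(\Gamma)\cdot z_0}$ is Hausdorff-continuous at $\rho$; and for $\rho'$ close to $\rho$ it builds the invariant proper domain directly as $\O':=\rho'(\Gamma)\cdot\mathcal{V}$, where $\mathcal{V}$ is a connected neighborhood of the finite set $\{\rho'(s)\cdot x_0 \mid s\in S\}$ chosen to avoid $\hyp_x$ for every $x$ in the \emph{perturbed orbit closure} $\overline{\rho'(\Gamma)\cdot z_0}$, so that $\overline{\O'}\subset\Affstd_{z_0}$. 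The datum your reduction discards --- control of the entire orbit closure of a dual point lying well inside $\O^*$, rather than of the limit set alone --- is exactly what makes the construction of the invariant proper domain for nearby representations possible.
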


The examples constructed in the proof of Theorem~\ref{prop_existence_CC_subgroups_1} are themselves subject to dynamic and topological constraints; see Proposition~\ref{prop_restrictions_L} and Example~\ref{ex_cas_SP_rigidity}. If the Lie algebra of~$G$ is~$\spp(2r, \mathbb{R})$, with~$r$ a multiple of~$4$,~$\uu(r,r)$ or~$\soo^*(4r)$ with~$r$ even, then there exist Zariski-dense~$P$-Anosov subgroups of~$G$ preserving a proper domain in~$\SB(\g)$ that are neither virtually free nor surface groups
(see Example~\ref{ex_cas_SP_rigidity}.(2)).
In the case where~$G = \SO(n, 2)$, other example appear, as seen in Proposition~\ref{cor_cohomological_dimension}.

\subsubsection{Causal convexity in causal flag manifolds}\label{sect_intro_convexity} As mentioned above, in this paper we define a natural notion of convexity, specific to causal flag manifolds. If a point~$x$ lies in an affine chart~$\Affstd$ of~$\SB(\g)$ (using the notation from Section~\ref{sect_transversality}), causality allows us to define the \emph{future} and the \emph{past} of~$x$ (see Section~\ref{sect_causalité}). For any point~$y$ in the future of~$x$, the intersection of the past of~$y$ with the future of~$x$ defines a \emph{diamond}, denoted~$\Diams_\Affstd(x,y)$. From another perspective, there exist exactly two diamonds whose endpoints are~$x$ and~$y$: these are the only two proper connected components of~$\SB(\g) \smallsetminus (\hyp_x \cup \hyp_y)$ (see Definition~\ref{def_diamant}). Diamonds induce a notion of \emph{causal convexity} analogous to that already present in spacetime geometry~\cite{Sanchez2008}. A connected subset~$X \subset \SB(\g)$ is said to be \emph{causally convex} if it is contained in an affine chart~$\Affstd$, and for all~$x, y \in X$ such that~$y$ lies in the future of~$x$ in~$\Affstd$, the closed diamond~$\Diams^c_\Affstd(x,y)$ is contained in~$X$ (see Definition~\ref{def_causal_convexity}). In Proposition~\ref{lem_convex_hul_indép_carte_aff}, we show that this definition is independent of the choice of affine chart~$\Affstd$ containing~$X$. We then compare it to the \emph{dual convexity} mentioned in Section~\ref{sect_intro_falg_mfds} above:

\begin{prop}\label{prop_dual_implique_causall} 
Let~$G$ be a Hermitian simple Lie group of tube type, and let~$\O \subset \SB(\g)$ be a dually convex domain strictly contained in~$\SB(\g)$. Then~$\O$ is contained in at least one affine chart and is causally convex. 
\end{prop}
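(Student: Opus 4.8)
The plan is to prove Proposition~\ref{prop_dual_implique_causall} by first establishing that a dually convex proper domain~$\O$ must lie in a single affine chart, and then showing that it is stable under taking closed diamonds between its points. Both parts should exploit the interplay between the dual convexity (supporting Schubert hypersurfaces~$\hyp_\xi$) and the causal structure of~$\SB(\g)$.

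\textbf{Step 1: $\O$ lies in an affine chart.} First I would use the dual convexity hypothesis to produce a uniform transverse point. Since~$\O$ is strictly contained in~$\SB(\g)$ and dually convex, for each~$x \in \partial\O$ there is~$\xi_x \in G/P^\opp$ with~$x \in \hyp_{\xi_x}$ and~$\O \cap \hyp_{\xi_x} = \emptyset$, so~$\O$ lies in the affine chart~$\SB(\g) \smallsetminus \hyp_{\xi_x}$. The point is that any single such~$\xi$ already places~$\O$ inside one affine chart~$\Affstd = \SB(\g)\smallsetminus\hyp_\xi$; I would pick one boundary point and the corresponding supporting hypersurface to obtain this immediately. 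I expect this part to be short, the main content being the observation that a nonempty supporting Schubert variety disjoint from~$\O$ forces~$\O$ into the complementary affine chart.

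\textbf{Step 2: causal convexity.} Working inside the fixed affine chart~$\Affstd$, I would take two points~$x, y \in \O$ with~$y$ in the future of~$x$, and show the closed diamond~$\Diams^c_\Affstd(x,y) \subset \O$. The diamond is one of the two proper connected components of~$\SB(\g)\smallsetminus(\hyp_x \cup \hyp_y)$; the key is that for a point~$w$ in this diamond, I must rule out~$w \in \partial\O$ and~$w \notin \overline{\O}$. The strategy is to argue by contradiction using a supporting hypersurface. Suppose~$w \in \Diams^c_\Affstd(x,y)$ but~$w \notin \O$; moving along a causal segment from~$x$ to~$y$ through the diamond, there is a first exit point~$w_0 \in \partial\O$ on the diamond. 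By dual convexity there is a supporting~$\hyp_\zeta$ at~$w_0$ with~$\O \cap \hyp_\zeta = \emptyset$, so in particular~$x, y \notin \hyp_\zeta$. The crucial geometric input is that a Schubert hypersurface~$\hyp_\zeta$ supporting~$\O$ at an interior point~$w_0$ of the diamond must, by the relative position constraints encoded in the Maslov/type data and the cone structure~$(c_x)_x$, separate~$x$ from~$y$ in~$\Affstd$ — contradicting that both~$x$ and~$y$ lie in~$\O$ on the same side of~$\hyp_\zeta$.

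\textbf{Main obstacle and auxiliary input.} The hard part will be Step~2: precisely controlling how a supporting Schubert hypersurface~$\hyp_\zeta$ at a point of the diamond interacts with the causal order. Concretely, I must show that if~$w_0$ lies strictly inside~$\Diams^c_\Affstd(x,y)$ and~$\hyp_\zeta$ supports~$\O$ at~$w_0$, then~$\hyp_\zeta$ meets the interior of the diamond in a way that forces one of its endpoints~$x, y$ onto the wrong side. This requires a local description of~$\hyp_\zeta$ near~$w_0$ in terms of the causal cone~$c_{w_0}$, i.e.\ that the tangent hyperplane to~$\hyp_\zeta$ at~$w_0$ is a supporting hyperplane of the cone~$c_{w_0}$ (a causality/convexity compatibility that I would extract from the structure theory of~$\SB(\g)$ as a bounded symmetric domain, cf.\ the references on causal structures cited above). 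Once this local compatibility between Schubert hypersurfaces and causal cones is in hand, the separation argument closes the contradiction and yields~$\Diams^c_\Affstd(x,y)\subset\O$, completing the proof that~$\O$ is causally convex.
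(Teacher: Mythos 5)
Your Step~1 is exactly the paper's argument: pick $a \in \partial\O$, use dual convexity to get $z$ with $a \in \hyp_z$ and $\hyp_z \cap \O = \emptyset$, and conclude $\O \subset \Affstd_z$. Your Step~2 also sets up the same contradiction as the paper's Proposition~\ref{prop_dual_implies_causal} (a boundary point of $\O$ inside an open diamond $\Diams(x,y)$, a supporting Schubert hypersurface $\hyp_\zeta$ at that point, and the observation that $\hyp_\zeta$ would then have to separate $x$ from $y$, contradicting connectedness of $\O$). The problem is that the whole mathematical content lies in the claim you label as the ``crucial geometric input,'' and your proposal does not prove it; it replaces it by an unproven local statement plus an unspecified ``separation argument.'' This is a genuine gap, not a detail: the claim that a Schubert hypersurface through an interior point of a diamond separates the endpoints is exactly equivalent to the paper's Lemma~\ref{lem_comp_causally_convex} (every connected component of $\Affstd_\std \smallsetminus \hyp_{\LP^+}$ is causally convex), and that lemma is where the paper invests all its effort.

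Concretely, two things go wrong with your proposed route. First, the tangency statement (``the tangent hyperplane to $\hyp_\zeta$ at $w_0$ supports the cone $c_{w_0}$'') only makes sense at smooth points of $\hyp_\zeta$; in rank $r \geq 2$ the variety $\hyp_\zeta$ is singular along positive-dimensional strata (the sets $V_{i,j}$ with $i+j \leq r-2$ in the notation of Section~\ref{sect_def_Ois}), and nothing forces your first exit point $w_0$ to be a smooth point. Second, even granting the tangency at smooth points, passing from this infinitesimal statement to the global assertion that a causal curve cannot leave and re-enter a connected component of the complement of $\hyp_\zeta$ is precisely the nontrivial step. The paper does it algebraically rather than infinitesimally: the components of $\Affstd_\std \smallsetminus \hyp_{\LP^+}$ are the images $\varphi_\std(\mathcal{O}_i)$ of the open $L^0$-orbits (Kaneyuki's Sylvester-type classification), and Lemma~\ref{lem_inclusion_Ois} establishes the inertia inequalities $\mathcal{O}_i + c^0 \subset \bigcup_{j \geq i} \overline{\mathcal{O}}_j$, $\mathcal{O}_i - c^0 \subset \bigcup_{j \leq i} \overline{\mathcal{O}}_j$, together with $\overline{\mathcal{O}}_i \cap \overline{\mathcal{O}}_j \subset \overline{\mathcal{O}}_k$ for $i \leq k \leq j$ and the fact that each $\mathcal{O}_i$ is the interior of its closure; these four facts give Lemma~\ref{lem_comp_causally_convex} in three lines, uniformly for all $\HTT$ Lie algebras (including $\eeee_{7(-25)}$, where a hands-on signature or tangency computation is not available). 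To complete your proof you would either need to prove your local-to-global separation claim --- handling singular strata --- or substitute the orbit-theoretic argument; as written, the proposal assumes the very statement that Proposition~\ref{prop_dual_implies_causal} rests on.
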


Our proof is based on an analysis of the open orbits of the automorphism group of the diamond (see Lemma~\ref{lem_inclusion_Ois}), initiated in work of Kaneyuki \cite{kaneyuki1988sylvester}.
In the case where~$G = \SO(n, 2)$ for some~$n \geq 2$, Proposition~\ref{prop_dual_implique_causall} admits a more direct proof, given in~\cite[Prop.\ 6.2]{chalumeau2024rigidity}.

\subsubsection{Convexity and transverse groups}  
In Section~\ref{sect_convex_transverse}, originally motivated by Question~\ref{question_P_transverse}.(2), we study the geometry of manifolds of the form~$\O / \Gamma$, where~$\Gamma$ is a discrete $P$-transverse subgroup of a Hermitian simple Lie group~$G$ of tube type, preserving a proper domain~$\O \subset G/P = \SB(\g)$ (see Theorem~\ref{thm_equiv_anosov}). Given a proper domain~$\O \subset \SB(\g)$ preserved by a discrete subgroup~$\Gamma \leq G$, the \emph{full orbital limit set}~$\limorb$ of~$(\O, \Gamma)$ is the set of all accumulation points of all $\Gamma$-orbits of points in~$\O$ (see~\cite{DGKproj}). A \emph{convex core} of~$(\O, \Gamma)$ is a closed nonempty connected causally convex and $\Gamma$-invariant subset~$\mathcal{C}$ of~$\O$ such that the ideal boundary~$\partial_i \mathcal{C} := \overline{\mathcal{C}} \smallsetminus \mathcal{C}$ contains~$\limorb$. If~$\partial_i \mathcal{C} = \partial \O$, then~$\O$ is a diamond; this rigidity is proved in \cite{galiay2024rigidity}. Conversely, requiring the action of~$\Gamma$ to admit a convex core with transverse ideal boundary is a very flexible condition:

\begin{thm}\label{thm_equiv_anosov}   
Let~$G$ be a~$\HTT$ Lie group and~$\Gamma \leq G$ a discrete subgroup. Let~$P \leq G$ be a parabolic subgroup such that~$G/P = \SB(\g)$. The following are equivalent:
\begin{enumerate}
    \item The group~$\Gamma$ is finitely generated, $P$-transverse, preserves a proper domain~$\O \subset \SB(\g)$, and~$\Lambda_P(\Gamma)$ contains at least three points.
    \item There exists a proper, \emph{causally convex}, $\Gamma$-invariant domain~$\O \subset \SB(\g)$ such that~$\Gamma$ acts cocompactly on a convex core~$\mathcal{C}$ of~$(\O, \Gamma)$ whose ideal boundary is transverse and contains at least three points.
    \item There exists a proper, \emph{dually convex}, $\Gamma$-invariant domain~$\O' \subset \SB(\g)$ such that~$\Gamma$ acts cocompactly on a convex core~$\mathcal{C}'$ of~$(\O', \Gamma)$ whose ideal boundary is transverse and contains at least three points.
\end{enumerate}

If these conditions hold, then~$\partial_i \mathcal{C} = \Lambda_P(\Gamma) = \limorb = \Lambda_{\O'}^{\operatorname{orb}}(\Gamma) = \partial_i \mathcal{C}'$.
\end{thm}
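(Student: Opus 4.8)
The plan is to prove the cycle of implications $(1) \Rightarrow (3) \Rightarrow (2) \Rightarrow (1)$ and to read off the final chain of equalities along the way. The implication $(3) \Rightarrow (2)$ is the cheapest: if $\O'$ is a proper dually convex $\Gamma$-invariant domain, then Proposition~\ref{prop_dual_implique_causall} upgrades it to a causally convex domain contained in an affine chart, so the very same pair $(\O', \mathcal{C}')$ already witnesses $(2)$, since a convex core is causally convex by definition and its ideal boundary, transversality, and cardinality hypotheses are untouched.

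For $(2) \Rightarrow (1)$ I would start from the causally convex domain~$\O$ and its convex core~$\mathcal{C}$. Preservation of a proper domain and the lower bound on the number of ideal boundary points are immediate. To obtain finite generation I would equip $\mathcal{C}$ with a proper $\Gamma$-invariant metric adapted to the causal convex structure (analogous to the Hilbert metric on a properly convex set), check that the action is properly discontinuous, and invoke the \v{S}varc--Milnor lemma using cocompactness and the fact that $\mathcal{C}$ is a connected, locally compact length space (any two causally related points being joined inside a closed diamond). The crucial output is $P$-transversality: from cocompactness on $\mathcal{C}$ together with the transversality of $\partial_i \mathcal{C}$, I would construct a continuous $\Gamma$-equivariant transverse boundary map $\Gambor \to \partial_i \mathcal{C} \subset \SB(\g)$ and extract the required contraction/divergence dynamics, thereby identifying $\Lambda_P(\Gamma) = \partial_i \mathcal{C}$ and concluding that $\Gamma$ is $P$-transverse.

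The substantial direction is $(1) \Rightarrow (3)$. Starting from a $P$-transverse group $\Gamma$ preserving a proper domain with $\Lambda_P(\Gamma)$ containing at least three pairwise transverse points, Theorem~\ref{prop_restrictions_maslov_index} forces all such triples to share a common type and, in a causal flag manifold, to have vanishing Maslov index; this places $\Lambda_P(\Gamma)$ inside a single affine chart in an acausal configuration. Using the transverse limit map into $G/P^\opp$, I would define the candidate domain $\O'$ as the appropriate connected component of the complement of the union of the Schubert hypersurfaces $\hyp_\xi$ indexed by the dual limit set; this domain is $\Gamma$-invariant and dually convex by construction, and its properness follows from the acausal position of $\Lambda_P(\Gamma)$. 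I would then let $\mathcal{C}'$ be the causal convex hull of $\Lambda_P(\Gamma)$ inside $\O'$, which is well defined and chart-independent by Proposition~\ref{lem_convex_hul_indép_carte_aff}, and which is causally convex and $\Gamma$-invariant.

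The main obstacle is showing that $\Gamma$ acts cocompactly on $\mathcal{C}'$. Here I would run a contraction argument in the spirit of Danciger--Guéritaud--Kassel, adapted to the causal setting: if some sequence in $\mathcal{C}'$ escaped every compact set modulo $\Gamma$, then after translating by suitable $\gamma_n \in \Gamma$ and invoking the divergence dynamics of the $P$-transverse group, it would accumulate on $\partial_i \mathcal{C}' = \Lambda_P(\Gamma)$, contradicting that interior points of the convex hull remain uniformly away from its ideal boundary; the Maslov-index-zero (acausal) structure guaranteed by Theorem~\ref{prop_restrictions_maslov_index} together with the orbit analysis of Lemma~\ref{lem_inclusion_Ois} are precisely what make this uniform separation available. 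Finally, the equality chain $\partial_i \mathcal{C} = \Lambda_P(\Gamma) = \limorb = \Lambda_{\O'}^{\operatorname{orb}}(\Gamma) = \partial_i \mathcal{C}'$ follows by combining the defining inclusion $\limorb \subseteq \partial_i \mathcal{C}$ of a convex core, the general inclusion $\Lambda_P(\Gamma) \subseteq \limorb$, and the reverse inclusion $\partial_i \mathcal{C} \subseteq \Lambda_P(\Gamma)$ coming from cocompactness (every ideal boundary point is a limit of points $\gamma_n k$ with $k$ in a compact fundamental domain, hence a transverse limit point), together with the identical argument for $\mathcal{C}'$ and $\O'$.
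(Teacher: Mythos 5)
Your proposal breaks down at its central step, the construction of the convex core in $(1) \Rightarrow (3)$. You take $\mathcal{C}'$ to be the causally convex hull of $\Lambda_P(\Gamma)$ inside $\O'$ and then run a Danciger--Gu\'eritaud--Kassel-type argument on it; but this hull is degenerate. Once $\O'$ is proper, dually convex (hence causally convex by Proposition~\ref{prop_dual_implique_causall}), $\Gamma$-invariant, and satisfies $\hyp_\xi \cap \O' = \emptyset$ for every limit point $\xi$, the argument of Lemma~\ref{lem_omega_inter_futur_vide} applies to $\O'$ and shows that $\O'$ avoids $\J^{\pm}(b)$ for every $b \in \Lambda_P(\Gamma)$; consequently no two distinct limit points are causally related in the affine chart containing $\overline{\O'}$ (this is the rigorous form of the ``spacelike'' behaviour that Theorem~\ref{prop_restrictions_maslov_index} expresses via the vanishing of the Maslov index). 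By Lemma~\ref{lem_convex_envelope_egal_plus_petit_convex}, the causally convex hull of a set $X$ is the union of the closed diamonds $\Diams_{\Affstd}^c(x,y)$ over pairs $x,y \in X$ with $y \in \J_{\Affstd}^+(x)$; for an acausal set this union contains no diamonds at all, so $\operatorname{Conv}(\Lambda_P(\Gamma)) = \Lambda_P(\Gamma)$. Your $\mathcal{C}'$ is therefore a subset of $\partial \O'$, not a closed subset of $\O'$, and it is not a convex core in the sense of Definition~\ref{def_convex_core}; the ``uniform separation of interior points from the ideal boundary'' argument then has nothing to act on. This is exactly the disanalogy with the projective case that the paper emphasizes (``there is in our case no preferred convex core''; see Remark~\ref{rmk_intuition_main_thm}): causal convexity is timelike, the dynamics is spacelike, and the hull of the limit set collapses. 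The paper's proof of $(1) \Rightarrow (2)$ avoids this by taking $\mathcal{C}$ to be the causally convex hull of a \emph{thickened orbit} $\Gamma \cdot \overline{\mathcal{V}}$ (Section~\ref{sect_proof_implication_1_2}), whose hull does contain diamonds joining orbit points; $\partial_i \mathcal{C} = \Lambda_P(\Gamma)$ is then proved via Lemma~\ref{lem_omega_inter_futur_vide}, and cocompactness by showing that the set of $g \in \Gamma$ with $\overline{\mathcal{V}} \cap \J^{\pm}(g \cdot \overline{\mathcal{V}}) \neq \emptyset$ is finite --- a mechanism entirely different from the one you propose.

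There is a second gap, in $(2) \Rightarrow (1)$. To obtain $P$-transversality you propose to build a continuous $\Gamma$-equivariant boundary map $\Gambor \to \partial_i \mathcal{C}$. This presupposes that $\Gamma$ is word-hyperbolic, which is not a hypothesis of the theorem; indeed the whole point of the statement (Remark~\ref{rmk_intuition_main_thm}, Example~\ref{ex_illustration_einstein}) is that condition $(2)$ is satisfied by transverse groups that are far from Anosov, and even for hyperbolic $\Gamma$ nothing forces such a map to exist. The paper's mechanism (Lemma~\ref{lem_p_conic_shilov}) is metric rather than boundary-theoretic: the Caratheodory metric together with chains of photons shows that any sequence $(g_k)$ with $g_k \cdot x_0 \to a \in \partial_i \mathcal{C}$ is $\{\alpha_r\}$-contracting with limit $a$, which yields divergence, transversality, and $\Lambda_P(\Gamma) = \partial_i \mathcal{C} = \Lambda_{\O}^{\operatorname{orb}}(\Gamma)$ with no hyperbolicity assumption. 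Your finite-generation sketch suffers from the same spacelike issue as above: since generic pairs of points of $\mathcal{C}$ are not causally related, diamonds do not furnish paths between them, so a ``causal Hilbert metric'' on $\mathcal{C}$ is not obviously a length metric; the paper instead transports the problem to projective space via Fact~\ref{prop_ggkw} and builds a piecewise-projective path metric on a thickened set (Proposition~\ref{prop_finitely_generated}) before applying \v{S}varc--Milnor. Only your $(3) \Rightarrow (2)$, via Proposition~\ref{prop_dual_implique_causall}, coincides with the paper and is correct as stated; note also that the paper gets $(2) \Rightarrow (3)$ not by reproving anything but by passing to the dual convex hull $\O_0^{**}$ and invoking Lemma~\ref{lem_strongly_convex_cocompact_shilov}.
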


Theorem~\ref{thm_equiv_anosov} shows that a seemingly natural definition of convex cocompactness in~$\SB(\g)$ — namely, Point~(2) — does not distinguish $P$-Anosov subgroups (with~$G/P = \SB(\g)$) from other finitely generated $P$-transverse discrete subgroups of~$G$ that preserve a proper domain. This phenomenon stems from the intrinsically “timelike” nature of the convexity considered (causal convexity) and the “spatial” (i.e.\ of Maslov index 0) nature of the dynamical behavior of~$\Gamma$, as already noted in Theorem~\ref{prop_restrictions_maslov_index}; see Remark~\ref{rmk_intuition_main_thm} and Example~\ref{ex_illustration_einstein}.

\subsection{Outline of the paper} In Section~\ref{sect_prelminaries}, we recall some background material and establish a few elementary results concerning the notions used throughout the paper; in particular, we review the definition and properties of causal flag manifolds. In Section~\ref{sect_proj_geom_causal}, we recall the definition of \emph{diamonds} (Definition~\ref{def_diamant}), introduce the notion of causal convexity (Definitions~\ref{def_causal_convexity} and~\ref{def_causally_convex_indep}), and prove Proposition~\ref{prop_dual_implique_causall}. In Section~\ref{sect_topo_restrictions}, we establish Theorem~\ref{prop_restrictions_maslov_index}, through Proposition~\ref{prop_restrictions_maslov_index_2} and Corollary~\ref{cor_restrictions_maslov_index}. In Section~\ref{sect_groups_acting_cocompactly}, we prove the implication ``$(2) \Rightarrow \Gamma$ finitely generated'' from Theorem~\ref{thm_equiv_anosov}. We also establish preliminary results on groups preserving proper domains~$\O$ and acting cocompactly on a convex core with transverse boundary in~$\O$ (Lemmas~\ref{lem_p_conic_shilov} and~\ref{lem_strongly_convex_cocompact_shilov}), in preparation for the proof of the remaining of direction ``$(2) \Rightarrow (1)$'' and the equivalence ``$(2) \Leftrightarrow (3)$'' in Theorem~\ref{thm_equiv_anosov}. In Section~\ref{sect_convex_transverse}, we complete the proof of Theorem~\ref{thm_equiv_anosov}. In Section~\ref{sect_examples}, given a Lie group~$\HTT$~$G$ and a parabolic subgroup~$P \leq G$ such that~$G/P = \SB(\g)$, we construct examples of Zariski-dense $P$-Anosov subgroups of~$G$ preserving proper domains in~$\SB(\g)$, thereby proving Theorem~\ref{prop_existence_CC_subgroups_1}. We also prove Proposition~\ref{prop_preserving_proper_domain_open} (through Corollary~\ref{cor_preserving_proper_domain_open}). In Section~\ref{sect_anosov_einstein}, we investigate further examples in the case where $G = \SO(n,2)$ (corresponding to $\SB(\g) = \Ein^{n-1,1}$). In the Appendix (Section~\ref{sect_complement}), we discuss a generalization of results obtained in Section~\ref{sect_groups_acting_cocompactly}; the results of the section are independent from the rest of the paper.

\subsection*{Aknowledgements} I would like to thank my PhD advisor, Fanny Kassel, for her valuable corrections and feedback on this paper. I also thank her for encouraging me to explore the topic of convex cocompactness in flag manifolds, and for our insightful discussions on this subject. I am grateful to Yosuke Morita for his corrections, his help in the development of Lemma~\ref{lem_inclusion_Ois}, and more broadly, for our inspiring discussions on~$\HTT$ Lie groups. I would also like to thank Olivier Guichard and Andrew Zimmer, whose comments on my PhD thesis significantly contributed to improving the quality of this paper. I also thank Clarence Kineider, Roméo Troubat, and Max Riestenberg for sharing their insights on Property~I. Finally, I am very grateful to Rym Smaï for her helpful feedback on Section~\ref{sect_anosov_einstein}.

\section{Preliminaries}\label{sect_prelminaries}

In this section, we provide some basic reminders on objects that will be used throughout this paper and establish some elementary lemmas.

\subsection{Real projective space} Given a finite-dimensional real vector space~$V$, we will denote by~$[v]$ the projection in~$\mathbb{P}(V)$ of a vector~$v \in V \smallsetminus \{ 0  \}$. We denote by~$V^*$ the space of all linear forms on~$V$. The space~$\mathbb{P}(V^*)$ can be identified with the space of projective hyperplanes of~$\mathbb{P}(V)$, via the map 
\begin{equation}\label{indent_vector_space}
 [f] \mapsto \mathbb{P}(\Ker(f)). 
\end{equation}

\subsection{Preliminaries in Lie theory}\label{sect_prelim_lie_theory}
We fix a noncompact real semisimple Lie group~$G$, and denote by~$\g$ its Lie algebra.

\subsubsection{$\mathfrak{sl}_2$-triples} \label{sect_sl2-triples} \mbox{ } A triple~$\tr = (e,h,f)$ of elements of~$\g$ satisfying the relations $[h,e] = 2e$, $[h,f] = -2f$ and $[e,f] = h$ is called an \emph{$\mathfrak{sl}_2$-triple}. There is a Lie algebra embedding~$\plongsl_\tr: \mathfrak{sl}_2(\mathbb{R}) \hookrightarrow \g$ such that~$\plongsl_\tr(\E) = e$,~$\plongsl_\tr (\He) = h$ and~$\plongsl_\tr(\F) = f$, where
\begin{equation*}
    \E = \begin{pmatrix}
        0 & 1 \\ 0 & 0
    \end{pmatrix}; \quad \He = \begin{pmatrix}
        1 & 0 \\ 0 & -1
    \end{pmatrix}; \quad \F = \begin{pmatrix}
        0 & 0 \\ -1 & 0
    \end{pmatrix}.
\end{equation*}

\subsubsection{Cartan decomposition}\label{sect_cartan_decomp} Let~$B$ be the Killing form on~$\g$. Let~$K \leq G$ be a maximal compact subgroup and~$\mathfrak{h}$ be the~$B$-orthogonal of the Lie algebra~$\mathfrak{k}$ of~$K$ in~$\g$. Then one has~$\g = \mathfrak{k} \oplus \mathfrak{h}$. The \emph{Cartan involution }of~$\g$ (with respect to~$K$) is then the Lie algebra automorphism~$ \Cartinv: \g \rightarrow \g$ defined by~$ (\Cartinv)|_{\mathfrak{k}} = \operatorname{id}_{\mathfrak{k}}$ and~$ (\Cartinv)|_{\mathfrak{h}} = -\operatorname{id}_{\mathfrak{h}}$. It induces a Lie group automorphism of~$G$, still denoted by~$\Cartinv$ and called the \emph{Cartan involution of }$G$.

\subsubsection{Restricted root system.} \label{sect_real_lie_alg} Let~$\aaa \subset \mathfrak{h}$ be a maximal abelian subspace, and~$\g_0$ the centralizer of~$\aaa$ in~$\g$. We denote by~$\aaa^*$ the space of all linear forms on~$\aaa$. For~$\alpha \in \aaa^*$, we define
\begin{equation*}
    \g_{\alpha} := \{ X \in \g \mid [H, X] = \alpha(H)X \quad \forall H \in \aaa\}.
\end{equation*}
One has~$[\g_{\alpha}, \g_{\beta}] \subset \g_{\alpha + \beta}$ for any~$\alpha, \beta \in \aaa^*$. If~$\alpha \in \aaa^* \smallsetminus \{0\}$ satisfies~$\g_{\alpha} \ne \{0 \}$, then we say that~$\alpha$ is \emph{a restricted root} of~$(\g, \aaa)$. We denote by~$\Sigma = \Sigma(\g, \aaa)$ the set of all restricted roots of~$(\g, \aaa)$. One has~$\g = \g_0 \oplus \bigoplus_{\alpha \in \Sigma} \g_{\alpha}$. We fix a \emph{fundamental system}~$\FS = \{ \alpha_1, \dots , \alpha_N\} \subset \Sigma$, i.e.\ a family of restricted roots such that any root of~$\g$ can be uniquely written as~$\alpha = \sum_{i=1}^N n_i \alpha_i$, where the~$n_i$ all have same sign for~$1 \leq i \leq N$. The elements of~$\FS$ are called \emph{simple restricted roots}. From now on, whenever we fix a noncompact real semisimple Lie group~$G$ (resp.\ a real semisimple Lie algebra of noncompact type~$\g$), it will always implicitly be endowed with a fixed set~$\FS$ of simple restricted roots.

The choice of a fundamental system determines a set of \emph{positive restricted roots}~$\Sigma^+$, i.e.\ those roots~$\alpha$ where the~$n_i$ are all nonnegative.

For any~$\alpha \in \Sigma$ and~$X \in \g_{\alpha}\smallsetminus \{0\}$, there exists a unique scalar multiple~$X'$ of~$X$ such that~$(X',  [\Cartinv(X'), X'],  \Cartinv(-X'))$ is an~$\mathfrak{sl}_2$-triple. The element~$[ \Cartinv(X'), X']$ does not depend on the choice of~$X \in \g_{\alpha}$, and is denoted by~$h_{\alpha}$. The family~$(h_{\alpha})_{\alpha \in \FS}$, whose elements are called the \emph{coroots of~$\g$}, forms a basis of~$\aaa$, whose dual basis in~$\aaa^*$ is denoted by~$(\omega_{\alpha})_{\alpha \in \FS}$.

The \emph{closed positive Weyl chamber associated with~$\FS$} is 
\begin{equation*}
    \overline{\aaa}^+ = \{X \in \aaa \mid \alpha(X) \geq 0 \quad \forall \alpha \in \FS\}.
\end{equation*}
The \emph{Cartan decomposition} states that for all~$g \in G$, there exist~$k,\ell \in K$, and a unique~$\mu(g) \in \overline{\aaa}^+$ such that~$g = k \exp(\mu(g))\ell$. This defines the \emph{Cartan projection}~$\mu: G \rightarrow \overline{\aaa}^+$. The \emph{Lyapunov projection} is then the map~$\lambda: G \rightarrow \overline{\aaa}^+$ satisfying~$\lambda(g) = \lim_{k \rightarrow + \infty} \mu(g^k)/k$ for all~$g \in G$.

 \begin{rmk}\label{rmk_lyapunov_projection}
Note that the formula we gave is not the actual definition of the Lyapunov projection, but a characterization (see e.g.\ \cite{benoist1997proprietes}). We will only use this formula and the fact that, in the case where~$G = \SL(n, \mathbb{R})$, the Lyapunov projection of an element~$g \in G$ is a diagonal matrix whose entries are the logarithms of the moduli of the complex eigenvalues~$\lambda_1(g), \dots, \lambda_n(g)$ of~$g$, ordered so that that~$|\lambda_1(g)| \geq \dots \geq |\lambda_n(g)|$.
 \end{rmk}

\subsubsection{The restricted Weyl group}\label{sect_restricted_weyl_grouup}
The \emph{restricted Weyl group}~$W$ of~$G$ is the quotient~$N_K(\aaa)/Z_K(\aaa)$ of the normalizer of~$\aaa$ in~$K$ (for the adjoint action) by the centralizer of~$\aaa$ in~$K$. ~For its natural embedding in~$\GL(\aaa)$, it is a finite group generated by the~$B$-orthogonal reflexions in~$\aaa$ with respect to the kernels of the simple restricted roots. By duality with respect to~$B$ (which induces a scalar product on~$\aaa$), the action of~$W$ on~$\aaa$ induces an action on~$\aaa^*$ preserving~$\Sigma$. There exists a unique~$w_0 \in W$, called the \emph{longest element}, such that~$w_0 \cdot \Sigma^+ = -\Sigma^+$. The map~$\oppinv: \aaa^* \rightarrow \aaa^*$ defined as~$ \oppinv = - w_0$ is called the \emph{opposition involution}, and satisfies~$\oppinv(\FS) = \FS$.

\subsubsection{Parabolic subgroups} \label{sect_parab_subgroups} Let~$\Theta \subset \FS$ be a subset of the simple restricted roots. \emph{The standard parabolic subgroup}~$P_{\Theta}^+$ (resp.\ the \emph{standard opposite parabolic subgroup}~$P_{\Theta}^{\opp}$) is defined as the normalizer in~$G$ of the Lie algebra
\begin{equation}\label{eq_lie_u}
    \uu_{\Theta}^+ := \bigoplus_{\alpha \in \Sigma_{\Theta}^+} \g_{\alpha} \quad \Bigg{(}\text{resp.\ }  \uu_{\Theta}^- := \bigoplus_{\alpha \in \Sigma_{\Theta}^+} \g_{-\alpha} \Bigg{)},
\end{equation}
where~$\Sigma_{\Theta}^+ := \Sigma^+ \smallsetminus \operatorname{Span}(\FS \smallsetminus \Theta)$. By “standard”, we mean with respect to the above choices. More generally, a \emph{parabolic subgroup of type~$\Theta$} of~$G$ is a conjugate of~$P_{\Theta}^+$ in~$G$.

The Lie algebra of~$P_\Theta^+$ (resp.\ $P_\Theta^\opp$) is denoted by~$\LP_\Theta^+$ (resp.\ $\LP_\Theta^\opp$). For any representative~$k_0 \in N_K(\aaa)$ of~$w_0$, one has~$k_0 P_{\Theta}^\opp k_0 = P_{\oppinv(\Theta)}^+$. 

The \emph{Levi subgroup} associated with~$\Theta$ is the reductive Lie group defined as the intersection~$L_{\Theta} := P_{\Theta}^+ \cap P_{\Theta}^{\opp}$. Its Lie algebra is
\begin{equation}\label{eq_decomposition_l}
    \mathfrak{l} = \g_0 \oplus \bigoplus_{\alpha \in \Sigma^+ \cap \operatorname{Span}(\FS \smallsetminus \Theta)} (\g_\alpha \oplus \g_{-\alpha}).
\end{equation}
In particular, it contains~$\aaa$.

The unipotent radical of~$P_{\Theta}^+$ (resp.\ $P_{\Theta}^{\opp}$) is~$U_{\Theta}^+ := \exp (\uu_{\Theta}^+)$ (resp.\ $U_{\Theta}^- := \exp (\uu_{\Theta}^-)$). One then has~$P_{\Theta}^+ = U_{\Theta}^+ \rtimes L_{\Theta}$ (resp.\ $P_{\Theta}^{\opp} = U_{\Theta}^- \rtimes L_{\Theta}$).

\subsubsection{Flag manifolds}\label{sect_flag_mfds} 
The \emph{flag manifold} associated with~$\Theta$ is the quotient space~$\Fl(\g, \Theta) :=  \g /\mathfrak{p}_{\Theta}^+$. The flag manifold \emph{opposite} to~$\Fl(\g, \Theta)$ is~$\Fl(\g, \Theta)^{\opp} := \Fl(\g, \oppinv(\Theta))$. Since~$G$ acts transitively on~$\Fl(\g, \Theta)$ (resp.\ $\Fl(\g, \Theta)^\opp$) via the adjoint action, we have the~$\Ad(G)$-equivariant identification 
\begin{equation}\label{ident_flag_mfds}
    G/P_\Theta^+ \simeq \Fl(\g, \Theta) \quad \text{(resp.\ }G/P_\Theta^\opp \simeq \Fl(\g, \Theta)^\opp\text{)}.
\end{equation}
We will simply denote by~$g \cdot x$ the action of an element~$g \in G$ on~$x \in \Fl(\g, \Theta)$ (instead of~$\Ad(g) \cdot x$).

\subsubsection{Automorphisms of flag manifolds}\label{sect_automorphism_group} The group of all Lie algebra automorphisms of~$\g$ is called the \emph{automorphism group} of~$\g$ and denoted by~$\Aut(\g)$. It is a Lie group with Lie algebra~$\g$. When~$G$ is semisimple, the map~$\Ad: G \rightarrow \Aut(\g)$ has finite kernel.

In general, the group~$\Aut(\g)$ does not act on~$\Fl(\g, \Theta)$. However, it admits a finite-index subgroup that does: indeed, any~$g \in \Aut(\g)$ induces an automorphism~$\psi_g$ of the fundamental system~$\Delta$. This defines a group homomorphism~$\Aut(\g) \rightarrow \Aut(\Delta)$. For~$\Theta \subset \FS$, we denote by~$\Aut_\Theta(\g)$ the subgroup of~$\Aut(\g)$ of all Lie algebra automorphisms~$g$ such that~$\psi_g$ fixes~$\Theta$. It acts naturally on~$\Fl(\g, \Theta)$, and contains~$\Ker \psi$, which itself contains~$\Ad(G)$. We will denote this action by~$g \cdot x$ for~$x \in \Aut_\Theta(\g)$ and~$x \in \Fl(\g, \Theta)$. 

Note that~$\ker(\Ad)$ acts trivially on~$\Fl(\g, \Theta)$. Since we will always consider the action of~$G$ on~$\Fl(\g, \Theta)$, we may thus identify it with its image~$\Ad(G)$, and we will always be able to assume that~$G \subset \Aut_\Theta(\g)$.

\subsubsection{Transversality}\label{sect_transversality}
The action of~$G$ on~$\Fl(\g, \Theta) \times \Fl(\g, \Theta)^{\opp}$ by left translations has exactly one open orbit, which is the orbit of~$(\LP_{\Theta}^+, \LP^{\opp}_{\Theta})$ and is dense. Two elements~$x \in \Fl(\g, \Theta)$ and~$y \in \Fl(\g, \Theta)^{\opp}$ are said to be \emph{transverse} if~$(x,y) \in G \cdot (\LP_\Theta^+, \LP_\Theta^\opp)$.

Given a point~$y \in \Fl(\g, \Theta)^{\opp}$ (resp.\ $x \in \Fl(\g, \Theta)$), we let~$\hyp_{y}$ (resp.\ $\hyp_{x}$) be the set of all elements of~$\Fl(\g, \Theta)$ (resp.\ $\Fl(\g, \Theta)^\opp$) that are not transverse to~$y$ (resp.\ to~$x$):
\begin{equation}\label{eq_def_hyp_x}
    \begin{split}
        \hyp_y &:= \{z \in \Fl(\g, \Theta) \mid (z,y) \notin G \cdot (\LP_\Theta^+, \LP_\Theta^\opp)\}; \\
        \hyp_x &:= \{z' \in \Fl(\g, \Theta) \mid (x,z') \notin G \cdot (\LP_\Theta^+, \LP_\Theta^\opp)\}.
    \end{split}
\end{equation}

The set~$\hyp_y$ (resp.\ $\hyp_x$) is an algebraic subvariety of~$\Fl(\g, \Theta)$ (resp.\ of~$\Fl(\g, \Theta)^\opp$), called a \emph{maximal proper Schubert subvariety}. The space
$$\Affstd_y := \Fl(\g, \Theta) \smallsetminus \hyp_y$$
is called an \emph{affine chart} (or more classically a \emph{big Schubert cell}) and is an open dense subset of~$\Fl(\g, \Theta)$. The affine chart
\begin{equation}\label{eq_standard_chart}
    \Affstd_\std := \Fl(\g, \Theta) \smallsetminus \hyp_{\LP_{\Theta}^{\opp}}
\end{equation}
is called the \emph{standard affine chart}. We have the bijection
\begin{equation}\label{eq_A_egal_exp}
   \varphi_{\mathsf{std}}: \begin{cases}
        \uu_{\Theta}^- &\overset{\sim}{\longrightarrow} \Affstd_\std \\
        X &\longmapsto \exp(X) \cdot \LP_{\Theta}^+.
    \end{cases}
\end{equation}

\subsubsection{Self-opposite flag manifolds} \label{sect_self_opposite} If~$\oppinv(\Theta) = \Theta$, then we say that~$\Fl(\g, \Theta)$ is \emph{self-opposite}. For any representative~$k_0 \in N_K(\aaa)$ of the longest element~$w_0 \in W$, one has~$\LP_{\Theta}^+ = k_0 \cdot \LP_{\Theta}^{\opp}$, so~$\Fl(\g, \Theta)= \Fl(\g, \Theta)^{\opp}$. A subset~$F \subset \Fl(\g, \Theta)$ will be said to be \emph{transverse} if any two distinct points~$x,y \in F$ are transverse.

\subsection{Divergent groups}\label{sect_divergent_groups} Divergent groups are groups with strong dynamical properties for their action on flag manifolds. 

A sequence~$(g_k) \in G^{\mathbb{N}}$ is \emph{$\Theta$-divergent} if~$\alpha(\mu(g_k)) \rightarrow + \infty$ for every~$\alpha \in \Theta$. It is \emph{$\Theta$-contracting} if there exists~$(x, \xi) \in \Fl(\g, \Theta)\times \Fl(\g, \Theta)^\opp$ such that~$g_k \cdot y \rightarrow x$ uniformly on compact subsets of~$\Fl(\g, \Theta) \smallsetminus \hyp_{\xi}$; the pair~$(x, \xi)$ is then uniquely defined by~$(g_k)$, and we say that~$(g_k)$ is~$\Theta$-contracting with respect to~$(x, \xi)$, and that~$x$ is \emph{the~$\Theta$-limit} of~$(g_k)$. Let us recall the following fact, which is an immediate consequence of the Cartan decomposition of~$G$ (see e.g.\ \cite{kapovich2017anosov}):

\begin{fact}\label{fact_KAK_divergent} Let~$(g_k) \in G^{\mathbb{N}}$.
\begin{enumerate}
    \item Assume that that there exist an open subset~$\mathcal{U} \subset \Fl(\g, \Theta)$ and a point~$x \in \Fl(\g, \Theta)$ such that~$g_k \cdot \mathcal{U} \rightarrow \{ x \}$ for the Hausdorff topology. Then~$(g_k)$ is~$\Theta$-contracting with~$\Theta$-limit~$x$.
    \item A sequence~$(g_k)$ is~$\Theta$-divergent if and only if every subsequence of~$(g_k)$ admits a~$\Theta$-contracting subsequence.
\end{enumerate}
\end{fact}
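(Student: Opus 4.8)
The plan is to reduce everything to the Cartan decomposition $g_k = k_k \exp(\mu(g_k)) \ell_k$ with $k_k, \ell_k \in K$, combined with the compactness of $K$ and the explicit contraction dynamics of the diagonal part $a_k := \exp(\mu(g_k))$ on the standard affine chart. The basic computation I will rely on is that, on $\Affstd_\std \cong \uu_\Theta^-$ via $\varphi_{\mathsf{std}}$, the element $a_k$ acts by $\Ad(a_k)$, which on each summand $\g_{-\alpha}$ (for $\alpha \in \Sigma_\Theta^+$) is the scaling by $e^{-\alpha(\mu(g_k))}$. Since $\mu(g_k) \in \overline{\aaa}^+$, every root $\alpha \in \Sigma_\Theta^+$ writes as $\sum_i n_i \alpha_i$ with $n_i \geq 0$ and some $n_{i_0} > 0$ for $\alpha_{i_0} \in \Theta$, so $\alpha(\mu(g_k))$ dominates $\alpha_{i_0}(\mu(g_k))$; hence $\alpha(\mu(g_k)) \to +\infty$ for every $\alpha \in \Sigma_\Theta^+$ as soon as this holds for every $\alpha \in \Theta$. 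Thus $\Ad(a_k) \to 0$ on $\uu_\Theta^-$, and $a_k \cdot y \to \LP_\Theta^+$ uniformly on compact subsets of $\Affstd_\std$.

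I would record this as the key lemma: \emph{if $\alpha(\mu(g_k)) \to +\infty$ for every $\alpha \in \Theta$ and $k_k \to k$, $\ell_k \to \ell$ in $K$, then $(g_k)$ is $\Theta$-contracting with respect to $(k \cdot \LP_\Theta^+,\, \ell^{-1} \cdot \LP_\Theta^\opp)$}. Writing $\xi = \ell^{-1} \cdot \LP_\Theta^\opp$, a point $y \notin \hyp_\xi$ satisfies $\ell \cdot y \in \Affstd_\std$; for $y$ in a compact $C \subset \Fl(\g,\Theta) \smallsetminus \hyp_\xi$, the points $\ell_k \cdot y$ lie, for $k$ large, in a fixed compact subset of the chart (since $\ell_k \to \ell$ and $\ell \cdot C$ is compact in the open set $\Affstd_\std$), so $a_k \cdot (\ell_k \cdot y) \to \LP_\Theta^+$ uniformly by the computation above, and $k_k \to k$ then gives $g_k \cdot y \to k \cdot \LP_\Theta^+$ uniformly on $C$. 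This immediately yields the implication ``$\Theta$-divergent $\Rightarrow$ every subsequence has a $\Theta$-contracting subsequence'' in~(2): any subsequence is again $\Theta$-divergent, and extracting so that the $K$-parts converge (possible by compactness of $K$) places me in the hypotheses of the lemma.

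For the converse in~(2), I would first prove that a $\Theta$-contracting sequence is automatically $\Theta$-divergent. If not, some $\alpha_0 \in \Theta$ has $\alpha_0(\mu(g_k))$ bounded along a subsequence; extracting so the $K$-parts converge, the factor $e^{-\alpha_0(\mu(g_k))}$ on the line $\g_{-\alpha_0} \subset \uu_\Theta^-$ stays bounded away from $0$, so $\Ad(a_k)$ cannot collapse a chart-segment in that direction to a point, contradicting uniform convergence to a single point $x$ on a compact neighbourhood of a point of $\Affstd_\std \smallsetminus \hyp_\xi$. Granting this, assume every subsequence of $(g_k)$ has a $\Theta$-contracting subsequence but $(g_k)$ is not $\Theta$-divergent: choosing $\alpha_0 \in \Theta$ and a subsequence along which $\alpha_0(\mu(g_k))$ is bounded, the hypothesis provides a $\Theta$-contracting, hence $\Theta$-divergent, subsubsequence, along which $\alpha_0(\mu(g_k)) \to +\infty$, a contradiction. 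This establishes~(2).

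Finally, for~(1) the hypothesis that $g_k \cdot \mathcal{U}$ shrinks to the single point $x$ forces $\Theta$-divergence by the same non-collapse argument: if some $\alpha_0 \in \Theta$ had $\alpha_0(\mu(g_k))$ bounded along a subsequence (with converging $K$-parts), the $\g_{-\alpha_0}$-direction of the chart would be contracted by a factor bounded away from $0$, so $g_k \cdot \mathcal{U}$ could not have diameter tending to $0$. Once $\Theta$-divergence is known, the key lemma applied along any $K$-convergent subsequence exhibits $(g_k)$ as $\Theta$-contracting there; since $\mathcal{U}$ is open it meets the open dense complement of every maximal proper Schubert subvariety, so the attracting point of each such subsequence must equal the Hausdorff limit $x$, identifying the $\Theta$-limit as $x$. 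The hard part will be to upgrade this from subsequential to full-sequence convergence, i.e.\ to produce a single $\xi$ valid for all $k$: the $K$-parts need not converge in general, so this is exactly the step where one must exploit that the shrinking of the \emph{fixed} open set $\mathcal{U}$ constrains the repelling data $\ell_k \cdot \LP_\Theta^\opp$, and not merely the divergence of $\mu(g_k)$ in the $\Theta$-directions.
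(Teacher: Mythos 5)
Your overall route --- the Cartan decomposition $g_k = k_k \exp(\mu(g_k))\ell_k$, the observation that $\Ad(\exp(\mu(g_k)))$ scales each $\g_{-\alpha}$, $\alpha \in \Sigma_\Theta^+$, by $e^{-\alpha(\mu(g_k))}$, and the key lemma that $\Theta$-divergence together with convergence of the $K$-parts yields contraction with respect to $(k\cdot\LP_{\Theta}^+,\ \ell^{-1}\cdot\LP_{\Theta}^\opp)$ --- is exactly the argument the paper has in mind: the paper offers no proof of this Fact, calling it an immediate consequence of the Cartan decomposition and citing Kapovich--Leeb--Porti. Your key lemma is correct as stated and proved, and your proof of (2) is complete in both directions; in particular the non-collapse argument showing that a $\Theta$-contracting sequence is $\Theta$-divergent is sound, up to the minor imprecision that the compact neighbourhood should be centred at a point of $\ell^{-1}\cdot\Affstd_\std \smallsetminus \hyp_\xi$ rather than of $\Affstd_\std \smallsetminus \hyp_\xi$ (both sets are open and dense, so this costs nothing).

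The step you flag at the end of (1) --- producing a single repelling flag $\xi$ valid for the whole sequence --- is not merely hard: it is impossible, because (1) as literally stated is false. Take $G = \PGL(2,\mathbb{R})$, so that $\Fl(\g,\Theta) = \mathbb{P}(\mathbb{R}^2)$ and $\hyp_\xi = \{\xi\}$; let $a_t = \operatorname{diag}(e^t, e^{-t})$, let $R$ be the rotation by $\pi/2$, and set $g_k := a_k$ for $k$ even, $g_k := a_k R$ for $k$ odd. The even terms contract $\mathbb{P}(\mathbb{R}^2)\smallsetminus\{[e_2]\}$ to $[e_1]$ and the odd terms contract $\mathbb{P}(\mathbb{R}^2)\smallsetminus\{[e_1]\}$ to $[e_1]$, so any open set $\mathcal{U}$ whose closure avoids $[e_1]$ and $[e_2]$ satisfies $g_k\cdot\mathcal{U} \to \{[e_1]\}$ for the Hausdorff topology; yet $(g_k)$ is not $\Theta$-contracting, since $g_{2k}\cdot[e_2] = [e_2]$ and $g_{2k+1}\cdot[e_1] = [e_2]$, so the convergence to $[e_1]$ fails at two distinct points, which cannot both be absorbed into a single $\hyp_\xi = \{\xi\}$. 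What is true --- and what you actually proved --- is the subsequential version: under the hypothesis of (1), the sequence $(g_k)$ is $\Theta$-divergent and every $\Theta$-contracting subsequence has $\Theta$-limit $x$. This corrected statement suffices for every invocation of Fact~\ref{fact_KAK_divergent}.(1) in the paper (Lemmas~\ref{lem_p_conic_shilov} and~\ref{lem_p_conical}, and the subsequent Lemma~\ref{lem_strongly_convex_cocompact_shilov}): there the conclusion ``$(g_k)$ is contracting with limit $a$'' can be replaced by ``every subsequence of $(g_k)$ has a contracting subsequence, and all of these have limit $a$'', and the downstream arguments go through by extraction, since the repelling flags of all contracting subsequences lie in $\O^*$ and hence avoid the relevant points. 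So your proof should be regarded as complete for the statement that is actually true; a full-sequence conclusion would require an extra hypothesis, such as convergence of the flags $\ell_k^{-1}\cdot\LP_{\Theta}^\opp$.
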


Thus, given a~$\Theta$-divergent sequence~$(g_k) \in G^\mathbb{N}$, one can define the \emph{$\Theta$-limit set of~$(g_k)$} as the set, denoted by~$\Lambda_\Theta(g_k)$, of all~$\Theta$-limits of~$\Theta$-contracting subsequences of~$(g_k)$. Given a subgroup~$H \leq G$, we denote by~$H^\mathbb{N}_\Theta$ the set of~$\Theta$-divergent sequences of elements of~$H$, and following \cite{gueritaud2017anosov, kapovich2017dynamics}, we define the~\emph{$\Theta$-limit set of~$H$} as
\begin{equation*}
    \Lambda_\Theta(H) = \bigcup_{(g_k) \in H^\mathbb{N}_\Theta} \Lambda_\Theta(g_k).
\end{equation*}

 We say that~$H$ is \emph{$\Theta$-divergent} if every sequence of distinct elements of~$H$ is~$\Theta$-divergent. If~$\Theta \subset \Delta$ is self-opposite, the subgroup~$H$ is said to be \emph{$\Theta$-transverse} if it is~$\Theta$-divergent and its~$\Theta$-limit set~$\Lambda_\Theta(H)$ is transverse, in the sense of Section~\ref{sect_self_opposite}.

\subsection{Anosov representations}\label{sect_anosov}

Anosov representations are generalizations to any reductive Lie groups~$G$ of rank-one convex-cocompact representations of Gromov-hyperbolic groups. The original definition is essentially dynamical \cite{labourie2006anosov}, but in this paper, we use as a definition the following characterization of~$\Theta$-Anosov representations proven in \cite{gueritaud2017anosov}:

\begin{definition}\label{def_anosov_rep}
    Let~$\Gamma$ be a discrete word-hyperbolic group and let~$\rho: \Gamma \rightarrow G$ be a representation.  We say that~$\rho$ is \emph{$\Theta$-Anosov} if the following properties are satisfied: 
\begin{enumerate}
    \item For every sequence of elements~$(g_k) \in \Gamma^{\mathbb{N}}$ diverging in~$\Gamma$, the sequence~$(\rho(g_k))$ is~$\Theta$-divergent.
    \item There exist continuous,~$\rho$-equivariant maps~$\xi_\rho: \Gambor \rightarrow \Fl(\g, \Theta)$ and~$ \xi_\rho^\opp: \Gambor \rightarrow \Fl(\g, \Theta)^\opp$ which are:
    \begin{enumerate}
        \item \emph{transverse}, i.e.\ for all~$x,y \in \Gambor$, we have:
        \begin{equation*}
            x \ne y \Longrightarrow \xi_\rho(x) \text{ and } \xi_\rho^\opp (y) \text{ are transverse};
        \end{equation*}
        \item \emph{dynamics-preserving}, that is, for any infinite-order element~$g \in \Gamma$ with attracting fixed point~$a \in \Gambor$, the sequence~$(\rho(g)^k)_{k \in \mathbb{N}}$ is~$\Theta$-contracting (resp.\ $\oppinv(\Theta)$-contracting) with~$\Theta$-limit~$\xi_\rho(a)$ (resp.\ with~$\oppinv(\Theta)$-limit~$\xi_\rho^\opp(a)$).
    \end{enumerate}
\end{enumerate}
\end{definition}
The maps~$\xi_\rho$ and~$\xi_\rho^\opp$ are uniquely defined by~$\rho$ and are respectively called the \emph{limit map} and the \emph{dual limit map} of~$\rho$. If~$\Theta$ is self-opposite, then~$\xi_{\rho} = \xi_{\rho}^\opp$. In this paper we will denote by~$\operatorname{Hom}_{\Theta\text{-}\mathsf{An}}(\Gamma, G)$ the space of all~$\Theta$-Anosov representations of~$\Gamma$ into~$G$. 

Anosov representations are discrete with finite kernel, and they are \emph{structurally stable}, i.e.\ the set \( \operatorname{Hom}_{\Theta\text{-}\mathsf{An}}(\Gamma, G) \) is open in the set \( \operatorname{Hom}(\Gamma, G) \) of all representations of \( \Gamma \) into \( G \) \cite{labourie2006anosov}.

We will say that a subgroup~$\Gamma$ of~$G$ is~\emph{$\Theta$-Anosov} if it is the image of a~$\Theta$-Anosov representation. Any~$\Theta$-Anosov subgroup is in particular~$\Theta$-divergent, and even~$\Theta$-transverse if~$\Theta$ is self-opposite.

\subsection{Irreducible representations of semisimple Lie groups. } \label{sect_irred_semi} A pair~$(V, \rho)$ is a finite-dimensional real linear (resp.\ projective) representation of~$G$, if~$\rho$ is a group morphism~$G \rightarrow \GL(V)$ (resp.\ $G \rightarrow \PGL(V)$). We will often use the following result, which allows some arguments in general flag manifolds to reduce to arguments in real projective space:

\begin{fact}[{\cite[Prop.\ 3.3]{gueritaud2017anosov}}]\label{prop_ggkw} Let~$G$ be a noncompact real semisimple Lie grop and~$\Theta$ be a subset of the simple restricted roots of~$G$. There exists an irreducible linear (resp.\ projective) representation~$(V, \rho)$ of~$G$, and a decomposition~$V = \ell \oplus H$, where~$\ell \subset V$ is a vector line and~$H \subset V$ is a hyperplane, such that:
    
\begin{enumerate}
    \item The stabilizer of~$\ell$ (resp.\ $H$) in~$G$ is~$P_{\Theta}$ (resp.\ $P_{\Theta}^{\opp}$).
    \item Considering the two~$\rho$-equivariant embeddings
    \begin{align*}
        &\iota_\rho: \Fl(\g, \Theta) \longrightarrow \mathbb{P}(V); g \cdot \LP_{\Theta}^+ \longmapsto \rho(g) \cdot \ell, \\
        &\iota^{\opp}_\rho: \Fl(\g, \Theta)^{\opp} \longrightarrow \mathbb{P}(V^*); g \cdot \LP_{\Theta}^\opp \longmapsto \rho(g) \cdot H
    \end{align*}
    induced by~$\rho$, two elements~$(x,\xi) \in \Fl(\g, \Theta) \times \Fl(\g, \Theta)^{\opp}$ are transverse if and only if their images~$\iota_\rho(x)$ and~$\iota_{\rho}^{\opp}(\xi)$ are.
    \item For any sequence~$(g_k) \in G^\mathbb{N}$, the sequence~$(g_k)$ is~$\Theta$-divergent (resp.\ $\Theta$-contracting) if and only if the sequence~$(\rho(g_k)) \in \PGL(V)^\mathbb{N}$ is~$\{\alpha_1\}$-divergent (resp.\ $\{\alpha_1\}$-contracting). If~$(g_k)$ is~$\Theta$-contracting with~$\Theta$-limit~$x \in \Fl(\g, \Theta)$, then the~$\{\alpha_1\}$-limit of~$(\rho(g_k))$ is~$\iota_\rho(x)$.
    \item Let~$\Gamma$ be a word hyperbolic group and let~$\rho' : \Gamma \rightarrow G$ be a representation. The representation~$\rho'$ is~$\Theta$-Anosov if and only if the representation~$\rho \circ \rho': \Gamma \rightarrow \PGL(V)$ is~$\{\alpha_1\}$-Anosov.
\end{enumerate}
\end{fact}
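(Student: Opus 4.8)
The plan is to realize $\Fl(\g,\Theta)$ as a subvariety of a projective space through a carefully chosen \emph{proximal} representation, and then read off each of (1)--(4) by tracking weights and singular values. Concretely, I would first form the dominant weight $\chi := \sum_{\alpha \in \Theta} \omega_\alpha \in \aaa^*$, which by definition of the dual basis $(\omega_\alpha)_{\alpha\in\FS}$ satisfies $\chi(h_\beta) > 0$ for $\beta \in \Theta$ and $\chi(h_\beta) = 0$ for $\beta \in \FS \smallsetminus \Theta$. Using the representation theory of real reductive groups (Cartan--Helgason, together with the construction of proximal representations in the spirit of Tits, Abels--Margulis--Soifer and Benoist), I would produce an irreducible representation $(V,\rho)$ whose highest restricted weight is a positive integer multiple $\Xi = N\chi$ and which is proximal, i.e.\ the highest weight space $V_\Xi =: \ell$ is a line. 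Setting $H := \bigoplus_{\lambda \neq \Xi} V_\lambda$ gives the decomposition $V = \ell \oplus H$. Part~(1) is then a stabilizer computation: since $\{\beta \in \FS \mid \Xi(h_\beta) > 0\} = \Theta$, the stabilizer of $\ell$ is $P_\Theta^+$, and dually the stabilizer of $[H] \in \mathbb{P}(V^*)$ (the annihilator line of $H$, a lowest weight line of $V^*$) is $P_\Theta^\opp$; in particular $\iota_\rho$ and $\iota_\rho^\opp$ are the induced embeddings.

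For part~(2), I would exploit that $G$ acts on $\Fl(\g,\Theta) \times \Fl(\g,\Theta)^\opp$ with a single open orbit, that of the transverse base point $(\LP_\Theta^+, \LP_\Theta^\opp)$, whose stabilizer is the Levi subgroup $L_\Theta = P_\Theta^+ \cap P_\Theta^\opp$. By construction $\ell \oplus H = V$, so $\iota_\rho \times \iota_\rho^\opp$ sends the base point to a pair $(\ell,[H])$ in general position, whose $G$-stabilizer is again $L_\Theta$. Hence this $G$-equivariant injective map identifies the open orbit of transverse pairs with the general-position (line, hyperplane) pairs in its image, which is exactly the asserted equivalence.

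Part~(3) is the technical core. Fixing a $K$-invariant inner product on $V$, the singular values of $\rho(g)$ are the $e^{\lambda(\mu(g))}$ as $\lambda$ ranges over the restricted weights of $V$; the largest is $e^{\Xi(\mu(g))}$, with multiplicity one by proximality. Using the standard fact that, for a highest weight $\Xi$, the weight $\Xi-\beta$ with $\beta$ simple occurs iff $\Xi(h_\beta)>0$, the weights covering $\Xi$ are exactly the $\Xi-\beta$ with $\beta\in\Theta$, and a weight-string argument shows every other weight $\lambda$ satisfies $(\Xi-\lambda)(\mu(g)) \ge \min_{\beta\in\Theta}\beta(\mu(g))$. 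Therefore, with the usual normalization of the first simple root $\alpha_1$ of $\PGL(V)$,
\[
\alpha_1(\mu(\rho(g))) \;=\; \Xi(\mu(g)) - \max_{\beta \in \Theta}(\Xi-\beta)(\mu(g)) \;=\; \min_{\beta \in \Theta}\beta(\mu(g)).
\]
This tends to $+\infty$ iff $\beta(\mu(g_k)) \to +\infty$ for all $\beta \in \Theta$, giving the equivalence of $\Theta$-divergence of $(g_k)$ with $\{\alpha_1\}$-divergence of $(\rho(g_k))$; proximality upgrades this to the equivalence for contraction, and the attracting line of a contracting $(\rho(g_k))$ is the $\rho(K)$-limit of $\ell$, which is $\iota_\rho$ of the $\Theta$-limit. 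Part~(4) then follows by unwinding Definition~\ref{def_anosov_rep}: the boundary maps of $\rho\circ\rho'$ are $\iota_\rho \circ \xi_{\rho'}$ and $\iota_\rho^\opp \circ \xi_{\rho'}^\opp$, which are transverse (resp.\ dynamics-preserving) iff $\xi_{\rho'},\xi_{\rho'}^\opp$ are, by parts~(2) and~(3); the converse uses in addition that the dynamics-preserving limit maps of an $\{\alpha_1\}$-Anosov $\rho\circ\rho'$ must land in the images of $\iota_\rho,\iota_\rho^\opp$, since the attracting fixed points of proximal elements are dense in $\Gambor$.

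The main obstacle is the very first step: securing the \emph{existence} of a proximal irreducible representation with highest restricted weight proportional to $\chi$ and a one-dimensional highest weight space. Over $\mathbb{R}$ the highest \emph{restricted} weight space of an irreducible representation may fail to be a line, so proximality is a genuine condition requiring the representation-theoretic input above; moreover the companion statement used decisively in part~(3) — that the restricted weights immediately below $\Xi$ are precisely the $\Xi-\beta$ with $\beta \in \Theta$ — must be extracted from the same structure theory, with extra care because the restricted root system may be non-reduced.
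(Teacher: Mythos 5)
This statement is not proved in the paper at all: it is imported verbatim as a Fact from \cite[Prop.\ 3.3]{gueritaud2017anosov}, so the only meaningful comparison is with the proof in that reference --- and your sketch follows essentially the same route as that proof: Tits-type proximal irreducible representations (in the cited source one takes, for each $\alpha\in\Theta$, a proximal representation of highest weight $l_\alpha\omega_\alpha$ and combines them by tensor product, so the highest restricted weight is $\sum_{\alpha\in\Theta}l_\alpha\omega_\alpha$ with possibly unequal coefficients rather than your $N\chi$; nothing in your argument uses equality), then the singular-value identity $\alpha_1(\mu(\rho(g)))=\min_{\beta\in\Theta}\beta(\mu(g))$ obtained from the weight-string argument, and finally the density of attracting fixed points in $\Gambor$ to pull the limit maps back through $\iota_\rho$ in the converse direction of~(4). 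The one step you state more loosely than it can bear is the ``non-transverse $\Rightarrow$ images incident'' half of~(2): this does not follow from the open-orbit count alone, since a non-open $G$-orbit in $\Fl(\g, \Theta)\times\Fl(\g, \Theta)^{\opp}$ could a priori still land on (line, hyperplane) pairs in general position; one needs the Bruhat-cell/weight computation --- writing $x=\exp(Y)\,w\cdot\LP_{\Theta}^+$ with $Y\in\uu_{\Theta}^-$ and $w\Xi\neq\Xi$, the vector $\rho(w)\cdot\ell$ lies in $H$ and $\exp(d\rho(Y))$ preserves $H$ because it only lowers weights. That fix is routine and of exactly the same nature as your part-(3) computation, so the proposal is correct in substance and essentially identical in strategy to the cited proof.
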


Note that for~$\nu \in V \smallsetminus \{0 \}$ and~$f \in V^* \smallsetminus \{ 0 \}$, the transversality of~$[\nu] \in \mathbb{P}(V)$ and~$[f] \in \mathbb{P}(V^*)$ is equivalent to~$f(\nu) \ne 0$. With the identification~\eqref{indent_vector_space} and the notations of Fact~\ref{prop_ggkw}, two elements~$(x,\xi) \in \Fl(\g, \Theta) \times \Fl(\g, \Theta)^{\opp}$ are transverse if and only if one has~$\iota_\rho(x) \notin \iota^{\opp}_\rho(\xi)$.

In the notation of Fact~\ref{prop_ggkw}, we have the following elementary property:
\begin{fact}\label{fact_generat}(see e.g.\ \cite{zimmer2018proper}) Let~$\O \subset \Fl(\g, \Theta)$ be a subset with nonempty interior. There exist~$\xi_1, \cdots , \xi_n \in \O$ such that~$\mathbb{P}(V) = \iota_\rho (\xi_1) \oplus \cdots \oplus \iota_\rho (\xi_n)$. 
\end{fact}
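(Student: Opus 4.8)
The plan is to prove Fact~\ref{fact_generat} by induction on the dimension of~$V$, using the irreducibility of the representation~$(V, \rho)$ to guarantee that the images~$\iota_\rho(\O)$ of an open set cannot all be confined to a proper subspace. First I would observe that since~$\O$ has nonempty interior in~$\Fl(\g, \Theta)$, its image~$\iota_\rho(\O)$ is a subset of~$\mathbb{P}(V)$ with nonempty interior in~$\iota_\rho(\Fl(\g, \Theta))$; in particular it is not contained in any proper projective subspace~$\mathbb{P}(W)$ of~$\mathbb{P}(V)$. The key reason is that the smallest projective subspace containing~$\iota_\rho(\Fl(\g, \Theta))$ is $G$-invariant (as $\iota_\rho$ is $\rho$-equivariant and $\Fl(\g,\Theta)$ is a single $G$-orbit), hence corresponds to a $G$-invariant linear subspace of~$V$, which by irreducibility must be all of~$V$. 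Since $\iota_\rho$ is an embedding of a flag manifold and $\O$ is open, the same conclusion holds for~$\iota_\rho(\O)$: it spans~$V$.

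The main construction is then a greedy selection. I would pick~$\xi_1 \in \O$ arbitrary, so that~$\iota_\rho(\xi_1)$ spans a line~$V_1 := \iota_\rho(\xi_1) \subset V$. Having chosen~$\xi_1, \dots, \xi_k \in \O$ whose images span a subspace~$V_k$ of dimension~$k$, if~$V_k \ne V$ then, because~$\iota_\rho(\O)$ is not contained in the proper subspace~$V_k$ (by the spanning property above), there exists~$\xi_{k+1} \in \O$ with~$\iota_\rho(\xi_{k+1}) \notin V_k$. Setting~$V_{k+1} := V_k \oplus \iota_\rho(\xi_{k+1})$ increases the dimension by one. This process terminates after~$n = \dim V$ steps, yielding points~$\xi_1, \dots, \xi_n \in \O$ with~$\mathbb{P}(V) = \iota_\rho(\xi_1) \oplus \cdots \oplus \iota_\rho(\xi_n)$, where the direct-sum notation on projective points is understood as the statement that the underlying lines are linearly independent and span~$V$.

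The step I expect to be the main obstacle — or at least the one requiring the most care — is the spanning claim: that~$\iota_\rho(\O)$ is not contained in any proper subspace. The cleanest argument uses irreducibility of~$\rho$ together with the fact that~$\Fl(\g,\Theta)$ is a single $G$-orbit, so that the linear span of~$\iota_\rho(\Fl(\g,\Theta))$ is a nonzero $G$-invariant subspace, hence all of~$V$. To pass from~$\Fl(\g,\Theta)$ to an open subset~$\O$, I would invoke that~$\iota_\rho$ is a real-analytic (indeed algebraic) embedding of the connected manifold~$\Fl(\g,\Theta)$: if the image of the open set~$\O$ lay in a proper subspace~$\mathbb{P}(W)$, then by analyticity the entire image~$\iota_\rho(\Fl(\g,\Theta))$ would lie in~$\mathbb{P}(W)$, contradicting that it spans~$V$. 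This reduces the whole statement to the standard fact that an irreducible representation admits no proper invariant subspace, which is why the result is elementary and can be cited as in~\cite{zimmer2018proper}.
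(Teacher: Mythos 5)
Your proof is correct. The paper itself gives no proof of this fact — it simply cites \cite{zimmer2018proper} — and your argument is essentially the standard one from that reference: irreducibility of $(V,\rho)$ forces the span of $\iota_\rho(\Fl(\g,\Theta))$ to be all of $V$ (as a nonzero $G$-invariant subspace), analytic (equivalently Zariski) continuation on the connected flag manifold passes this spanning property to the open subset $\O$, and a greedy choice of points whose images leave the span of the previously chosen lines then produces the direct-sum decomposition.
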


\subsubsection{Continuity of the orbit for Anosov representations}\label{sect_continuity_anosov} The map~$\rho \mapsto \xi_\rho$, associating to a~$\Theta$-Anosov representation its limit map, is continuous on~$\operatorname{Hom}_{\Theta\text{-}\mathsf{An}}(\Gamma, G)$ \cite{guichard2012anosov}. In the next lemma, using Fact~\ref{prop_ggkw}, we extend this continuity property to some orbits of~$\rho(\Gamma)$ in~$\Fl(\g, \Theta)$ in the case where the hypersurfaces~$\hyp_{\xi_\rho^\opp(\eta)}$, with~$\eta \in \partial_\infty \Gamma$, do not cover~$\Fl(\g, \Theta)$. 

\begin{lem}\label{lem_convergence_orbit_anosov_hausdorff} Let~$G$ be a noncompact real semisimple Lie group and~$\Theta$ be a subset of the simple restricted roots of~$G$. Let~$\Gamma$ be a word hyperbolic group with boundary~$\partial_{\infty} \Gamma$.
    Let~$\rho: \Gamma \rightarrow G$ be a~$\Theta$-Anosov representation such that the set 
    \begin{equation*}
        \mathcal{O}_{\rho} := \Fl(\g, \Theta) \smallsetminus \bigcup_{\eta \in \partial_{\infty} \Gamma} \hyp_{\xi^\opp_{\rho} (\eta)}
    \end{equation*}
is nonempty. Let~$x_0 \in \mathcal{O}_{\rho}$. Then the map 
\begin{equation*}
    \Psi:
    \operatorname{Hom}_{\Theta\text{-}\mathsf{An}}(\Gamma, G) \longrightarrow \big{\{}\text{closed subsets of } \Fl(\g, \Theta)\big{\}}; \ 
    \rho' \longmapsto \overline{\rho'(\Gamma) \cdot x_0}
\end{equation*}
is continuous at~$\rho$ for the Hausdorff topology.
\end{lem}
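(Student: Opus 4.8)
The plan is to reduce the statement to the real projective space by means of Fact~\ref{prop_ggkw}, and then to run a uniform–contraction argument for projective Anosov representations. First I would fix the irreducible projective representation $\tau\colon G\to\PGL(V)$ provided by Fact~\ref{prop_ggkw}, together with its closed, $G$-equivariant embeddings $\iota:=\iota_\tau\colon\Fl(\g,\Theta)\hookrightarrow\mathbb{P}(V)$ and $\iota^\opp:=\iota^\opp_\tau\colon\Fl(\g,\Theta)^\opp\hookrightarrow\mathbb{P}(V^*)$. By equivariance, $\iota(\rho'(\gamma)\cdot x_0)=(\tau\circ\rho')(\gamma)\cdot\iota(x_0)$ for all $\gamma\in\Gamma$, so with $p_0:=\iota(x_0)$ one gets $\iota\big(\overline{\rho'(\Gamma)\cdot x_0}\big)=\overline{(\tau\circ\rho')(\Gamma)\cdot p_0}$. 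Since $\iota$ is a homeomorphism onto a closed (hence compact) subset of $\mathbb{P}(V)$, Hausdorff convergence transfers through $\iota$, and it suffices to prove continuity at $\rho$ of $\rho'\mapsto\overline{(\tau\circ\rho')(\Gamma)\cdot p_0}$. As $\rho'\mapsto\tau\circ\rho'$ is continuous and carries $\Theta$-Anosov representations to $\{\alpha_1\}$-Anosov ones (Fact~\ref{prop_ggkw}.(4)), and as the transversality criterion of Fact~\ref{prop_ggkw}.(2) turns $x_0\in\mathcal{O}_\rho$ into the statement that $p_0$ is transverse to the whole dual limit set of $\tau\circ\rho$, I am reduced to the following projective statement: for a $\{\alpha_1\}$-Anosov representation $\varphi\colon\Gamma\to\PGL(V)$ and a point $p_0$ transverse to the entire dual limit set of $\varphi$, the map $\varphi'\mapsto\overline{\varphi'(\Gamma)\cdot p_0}$ is Hausdorff-continuous at $\varphi$.

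Writing $\Lambda_{\varphi'}:=\xi_{\varphi'}(\partial_\infty\Gamma)$, I would next record two facts on a neighborhood $\mathcal{N}_0$ of $\varphi$. First, an orbit-closure formula $\overline{\varphi'(\Gamma)\cdot p_0}=\varphi'(\Gamma)\cdot p_0\cup\Lambda_{\varphi'}$: since $\varphi'\mapsto\xi^\opp_{\varphi'}$ is continuous for the uniform topology (Section~\ref{sect_continuity_anosov}) and $p_0$ avoids the compact family $\{\hyp_{\xi^\opp_\varphi(\eta)}\}_{\eta}$ with a uniform margin, $p_0$ still avoids every $\hyp_{\xi^\opp_{\varphi'}(\eta)}$ for $\varphi'\in\mathcal{N}_0$; then for any divergent $(\gamma_k)$ in $\Gamma$ with $\gamma_k\to\eta^+$ and $\gamma_k^{-1}\to\eta^-$, the contraction dynamics of the projective Anosov $\varphi'$ gives $\varphi'(\gamma_k)\cdot p_0\to\xi_{\varphi'}(\eta^+)\in\Lambda_{\varphi'}$, which yields the formula. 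Second, continuity of the limit set: since $\varphi'\mapsto\xi_{\varphi'}$ is continuous for the uniform topology and $\partial_\infty\Gamma$ is compact, one gets $\Lambda_{\varphi'}\to\Lambda_\varphi$ in the Hausdorff distance as $\varphi'\to\varphi$.

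The crux, which I expect to be the main obstacle, is a uniform contraction of the tail of the orbit: for every $\epsilon>0$ there should be $R>0$ and a neighborhood $\mathcal{N}\subset\mathcal{N}_0$ of $\varphi$ with
\begin{equation*}
d\big(\varphi'(\gamma)\cdot p_0,\ \Lambda_{\varphi'}\big)<\epsilon\qquad\text{for all }\varphi'\in\mathcal{N}\text{ and all }\gamma\in\Gamma\text{ with }|\gamma|\geq R.
\end{equation*}
I would argue by contradiction: otherwise there are $\epsilon_0>0$, $\varphi_n\to\varphi$ and $\gamma_n$ with $|\gamma_n|\to\infty$ and $d(\varphi_n(\gamma_n)\cdot p_0,\Lambda_{\varphi_n})\geq\epsilon_0$; passing to a subsequence, $\gamma_n\to\eta^+$ and $\gamma_n^{-1}\to\eta^-$ in the compact space $\partial_\infty\Gamma$. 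By the locally uniform singular-value gap of Anosov representations, $\log\!\big(s_1(\varphi_n(\gamma_n))/s_2(\varphi_n(\gamma_n))\big)\to+\infty$, so $\varphi_n(\gamma_n)$ becomes strongly contracting, pushing every point off its repelling hyperplane toward its top singular direction at rate $s_2/s_1\to0$. Using that for Anosov representations the top singular direction and the repelling hyperplane of $\varphi_n(\gamma_n)$ converge, respectively, to $\xi_\varphi(\eta^+)$ and to $\hyp_{\xi^\opp_\varphi(\eta^-)}$, and that $p_0$ stays off the latter (as $p_0$ is transverse to the whole dual limit set of $\varphi$), the standard contraction estimate gives $\varphi_n(\gamma_n)\cdot p_0\to\xi_\varphi(\eta^+)$; since $\xi_{\varphi_n}(\eta^+)\to\xi_\varphi(\eta^+)$ with $\xi_{\varphi_n}(\eta^+)\in\Lambda_{\varphi_n}$, this contradicts $d(\varphi_n(\gamma_n)\cdot p_0,\Lambda_{\varphi_n})\geq\epsilon_0$. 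The delicate point is exactly this convergence of the Cartan attracting point and repelling hyperplane of $\varphi_n(\gamma_n)$ to the limit-map values, \emph{jointly} as $\varphi_n\to\varphi$ and $\gamma_n\to\eta^\pm$: for fixed $\varphi$ it is the standard identification of the limit maps with Cartan limits, while the uniform version rests on the local uniformity of the Anosov constants (structural stability) together with the continuity of the limit maps.

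Finally I would assemble the estimate. Fixing $\epsilon>0$, take $R$ and $\mathcal{N}$ as above (also applied to $\varphi$ itself), with the tail margin set to $\epsilon/3$. Let $F:=\{\gamma\in\Gamma:|\gamma|<R\}$, a finite set; by continuity of the finitely many evaluation maps $\varphi'\mapsto\varphi'(\gamma)\cdot p_0$ $(\gamma\in F)$ and by the continuity of the limit set, I shrink $\mathcal{N}$ so that $d(\varphi'(\gamma)\cdot p_0,\varphi(\gamma)\cdot p_0)<\epsilon/3$ for all $\gamma\in F$ and $d_{\mathrm{Haus}}(\Lambda_{\varphi'},\Lambda_\varphi)<\epsilon/3$. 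Splitting $\overline{\varphi'(\Gamma)\cdot p_0}=\varphi'(F)\cdot p_0\cup\varphi'(\Gamma\smallsetminus F)\cdot p_0\cup\Lambda_{\varphi'}$ (and similarly for $\varphi$), both inclusions defining the Hausdorff distance follow: limit-set points are matched within $\epsilon/3$; core points $(\gamma\in F)$ within $\epsilon/3$; tail points $(|\gamma|\geq R)$ lie within $\epsilon/3$ of $\Lambda_{\varphi'}$ (resp.\ $\Lambda_\varphi$), hence within $2\epsilon/3$ of the other orbit closure. Thus $d_{\mathrm{Haus}}\big(\overline{\varphi'(\Gamma)\cdot p_0},\overline{\varphi(\Gamma)\cdot p_0}\big)<\epsilon$ for $\varphi'\in\mathcal{N}$, giving continuity at $\varphi$, and hence at $\rho$ after undoing the reduction of the first paragraph.
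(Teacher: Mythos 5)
Your overall scaffolding is sound and, at the structural level, matches the paper: the same reduction to $\PGL(V)$ via Fact~\ref{prop_ggkw}, then an orbit-closure formula $\overline{\varphi'(\Gamma)\cdot p_0}=\varphi'(\Gamma)\cdot p_0\cup\Lambda_{\varphi'}$ on a neighborhood, Hausdorff continuity of $\varphi'\mapsto\Lambda_{\varphi'}$, a uniform tail estimate, and an $\epsilon/3$ assembly; this last assembly is just an explicit unpacking of the paper's one-line reduction ("it suffices to prove that any limit point of $\rho_k(g_k)\cdot x_0$ lies in $\xi_\rho(\partial_\infty\Gamma)$"). The genuine divergence is in how the tail is controlled. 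You use Cartan (singular-value) data: the attracting point and repelling hyperplane of $\varphi_n(\gamma_n)$, which you need to converge to $\xi_\varphi(\eta^+)$ and $\hyp_{\xi^\opp_\varphi(\eta^-)}$ \emph{jointly} as $\varphi_n\to\varphi$ and $\gamma_n\to\eta^{\pm}$. The paper avoids exactly this: using \cite[Thm 1.7]{guichard2012anosov} it replaces $(g_k)$ by $(fg_k)$ so that each $g_k$ has infinite order and $g_k^{\pm}$ converge to distinct points $a\neq b$; the relevant invariant splitting is then $\xi_{\rho_k}(g_k^+)\oplus\xi^\opp_{\rho_k}(g_k^-)$, i.e.\ literally limit-map values, so its convergence is immediate from continuity of limit maps, and the contraction rate comes from the Lyapunov projection, obtained by applying the locally uniform singular-value gap to the powers $g_k^N$ and letting $N\to\infty$. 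This fixed-point/eigenvalue device is precisely what makes the paper's proof close with elementary means.

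The step you flag as delicate is where your write-up has a real gap: the joint convergence of Cartan attractors and repelling hyperplanes to limit-map values does \emph{not} follow softly from "local uniformity of the Anosov constants plus continuity of the limit maps". Convergence $\varphi_n\to\varphi$ in $\operatorname{Hom}(\Gamma,G)$ controls $\varphi_n(\gamma)$ only for $\gamma$ in bounded word-length balls, so you cannot compare $\varphi_n(\gamma_n)$ with $\varphi(\gamma_n)$; and although the blown-up gap lets you extract a limit of $U_1(\varphi_n(\gamma_n))$ after passing to a subsequence, no compactness argument identifies that limit --- identifying it \emph{is} the statement to be proven. What is actually needed is a quantitative Morse-type estimate of the form $d\big(U_1(\varphi'(\gamma)),\,\xi_{\varphi'}(\eta_\gamma)\big)\leq Ce^{-c|\gamma|}$ (with $\eta_\gamma$ an endpoint of a geodesic ray through $\gamma$, and $C,c$ uniform over a neighborhood of $\varphi$), together with its dual version. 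Such uniform Cartan-attractor estimates do exist in the literature (in the circle of ideas of \cite{gueritaud2017anosov, kapovich2017anosov}), so your route is completable by citing or reproving one of them; but as written, the assertion is a placeholder for the hardest part of the argument, whereas the paper's fixed-point trick discharges it with continuity of limit maps alone.
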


\begin{proof} By Fact~\ref{prop_ggkw}.(4), it suffices to prove the lemma for~$G = \PGL(n, \mathbb{R})$ and~$\Theta = \{\alpha_1\}$ the first simple root of~$G$, i.e.\ $\Fl(\g, \Theta) = \mathbb{P}(\mathbb{R}^n)$ and~$\Fl(\g, \Theta)^\opp = \mathbb{P}((\mathbb{R}^n)^*)$, where~$n \in \mathbb{N}_{\geq 2}$. For any infinite-order element~$g \in \Gamma$, we denote by~$g^+$ and~$g^\opp$ the attracting and repelling fixed point of~$g$ in~$\partial_\infty \Gamma$.

It suffices to prove that for any sequence of representations~$(\rho_k) \in \operatorname{Hom}_{\{\alpha_1\}\text{-}\mathsf{An}}(\Gamma, \PGL(n, \mathbb{R}))^{\mathbb{N}}$ and for any diverging sequence~$(g_k) \in \Gamma^{\mathbb{N}}$, any limit point of the sequence~$(\rho_k(g_k) \cdot x_0)$ converges to an element of~$\xi_\rho(\partial_\infty \Gamma)$. We fix~$x$ an accumulation point of~$(\rho_k(g_k) \cdot x_0)$. Up to extracting, we may assume that~$\rho_k(g_k) \cdot x_0 \rightarrow x$.

Since~$\rho$ is~$\{\alpha_1\}$-Anosov, by \cite{guichard2012anosov} there exist a neighborhood~$\mathcal{U}$ of~$\rho$ in~$\operatorname{Hom}_{\{\alpha_1\}\text{-}\mathsf{An}}(\Gamma, \PGL(n, \mathbb{R}))$ and two constants~$D >1, L >0$ such that for all~$\rho' \in \mathcal{U}$ and~$g \in \Gamma$:
\begin{equation}\label{eq_ineq_singular_values}
    \alpha_1\big{(}\mu(\rho'(g)) \big{)} \geq \frac{1}{D} |g| - L,
\end{equation}
where~$\mu$ is the Cartan projection of~$\PGL(V)$ (see Section~\ref{sect_cartan_decomp}) and~$|\cdot|$ is the word length on~$\Gamma$, determined by a finite generating set of~$\Gamma$. Since for~$k$ large enough we have~$\rho_k \in \mathcal{U}$, we may apply~\eqref{eq_ineq_singular_values} to~$\rho_k(g_k^N)$, for~$k$ large enough and all~$N \in \mathbb{N}$. Dividing it by~$N \in \mathbb{N}$ and making~$N \rightarrow + \infty$, we get
\begin{equation}\label{eq_ineq_lyapounov}
    \alpha_1\big{(}\lambda(\rho_k(g_k)) \big{)} \geq \frac{1}{D} |g_k|_\infty,
\end{equation}
where~$\lambda$ is the Lyapunov projection of~$\PGL(n, \mathbb{R})$ defined in Section~\ref{sect_real_lie_alg}, and~$|\cdot|_{\infty}$ is the \emph{stable length} on~$\Gamma$ defined by~$|g|_\infty = \lim_{k \rightarrow + \infty} |g^k|/k$ for all~$g \in \Gamma$.

Up to further extracting, we may assume that there exist~$a,b \in \partial_\infty \Gamma$ such that~$g_k \rightarrow a$ and~$g_k^{-1} \rightarrow b$ in~$\Gamma \sqcup \partial_\infty \Gamma$. By \cite[Thm 1.7]{guichard2012anosov}, up to further extracting and considering~$(fg_k)$ instead of~$(g_k)$ for a suitable~$f \in  \Gamma$, we may assume that~$a \ne b$ and that~$g_k$ has infinite order for all~$k \in \mathbb{N}$. Note that we have~$a = \lim_{k \rightarrow + \infty }g_k^+$ and~$b = \lim_{k \rightarrow + \infty} g_k^-$. Since~$a \ne b$, we have~$|g_k|_{\infty} \rightarrow + \infty$ as~$k \rightarrow + \infty$. Then~\eqref{eq_ineq_lyapounov} implies that~$\alpha_1\big{(}\lambda(\rho_k(g_k)) \big{)} \rightarrow + \infty$.

Since~$a \ne b$, we may assume that~$g_k^+ \ne g_k^-$ for all~$k \in \mathbb{N}$. Then we have the decomposition~$\mathbb{R}^{n} = \xi_\rho(a) \oplus \xi_{\rho}^\opp(b) = \xi_{\rho_k}(g_k^+) \oplus \xi_{\rho_k}^\opp(g_k^\opp)$ for all~$k \in \mathbb{N}$. We denote by~$\Proj_k$ (resp.\ $\Proj$) the projection onto~$\xi_{\rho_k}(g_k^+)$ (resp.\ onto~$\xi_\rho(a)$) parallel to~$\xi_{\rho_k}^\opp(g_k^\opp)$ (resp.\ parallel to~$\xi_\rho^\opp(b)$), and by~$\Proj_k^\opp$ (resp.\ $\Proj^\opp$) the projection onto~$\xi_\rho^\opp(g_k^\opp)$ (resp.\ onto~$\xi_\rho^\opp(b)$) parallel to~$\xi_{\rho_k}(g_k^+)$ (resp.\ parallel to~$\xi_\rho(a)$). Note that~$\Proj_k \rightarrow \Proj$ and~$\Proj_k^\opp \rightarrow \Proj^\opp$ as~$k \rightarrow + \infty$, in~$\operatorname{End}(\mathbb{R}^{n})$.

Let~$\lVert\cdot \rVert$ be the~$L^2$-norm on on~$\mathbb{R}^{n}$. For all~$k \in \mathbb{N}$, we denote by~$A_k$ the unique lift of~$\rho_k(g_k)$ in~$\GL(n, \mathbb{R})$ such that the induced endomorphism of~$\xi_{\rho_k(g_k^+)}$ by~$A_k$ is~$\pm \operatorname{id}$ (such a lift exists because $\xi_{\rho_k}(g_k^+)$ is fixed by~$\rho_k(g_k)$). Let~$v_0 \in \mathbb{R}^n \smallsetminus \{ 0 \}$ be a lift of~$x_0$. By Remark~\ref{rmk_lyapunov_projection}, we have
\begin{equation*}
    \big{\lVert} A_k \cdot \Proj_k^\opp(v_0) \big{\rVert} \leq e^{-\alpha_1 \big{(} (\lambda(\rho_k(g_k))\big{)}} \big{\lVert} \Proj_k^\opp(v_0) \big{\rVert} \quad \forall k \in \mathbb{N},
\end{equation*}
with~$\lVert \Proj_k^\opp(v_0) \rVert \rightarrow \lVert \Proj^\opp(v_0) \rVert$ and~$e^{-\alpha_1 \big{(} \lambda(\rho_k(g_k))\big{)}} \rightarrow 0$, so~$\lVert A_k \cdot \Proj^\opp_k(v_0) \rVert \rightarrow 0$. Then we have:
\begin{equation}\label{eq_convergence_g_k}
    A_k \cdot v_0 = A_k \cdot \Proj^\opp_k(v_0) + A_k \cdot \Proj_k(v_0) = A_k \cdot \Proj^\opp_k(v_0) \pm \Proj_k(v_0) \underset{k \rightarrow + \infty}{\longrightarrow} \pm \Proj(v_0).
\end{equation}
Since~$x_0 \notin \xi_\rho^\opp(b)$, we know that~$\Proj(v_0) \ne 0$. Hence, we can project~\eqref{eq_convergence_g_k} in~$\mathbb{P}(\mathbb{R}^n)$, and we get~$\rho_k(g_k) \cdot x_0 \rightarrow \xi_\rho(a)$, so~$x = \xi_\rho(a)$. $\qed$
\end{proof}

\subsection{Causal flag manifolds}\label{sect_causal_flag_manifolds}

If~$G$ is a Hermitian simple Lie group of tube type, that is, if the symmetric space~$\mathbb{X}_G$ of~$G$ is irreducible and Hermitian of tube type, then we will say that~$G$ is a \emph{$\HTT$ Lie group}, and~$\g$ a \emph{$\HTT$ Lie algebra}. In this section, we fix an~$\HTT$ Lie group~$G$ and recall basic properties of~$G$.

\subsubsection{Strongly orthogonal roots and root system} \label{sect_root_syst_shilov} Two roots~$\alpha, \beta \in \Sigma$ are called \emph{strongly orthogonal} if neither~$\alpha + \beta$ nor~$\alpha- \beta$ is a restricted root. Since~$G$ is of tube type, there exists a (maximal) set~$\{ 2\e_1, \cdots, 2\e_r \} \subset \Sigma$ of strongly orthogonal roots, such that the set~$\FS= \{ \alpha_1, \cdots, \alpha_r \}$ is a fundamental system of~$\Sigma$, where~$\alpha_i = \e_i - \e_{i+1}$ for~$i < r$ and~$\alpha_r = 2 \e_r$. The system~$\Sigma$ is then of type~$C_r$ (see e.g.\ \cite{faraut1994analysis}):
\begin{equation}\label{eq_fund_sys_Cr}
    \begin{split}
        \Sigma  &= \{\pm \e_i \pm \e_j \mid 1 \leq i \leq j \leq r\}; \quad  \Sigma^+ = \{\e_i \pm \e_j \mid 1 \leq i < j \leq r\} \cup \{2 \e_i \mid 1 \leq i \leq r\}; \\
        \FS &= \{ \alpha_i := \e_i - \e_{i+1} \mid 1 \leq i  \leq r-1\} \cup \{\alpha_r := 2\varepsilon_r\}.
    \end{split}
\end{equation}

If~$\Theta = \{\alpha_r\}$, then the flag manifold~$\Fl(\g, \Theta)$ is called the \emph{Shilov boundary} of~$\mathbb{X}_G$ and we will denote it by~$\SB(\g)$. These flag manifolds are all listed in Table~\ref{table_shilov_bnds} and are all self-opposite.

\begin{notation}\label{sect_notation} When~$G$ is a~$\HTT$ Lie group and~$\Theta = \{ \alpha_r\}$, we will always use the following simplified notation:
\begin{equation*}
   \uu^{\pm} = \uu_{\{\alpha_r\}}^{\pm}, \ U^{\pm} = U_{\{\alpha_r\}}^{\pm}, \ \mathfrak{l} = \mathfrak{l}_{\{\alpha_r\}}, \ L=  L_{\{\alpha_r\}}, \ \LP^\pm = \LP_{\{\alpha_r\}}^\pm, \ \ \LP^\pm = \LP_{\{\alpha_r\}}^\pm.
\end{equation*}
\end{notation}
 Note that the Lie algebras~$\uu^{\pm}$ are abelian.
 
The center of~$\mathfrak{l}$ is one-dimensional, and one can write~$\mathfrak{l} = \mathfrak{l}_s \oplus \mathbb{R} H_0$, where~$H_0$ is in the center of~$\mathfrak{l}$ and~$\mathfrak{l}_s$ is the semisimple part of~$\mathfrak{l}$. The possible values of~$\mathfrak{l}_s$ are listed in Table \ref{table_shilov_bnds}. We will denote by~$L_s$ the connected Lie subgroup of the identity component~$L^0$ of~$L$ with Lie algebra~$\mathfrak{l}_s$.

For all~$1 \leq i \leq r-1$, we write~$\alpha_i := \varepsilon_i - \varepsilon_{i+1}$. Then: 
\begin{fact}\label{eq_root_syst_L_s}
    A fundamental system of simple restricted roots of~$\mathfrak{l}_s$ (resp.\ of~$L_s$) is~$\{\alpha_1, \dots, \alpha_{r-1}\}$, of type~$A_{r-1}$.
\end{fact}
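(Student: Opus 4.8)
The plan is to read the restricted root system of $\mathfrak{l} = \mathfrak{l}_{\{\alpha_r\}}$ directly off the decomposition \eqref{eq_decomposition_l}, and then check that passing to the semisimple part $\mathfrak{l}_s$ leaves it unchanged. By \eqref{eq_decomposition_l}, the nonzero weights of $\mathfrak{a}$ acting on $\mathfrak{l}$ are exactly the $\pm\alpha$ with $\alpha \in \Sigma^+ \cap \operatorname{Span}(\FS \smallsetminus \{\alpha_r\})$, so the first step is to identify this set of roots explicitly using the $C_r$-description \eqref{eq_fund_sys_Cr}.

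Since $\FS \smallsetminus \{\alpha_r\} = \{\alpha_1, \dots, \alpha_{r-1}\}$ with $\alpha_i = \varepsilon_i - \varepsilon_{i+1}$, the subspace $\operatorname{Span}(\FS \smallsetminus \{\alpha_r\})$ consists of the combinations $\sum_i c_i \varepsilon_i$ with $\sum_i c_i = 0$. Running through the positive roots in \eqref{eq_fund_sys_Cr}: a root $\varepsilon_i - \varepsilon_j$ with $i<j$ has coordinate-sum $0$ and hence lies in this span, whereas $\varepsilon_i + \varepsilon_j$ and $2\varepsilon_i$ have coordinate-sum $2 \ne 0$ and do not. Thus the restricted root system of $\mathfrak{l}$ is $\Sigma_{\mathfrak{l}} = \{\pm(\varepsilon_i - \varepsilon_j) \mid 1 \le i < j \le r\}$, the standard realization of a root system of type $A_{r-1}$. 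Its simple system is $\{\alpha_1, \dots, \alpha_{r-1}\}$: these are linearly independent, and each positive root decomposes as $\varepsilon_i - \varepsilon_j = \alpha_i + \alpha_{i+1} + \dots + \alpha_{j-1}$ with nonnegative coefficients.

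It remains to pass from $\mathfrak{l}$ to $\mathfrak{l}_s$. Writing $\mathfrak{l} = \mathfrak{l}_s \oplus \mathbb{R} H_0$ with $H_0$ central, each root space $\g_{\pm(\varepsilon_i - \varepsilon_j)} \subset \mathfrak{l}$ carries a nonzero weight and hence lies in the derived subalgebra $\mathfrak{l}_s$, while $H_0$ sits in the zero-weight space $\g_0$. Because $H_0$ is central, $[H_0, X] = 0$ for $X \in \g_{\varepsilon_i - \varepsilon_j}$, i.e.\ $(\varepsilon_i - \varepsilon_j)(H_0) = 0$ for all $i < j$; so every root of $\Sigma_{\mathfrak{l}}$ already annihilates $\mathbb{R} H_0$. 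Restricting to a maximal abelian subspace $\mathfrak{a} \cap \mathfrak{l}_s$ of the noncompact part of $\mathfrak{l}_s$ therefore does not alter the root system, and $\mathfrak{l}_s$ has restricted root system of type $A_{r-1}$ with fundamental system $\{\alpha_1, \dots, \alpha_{r-1}\}$. The assertion for $L_s$ follows at once, since $L_s$ is by definition the connected subgroup with Lie algebra $\mathfrak{l}_s$ and its restricted roots are those of $\mathfrak{l}_s$.

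The computation is entirely routine; the only point deserving care is this last step, namely verifying that removing the central direction $\mathbb{R} H_0$ does not change the system, which is immediate once one observes that the roots of $\Sigma_{\mathfrak{l}}$ vanish on $H_0$. Alternatively, one may simply invoke the general principle that the semisimple part of a standard Levi $L_\Theta$ has simple restricted roots $\FS \smallsetminus \Theta$, together with the fact that deleting the node $\alpha_r$ from the $C_r$ Dynkin diagram yields precisely the $A_{r-1}$ diagram.
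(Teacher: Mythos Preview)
Your proof is correct. The paper states this as a Fact without proof, treating it as standard; you have supplied exactly the routine verification one would expect, reading the root system off \eqref{eq_decomposition_l} and \eqref{eq_fund_sys_Cr} and then checking that passing to the semisimple part changes nothing.
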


    \begin{table}[H]
        \centering
        \begin{tabular}{|c|c|c|}
        \hline
          ~$\g$   &~$\SB(\g)$ &~$\mathfrak{l}_{s}$ \\
         \hline
            ~$\soo(2,n)$,~$n \geq 3$ & ~$\Ein^{n-1,1}$&~$\soo(n-1,1)$  \\
           \hline
             ~$\spp(2r, \mathbb{R})$ &~$\Lag_r(\mathbb{R}^{2r})$&$\mathfrak{sl}(r, \mathbb{R})$  \\
           \hline
             ~$\uu(r,r)$  &$\Lag_r(\mathbb{C}^{2r})$& ~$\mathfrak{sl}(r, \mathbb{C})$  \\
           \hline
              ~$\soo^*(4r)$  &~$\Lag_r(\mathbb{H}^{2r})$&  ~$\mathfrak{sl}(r, \mathbb{H})$  \\
           \hline
      ~$\mathfrak{e}_{7(-25)}$ & $(E_{6(-26)}/ F_4) \times \Rf$  & ~$\mathfrak{e}_{6(-26)}$  \\
           \hline
        \end{tabular}
        \caption{Shilov boundaries associated with all~$\HTT$ Lie algebras. For the notation~$\mathfrak{e}_{7(-25)}$ and~$\mathfrak{e}_{6(-26)}$, see \cite{faraut1994analysis} or~\cite{onishchik2012lie}. See Example~\ref{ex_shilov_bndrs} for the other notation used in the table.} \label{table_shilov_bnds}
    \end{table}

\begin{ex}\label{ex_shilov_bndrs}
(1). \emph{The Lagrangians.} Let~$\Kf = \Rf, \Cf$ or~$\Hf$, and let~$r \geq 2$. Let~$J_{\Kf} = \begin{pmatrix}
        0 & -I_r \\ I_r & 0
    \end{pmatrix}$, where~$I_r$ is the identity matrix of size~$r$. Given a matrix~$g \in \operatorname{Mat}_{2r}(\Kf)$, we denote by~$\overline{g}$ the matrix whose~$(i,j)$-th entry is the conjugate (in~$\Kf$) of the~$(i,j)$-th entry of~$g$. Let~$G_{\Kf} := \left\{g \in \SL(2r, \Kf) \mid   \ ^t\! \overline{g} J_{\Kf} g = J_{\Kf}\right\}$. Then~$G_\mathbb{K}$ is a~$\HTT$ Lie group, and one has~$G_{\Rf}= \Sp(2r, \Rf)$,~$G_{\Cf} = \SU(r,r)$ and~$G_{\Hf} = \SO^* (4r)$. The space~$\aaa := \{D = \operatorname{diag}(\lambda_1, \dots, \lambda_r, -\lambda_1, \dots, - \lambda_r ) \mid \lambda_i \in \mathbb{R} \ \forall 1 \leq i \leq r\}$ is a Cartan subspace of~$\g_{\mathbb{K}}$. If we define~$\varepsilon_i: D \mapsto \lambda_i$ on~$\aaa$, then the strongly orthogonal roots defined in Section~\ref{sect_root_syst_shilov} can be taken to be~$(2 \varepsilon_i)_{1 \leq i \leq r}$. Let~$\FS$ be the associate fundamental system simple restricted roots of~$\g_{\mathbb{K}}$ (by Equation~\eqref{eq_fund_sys_Cr}). 

Now let~$\mathbf{b}$ be the bilinear form whose matrix in the canonical basis of~$\Kf^{2r}$ is~$J_{\Kf}$. The space~$\Lag_r(\Kf^{2r})$ is the space of Lagrangians of~$(\Kf, \mathbf{b})$, on which the group~$G_\Kf$ acts transitively. If~$(e_1, \dots e_{2r})$ is the canonical basis of~$\Kf^{2r}$, then the parabolic~$P$ of Notation~\ref{sect_notation} is the stabilizer in~$G_{\Kf}$ of~$\operatorname{Span}(e_1, \dots , e_r)$, and~$P^{\opp}_{\{\alpha_r\}}$ is the stabilizer in~$G_{\Kf}$ of~$\operatorname{Span} (e_{r+1}, \dots, e_{2r})$. Thus Equation~\eqref{ident_flag_mfds} gives a~$G_{\mathbb{K}}$-equivariant identification~$\Lag_r(\Kf^{2r}) \simeq  \SB(\g_\mathbb{K})$. This model gives~$\uu^- = \Big{\{} \begin{pmatrix} 0_r & 0_r \\ X & 0_r\end{pmatrix} \mid \ ^t\! \overline{X} = X \Big{\}}$.
\vspace{0.2cm}

(2). \emph{The Lorentzian Einstein universe.}
Let~$n \geq 2$. Let~$\b$ a bilinear form of signature~$(n, 2)$ on~$\mathbb{R}^{n+2}$ (where~$n$ is the number of positive directions and~$2$ the number of negative directions). The \emph{Einstein universe} of signature~$(n-1, 1)$, or \emph{Lorentzian Einstein universe}, is the space of isotropic lines of~$(\mathbb{R}^{n+2}, \b)$, given by~$\Ein^{n-1, 1} = \mathbb{P}(\{v \in \mathbb{R}^{n+2} \mid  \b(v,v)  = 0\})$. The group~$G := \PO(\b) \simeq \PO(n, 2)$ acts transitively on~$\Ein^{n-1, 1}$. If~$P_x$ is the stabilizer of a point~$x \in \Ein^{n-1, 1}$, then~$P_x$ is conjugate to the parabolic subgroup~$P_{\{\alpha_2\}}$ of~$G$. Thus Equation~\eqref{ident_flag_mfds} gives a~$\PO(n, 2)$-equivariant identification~$\Ein^{n-1, 1} \simeq \SB(\soo(n,2))$.

Given a point~$x \in \Ein^{n-1, 1}$, we have~$\hyp_x = \mathbb{P}(x^\perp) \cap \Ein^{n-1, 1}$, and the set~$\hyp_x$ is called the \emph{lightcone of~$x$}. The bilinear form induces a class of Lorentzian metrics~$[\psi]$ on the affine chart~$\Affstd_x$. We then have~$\Affstd_\std \cap \hyp_{y_0} = \{y \in \Affstd_\std \mid \psi(y-y_0) = 0\}$ for all~$y_0 \in \Affstd_x$ (see Figure~\ref{figure_past_and_future}). See \cite[Sect.\ 3.4.2]{galiay2024rigidity} for a more explicit description of~$\Ein^{n-1, 1}$.
\end{ex}

\begin{rmk}\label{rmk_racines_inversées} With our conventions, the root defining $\Ein^{n-1, 1} = \SB(\soo(n,2))$ is $\alpha_2$. This is because we consider the root system of $\soo(n,2)$ to be of type $C_2$. However, in the literature, the order of the roots is often reversed, and the root system of $\soo(n,2)$ is more commonly seen as being of type $B_2$. With this convention, the root defining $\SB(\g)$ is then $\alpha_1$.
\end{rmk}

\subsubsection{An invariant cone and causality} \label{sect_causalité}
It is a classical fact from \cite{kostant2010root} that the identity component~$L^0$ of~$L$ acts irreducibly on~$\uu^-$. By \cite[Prop.\ 4.7]{benoist2000automorphismes} applied to this action, there exists an open~$L^0$-invariant properly convex cone~$c^0$ in~$\uu^-$ (see e.g.\ \cite{guichard2022generalizing}). This cone is defined as the interior of the convex hull in~$\uu^-$ of the orbit~$\Ad(L^0) \cdot v^-$, where~$v^\opp$ is a nonzero vector of~$\g_{-\alpha_r}$. 

Let~$\Affstd$ be an affine chart of~$\SB(\g)$. There exists~$g \in G$ such that~$\Affstd = g \cdot \Affstd_{\mathsf{std}}$ (recall Equation~\eqref{eq_standard_chart}). Given some point~$x \in \Affstd$, there exists a unique~$X \in \uu^-$ such that~$x = g\exp(X) \cdot \LP^+$. The set~$ \I_{\Affstd}(x) := (g \exp(X + c^0) \cdot \LP^+) \cup  (g \exp(X - c^0) \cdot \LP^+)$ only depends on~$\Affstd$ and~$x$, not on~$g$. It has two connected components, denoted by~$\I_{\Affstd}^{+} (x) $, called the \emph{future} of~$x$ in~$\Affstd$, and by~$\I_{\Affstd}^\opp (x)$, called the \emph{past} of~$x$ in~$\Affstd$. The choice of these components depends on~$g$. However, we can chose them in a continuous way, i.e.\ we can chose~$\I_{\Affstd}^{+} (x) $ (resp.\ $\I_{\Affstd}^\opp (x)$) for all~$x \in \Affstd$ so that the map $x \mapsto \I_{\Affstd}^{+} (x)$ (resp.\ $ x \mapsto \I_{\Affstd}^- (x)$)
is continuous on~$\Affstd$ for the Hausdorff topology. We will implicitly make such a continuous choice (called a choice of \emph{time orientation}) each time we fix an affine chart~$\Affstd$ of~$\SB(\g)$. 

\begin{rmk}
    We have just endowed the manifold~$M := \SB(\g)$ with an invariant causal structure (see \cite{kaneyuki2006causal}), i.e.\ a smooth~$G$-equivariant (up to opposition) family of properly convex open cones~$(c_x)_{x \in M}$ in~$TM$. By~\cite{neeb2025open}, Shilov boundaries associated with ~$\HTT$ Lie groups are the only flag manifolds admitting a causal structure, which is why we will sometimes refer to them as \emph{causal flag manifolds}. 
\end{rmk}

We can now define  
\begin{align*}
    &\mathbf{J}_{\Affstd}^{\pm}(x) := \overline{\I_{\Affstd}^{\pm} (x)} \text{, the \emph{large future} (resp.\ \emph{large past}) of }x \text{ in } \Affstd; \\
        &\mathbf{C}_{\Affstd}^{\pm} (x) := \partial \I_{\Affstd}^{\pm} (x) \text{, the \emph{future lightcone} (resp.\ \emph{past lightcone}) of }x \text{ in } \Affstd; \\
       &\mathbf{C}_{\Affstd}(x) := \mathbf{C}_{\Affstd}^+(x) \cup \mathbf{C}_{\Affstd}^- (x) \text{, the \emph{lightcone} of }x \text{ in } \Affstd. 
\end{align*}

When~$\Affstd = \Affstd_{\mathsf{std}}$, we will ommit the ``$\Affstd$" in subscript. These sets satisfy the following straightforward properties:

\begin{fact}\label{fact_cone_in_schubert_subvrty}
Let~$\Affstd$ be an affine chart.
    \begin{enumerate}
    \item The lightcone~$\mathbf{C}_{\Affstd}(x)$ of~$x \in \mathbb{A}$ is always contained in~$\hyp_x \cap \mathbb{A}$, and~$\I_{\Affstd}^{\pm}(x)$ are connected components of~$\Affstd \smallsetminus \hyp_x$.
    \item 
    For all~$x,y,z \in \Affstd$, one has:
    \begin{enumerate}
        \item [*]\emph{(reflexivity)}~$x \in \mathbf{J}_{\Affstd}^+(y)\Leftrightarrow y \in \mathbf{J}_{\Affstd}^-(x)$;
        \item [*]\emph{(antisymmetry)}~$\mathbf{J}_{\Affstd}^+(y) \cap \mathbf{J}_{\Affstd}^-(y) = \{ y \}$;
        \item [*]\emph{(transitivity)}~$[x \in \mathbf{J}_{\Affstd}^{\pm}(y) \text{ and } y \in \mathbf{J}_{\Affstd}^\pm(z) \text{ } ] \Rightarrow x \in \mathbf{J}_{\Affstd}^{\pm} (z)$
    \end{enumerate}
\end{enumerate}
Reflexivity and antisymmetry are also true replacing ``$\mathbf{J}$" with ``$\mathbf{C}$". Reflexivity and transitivity are also true replacing ``$\mathbf{J}$" with ``$\mathbf{I}$".
\end{fact}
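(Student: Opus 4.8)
The plan is to reduce everything to elementary properties of the open properly convex cone $c^0 \subset \uu^-$ by working in the standard affine chart. Since the whole causal structure is defined $G$-equivariantly (up to a global choice of time orientation), I would pick $g \in G$ with $\Affstd = g \cdot \Affstd_{\mathsf{std}}$ and reduce to $\Affstd = \Affstd_{\mathsf{std}}$, which I identify with $\uu^-$ through $\varphi_{\mathsf{std}}$; a global swap $+ \leftrightarrow -$ leaves all three order properties and both $\pm$-assertions of Part~(1) unchanged, so this loses nothing. The key point is that $\uu^-$ is abelian, so $U^- = \exp(\uu^-)$ acts on $\Affstd_{\mathsf{std}}$ by translations, $\exp(Y)\cdot \varphi_{\mathsf{std}}(X) = \varphi_{\mathsf{std}}(X+Y)$. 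Hence, writing $x = \varphi_{\mathsf{std}}(X)$, the future, past, large future/past and lightcones are exactly the translates
$$\I^{\pm}(x) = \varphi_{\mathsf{std}}(X \pm c^0), \quad \J^{\pm}(x) = \varphi_{\mathsf{std}}(X \pm \overline{c^0}), \quad \mathbf{C}^{\pm}(x) = \varphi_{\mathsf{std}}(X \pm \partial c^0).$$
Moreover transversality of $\varphi_{\mathsf{std}}(X)$ and $\varphi_{\mathsf{std}}(Z)$ is $G$-invariant and, after applying $\exp(-X)$ and using that $\uu^-$ is abelian, depends only on $Z-X$; thus there is a fixed subset $\mathcal{Z} \subset \uu^-$ (the non-transverse locus based at the origin) with $\hyp_x \cap \Affstd_{\mathsf{std}} = \varphi_{\mathsf{std}}(X + \mathcal{Z})$.

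For Part~(1), the two claims become: $\partial c^0 \subseteq \mathcal{Z}$ (giving $\mathbf{C}_\Affstd(x) \subseteq \hyp_x \cap \Affstd$), and $c^0$, $-c^0$ are connected components of $\uu^- \smallsetminus \mathcal{Z}$ (giving that $\I^{\pm}(x)$ are connected components of $\Affstd \smallsetminus \hyp_x$). I would establish these from the explicit pair $(\uu^-, \mathcal{Z})$ of Example~\ref{ex_shilov_bndrs}: in the Lagrangian models $\uu^-$ is the space of symmetric (resp.\ Hermitian) matrices, two Lagrangians being transverse iff the difference of matrices is nonsingular, so $\mathcal{Z}$ is the determinantal hypersurface and $c^0$ the positive-definite cone; then $\partial c^0$ consists of singular positive-semidefinite matrices, hence lies in $\mathcal{Z}$, while $\{A \succ 0\}$ and $\{A \prec 0\}$ are exactly the signature-extremal connected components of the nonsingular locus (and similarly the timelike cones among non-null vectors in the $\soo(2,n)$ case). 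The general statement — that $\partial c^0 \subseteq \mathcal{Z}$ and that $c^0$, $-c^0$ are full connected components of the transverse locus — is the content of the causal (``Sylvester'') structure on $\SB(\g)$ studied by Kaneyuki~\cite{kaneyuki1988sylvester, kaneyuki2006causal}, which I would invoke for the exceptional case. I expect this to be the only substantive input, so the main obstacle is purely to pin down $\mathcal{Z}$ and the components of its complement, not any new computation.

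Part~(2) then follows formally from three elementary facts about $c^0$. With $x = \varphi_{\mathsf{std}}(X)$, $y = \varphi_{\mathsf{std}}(Y)$, $z = \varphi_{\mathsf{std}}(Z)$: \emph{reflexivity} for $\J$ reads $X-Y \in \overline{c^0} \Leftrightarrow Y-X \in -\overline{c^0}$, a tautology, and likewise for $\mathbf{C}$ and $\I$ using $\overline{-c^0} = -\overline{c^0}$ and $\partial(-c^0) = -\partial c^0$. \emph{Transitivity} for $\J$ reads $(X-Y \in \overline{c^0}$ and $Y-Z \in \overline{c^0}) \Rightarrow X-Z \in \overline{c^0}$, which holds since the closure of a convex cone is stable under addition, $\overline{c^0} + \overline{c^0} \subseteq \overline{c^0}$; the identical argument with $c^0 + c^0 \subseteq c^0$ gives transitivity for $\I$. \emph{Antisymmetry} for $\J$ reduces to $\overline{c^0} \cap (-\overline{c^0}) = \{0\}$, which is precisely proper convexity (the cone contains no line); for $\mathbf{C}$ one adds $0 \in \partial c^0$ to get $\partial c^0 \cap (-\partial c^0) = \{0\}$. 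Finally, $\I$ admits no antisymmetry exactly because $c^0 \cap (-c^0) = \emptyset$ for an open properly convex cone, so $\I^+(y) \cap \I^-(y) = \emptyset$ rather than $\{y\}$; this accounts for the asymmetry in the statement of the Fact.
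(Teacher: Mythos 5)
Your proof is correct. The paper states this Fact without proof (it is presented as a list of ``straightforward properties''), so there is no argument in the text to compare against; but your route is exactly the one the paper's own toolkit suggests. The reduction to~$\Affstd_\std$ via the translation action of~$\exp(\uu^-)$ (valid since~$\uu^-$ is abelian, together with your correct remark that the two continuous time orientations on a chart differ by a global swap of~$+$ and~$-$) turns everything into statements about~$c^0$, $\overline{c^0}$, $\partial c^0$ and the non-transverse locus~$\mathcal{Z} = \varphi_\std^{-1}(\hyp_{\LP^+} \cap \Affstd_\std)$. Part~(1) is then precisely Kaneyuki's orbit description, which the paper itself quotes in Section~\ref{sect_def_Ois}: $c^0 = \mathcal{O}_r$ and~$-c^0 = \mathcal{O}_0$ are connected components of~$\uu^- \smallsetminus \varphi_\std^{-1}(\hyp_{\LP^+})$, and the closure formula~\eqref{eq_adherence_O_i} gives~$\partial c^0 = \bigsqcup_{k \leq r-1} V_{k,0} \subset \varphi_\std^{-1}(\hyp_{\LP^+})$. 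Part~(2) is the cone arithmetic you describe: stability of~$c^0$ and~$\overline{c^0}$ under addition gives transitivity, pointedness~$\overline{c^0} \cap (-\overline{c^0}) = \{0\}$ gives antisymmetry, and~$0 \in \partial c^0$ handles the lightcone version; your observation that~$c^0 \cap (-c^0) = \emptyset$ explains why no antisymmetry is claimed for~$\I$.

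One point you adopt silently but correctly deserves to be made explicit: the closures and boundaries defining~$\J^{\pm}_{\Affstd}$ and~$\mathbf{C}^{\pm}_{\Affstd}$ must be taken \emph{inside the chart}~$\Affstd$, not in~$\SB(\g)$. Otherwise antisymmetry is false: in~$\Ein^{n-1,1}$, for instance, the closures in~$\SB(\g)$ of the future and of the past of a point both contain the point at infinity of the chart (timelike infinity collapses to that point in the projective model). Your identification~$\J^{\pm}(x) = \varphi_\std(X \pm \overline{c^0})$ builds in the chart-closure reading, which is the one under which the Fact holds and the one consistent with its later use in the paper (e.g.\ Lemma~\ref{lem_omega_inter_futur_vide}).
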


\subsection{Reminders on proper domains in flag manifolds}\label{sect_proper_domains_flag_manifolds}
In this section, we recall some definitions and properties of domains in a flag manifold, generalizing those of classical convex projective geometry.

\subsubsection{Generalities on proper domains}\label{sect_generalities_proper_domains}
Let~$G$ be a noncompact real semisimple Lie group and~$\Theta \subset \FS$ a subset of the simple restricted roots.

\begin{definition}\label{def_domain_proper}
    Let~$X \subset \Fl(\g, \Theta)$ be a subset. We say that~$X$ is:
    \begin{enumerate}
        \item a \emph{domain} if~$X$ is open, nonempty and connected;
        \item \cite{zimmer2018proper} \emph{proper} if there exists~$\xi \in \Fl(\g, \Theta)^\opp$ such that~$\overline{X}\cap \hyp_{\xi}  
 = \emptyset$. In particular, if~$\xi = \LP_\Theta^\opp$, then we will say that~$X$ is \emph{proper in~$\Affstd_\std$}. This is equivalent to saying that~$\overline{X}\subset \Affstd_\std$.
    \end{enumerate}    
\end{definition}

\begin{rmk}\label{Rmk_contenu_dans_A_std}
    Given a proper domain~$\O$ of~$\Fl(\g, \Theta)$, since~$G$ acts transitively on~$\Fl(\g, \Theta)^-$, we will always be able to assume that~$\O$ is proper in~$\Affstd_\std$.
\end{rmk}

\subsubsection{The automorphism group} \label{sect_group_aut_omega}

Given a subset~$X \subset \Fl(\g, \Theta)$, the \emph{automorphism group} of~$X$ is
\begin{equation*}
    \Aut(X) = \left\{ g \in \Aut_\Theta(\g) \mid g \cdot X = X \right\}.
\end{equation*}
\begin{fact}\label{fact_autom_group_proper}\cite{zimmer2018proper} Let~$\O \subset \Fl(\g, \Theta)$ be an open subset. The group~$\Aut(\O)$ is a Lie subgroup of~$G$. Moreover, it acts properly on~$\O$ as soon as~$\O$ is proper.
\end{fact}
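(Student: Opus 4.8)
There are two assertions: that $\Aut(\O)$ is a Lie subgroup, and that its action on a \emph{proper} $\O$ is proper. The first is the routine part and follows from closedness together with Cartan's closed-subgroup theorem; the genuine content is properness, and the plan for it is to transport the problem into a projective space via Fact~\ref{prop_ggkw} and run a contraction-dynamics argument in the spirit of Benoist. Throughout, fix the irreducible representation $(V, \rho)$ and the embeddings $\iota_\rho, \iota_\rho^\opp$ of Fact~\ref{prop_ggkw}.

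\textbf{Lie subgroup.} First I would show that $\Aut(\O)$ is closed in $\Aut_\Theta(\g)$. If $g_k \to g$ in $\Aut_\Theta(\g)$ with $g_k \cdot \O = \O$, then for $x \in \O$ one has $g_k \cdot x \to g \cdot x \in \overline{\O}$, so $g \cdot \O \subseteq \overline{\O}$; applying the same to $g_k^{-1} \to g^{-1}$ gives $\O \subseteq g \cdot \overline{\O}$. Taking closures yields $g \cdot \overline{\O} = \overline{\O}$, hence $g$ preserves $\partial \O$ and $\operatorname{int}(\overline{\O})$. Since the action of $\Aut_\Theta(\g)$ on $\Fl(\g, \Theta)$ is real-analytic and $g \cdot \O$ is open, one concludes $g \cdot \O = \O$, so $\Aut(\O)$ is closed. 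Cartan's closed-subgroup theorem then shows that $\Aut(\O)$ is an embedded Lie subgroup of $\Aut_\Theta(\g)$, which we identify with a Lie subgroup of $G$ via $\Ad$.

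\textbf{Properness.} By Remark~\ref{Rmk_contenu_dars_A_std} I may assume $\overline{\O} \subseteq \Affstd_\std$, so that $\iota_\rho(\overline{\O})$ is a compact subset of the affine chart $\mathbb{P}(V) \smallsetminus \iota_\rho^\opp(\LP^\opp_\Theta)$; by Fact~\ref{fact_generat} the set $\iota_\rho(\O)$ spans $V$. To prove properness it suffices to show that for every compact $K \subseteq \O$, the set $\{ g \in \Aut(\O) \mid g \cdot K \cap K \neq \emptyset \}$ is relatively compact. Suppose not: then there exist $g_k \in \Aut(\O)$ with $g_k \to \infty$ in $G$, and points $x_k \to x \in K$ with $g_k \cdot x_k \to y \in K$. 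Since $\rho$ has finite kernel, $\rho(g_k) \to \infty$ in $\PGL(V)$; after a Cartan decomposition and extraction I may assume that $\rho(g_k)/\sigma_1(\rho(g_k))$ converges to a nonzero endomorphism $\pi \in \operatorname{End}(V)$ of rank $p < \dim V$, so that $\rho(g_k) \cdot [v] \to [\pi v]$ uniformly on compact subsets of $\mathbb{P}(V) \smallsetminus \mathbb{P}(\Ker \pi)$, with image $\mathbb{P}(\operatorname{im} \pi)$.

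\textbf{The main obstacle and its resolution.} The crux is to control the repelling subspace $\mathbb{P}(\Ker \pi)$ relative to $\iota_\rho(\O)$: if $\iota_\rho(\O)$ avoided $\mathbb{P}(\Ker \pi)$, then, since $\iota_\rho(\overline{\O})$ is compact and $\rho(g_k)$-invariant, the uniform contraction would force $\iota_\rho(\overline{\O}) = \rho(g_k) \cdot \iota_\rho(\overline{\O}) \subseteq \mathbb{P}(\operatorname{im}\pi)$, contradicting the spanning property from Fact~\ref{fact_generat}. The difficulty is that for a group leaving every compact set the repelling hyperplane generally \emph{does} meet $\O$, accumulating on $\partial \O$; this is where the recurrence $x_k \to x$, $g_k \cdot x_k \to y$ with $x, y \in \O$ must be used. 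The plan is to combine the contraction limits of $g_k$ and of $g_k^{-1}$: the recurrence forces the limit point $y = \iota_\rho^{-1}([\pi \iota_\rho(x)])$ to lie in $\O$ and in $\mathbb{P}(\operatorname{im}\pi)$, while applying the symmetric argument to $g_k^{-1} \in \Aut(\O)$ (whose attracting subspace lies in $\Ker\pi$) and invoking boundedness of $\iota_\rho(\overline{\O})$ and the invariance $g_k \cdot \overline{\O} = \overline{\O}$ collapses $\iota_\rho(\overline{\O})$ into a proper projective subspace. This contradicts that $\iota_\rho(\O)$ spans $V$, forcing $(g_k)$ to be bounded. I expect the delicate point to be the bookkeeping of the attracting/repelling subspaces of $g_k$ and $g_k^{-1}$ at boundary points of $\O$ where $\iota_\rho(x) \in \Ker\pi$; this can be handled by perturbing $x$ within the open set $\O$ (genericity of the spanning basis in Fact~\ref{fact_generat}) and, if needed, linearizing the flag dynamics through the exterior-power representations underlying Fact~\ref{fact_KAK_divergent}.
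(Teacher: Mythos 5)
You should first note that the paper does not prove this Fact at all: it is imported from \cite{zimmer2018proper}, and the proof there runs through a completely different mechanism, namely the construction of an $\Aut(\O)$-invariant Caratheodory-type metric on~$\O$ which is proper and generates the standard topology (this is exactly what the present paper recalls as Facts~\ref{fact_caratheodory_metric} and~\ref{fact_caratheodory_metric_1}, cited to the same source); properness of the action is then a short metric-space argument. Your Lie-subgroup step is correct in substance, but its last line needs a different justification: real-analyticity is irrelevant. The correct point is that each $g_k$ preserves the closed set $\partial\O$, hence so does the limit $g$, and since $\O$ is open one has $\O = \overline{\O} \smallsetminus \partial\O$, whence $g \cdot \O = \O$.

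The properness part has a genuine gap, located exactly where you flagged it. Set $A_k := \rho(g_k)/\sigma_1(\rho(g_k)) \to \pi$ and let $\varpi$ be the analogous normalized limit of $\rho(g_k)^{-1}$ (after extraction). Since $A_k \cdot \rho(g_k)^{-1}/\sigma_1(\rho(g_k)^{-1})$ is a scalar matrix tending to $0$, one gets $\pi\varpi = \varpi\pi = 0$, i.e.\ $\operatorname{im}\pi \subseteq \Ker\varpi$ and $\operatorname{im}\varpi \subseteq \Ker\pi$. Consequently, in your good case $\iota_\rho(x) \notin \mathbb{P}(\Ker\pi)$, the recurrence does give $\iota_\rho(y) = [\pi\,\iota_\rho(x)] \in \mathbb{P}(\operatorname{im}\pi)$ with $y \in \O$; but then $\iota_\rho(y)$ lies \emph{automatically} in $\mathbb{P}(\Ker\varpi)$, the repelling set of the backward dynamics, so the symmetric argument applied to $(g_k^{-1})$ yields no information at $y$, and the claimed collapse of $\iota_\rho(\overline{\O})$ into a proper subspace does not follow. (Invariance of $\overline{\O}$ under a divergent sequence is perfectly compatible with both $\mathbb{P}(\Ker\pi)$ and $\mathbb{P}(\operatorname{im}\pi)$ meeting $\overline{\O}$: this is what happens for divergent sequences in the automorphism group of a divisible convex set, where the action is nonetheless proper; so no contradiction can come from that configuration alone.) In the bad case $\iota_\rho(x) \in \mathbb{P}(\Ker\pi)$, your proposed fix — perturbing $x$ inside $\O$ — destroys the only nontrivial hypothesis you have: recurrence is known only along the specific sequence $x_k \to x$, and nothing prevents $g_k \cdot x'$ from converging into $\partial\O$ for a perturbed point $x'$. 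Note also that the device which rescues this argument in the properly convex projective setting — enclosing $x$ in a small simplex of the domain with generic vertices, which forces the recurrence points into $\mathbb{P}(\operatorname{im}\pi)$, resp.\ $\mathbb{P}(\operatorname{im}\varpi)$ — is unavailable here, since $\iota_\rho(\O)$ lies on the projective variety $\iota_\rho(\Fl(\g, \Theta))$ and contains no simplices. So neither branch of your case analysis closes, and the argument stalls at precisely the point the invariant-metric proof is designed to bypass.

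Two smaller inaccuracies. First, ``$\rho$ has finite kernel'' can fail for semisimple non-simple $G$: a simple factor whose roots avoid $\Theta$ acts trivially on $\Fl(\g, \Theta)$ and is killed by $\rho$, so divergence of $(g_k)$ in $G$ does not by itself give divergence of $(\rho(g_k))$ in $\PGL(V)$; one must first quotient by the kernel of the action on $\Fl(\g, \Theta)$. Second, your preliminary observation requires $\iota_\rho(\overline{\O})$, not merely $\iota_\rho(\O)$, to avoid $\mathbb{P}(\Ker\pi)$, since the uniform contraction must be applied on the compact closure.
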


Given a proper open subset~$\O \subset \Fl(\g, \Theta)$, the \emph{full orbital limit set} of~$H$ in~$\O$ is:
\begin{equation*}
    \Lambda_\O^{\operatorname{orb}}(H)= \bigcup_{x \in \O} \overline{H \cdot x} \smallsetminus(H \cdot x).
\end{equation*}
Since~$H$ acts properly on~$\O$ (recall Fact~\ref{fact_autom_group_proper}), we have 
\begin{equation}\label{eq_lim_orb_included_bndry}
    \Lambda_\O^{\operatorname{orb}}(H) \subset \partial \O.
\end{equation}
\begin{rmk}
\begin{enumerate}
    \item In the case where~$\g$ is a~$\HTT$ Lie algebra and~$\Fl(\g, \Theta) = \SB(\g)$, the group~$\Aut(\O)$ is commensurable to the \emph{conformal group} of~$\O$, that is, the group of all invertible maps from~$\O$ to itself that preserve the causal structure of~$\O$ \cite[Thm 2.3]{kaneyuki2011automorphism}.
    \item If~$\g$ is a~$\HTT$ Lie algebra of real rank~$r \geq 1$ and~$\Theta = \{\alpha_r\}$, or if~$\g = \soo(p+1, q+1)$ and~$\Theta = \{\alpha_1\}$, and it is proved in \cite{galiay2024rigidity} and~\cite{chalumeau2024rigidity} respectively that, whenever  \eqref{eq_lim_orb_included_bndry} is an equality, the domain~$\O$ is a \emph{diamond} (see Section~\ref{ssect_def_diamonds} for the definition of \emph{diamond} in~$\SB(\g)$).
\end{enumerate}
\end{rmk}

\subsubsection{The dual }\label{sect_dual_proper_domain} Let~$X \subset \Fl(\g, \Theta)$ be a subset. The \emph{dual of~$X$} is the set
\begin{equation*}
    X^*:= \{\xi \in \Fl(\g, \Theta)^\opp \mid \hyp_{\xi} \cap \ X = \emptyset\} \subset \Fl(\g, \Theta)^\opp.
\end{equation*}
Let us recall some properties of the dual (see \cite{zimmer2018proper}):
\begin{enumerate}
    \item The dual~$X^*$ of a set~$X \subset \Fl(\g, \Theta)$ is~$\Aut(X)$-invariant. Equivalently, one has~$\Aut(X) \subset \Aut(X^*)$.
    \item If~$\O \subset \Fl(\g, \Theta)$ is open, then~$\O^*$ is compact. 
    \item An open subset~$\O$ is proper if and only if its dual~$\O^*$ has nonempty interior.
\end{enumerate} 

We will need the following lemma:

\begin{lem}\label{lem_proper_inter_hyp}
    Let~$\O \subset \Fl(\g, \Theta)$ be a proper open subset, and~$\xi \in \Lambda_{\oppinv(\Theta)}(\Aut(\O))$. Then~$\xi \in \O^*$.
\end{lem}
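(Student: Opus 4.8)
The plan is to work not with $\O$ itself but with its dual $\O^* \subset \Fl(\g, \oppinv(\Theta)) = \Fl(\g, \Theta)^\opp$, which is precisely the flag manifold in which the point $\xi$ lives. The three properties of the dual recalled in Section~\ref{sect_dual_proper_domain} supply everything needed: since $\O$ is open, $\O^*$ is compact (hence closed); since $\O$ is proper, $\O^*$ has nonempty interior; and $\Aut(\O) \subseteq \Aut(\O^*)$, so $\O^*$ is $\Aut(\O)$-invariant. The whole point is then to show that an attracting limit of a contracting sequence drawn from $\Aut(\O)$ cannot escape the closed invariant compactum $\O^*$.

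Concretely, I would first extract. Since $\xi \in \Lambda_{\oppinv(\Theta)}(\Aut(\O))$, there is a $\oppinv(\Theta)$-divergent sequence $(h_k)$ of elements of $\Aut(\O)$ admitting $\xi$ as a $\oppinv(\Theta)$-limit, and after passing to a subsequence I may assume that $(h_k)$ is $\oppinv(\Theta)$-contracting with $\oppinv(\Theta)$-limit $\xi$. By the definition of contraction (Section~\ref{sect_divergent_groups}), there is then a pair $(\xi, w) \in \Fl(\g, \oppinv(\Theta)) \times \Fl(\g, \oppinv(\Theta))^\opp$ such that $h_k \cdot y \to \xi$ uniformly on compact subsets of $\Fl(\g, \oppinv(\Theta)) \smallsetminus \hyp_w$.

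Next I would produce a single test point witnessing the conclusion. Because $\hyp_w$ is a maximal proper Schubert subvariety, it is nowhere dense (its complement, an affine chart, is open and dense), whereas $\operatorname{int}(\O^*)$ is a nonempty open set; hence there exists $q \in \O^* \smallsetminus \hyp_w$. The singleton $\{q\}$ is a compact subset of $\Fl(\g, \oppinv(\Theta)) \smallsetminus \hyp_w$, so $h_k \cdot q \to \xi$. On the other hand, $q \in \O^*$ together with $h_k \in \Aut(\O) \subseteq \Aut(\O^*)$ forces $h_k \cdot q \in \O^*$ for every $k$. Since $\O^*$ is closed, the limit $\xi$ lies in $\O^*$, which is exactly the desired conclusion that $\hyp_\xi \cap \O = \emptyset$.

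I do not expect a serious obstacle here: the only thing to watch is the bookkeeping of the opposite flag manifolds, namely that $\xi$, $\O^*$ and the repelling variety $\hyp_w$ all live in $\Fl(\g, \oppinv(\Theta))$ while $w$ lives in $\Fl(\g, \Theta)$, and that the relevant invariance is $\Aut(\O) \subseteq \Aut(\O^*)$ rather than invariance of $\O$. The one genuinely conceptual step — recognizing that the correct invariant compactum on which to run the contraction dynamics is the dual $\O^*$, not $\O$ — is what makes the argument short.
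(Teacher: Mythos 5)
Your proof is correct and is essentially the paper's own argument: both take a contracting sequence in $\Aut(\O)$ with limit $\xi$, use the nonempty interior of $\O^*$ to find a point off the repelling hypersurface whose orbit converges to $\xi$, and conclude by closedness and $\Aut(\O)$-invariance of $\O^*$. The paper's version is just a terser write-up of the same steps (your extraction step is even implicit in the paper's definition of the limit set, since its elements are by definition limits of contracting subsequences).
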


\begin{proof}
 There exists a~$\Theta$-contracting sequence~$(g_k) \in \Aut(\O)^\mathbb{N}$ such that~$(g_k)$ has~$\Theta$-limit~$\xi$. Since~$\O^*$ has nonempty interior, there exists~$z \in \O^*$ such that~$g_k \cdot z \rightarrow \xi$. Since~$\O^*$ is~$\Aut(\O)$-invariant and closed, we have~$\xi \in \O^*$. Thus by definition of~$\O^*$, one has~$\hyp_{\xi} \cap \O = \emptyset$.
\end{proof}

In \cite{zimmer2018proper}, Zimmer defines the following notion of convexity: 

\begin{definition}\label{def_dual_convexity}
    An open subset~$\O \subset \Fl(\g, \Theta)$ is \emph{dually convex} if for all~$a \in \partial \O$, there exists~$\xi \in \O^*$ such that~$a \in \hyp_\xi$.
\end{definition}
Given a proper domain~$\O \subset \Fl(\g, \Theta)$, its \emph{bidual}~$\O^{**}$ is a proper open set containing~$\O$, not necessarily connected (see e.g.\ \cite[Rmk 5.3.3]{galiay2025convex}). It is however dually convex, and each of its connected components are also dually convex.

\begin{definition}\label{def_enveloppe_dc}
    Let~$\O \subset \Fl(\g, \Theta)$ be a proper domain. We denote by~$\O^{**}_0$ the connected component of~$\O^{**}$ which contains~$\O$. It is a proper~$\Aut(\O)$-invariant dually convex domain of~$\Fl(\g, \Theta)$ containing~$\O$, and we call it the \emph{dual convex hull} of~$\O$.
\end{definition}
If~$G = \PGL(n, \mathbb{R})$ and~$\Theta = \{\alpha_1\}$, then~$\Fl(\g, \Theta) = \mathbb{P}(\mathbb{R}^n)$, and the domain~$\O_0^{**}$ coincides with the classical convex hull of~$\O$.

\section[Convexity in causal flag manifolds]{Convexity in causal flag manifolds}\label{sect_proj_geom_causal}

The goal of this section is to define a natural notion of convexity in causal flag manifolds. By ``natural", we mean that the property for a subset~$X \subset \SB(\g)$ to be convex should not depend on the choice of affine chart containing it. A notion that proves to be suitable is that of \emph{causal convexity}, which involves the causal structure of~$\SB(\g)$, in analogy with the already-existing causal convexity in \emph{conformal spacetimes}~\cite{Sanchez2008}; see Definition~\ref{def_causally_convex_indep}.

In Sections ~\ref{ssect_def_diamonds} and~\ref{sect_def_Ois}, we investigate the open~$L^0$-orbits in~$\SB(\g)$, two of which are preferred and called \emph{diamonds}. We then define causal convexity (Section~\ref{sect_causal_convexity_def_properties}); to this end, we first define it in affine charts (Section~\ref{sect_def_causal_affine_chart}), and  compare it to the notion of dual convexity (Section~\ref{sect_link_dual_convexity} and Proposition~\ref{prop_dual_implies_causal}). Finally, we show that the notion of causal convexity is in fact independent of the choice of affine chart, making it an intrinsic property for domains of~$\SB(\g)$ contained in affine charts (Section~\ref{sect_causal_convexity_def}).

For all this section, we take Notation~\ref{sect_notation}.

\subsection{Reminders on diamonds }\label{ssect_def_diamonds} This section contains reminders on diamonds, which can also be found in \cite{galiay2024rigidity}. 

Fix a~$\HTT$ Lie group~$G$. Given two transverse points~$x,y \in \SB(\g)$, the set~$\SB(\g) \smallsetminus (\hyp_x \cup \hyp_y)$ admits several connected components, exactly two of which are proper.
\begin{definition}\label{def_diamant} A subset~$\O$ of~$\SB(\g)$ is called a \emph{diamond} if there exists a (unique) pair of transverse points~$x,y \in \SB(\g)$ such that~$\O$ is one of the two proper connected components of~$\SB(\g) \smallsetminus (\hyp_x \cup \hyp_y)$. The two points~$x,y$ are then called the \emph{endpoints of~$\O$}.
\end{definition}

Let~$\Diams_{\mathsf{std}} := \I^+(\LP^+)$. Recall from Section~\ref{sect_parab_subgroups} that there exists an order-two element~$k_0\in G$ such that~$k_0 \cdot \LP^+ = \LP^\opp$. Then~$\Diams_{\mathsf{std}}' :=  \mathbf{I}^-(\LP^+) = k_0 \cdot \Diams_{\mathsf{std}}$ is the interior of the dual of~$\Diams_{\mathsf{std}}$ (see e.g.\ \cite[Lem.\ 13.11]{guichard2022generalizing}), and the domains~$\Diams_{\mathsf{std}}$ and~$\Diams_{\mathsf{std}}'$ are exactly the two diamonds with endpoints~$\LP^+$ and~$\LP^{\opp}$. The explicit description of~$\Diams_\std$ with the identifications given in Example~\ref{ex_shilov_bndrs} is given in Example~\ref{ex_Ois} (see also~\cite[Ex.\ 4.10]{galiay2024rigidity}).

Given two transverse points~$x,y \in \SB(\g)$, one has~$(x,y)=g \cdot (\LP^+, \LP^{\opp})$ for some~$g \in G$. The two diamonds~$g \cdot \Diams_{\mathsf{std}}$ and~$g \cdot \Diams_{\mathsf{std}}'$ are the diamonds with endpoints~$x$ and~$y$. Thus, any diamond is a~$G$-translate of~$\Diams_{\mathsf{std}}$.

It is convenient to consider models of diamonds that are proper in affine charts:

\begin{definition} Let~$\Affstd \subset \SB(\g)$ be an affine chart. If~$x,y \in \Affstd$ and~$y \in \I^+_\Affstd (x)$, we define~$\Diams_\Affstd(x,y)$ as the set~$\I^+_\Affstd(x) \cap \I^-_\Affstd(y)$. It is one of the two diamonds with endpoints~$x$ and~$y$.
\end{definition}
For~$x,y \in \Affstd$ and~$y \in \I^+_\Affstd (x)$, the diamond~$\Diams_\Affstd(x,y)$ is the only one of the two diamonds with endpoints~$x$ and~$y$ that is proper in~$\Affstd$; see Figure \ref{fig:enter-label}. When~$\Affstd = \Affstd_\std$, we will ommit the ``$\Affstd_\std$" in subscript.

The following fact is well known (see e.g.\ \cite[Thm 2.3 and 3.5]{kaneyuki2011automorphism}): 
\begin{fact}\label{fact_autom_diam}
The action of the identity component~$L^0$ of~$L$ on~$\Diams_{\mathsf{std}}$ is transitive and the stabilizer of a point in~$\Diams_{\mathsf{std}}$ is a maximal compact subgroup of~$L^0$, so that any diamond is a model for the symmetric space of~$L^0$. 
\end{fact}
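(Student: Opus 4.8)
The plan is to reduce the statement, via the standard chart $\varphi_\std$, to a transitivity statement for the action of $L^0$ on the convex cone $c^0\subset\uu^-$, and then to recognise $(\uu^-,c^0)$ as a Euclidean Jordan algebra with its symmetric cone so as to import the classical structure theory. First I would record the action of $L^0$ on $\Diams_\std$ in the chart. Since $L = L_{\{\alpha_r\}}$ fixes both basepoints $\LP^+\in\SB(\g)$ and $\LP^-\in\SB(\g)^\opp=\SB(\g)$, the group $L^0$ preserves $\Affstd_\std = \SB(\g)\smallsetminus\hyp_{\LP^-}$; and because $\uu^-$ is abelian and normalised by $L$, for $\ell\in L^0$ and $X\in\uu^-$ one computes
$$\ell\cdot\varphi_\std(X) = \ell\exp(X)\cdot\LP^+ = \exp(\Ad(\ell)X)\,\ell\cdot\LP^+ = \varphi_\std(\Ad(\ell)X).$$
Thus in the chart $\varphi_\std$ the $L^0$-action on $\uu^-$ is the linear adjoint action, and by the very definition of the future $\I^+(\LP^+)$ we have $\Diams_\std = \varphi_\std(c^0)$ (up to replacing $c^0$ by $-c^0$). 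Proving the statement therefore amounts to showing that $L^0$ acts transitively on the properly convex open cone $c^0$ with maximal compact isotropy.

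Next I would invoke the Kantor--Koecher--Tits correspondence. The decomposition $\g = \uu^-\oplus\mathfrak{l}\oplus\uu^+$ is a $3$-grading with abelian $\uu^\pm$, and for a $\HTT$ Lie algebra this endows $V:=\uu^-$ with the structure of a simple Euclidean Jordan algebra of rank $r$, whose symmetric cone $\Omega$ is exactly $c^0$: indeed $\g_{-\alpha_r}=\g_{-2\e_r}$ is one-dimensional and its generator is a rank-one direction, so $c^0 = \mathrm{int}(\mathrm{conv}(\Ad(L^0)\cdot v^-))$ is the interior of the convex hull of the rank-one elements, which is the cone of squares. Under this correspondence $\mathfrak{l}=\g_0$ is identified with the structure algebra $\mathrm{str}(V)$, which is the Lie algebra of the linear automorphism group $G(\Omega)=\{g\in\GL(V)\mid g\cdot\Omega=\Omega\}$. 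The classical theory of symmetric cones (Faraut--Korányi) then gives that $G(\Omega)^0$ acts transitively on $\Omega$ and that the stabiliser of the Jordan identity $e\in\Omega$ is the Jordan-automorphism group $\Aut(V)$, a maximal compact subgroup of $G(\Omega)^0$; hence $\Omega\cong G(\Omega)^0/\Aut(V)$ is the Riemannian symmetric space of $G(\Omega)^0$.

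It then remains to transfer these statements to $L^0$. The chart computation gives a homomorphism $L^0\to G(\Omega)$ landing in $G(\Omega)^0$, whose kernel is the subgroup of $L^0$ centralising $\uu^-$; since $\mathfrak{l}\cong\mathrm{str}(V)$ acts faithfully on $V=\uu^-$ (as $\g$ is simple), this kernel is discrete, hence finite. As $\mathrm{Lie}(L^0)\cong\mathrm{str}(V)=\mathrm{Lie}(G(\Omega))$, the homomorphism has open image and is a finite covering onto $G(\Omega)^0$. Consequently $L^0$ acts transitively on $c^0=\Omega$, and the stabiliser of $\varphi_\std(e)$ — being the preimage of a maximal compact subgroup under a finite covering of connected reductive groups — is maximal compact in $L^0$. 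Thus $\Diams_\std$ is a model for the symmetric space of $L^0$; and since every diamond is a $G$-translate $g\cdot\Diams_\std$ with $\Aut$-group conjugate to $L^0$, the same holds for every diamond.

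I expect the crux to be precisely the matching of $L^0$ with $G(\Omega)^0$: transitivity of the a priori larger cone-automorphism group does not by itself force transitivity of $L^0$, so the equality of Lie algebras $\mathrm{Lie}(L^0)=\mathrm{Lie}(G(\Omega))$ coming from the $3$-grading, together with finiteness of the kernel, is the essential point. An alternative, more hands-on route to transitivity would bypass $G(\Omega)$ entirely: by the spectral theorem for Euclidean Jordan algebras every element of $c^0$ writes as $\sum_{i=1}^r\lambda_i c_i$ with $\lambda_i>0$ for some Jordan frame $(c_i)$, so the maximal torus $\exp(\aaa)\subset L^0$ adjusts the eigenvalues while the maximal compact subgroup of $L^0$ (acting as $\Aut(V)$) moves between Jordan frames; this gives transitivity directly, at the cost of developing the Jordan-frame formalism that the cone-automorphism argument avoids.
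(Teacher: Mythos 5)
Your proposal addresses a statement the paper does not actually prove: Fact~\ref{fact_autom_diam} is recorded as well known, with a citation to Kaneyuki (Thms~2.3 and~3.5 of the cited reference), so the comparison is with the literature rather than with an in-text argument. Your route --- linearize the $L^0$-action in the chart $\varphi_\std$ (so that $\Diams_\std$ becomes $c^0\subset\uu^-$ up to sign), identify $(\uu^-,c^0)$ with a simple Euclidean Jordan algebra $V$ and its symmetric cone via the $3$-grading $\g=\uu^-\oplus\mathfrak{l}\oplus\uu^+$, import Faraut--Kor\'anyi (transitivity of $G(\Omega)^0$ on the cone, with maximal compact point stabilizers), and descend along the finite cover $L^0\to G(\Omega)^0$ --- is precisely the Jordan-theoretic argument underlying Kaneyuki's theorems, and the covering/maximal-compact transfer at the end is done correctly. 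It is also consistent with the machinery the paper builds later: your ``alternative route'' via the spectral theorem and Jordan frames is exactly the content of Section~\ref{sect_def_Ois} (Kaneyuki's Sylvester law, Fact~\ref{fact_takeuchi}, Lemma~\ref{lem_decomp_spectrale}), where in particular $c^0=\mathcal{O}_r$ is asserted to be a single $L^0$-orbit.

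Two steps are glossed and need repair, though neither threatens the strategy. First, ``the kernel is discrete, hence finite'' is not a valid inference on its own: a discrete normal subgroup of the connected group $L^0$ is central, but $Z(L^0)$ contains the noncompact factor $\exp(\mathbb{R}H_0)$, so discreteness does not imply finiteness. The correct argument: if $\ell\in L^0$ acts trivially on $\uu^-$, then by invariance of the Killing pairing between $\uu^+$ and $\uu^-$ it also acts trivially on $\uu^+$, hence on $[\uu^+,\uu^-]=\mathfrak{l}$ (equality holds because $\uu^-\oplus[\uu^+,\uu^-]\oplus\uu^+$ is an ideal of the simple algebra $\g$); thus $\Ad(\ell)=\id_{\g}$, and the kernel lies in $\ker(\Ad)\cap L^0$, which is finite (indeed trivial once $G$ is replaced by its image in $\Aut_\Theta(\g)$, as in Section~\ref{sect_automorphism_group}). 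Second, your identification of $c^0$ with the cone of squares is mildly circular as written: $c^0$ is defined as the interior of the convex hull of the single orbit $\Ad(L^0)\cdot v^-$, and equating this with the convex hull of \emph{all} positive rank-one elements presupposes transitivity of $L^0$ on positive rank-one rays --- a homogeneity statement of the same nature as the one being proved. Break the circle either by invoking Fact~\ref{fact_takeuchi} with $i=1$ (the maximal compact $K_0\leq L^0$ already sweeps out all positive rank-one rays from $v^-$), or by reordering: first establish $\operatorname{Lie}(L^0)\cong\operatorname{str}(V)=\operatorname{Lie}(G(\Omega))$ and surjectivity of $L^0\to G(\Omega)^0$, then use transitivity of $G(\Omega)^0$ on positive rank-one elements to identify the orbit of $v^-$, and with it $c^0$ with the symmetric cone up to sign. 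With these two repairs the proof is correct.
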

Since~$L^0$ is commensurable to~$L_s \times \mathbb{R}$ (Section~\ref{sect_root_syst_shilov}), the diamond~$\Diams_{\mathsf{std}}$ is~$L^0$-equivariantly diffeomorphic to~$ \mathbb{X}_{L_s} \times \Rf$, where~$\mathbb{X}_{L_s}$ is the symmetric space of~$L_s$. The corresponding identifications, for~$\g$ ranging in~$\HTT$ Lie algebras, are listed in \cite[Table 2]{galiay2024rigidity}.

\begin{figure}[H]
\begin{center}
\begin{minipage}[b]{.5\linewidth}
\begin{center}
\begin{tikzpicture}
            \fill[fill=black] (0,0) circle (1pt);
            \draw node at (0.4,0) {$\LP^+$};
             \draw (-1.7,-1.7) -- (0,0) -- (-1.7,1.7);
             \draw (1.7,-1.7) -- (0,0) -- (1.7,1.7);   \draw (-1,1) arc (180:360:1 and 0.15);
          \draw[dashed] (1,1) arc (0:180:1 and 0.15); 
           \draw (-1,-1) arc (180:360:1 and 0.15);
          \draw[dashed] (1,-1) arc (0:180:1 and 0.15);
          \draw node at (0,1.6) {$\mathbf{I}^+(\LP^+)~$};
          \draw node at (0,-1.6) {$\mathbf{I}^-(\LP^+)~$};
          
        \end{tikzpicture}
\caption{Past and future of~$\LP^+$ in~$\Affstd_\std$ in the case where~$(p,q) = (2,1)$}
    \label{figure_past_and_future}
\end{center}
\end{minipage} \hfill
\begin{minipage}[b]{.47\linewidth}
\begin{center}
\begin{tikzpicture}
          \draw (-2,1) -- (0,-1) -- (2,1);
          
          \draw (-2, -1) -- (0,1) -- (2,-1);
          \draw (-1,0) arc (180:360:1 and 0.15);
           \draw [fill, pattern = north east lines, opacity=0.7] (-1,0) -- (0,1) -- (1,0) -- (0,-1);
          
          \draw[dashed] (1,0) arc (0:180:1 and 0.15);
          \fill[fill=black] (0,1) circle (1pt);
          \fill[fill=black] (0,-1) circle (1pt);
          \draw node at (0.15,1.15) {$y$};
          \draw node at (0.15,-1.15) {$x$};
        \end{tikzpicture}
\caption{The diamond $\Diams(x,y)$ for $x,y \in \Affstd_\std$ and $y \in \mathbf{I}^+(x)$ (greyed-out area), seen in~$\Affstd_\std \simeq \mathbb{R}^{2,1}$ for~$(p,q) = (2,1)$.}
    \label{fig:enter-label}
\end{center}
\end{minipage}
\end{center}
\end{figure}

Given an affine chart~$\Affstd$ and~$x,y \in \Affstd$ such that~$y \in \I^+_{\Affstd} (x)$, the diamond~$\Diams_{\Affstd}(x,y)$ is the only one of the two diamonds with endpoints~$x$ and~$y$ that is proper in~$\Affstd$. The converse is true:

\begin{lem}\label{lem_diamond_bounded} Let~$\Affstd$ be an affine chart of~$\SB(\g)$. Let~$x,y \in \Affstd$ be two transverse points. Assume that there exists a diamond~$D$ with endpoints~$x$ and~$y$ such that~$\overline{D} \subset \Affstd$. Then one of the following is satisfied:
\begin{enumerate}
    \item One has~$y \in \I_{\Affstd}^+(x)$ and~$D = \Diams_\Affstd(x,y)$.
    \item One has~$y \in \I_{\Affstd}^-(x)$ and~$D = \Diams_\Affstd(y,x)$.
\end{enumerate}
\end{lem}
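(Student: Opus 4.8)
The plan is to reduce everything to the characterisation recalled just before the statement: once we know that $y$ lies in $\I^{+}_{\Affstd}(x)$ or in $\I^{-}_{\Affstd}(x)$, the fact that $\Diams_{\Affstd}(x,y)$ (resp.\ $\Diams_{\Affstd}(y,x)$) is the \emph{unique} diamond with endpoints $x,y$ that is proper in $\Affstd$, together with the hypothesis $\overline{D}\subset\Affstd$, immediately forces $D=\Diams_{\Affstd}(x,y)$ (resp.\ $D=\Diams_{\Affstd}(y,x)$), that is, case~(1) (resp.\ case~(2)). So the whole content is to prove that $x$ and $y$ are causally related in $\Affstd$, i.e.\ to rule out the ``spacelike'' configuration where $y\in\Affstd\smallsetminus\hyp_x$ but $y\notin\I^{+}_{\Affstd}(x)\cup\I^{-}_{\Affstd}(x)$.

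The key step will be to show that $D$ is entirely contained in $\I^{+}_{\Affstd}(x)$ or in $\I^{-}_{\Affstd}(x)$. Since $D$ is a connected component of $\SB(\g)\smallsetminus(\hyp_x\cup\hyp_y)$ and $\overline{D}\subset\Affstd$, the set $D$ is a connected subset of $\Affstd\smallsetminus\hyp_x$, hence lies in a single connected component of $\Affstd\smallsetminus\hyp_x$; by Fact~\ref{fact_cone_in_schubert_subvrty}.(1) it therefore suffices to prove that $D$ meets $\I^{+}_{\Affstd}(x)$ or $\I^{-}_{\Affstd}(x)$. Here I would use two facts. First, by the very definition of the future and the past, in any affine chart the causal cone field is the \emph{constant} cone $c^0$: concretely $\I^{+}_{\Affstd}(x)$ is exactly the translate ``$x+c^0$'' in the coordinates of $\Affstd$. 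Second, writing $(x,y)=g\cdot(\LP^{+},\LP^{\opp})$ with $g\in G$, the diamond $D$ equals $g\cdot\Diams_{\mathsf{std}}$ or $g\cdot\Diams_{\mathsf{std}}'$, and $g\cdot\LP^{+}=x$; since $\Diams_{\mathsf{std}}=\I^{+}(\LP^{+})$ coincides with the open cone $c^0$ near its endpoint $\LP^{+}$, and since $g$ preserves the causal cone field up to sign (the cone field being $G$-equivariant up to the index-two subgroup), the set $D$ coincides near $x$ with a translate of $c^0$ or of $-c^0$. Comparing with $\I^{\pm}_{\Affstd}(x)=x\pm c^0$, a ray $x+tv$ with $v\in c^0$ and $t>0$ small then lies in $D\cap\I^{+}_{\Affstd}(x)$ (resp.\ in $D\cap\I^{-}_{\Affstd}(x)$), which gives the desired nonempty intersection; by connectedness $D\subseteq\I^{+}_{\Affstd}(x)$ or $D\subseteq\I^{-}_{\Affstd}(x)$.

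To conclude, I would use that the two endpoints of a diamond lie in its closure (this is visible on $\Diams_{\mathsf{std}}$ and follows for $D$ by $G$-equivariance; see~\cite{galiay2024rigidity}), so that $y\in\overline{D}$. If $D\subseteq\I^{+}_{\Affstd}(x)$, then $y\in\overline{\I^{+}_{\Affstd}(x)}=\J^{+}_{\Affstd}(x)$; as $x$ and $y$ are transverse we have $y\notin\hyp_x$, hence $y\notin\mathbf{C}_{\Affstd}(x)$ (the lightcone being contained in $\hyp_x\cap\Affstd$ by Fact~\ref{fact_cone_in_schubert_subvrty}.(1)), and therefore $y\in\I^{+}_{\Affstd}(x)$. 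The symmetric argument in the case $D\subseteq\I^{-}_{\Affstd}(x)$ gives $y\in\I^{-}_{\Affstd}(x)$, i.e.\ $x\in\I^{+}_{\Affstd}(y)$ by reflexivity. Feeding this back into the uniqueness statement recalled above finishes the proof.

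The main obstacle is the local (tangent-cone) step in the second paragraph: the argument hinges on the observation that the causal structure is \emph{flat} in each affine chart, so that $\I^{\pm}_{\Affstd}(x)$ are genuine translates of $\pm c^0$ and the transition between two charts preserves $c^0$ up to sign. This is exactly what lets one pass from ``$D$ is a future/past cone with vertex $x$'' (true in the chart adapted to $D$) to ``$D$ enters $\I^{+}_{\Affstd}(x)$ or $\I^{-}_{\Affstd}(x)$'' in the given chart $\Affstd$. The index-two time-orientation ambiguity is harmless, as it only permutes the two possible signs, i.e.\ cases~(1) and~(2).
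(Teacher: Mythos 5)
Your proof is correct, but it takes a genuinely different route from the paper's. The paper's own proof is a three-line duality argument: since $\overline{D}\subset\Affstd=\Affstd_z$, the base point $z$ lies in the interior of $D^{*}$; Proposition~13.15 of~\cite{guichard2018positivity} then says that $x$ belongs to one of the two diamonds with endpoints $y$ and $z$; and these two diamonds are exactly $\I^{+}_{\Affstd}(y)$ and $\I^{-}_{\Affstd}(y)$, after which reflexivity and the uniqueness of the proper diamond finish the proof. You instead work locally at the endpoint $x$: you use the $G$-equivariance (up to sign) of the causal cone field and its flatness in affine charts to show that the germ of $D$ at $x$ enters $\I^{+}_{\Affstd}(x)\cup\I^{-}_{\Affstd}(x)$, then connectedness of $D$ inside $\Affstd\smallsetminus\hyp_x$ together with Fact~\ref{fact_cone_in_schubert_subvrty}.(1) to get $D\subset\I^{\pm}_{\Affstd}(x)$, and finally $y\in\overline{D}$ plus transversality (which excludes the lightcone) to conclude $y\in\I^{\pm}_{\Affstd}(x)$. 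Your route is self-contained --- it avoids the appeal to the positivity paper, using only facts already established in Section~2 plus the (citable) fact that the endpoints of a diamond lie in its closure --- at the price of a local first-order analysis; the paper's route is much shorter but outsources the key step to an external reference. A side observation: your argument establishes the causal relatedness of $x$ and $y$ using only $D\subset\Affstd$, the full hypothesis $\overline{D}\subset\Affstd$ entering only in the final identification $D=\Diams_{\Affstd}(x,y)$, whereas the paper's proof needs $\overline{D}\subset\Affstd$ from its very first line.

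One formulation in your key step should be tightened. In the coordinates of $\Affstd$, the set $D$ does not literally ``coincide near $x$ with a translate of $c^0$ or of $-c^0$'': the transition map $\tau$ between two affine charts is not affine. What is true, and what suffices, is that $\tau$ is a diffeomorphism near $x$ whose differential carries $c^0$ to $\pm c^0$ (because the cone field is constant equal to $c^0$ in every chart, $\uu^{\opp}$ being abelian). Writing $D$ near $x$ as $\tau(X'+c^0)$ (or $\tau(X'-c^0)$) and expanding $\tau(X'+tv)=\tau(X')+t\,d\tau_{X'}(v)+o(t)$ for $v\in c^0$, the openness of the cone gives $\tau(X'+tv)\in x\pm c^0$ for all small $t>0$, which is exactly the nonempty intersection of $D$ with $\I^{+}_{\Affstd}(x)\cup\I^{-}_{\Affstd}(x)$ that your connectedness argument requires. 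Since you explicitly flag this as the main obstacle and describe precisely this mechanism, this is a matter of wording rather than a gap.
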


\begin{proof} Let~$z \in \SB(\g)$ be such that~$\Affstd = \Affstd_z$. Since~$\overline{D} \subset \Affstd_z$, the point~$z$ lies in the interior of~$D^*$. Then by \cite[Prop.\ 13.15]{guichard2018positivity}, the point~$x$ belongs to one of the two diamonds with endpoints~$y,z$. These two diamonds are exactly~$\I_{\Affstd}^+(y)$ and~$\I_{\Affstd}^-(y)$, and the lemma follows.~$\qed$
\end{proof}

If~$x,y \in \Affstd$ and~$y \in \mathbf{J}_{\Affstd}^+(x)$, then we will denote by~$\Diams_{\Affstd}^c(x,y)$ the Hausdorff limit of the sequence of~$\overline{\Diams_{\Affstd}(x,y_k)}$, where~$(y_k) \in \I^+(x)$ and~$y_k \rightarrow y$; this limit does not depend on the choice of the sequence~$(y_k)$. If~$y \in \I^+(x)$, then one has~$\Diams_{\Affstd}^c(x,y) = \overline{\Diams_{\Affstd}(x,y)}$.

\subsection{Other~$L^0$-orbits}\label{sect_def_Ois} In Section~\ref{ssect_def_diamonds}, we have defined \emph{diamonds} in~$\SB(\g)$. By Fact~\ref{fact_autom_diam}, we know that the standard diamond~$\Diams_\std$ is an open~$L^0$-orbit in~$\SB(\g)$. In this section, we investigate the other~$L^0$-orbits in~$\SB(\g)$.
Recall the strongly orthogonal roots~$2\varepsilon_1, \dots, 2\varepsilon_r$ defined in Section~\ref{sect_root_syst_shilov}. For all~$1 \leq i \leq r$, let~$v_i \in \g_{-2 \varepsilon_i}$ be such that~$[v_i, h_{- 2 \varepsilon_i}, \Cartinv(v_i)]$ is an~$\mathfrak{sl}_2$-triple, where~$h_{-2 \varepsilon_i}$ is defined in Section~\ref{sect_real_lie_alg}. For all~$1 \leq i,j \leq r$ such that~$i+j \leq r$, we define
\begin{equation*}
    X_{i,j} := v_1 + \cdots + v_i -v_{i+1} - \cdots - v_{i+j} \in \uu^-.
\end{equation*}
Let~$V_{i, j}$ be the~$L^0$-orbit of~$X_{i, j}$. We write~$\mathcal{O}_i := V_{i, r-i}$. Using the terminology of Jordan algebras and generalizing classical Sylvester's law of inertia, Kaneyuki proves that the set~$V_{k,l}$ is open if and only if~$k+l = r$, and that~$\mathcal{O}_i := V_{i, r-i}$ is the connected component of~$\uu^- \smallsetminus \varphi_{\std}^{-1}(\hyp_{\LP^+})$ containing~$X_{i, r-i}$~\cite{kaneyuki1988sylvester}. Kaneyuki also proves that~$\varphi_{\std}^{-1}(\hyp_{\LP^+}) = \bigsqcup_{i+j \leq r-1} V_{i,j}$ and
\begin{equation}\label{eq_adherence_O_i}
    \overline{\mathcal{O}}_i = \bigsqcup_{k \leq i, \ \ell \leq r-i} V_{k,l} \quad \forall 1 \leq i \leq r.
\end{equation}
Finally, one has~$c^0 = \mathcal{O}_r$. The following fact is proven in \cite[Lem.\ 2.3.(1)]{takeuchi1988basic}:

\begin{fact}\label{fact_takeuchi}
    There exists a maximal compact subgroup~$K_0$ of~$L^0$ such that for all~$1 \leq i \leq r$, we have~$\bigsqcup_{s+t = i}V_{s, t} = \Ad(K_0) \cdot \big{\{}\sum_{k=1}^r \lambda_k v_k \mid \operatorname{card}\{k \mid \lambda_k \ne 0\} = i\big{\}}$.
\end{fact}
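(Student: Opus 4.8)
The plan is to recognize the statement as the spectral theorem for Euclidean Jordan algebras, combined with the transitivity of the automorphism group on Jordan frames. Since $G$ is of tube type, the abelian algebra $\uu^-$ carries a canonical structure of simple Euclidean Jordan algebra of rank $r$, on which $L^0$ acts through $\Ad$ essentially as the identity component of its structure group; this is precisely the tube-type correspondence underlying Kaneyuki's generalized Sylvester law already invoked in Section~\ref{sect_def_Ois}. Under this identification the vectors $v_i \in \g_{-2\varepsilon_i}$, normalized so that $[v_i, h_{-2\varepsilon_i}, \Cartinv(v_i)]$ is an $\mathfrak{sl}_2$-triple, form a Jordan frame $(v_1, \dots, v_r)$, i.e.\ a complete system of orthogonal primitive idempotents with unit $e := v_1 + \cdots + v_r$. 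The number of nonzero spectral coefficients of an element $X \in \uu^-$ (its Jordan-theoretic rank) is a structure-group invariant, and Kaneyuki's classification identifies $V_{s,t}$ with the set of elements of signature $(s,t)$; hence $\bigsqcup_{s+t=i} V_{s,t}$ is exactly the set of rank-$i$ elements of $\uu^-$.

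I would then take $K_0$ to be the stabilizer in $L^0$ of the Jordan unit $e$ (equivalently, of the Jordan inner product). This is a maximal compact subgroup of $L^0$, it acts on $\uu^-$ by Jordan-algebra automorphisms, and --- crucially --- it does not depend on $i$, so that a single $K_0$ can serve in all the claimed identities at once.

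The core step is the spectral theorem: every $X \in \uu^-$ admits a decomposition $X = \sum_{k=1}^r \lambda_k e_k$ with $(e_k)$ a Jordan frame and $\lambda_k \in \mathbb{R}$, the number of nonzero $\lambda_k$ being $\operatorname{rank}(X)$. Since $\uu^-$ is a simple Euclidean Jordan algebra, its automorphism group $K_0$ acts transitively on Jordan frames; thus there is $k_0 \in K_0$ with $\Ad(k_0) \cdot e_k = v_{\pi(k)}$ for some permutation $\pi$, whence $\Ad(k_0) \cdot X = \sum_k \lambda_k v_{\pi(k)}$ still has exactly $\operatorname{rank}(X)$ nonzero coefficients in the basis $(v_k)$. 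If $\operatorname{rank}(X) = i$, this places $\Ad(k_0) \cdot X$ in $\{\sum_k \mu_k v_k \mid \operatorname{card}\{k \mid \mu_k \ne 0\} = i\}$, giving the inclusion $\bigsqcup_{s+t=i} V_{s,t} \subseteq \Ad(K_0) \cdot \{\cdots\}$. The reverse inclusion is immediate: any $\sum_k \mu_k v_k$ with exactly $i$ nonzero coefficients has rank $i$, and $\Ad(K_0) \subseteq \Ad(L^0)$ preserves rank, so the right-hand side consists of rank-$i$ elements.

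The main obstacle is not the abstract Jordan-theoretic input but matching it to the Lie-theoretic data: one must verify that the $\mathfrak{sl}_2$-normalization of the $v_i$ coincides with the idempotent normalization in the Jordan algebra (so that $(v_i)$ really is a Jordan frame and $c^0 = \mathcal{O}_r$ is the cone of positive elements), and that the chosen maximal compact $K_0$ --- rather than the full structure group --- already realizes all frame changes. Simplicity of $\g$, equivalently the $L^0$-irreducibility of $\uu^-$ used in Section~\ref{sect_causalité}, is exactly what guarantees that the Jordan algebra is simple, hence the transitivity of $K_0$ on frames.
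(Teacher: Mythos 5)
Your proposal is correct, but it follows a genuinely different route from the paper: the paper gives no argument at all for Fact~\ref{fact_takeuchi}, which is simply quoted from Takeuchi \cite[Lem.\ 2.3.(1)]{takeuchi1988basic}, where it is proved by purely Lie-theoretic means in the more general setting of symmetric $R$-spaces (cf.\ Remark~\ref{rmk_Nagano_space}), with no Jordan algebra in sight. You instead exploit the tube-type hypothesis to endow $\uu^-$ with its structure of simple Euclidean Jordan algebra, identify $(v_1, \dots, v_r)$ with a Jordan frame and $\Ad(L^0)$ with the identity component of the structure group, take $K_0$ to be the stabilizer of the unit $e = v_1 + \cdots + v_r$ (a maximal compact subgroup, consistently with Fact~\ref{fact_autom_diam} since $e \in c^0$), and conclude from the spectral theorem and the transitivity of $K_0$ on Jordan frames \cite{faraut1994analysis}, together with Kaneyuki's generalized Sylvester law \cite{kaneyuki1988sylvester} identifying $V_{s,t}$ with the set of signature-$(s,t)$ elements; both inclusions then follow as you say, and taking the unit stabilizer makes $K_0$ uniform in $i$, as the statement requires. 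What each approach buys: your route is self-contained modulo standard Jordan theory, makes the ``spectral theorem'' nature of the fact transparent, and dovetails with the Kaneyuki results already invoked in Section~\ref{sect_def_Ois}; the paper's citation to Takeuchi covers all symmetric $R$-spaces at once (hence also the generalization noted in Remark~\ref{rmk_Nagano_space}) and sidesteps the one piece of bookkeeping your route genuinely needs, namely the dictionary between the Lie-theoretic normalization (the $\mathfrak{sl}_2$-triples $(v_i, h_{-2\varepsilon_i}, \Cartinv(v_i))$, the group $\Ad(L^0)$, the cone $c^0$) and the Jordan-theoretic one (primitive orthogonal idempotents, structure group, symmetric cone). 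You flag this dictionary rather than verify it; since it is standard (it is the very setup of \cite{kaneyuki1988sylvester}) and consistent with facts the paper already uses ($c^0 = \mathcal{O}_r$ and Fact~\ref{fact_autom_diam}), I regard this as an acceptable appeal to the literature on a par with the paper's own citation, not a gap.
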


\begin{rmk}\label{rmk_Nagano_space}
    The above fact is actually stated in \cite{takeuchi1988basic} in the more general case where the flag manifold~$\Fl(\g, \alpha)$ is a \emph{irreducible Nagano space} (or \emph{symmetric~$R$-space}) in the sense of \cite[Chap.\ 5]{galiay2025convex}.
\end{rmk}

\begin{ex} \label{ex_Ois}
\begin{enumerate}
    \item When~$\g$ is as in Example~\ref{ex_shilov_bndrs}.(1), one has~$v_i = \begin{pmatrix} 0_r & 0_r \\ E_{i,i} & 0_r\end{pmatrix}$, where~$E_{i,i}$ is the~$(r \times r)$-matrix with every coefficient equal to~$0$ except the one on the~$i$-th row and~$i$-th column, and
\begin{equation}\label{eq_Ois_lag}
    \mathcal{O}_i = \Big{\{} \begin{pmatrix} 0_r & 0 \\ X & 0_r\end{pmatrix} \mid  X \in  \Mat_{r}(\Kf), \ ^t\! \overline{X} = X \text{ and } \operatorname{sgn}(X) = (i, r-i, 0)\Big{\}},
\end{equation}
where we have denoted by~$\operatorname{sgn}(X) = (p_+,p_-, r-p_+-p_-)$ the \emph{signature} of~$X$, where~$p_{\pm}$ is the maximal dimension of a totally positive (resp.\ negative) subspace of~$\mathbb{K}^r$ for~$X$. When~$i = r$, we recover the cone~$c^0$, and~$\Diams_\std = \varphi_\std(\mathcal{O}_r)$ identifies exactly the set of positive definite symmetric matrices. See \cite{kaneyuki1988sylvester, kaneyuki1998sylvester} for more details.

\item When~$\g = \soo(n,2)$ for some~$n \geq 2$, in the notation of Example~\ref{ex_shilov_bndrs}.(2), Example 2, and the identification~$\uu^- \simeq \Affstd_\std$ (given in~\eqref{eq_A_egal_exp}), we chose an orthonormal basis~$(e_1, \dots, e_n)$ of~$\Affstd$ for~$\psi$, so that~$\psi(e_i) =1$ for~$1 \leq i \leq n-1$ and~$\psi(e_n) = -1$. We write any element~$v \in \Affstd_\std$ as a linear combination~$v = \sum_{i=1}^n v_i e_i$; Then one has~$\uu^- = \mathcal{O}_0 \cup \mathcal{O}_1 \cup \mathcal{O}_2$, where:
\begin{equation*}
\begin{split}
   \Diams_\std' = \mathcal{O}_0 &= \{v \in \uu^- \mid \psi(v) <0 \text{ and } v_n <0\} ; \ \mathcal{O}_1 = \{v \in \uu^- \mid \psi(v) >0 \} ; \\
    \Diams_\std = \mathcal{O}_2 &= \{v \in \uu^- \mid \psi(v) <0 \text{ and } v_n > 0\}.\quad \qed
\end{split}
\end{equation*}
\end{enumerate}
\end{ex}

The goal of this section is to prove Lemma~\ref{lem_inclusion_Ois} below. To this end, we need the following lemma, which is a direct consequence of Fact~\ref{fact_takeuchi}:

\begin{lem}\label{lem_decomp_spectrale}
In the notation of Fact~\ref{fact_takeuchi}, one has~$\Ad(K_0) \cdot \operatorname{Span}(v_1, \dots, v_r) = \uu^\opp$. Moreover, for all~$X \in \uu^\opp$, writing~$X = \Ad(g) \cdot \sum_{k=1}^r \lambda_k v_k$ for some~$g \in K_0$ and~$\lambda_1, \dots, \lambda_r \in \mathbb{R}$, one has~$X \in V_{i_{+}, i_{-}}$, where~$i_{\pm} = \operatorname{card}\{1 \leq k \leq r \mid \pm\lambda_k > 0\}$. 
\end{lem}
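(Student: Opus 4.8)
The plan is to deduce both statements from Fact~\ref{fact_takeuchi} together with the torus and Weyl symmetries available inside $L^0$. For the first assertion, I would decompose $\operatorname{Span}(v_1, \dots, v_r)$ according to the number of nonzero coordinates,
$$\operatorname{Span}(v_1, \dots, v_r) = \bigsqcup_{i=0}^{r} \big\{ \sum_{k=1}^r \lambda_k v_k : \operatorname{card}\{k : \lambda_k \neq 0\} = i \big\},$$
and apply $\Ad(K_0)$. For $1 \leq i \leq r$, Fact~\ref{fact_takeuchi} turns the $i$-th piece into $\bigsqcup_{s+t=i} V_{s,t}$, while for $i = 0$ one has $\Ad(K_0) \cdot \{0\} = \{0\} = V_{0,0}$. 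Hence $\Ad(K_0) \cdot \operatorname{Span}(v_1, \dots, v_r) = \bigsqcup_{s+t \leq r} V_{s,t}$, which is exactly $\uu^\opp$ by the description of $\varphi_\std^{-1}(\hyp_{\LP^+})$ and of the open orbits $\mathcal{O}_i = V_{i, r-i}$ recalled above.

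For the second assertion, write $Y := \sum_k \lambda_k v_k$, so that $X = \Ad(g) \cdot Y$ with $g \in K_0 \subset L^0$. Since each $V_{s,t}$ is an $L^0$-orbit, it is $L^0$-invariant, so $X$ and $Y$ belong to the same $V_{s,t}$; it thus suffices to show $Y \in V_{i_+, i_-}$. Because $\aaa \subset \mathfrak{l}$ and $2\varepsilon_1, \dots, 2\varepsilon_r$ form a basis of $\aaa^*$, I can choose $H \in \aaa$ with $2\varepsilon_k(H) = \log |\lambda_k|$ for each $k$ with $\lambda_k \neq 0$; then $\exp(H) \in L^0$ acts by $\Ad(\exp H) \cdot v_k = e^{-2\varepsilon_k(H)} v_k$, so $\Ad(\exp H) \cdot Y = \sum_{k : \lambda_k \neq 0} \operatorname{sgn}(\lambda_k)\, v_k$. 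This reduces the claim to showing that every $\pm$-combination $\sum_{k \in S_+} v_k - \sum_{k \in S_-} v_k$, with $S_+, S_-$ disjoint of respective cardinalities $i_+, i_-$, lies in the orbit $V_{i_+, i_-}$ of $X_{i_+, i_-}$.

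To reach this standard form I would pick $\sigma \in S_r$ carrying $\{1, \dots, i_+\}$ onto $S_+$ and $\{i_+ + 1, \dots, i_+ + i_-\}$ onto $S_-$. By Fact~\ref{eq_root_syst_L_s}, the restricted Weyl group of $\mathfrak{l}_s$ is $S_r$, acting by permuting $\varepsilon_1, \dots, \varepsilon_r$ and hence the lines $\g_{-2\varepsilon_k} = \mathbb{R} v_k$, and it is realized inside $K_0 \subset L^0$; applying a representative of $\sigma$, followed by one more torus normalization, carries $X_{i_+, i_-}$ to the target combination. The delicate point — which is the real content of the lemma and the main obstacle — is that an arbitrary Weyl representative sends $v_k$ to a possibly \emph{negative} multiple of $v_{\sigma(k)}$, which would alter the signature; this must be excluded, since $v_k$ and $-v_k$ lie in the distinct orbits $V_{1,0}$ and $V_{0,1}$. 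The resolution is that $K_0$ acts on $\uu^\opp$ as the automorphism group of the associated Euclidean Jordan algebra, in which the $v_k$ are, up to scale, the primitive idempotents; since automorphisms preserve idempotents, $\sigma$ can be realized by the honest permutation $v_k \mapsto v_{\sigma(k)}$ with no sign — this is precisely the frame symmetry underlying Fact~\ref{fact_takeuchi}, from which $Y \in V_{i_+, i_-}$ follows at once.
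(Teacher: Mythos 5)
Your proof is correct and follows essentially the same route as the paper's: the first assertion is deduced, exactly as in the paper, from Fact~\ref{fact_takeuchi} together with the decomposition $\uu^\opp = \bigsqcup_{i+j\le r}V_{i,j}$, and for the second assertion the paper likewise first applies a torus element, namely $a = \exp\big(\tfrac12\sum_k\log(|\lambda_k|)\,h_{2\varepsilon_k}\big)\in\exp(\aaa)\subset L^0$ (the same normalization as your $\exp(H)$), to reduce to coefficients $\pm1$, and then a Weyl group element of $L^0$, acting as a permutation of the $(\varepsilon_i)$, to reach $X_{i_+,i_-}$. The one place where you go beyond the paper is precisely the sign issue you flag at the end: the paper's proof only asserts that $W_0$ acts by permutations on the $(\varepsilon_i)$ and then writes $\sum_{k\in J^+}v_k-\sum_{l\in J^-}v_l = w\cdot X_{i_+,i_-}$ (in fact with a typo, $\varepsilon_k$ appearing in place of $v_k$), leaving implicit why a representative of $w$ in $L^0$ can be chosen to send each $v_k$ to $+\,v_{\sigma(k)}$ rather than to a negative multiple. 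This point is genuinely needed, since torus elements rescale each line $\g_{-2\varepsilon_k}$ only by positive factors and $-v_k$ lies in $V_{0,1}\neq V_{1,0}$, so a stray sign could not be corrected afterwards. Your resolution via the Euclidean Jordan algebra structure --- representatives in $K_0$ act by Jordan automorphisms, hence send the primitive idempotents $v_k$ to primitive idempotents, and the only primitive idempotent on the line $\g_{-2\varepsilon_{\sigma(k)}}$ is $v_{\sigma(k)}$ itself --- is correct and standard, and it patches this imprecision; its only cost is invoking facts from \cite{faraut1994analysis} not spelled out in the paper, namely that Takeuchi's $K_0$ may be taken to be the stabilizer of $e=v_1+\cdots+v_r$ in $L^0$ and that the long root spaces are one-dimensional. (Your ``one more torus normalization'' after the permutation is then superfluous, since the coefficients are already $\pm1$.)
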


\begin{proof} The first assertion is a direct consequence of Fact~\ref{fact_takeuchi} and the fact that~$\uu^\opp = \bigsqcup_{i+j \leq r} V_{i,j}$.

To prove the second assertion, just notice that if~$X = \Ad(g) \cdot \sum_{k=1}^r \lambda_k v_k$ for some~$g \in K_0$ and~$\lambda_1, \dots, \lambda_r \in \mathbb{R}$, then one can write~$X = \Ad(ga) \cdot (\sum_{k \in J^+} \varepsilon_k - \sum_{l \in J^-} \varepsilon_l)$ for 
    \begin{equation*}
        a = \exp\Big{(}\frac{1}{2}\sum_{k=1}^r \log(|\lambda_k|)h_{2 \varepsilon_k}\Big{)} \in \exp(\aaa)
    \end{equation*}
    and~$J^{\pm} = \{1 \leq k \leq r \mid \pm\lambda_k > 0\}$ (recall that the~$h_{\alpha}$, for~$\alpha \in \Sigma$, are defined in Section~\ref{sect_real_lie_alg}). Note that~$a \in L^0$, by~\eqref{eq_decomposition_l}. Since the semisimple part of~$L^0$ is of type~$A_{r-1}$, and a fundamental system of restricted roots is given by the roots~$\varepsilon_i - \varepsilon_{i+1}$, for~$1 \leq i \leq r-1$ (see Fact~\ref{eq_root_syst_L_s}), the Weyl group~$W_0$ of~$L^0$ acts by permutations on the~$(\varepsilon_i)_{1 \leq i \leq r}$. Hence there exists~$w \in W_0$ such that
    \begin{equation*}
        \sum_{k \in J^+} \varepsilon_k - \sum_{l \in J^-} \varepsilon_l = w \cdot X_{i_+,i_-},
    \end{equation*}
    with~$i_{\pm} = \operatorname{card} (J^{\pm})$. Since~$g, a , w \in L^0$, we have~$X \in \Ad(L^0) \cdot X_{i_+, i_-} = V_{i_+, i_-}$.~$\qed$
\end{proof}

The orbits~$\mathcal{O}_i$, for~$0 \leq i \leq r$, satisfy the following elementary properties:

\begin{lem}\label{lem_inclusion_Ois}

\begin{enumerate}
    \item \label{lem_inclusion_Ois_item_inclusion} For all~$i$, one has~$\mathcal{O}_i + c^0 \subset \bigcup_{j \geq i} \overline{\mathcal{O}}_j$, and~$\mathcal{O}_i - c^0 \subset \bigcup_{j \leq i} \overline{\mathcal{O}}_j$. 
    \item \label{lem_inclusion_Ois_item_interior} For all~$i$, the set~$\mathcal{O}_i$ is equal to the interior of its closure.
    \item \label{lem_inclusion_Ois_item_intersection} For~$1 \leq i \leq k \leq j \leq r$, one has
    \begin{equation*}
        \overline{\mathcal{O}}_i \cap \overline{\mathcal{O}}_j \subset \overline{\mathcal{O}}_k.
    \end{equation*}
    \item For all~$0 \leq i \leq r$, one has~$- \mathcal{O}_i = \mathcal{O}_{r-i}$.
\end{enumerate}
\end{lem}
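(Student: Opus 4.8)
The plan is to phrase everything in terms of the \emph{signature} attached to a vector by the spectral decomposition of Lemma~\ref{lem_decomp_spectrale}. Writing $X = \Ad(g)\sum_{k=1}^r\lambda_k v_k$ with $g\in K_0$, set $i_\pm(X) = \operatorname{card}\{k\mid \pm\lambda_k>0\}$, so that $X\in V_{i_+(X),i_-(X)}$; in particular $\mathcal{O}_i = V_{i,r-i}$ is the set of $X$ with $i_+(X)=i$ and $i_+(X)+i_-(X)=r$ (the nondegenerate vectors), while \eqref{eq_adherence_O_i} rewrites as $\overline{\mathcal{O}}_i = \{X\mid i_+(X)\le i \text{ and } i_-(X)\le r-i\}$. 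With this dictionary, (3) and (4) become purely formal.

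For (4), the involution $X\mapsto -X$ commutes with the $L^0$-action because each $\Ad(g)$ is linear; hence it sends the single orbit $\mathcal{O}_i=\Ad(L^0)\cdot X_{i,r-i}$ to the orbit of $-X_{i,r-i} = -v_1-\cdots-v_i+v_{i+1}+\cdots+v_r$. Applying Lemma~\ref{lem_decomp_spectrale} with $g=\id$ gives $i_+(-X_{i,r-i})=r-i$ and $i_-(-X_{i,r-i})=i$, i.e. $-X_{i,r-i}\in V_{r-i,i}=\mathcal{O}_{r-i}$, whence $-\mathcal{O}_i=\mathcal{O}_{r-i}$. For (3), if $X\in\overline{\mathcal{O}}_i\cap\overline{\mathcal{O}}_j$ and $i\le k\le j$, then $i_+(X)\le i\le k$ and $i_-(X)\le r-j\le r-k$, so $X\in\overline{\mathcal{O}}_k$.

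For (2), recall (Kaneyuki, \cite{kaneyuki1988sylvester}) that $V_{a,b}$ is open exactly when $a+b=r$, so $\mathcal{O}_i$ is open and $\mathcal{O}_i\subset\operatorname{int}(\overline{\mathcal{O}}_i)$. Conversely, a point $X\in\overline{\mathcal{O}}_i\smallsetminus\mathcal{O}_i$ is degenerate, i.e. at least one coefficient $\lambda_k$ in its spectral decomposition vanishes. Perturbing precisely these vanishing coefficients, the vectors $\Ad(g)\sum_k(\lambda_k+t\eta_k)v_k$ stay arbitrarily close to $X$ (by continuity and linearity of $\Ad(g)$) while, for small $t>0$, having a strictly larger value of $i_+$ or of $i_-$; a short check using $i_+(X)\le i$, $i_-(X)\le r-i$ and $i_+(X)+i_-(X)<r$ shows one of these two perturbations leaves $\overline{\mathcal{O}}_i$. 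Hence $X\notin\operatorname{int}(\overline{\mathcal{O}}_i)$, and (2) follows.

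The main obstacle is (1), the only item coupling the additive structure of $\uu^-$ with the cone $c^0$: here $X$ and $X+C$ cannot be simultaneously ``diagonalized'' by Lemma~\ref{lem_decomp_spectrale}, so a genuine inertia argument is needed. Since $G$ is of tube type, $\uu^-$ carries the structure of a Euclidean Jordan algebra with symmetric cone $c^0$ and Jordan frame $(v_k)$ (see \cite{faraut1994analysis, kaneyuki1988sylvester}), and the $\lambda_k$ above are its Jordan eigenvalues. I would invoke the monotonicity of these eigenvalues under the cone order --- $C\in c^0$ implies $\lambda_j(X+C)\ge\lambda_j(X)$ for every $j$ --- which yields $i_+(X+C)\ge i_+(X)=i$. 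Writing $X+C\in V_{p,n}$ with $p\ge i$ and noting $V_{p,n}\subset\overline{\mathcal{O}}_p$ (immediate from \eqref{eq_adherence_O_i}, as $p+n\le r$) gives $X+C\in\bigcup_{j\ge i}\overline{\mathcal{O}}_j$, the first inclusion. The second inclusion $\mathcal{O}_i-c^0\subset\bigcup_{j\le i}\overline{\mathcal{O}}_j$ then follows formally by applying $X\mapsto-X$ and using (4) together with $-\overline{\mathcal{O}}_j=\overline{\mathcal{O}}_{r-j}$. If one prefers to avoid Jordan-algebra spectral theory, the eigenvalue monotonicity reduces, in the four classical cases of Table~\ref{table_shilov_bnds}, to the classical Weyl monotonicity for Hermitian matrices over $\Rf,\Cf,\Hf$ via \eqref{eq_Ois_lag}, leaving only $\mathfrak{e}_{7(-25)}$ to the general theory.
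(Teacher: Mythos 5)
Your handling of (2), (3) and (4) is correct, and in fact more detailed than the paper, which dismisses all three as consequences of \eqref{eq_adherence_O_i}; your perturbation argument for (2) and the signature dictionary for (3)--(4) are exactly the verifications the paper leaves implicit. Your formal derivation of the second inclusion in (1) from the first one together with (4) is also fine (the paper merely says the second inclusion admits an analogous proof).

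Where you genuinely diverge from the paper is the first inclusion of (1). The paper needs no eigenvalue monotonicity at all: by Fact~\ref{fact_autom_diam}, the group $L^0$ acts transitively on $c^0$ and $K_0$ fixes a point $Z_0 = \sum_k \mu_k v_k \in c^0$; writing an arbitrary $Z \in c^0$ as $\Ad(\ell)\cdot Z_0$ with $\ell \in L^0$ and using $L^0$-invariance of the orbits, the proof reduces to adding $Z_0$ itself, and then $X + Z_0 = \Ad(w')\cdot \sum_k (\lambda_k + \mu_k)v_k$ is expressed in the \emph{same} $K_0$-frame as $X$, so the number of positive coefficients visibly cannot decrease. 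This is elementary and rests only on Lemma~\ref{lem_decomp_spectrale}. Your route instead invokes Weyl-type monotonicity of Jordan eigenvalues under the cone order. That statement is true (it follows from Hirzebruch's min--max theorem for Euclidean Jordan algebras), so your argument is not wrong, but it is a nontrivial import that you neither prove nor locate precisely: it is not in \cite{kaneyuki1988sylvester}, and you would also need to justify that the $\lambda_k$ of Lemma~\ref{lem_decomp_spectrale} are the Jordan eigenvalues, i.e.\ that $K_0$ acts by Jordan automorphisms once the identity element is taken to be the $K_0$-fixed point. Moreover your fallback is miscounted: \eqref{eq_Ois_lag} describes only the three Lagrangian families $\spp(2r,\Rf)$, $\uu(r,r)$, $\soo^*(4r)$; the fourth classical case $\soo(n,2)$ is a rank-two spin factor, not a Hermitian matrix algebra, so it would still need separate treatment --- precisely the point of the remark following the paper's proof, which presents the matrix-signature argument and notes that it does not cover $\soo(n,2)$ or $\eeee_{7(-25)}$. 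In short, both approaches work, but the paper's fixed-point reduction buys uniformity over all five rows of Table~\ref{table_shilov_bnds} with no external input, whereas yours trades that for a dependence on Jordan-algebraic spectral theory that requires a proper citation (e.g.\ Hirzebruch) to be complete.
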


\begin{proof} Points (2), (3) and (4) follow from Equation~\eqref{eq_adherence_O_i}. It remains to prove (1). It suffices to prove the first inclusion, the second one admitting an analogue proof.

Let~$K_0$ be the maximal compact subgroup of~$L^0$ defined in Fact~\ref{fact_takeuchi}. By Fact~\ref{fact_autom_diam}, the group~$K_0$ has a fixed point~$Z_0$ in~$c^0$. By Lemma~\ref{lem_decomp_spectrale}, there exist~$w \in K_0$ and~$\mu_1, \dots, \mu_r >0$ such that~$Z_0 = \Ad(w) \cdot \sum_{k=1}^r \mu_k v_k$. Since~$Z_0$ is~$\Ad(K_0)$-invariant, we actually have~$Z_0 = \Ad(w)^{-1} Z_0 = \sum_{k=1}^r \mu_k v_k$.

Let~$X \in \mathcal{O}_i$. Let us first prove that~$X + Z_{0} \in \overline{\mathcal{O}_j}$ for some~$j \geq i$. By Lemma~\ref{lem_decomp_spectrale}, there exist~$w' \in K_0$ and~$\lambda_1, \dots \lambda_r \in \mathbb{R}$ such that~$X = \Ad(w') \cdot \sum_{k=1}^r \lambda_k v_k$, with~$i = \operatorname{card}\{1 \leq k \leq r \mid \lambda_k > 0\}$ and~$r-i = \operatorname{card}\{1 \leq k \leq r \mid \lambda_k < 0\}$. To simplify the notations, we may assume that~$\lambda_1, \dots, \lambda_i > 0$ and~$\lambda_{i+1}, \dots , \lambda_r < 0$. Since~$K_0$ stabilizes~$Z_{0}$, we have
\begin{equation*}
    X + Z_{0} = \Ad(w') \cdot \sum_{i=k}^r (\lambda_k +1) v_i.
\end{equation*}
For all~$1 \leq k \leq i$, we have~$\lambda_k +1 >0$, so by Lemma~\ref{lem_decomp_spectrale}, we have~$X + Z_{0} \in V_{j, j'}$, for some~$j \geq i$, and some~$j'$ such that~$j+j' \leq r$. By~\eqref{eq_adherence_O_i}, we then have~$X + Z_{0} \in \overline{\mathcal{O}_j}$.

Now if~$Z \in c^0$, we can write~$Z = \Ad(\ell) \cdot Z_{0}$ for some~$\ell \in L^0$. Since~$\mathcal{O}_i$ is~$L^0$-stable, we have~$\Ad(\ell)^{-1} \cdot X \in \mathcal{O}_i$. We then apply the argument of the previous paragraph to~$\Ad(\ell)^{-1} \cdot X$, and get~$\Ad(\ell)^{-1} \cdot X + Z_{0} \in \overline{\mathcal{O}_j}$ for some~$j \geq i$. But~$\overline{\mathcal{O}_j}$ is~$L^0$-stable, so~$X+ Z = \Ad(\ell) \cdot (\Ad(\ell)^{-1} \cdot X +  Z_0) \in \overline{\mathcal{O}_j} $. This ends the proof of Point~(1).~$\qed$
\end{proof}

\begin{rmk}
Let us give an elementary proof of Lemma~\ref{lem_inclusion_Ois}.(1) in the case where~$\g = \spp(2r, \mathbb{R})$ for some~$r \geq 2$. According to the description of the $\mathcal{O}_i$'s given in Equation~\eqref{eq_Ois_lag}, it suffices to show that for any two symmetric matrices $X, Y$ with $Y$ positive-definite and $X$ of signature $(i, r - i, 0)$, the signature of $X + Y$ is $(j, k, r - j - k)$ with $j \geq i$. This follows directly from the definition of the signature: if $V$ is an $i$-dimensional real vector subspace of~$\Rf^{r}$ on which~$X$ is positive definite, then by the positivity of~$Y$, one has $^t\! \overline{\mathsf{v}} (X + Y) \mathsf{v} > 0$ for all~$\mathsf{v} \in V \smallsetminus \{ 0 \}$. Hence, $j \geq i$. The same reasoning on the signature allows one to prove Lemma~\ref{lem_inclusion_Ois}.(1) in the cases~$\g = \suu(r,r)$ and~$\g = \soo(4r)^*$, where~$r \geq 2$. However, the cases~$\g = \soo(n,2)$ with~$n \geq 2$ and~$\g = \eeee_{7(-25)}$ must be handled separately (even though the case~$\g = \soo(n,2)$ with~$n \geq 2$ is well known). The proof provided above has the advantage of encompassing all cases at once.
\end{rmk}

\subsection{A notion of causal convexity}\label{sect_causal_convexity_def_properties} In this section, we introduce a notion of convexity, called \emph{causal convexity}, inspired from causal convexity in conformal space-times (see \cite{Sanchez2008} and Remark~\ref{sect_causal_convexity_einstein_universe}). Contrary to the dual convexity recalled in Definition~\ref{def_dual_convexity}, causal convexity is specific to flag manifolds admitting a causal structure. We investigate the properties of causally convex domains and relate the two notions of convexity.

\begin{rmk}\label{sect_causal_convexity_einstein_universe} A domain~$\O$ of a \emph{conformal spacetime}~$(M,[g])$ is said to be \emph{causally convex} if every \emph{causal curve} of~$M$ joining two points of~$\O$ is contained in~$\O$; see e.g.\ \cite[pp 8]{Sanchez2008}. Any affine chart~$\Affstd$ of~$\Ein^{n-1, 1}$ can be identified with Minkowski space (recall Example~\ref{ex_shilov_bndrs}.(2)), and it is then easy to check that a domain~$\O\subset \Affstd$ is causally convex if and only if for every pair~$a,b\in \O$, the diamond~$\Diams_\Affstd(a,b)$ is contained in~$\O$. This observation is the inspiration of the notion of causal convexity we will introduce in Definition~\ref{def_causal_convexity} below.

\end{rmk}

\subsubsection{Causal convexity in an affine chart}\label{sect_def_causal_affine_chart} In this section, we generalize the classical notion of causal convexity in Minkowski space, recalled in Remark~\ref{sect_causal_convexity_einstein_universe}, to affine charts of general causal flag manifolds.

We fix an affine chart~$\Affstd$ of~$\SB(\g)$. We define:

\begin{definition}\label{def_causal_convexity} 
    We say that a subset~$X \subset \Affstd$ is \emph{causally convex in~$\Affstd$} if for all~$x,y \in X$ with~$y \in \mathbf{J}_{\Affstd}^+(x)$, the closed diamond~$\Diams_{\Affstd}^c(x,y)$ is contained in~$X$.
\end{definition}

Note that if~$\O \subset \Affstd$ is open, then it is causally convex if and only if for all~$x,y \in \O$ such that~$y \in \I^+_\Affstd(x)$, one has~$\Diams_\Affstd(x,y) \subset \O$. 

\begin{rmk} Neeb defines the same notion of causal convexity in affine charts, and poses several questions concerning this property in \cite[Sect.\ 9.1]{neeb2025open}.
\end{rmk}

The intersection of two causally convex sets is still causally convex. This leads to the following definition:

\begin{definition}\label{def_convex_hull_causal} Let~$X \subset \Affstd$. The \emph{causally convex hull}~$\operatorname{Conv}_{\Affstd}(X)$ \emph{of~$X$ in~$\Affstd$} is the smallest causally convex subset of~$\Affstd$ containing~$X$. 
Equivalently, the set~$\operatorname{Conv}_{\Affstd}(X)$ is the intersection of all  causally convex subsets of~$\Affstd$ containing~$X$.
\end{definition}

\begin{lem}\label{lem_convex_envelope_egal_plus_petit_convex}Let~$X \subset \Affstd$ be a subset. The causally convex hull of~$X$ in~$\Affstd$ is equal to the union of all  diamonds~$\Diams_{\Affstd}^c(x,y)$, for~$x,y \in X$ and~$y \in \J_{\Affstd}^+(x)$. In particular, it is connected whenever~$X$ is. If~$\O \subset \Affstd$ be an open subset, then the causally convex hull of~$\O$ in~$\Affstd$ is equal to the union of all diamonds~$\Diams_{\Affstd}(x,y)$, for~$x,y \in X$ and~$y \in \I_{\Affstd}^+(x)$.
\end{lem}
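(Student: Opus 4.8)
The plan is to set $D := \bigcup \Diams_\Affstd^c(x,y)$, where the union runs over all $x,y \in X$ with $y \in \J_\Affstd^+(x)$, and to prove $\operatorname{Conv}_\Affstd(X) = D$ by a double inclusion, the reverse of which amounts to showing that $D$ is itself causally convex. Everything rests on one preliminary convex-geometric identity that I would record first. Fixing $g \in G$ with $\Affstd = g\cdot\Affstd_\std$ and using the coordinates $\varphi_\std$, the definitions of Section~\ref{sect_causalité} read, for $x = g\exp(X)\cdot\LP^+$, as $\I_\Affstd^\pm(x) = g\exp(X\pm c^0)\cdot\LP^+$ and $\J_\Affstd^\pm(x) = g\exp(X\pm\overline{c^0})\cdot\LP^+$. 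Since $c^0$ is an open properly convex cone, for $Y-X\in\overline{c^0}$ the bounded convex set $(X+c^0)\cap(Y-c^0)$ has closure $(X+\overline{c^0})\cap(Y-\overline{c^0})$; passing to the Hausdorff limit defining $\Diams^c_\Affstd$ then gives the identity
$$\Diams_\Affstd^c(x,y) = \J_\Affstd^+(x)\cap\J_\Affstd^-(y) \qquad \text{whenever } y\in\J_\Affstd^+(x).$$
In particular $\Diams_\Affstd^c(x,x)=\{x\}$ and $x,y\in\Diams_\Affstd^c(x,y)$, so $X\subseteq D$.

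The inclusion $D\subseteq\operatorname{Conv}_\Affstd(X)$ is immediate: as $\operatorname{Conv}_\Affstd(X)$ is causally convex and contains $X$, Definition~\ref{def_causal_convexity} forces every $\Diams_\Affstd^c(x,y)$ with endpoints $x,y\in X$ to lie in it. For the reverse inclusion, by minimality of the causally convex hull (Definition~\ref{def_convex_hull_causal}) it suffices to show $D$ is causally convex, since $X\subseteq D$. So I would take $p,q\in D$ with $q\in\J_\Affstd^+(p)$ and $w\in\Diams_\Affstd^c(p,q)=\J_\Affstd^+(p)\cap\J_\Affstd^-(q)$, and choose $x_1,y_1,x_2,y_2\in X$ with $p\in\Diams_\Affstd^c(x_1,y_1)$ and $q\in\Diams_\Affstd^c(x_2,y_2)$; the preliminary identity (and reflexivity) then give $p\in\J_\Affstd^+(x_1)$ and $y_2\in\J_\Affstd^+(q)$. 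Three applications of transitivity (Fact~\ref{fact_cone_in_schubert_subvrty}) yield $w\in\J_\Affstd^+(x_1)$ (chaining $x_1,p,w$), $w\in\J_\Affstd^-(y_2)$ (chaining $w,q,y_2$), and $y_2\in\J_\Affstd^+(x_1)$ (chaining $x_1,p,q,y_2$). Hence $w\in\J_\Affstd^+(x_1)\cap\J_\Affstd^-(y_2)=\Diams_\Affstd^c(x_1,y_2)\subseteq D$, proving $\Diams_\Affstd^c(p,q)\subseteq D$.

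For the remaining assertions: each $\Diams_\Affstd^c(x,y)$ is connected (it is convex in the coordinates above) and contains $x\in X$, so if $X$ is connected then $D$, a union of connected sets each meeting the connected set $X$, is connected. For an open domain $\O$ I would compare $D$ with $D_o := \bigcup\Diams_\Affstd(x,y)$ over $x,y\in\O$ with $y\in\I_\Affstd^+(x)$. One has $D_o\subseteq D$ trivially; conversely, given $w\in\Diams_\Affstd^c(x,y)$ with $x,y\in\O$, openness of $\O$ lets me pick $x'\in\I_\Affstd^-(x)\cap\O$ and $y'\in\I_\Affstd^+(y)\cap\O$ (these cones accumulate at $x$ and at $y$), and the absorption identity $c^0+\overline{c^0}=c^0$ for the open convex cone $c^0$ upgrades $w\in\J_\Affstd^+(x)\cap\J_\Affstd^-(y)$ to $w\in\I_\Affstd^+(x')\cap\I_\Affstd^-(y')=\Diams_\Affstd(x',y')\subseteq D_o$ with $y'\in\I_\Affstd^+(x')$. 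Together with the main part this gives $\operatorname{Conv}_\Affstd(\O)=D=D_o$.

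The only genuine content is the preliminary identity $\Diams_\Affstd^c = \J_\Affstd^+\cap\J_\Affstd^-$ together with the elementary cone facts (the closure of $(X+c^0)\cap(Y-c^0)$ and $c^0+\overline{c^0}=c^0$), which is exactly where properness of $c^0$ enters; once these are available, the remaining steps are formal bookkeeping with the reflexivity, antisymmetry, and transitivity of the causal order of Fact~\ref{fact_cone_in_schubert_subvrty}. I expect the Hausdorff-limit justification of the identity to be the most delicate point, but it is standard convex geometry for a salient cone.
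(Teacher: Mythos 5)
Your proposal follows the paper's proof almost verbatim: both define the union of closed diamonds with endpoints in $X$, obtain one inclusion from minimality of the hull, and obtain the other by showing this union is causally convex via the transitivity relations of Fact~\ref{fact_cone_in_schubert_subvrty}, absorbing a diamond with endpoints in the union into a larger diamond $\Diams^c_{\Affstd}(x_1,y_2)$ with endpoints in $X$. The paper is in fact terser than you are: it never states the identity $\Diams^c_{\Affstd}(x,y)=\J^+_{\Affstd}(x)\cap\J^-_{\Affstd}(y)$, even though its key step ``$\Diams_{\Affstd}^c(x,y) \subset \Diams_{\Affstd}^c(x_1, y_2)$'' uses exactly both inclusions of it; isolating that identity, as you do, is an improvement, and your treatment of connectedness and of the open case (which the paper dismisses as ``similar'') is correct, including the absorption fact $c^0+\overline{c^0}=c^0$.

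However, your justification of the preliminary identity contains a false claim. When $Y-X\in\overline{c^0}\smallsetminus c^0$ (the degenerate case $y\in\J^+_{\Affstd}(x)\smallsetminus\I^+_{\Affstd}(x)$), the set $(X+c^0)\cap(Y-c^0)$ is \emph{empty}: if $Z$ lay in it, then $Y-X=(Y-Z)+(Z-X)\in c^0+c^0\subset c^0$, a contradiction. Its closure is therefore empty, while $(X+\overline{c^0})\cap(Y-\overline{c^0})$ contains $X$ and $Y$ (think of a lightlike pair in Minkowski space, where the right-hand side is the lightlike segment). So the closure identity you state ``for $Y-X\in\overline{c^0}$'' fails precisely in the only case where the Hausdorff-limit definition of $\Diams^c_{\Affstd}$ is actually needed. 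The repair is the argument you gesture at but do not write: apply the closure identity only to the nondegenerate approximants $y_k\in\I^+_{\Affstd}(x)$, where it is valid and gives $\overline{\Diams_{\Affstd}(x,y_k)}=(X+\overline{c^0})\cap(Y_k-\overline{c^0})$, then compute the Hausdorff limit with a convenient sequence (allowed, since the paper asserts the limit is sequence-independent), e.g.\ $Y_k=Y+t_kV$ with $V\in c^0$ and $t_k\downarrow 0$. These sets are compact (bounded because $c^0$ is properly convex), decreasing, and their intersection is $(X+\overline{c^0})\cap(Y-\overline{c^0})$, because $Y_k-Z\in\overline{c^0}$ for all $k$ forces $Y-Z\in\overline{c^0}$ by closedness; decreasing compacta Hausdorff-converge to their intersection. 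With this correction your preliminary identity, and hence the whole argument, is sound and coincides with the paper's.
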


\begin{proof} We prove the first assertion, the second one admitting a similar proof. Let us define~$X' := \bigcup_{x,y \in X,\ y \in \J_{\Affstd}^+(x)} \Diams_{\Affstd}^c(x,y)$. By definition of the causally convex hull, we have~$X' \subset \operatorname{Conv}_{\Affstd}(X)$. Since~$X \subset X'$, to prove the converse inclusion, it suffices to prove that~$X'$ is  causally convex.  Let~$x, y \in X'$ be such that~$y \in \J_{\Affstd}^+(x)$. By definition of~$X'$, there exist~$x_1, x_2 \in X$ such that~$x_2 \in \J_{\Affstd}^+(x_1)$ and~$x \in \Diams_{\Affstd}^c(x_1, x_2)$, and~$y_1, y_2 \in \Affstd$ such that~$y_2 \in \J_{\Affstd}^+(y_1)$ and~$y \in \Diams_{\Affstd}^c(y_1, y_2)$. Then by transitivity, we get that~$y_2 \in \J_{\Affstd}^+(x_1)$. Since~$x_1, y_2 \in X$, by definition of~$X'$ we have~$\Diams_{\Affstd}^c(x,y) \subset \Diams_{\Affstd}^c(x_1, y_2) \subset X'$. Hence~$X'$ is causally convex.~$\qed$
\end{proof}

\begin{rmk}\label{rmk_convex_envelop_projective}  Lemma~\ref{lem_convex_envelope_egal_plus_petit_convex} states a property of convexity studied in this section --- causal convexity --- that significantly distinguishes it from classical convexity in the real projective setting. Indeed, in the latter case, the convex hull of a set~$F$ is in general not equal to the union of the projective segments connecting two points of~$F$; instead, every element of~$F$ is a convex combination of at most~$n$ points, where~$F \subset \mathbb{P}(\mathbb{R}^n)$. In the case of causal convexity in causal flag manifolds, Lemma~\ref{lem_convex_envelope_egal_plus_petit_convex} is a consequence of the intrinsic causality of our definition of convexity, and will be crucial in the proof of the implication~$(1) \Rightarrow (2)$ of Theorem~\ref{thm_equiv_anosov}.
\end{rmk}

\subsubsection{Link with dual convexity}\label{sect_link_dual_convexity} The goal of this section is to relate causal convexity and dual convexity, see Proposition~\ref{prop_dual_implies_causal} and Remark~\ref{rmk_causal_weaker_dual} below. To this end, we need the following lemma, which answers in particular a question of Neeb \cite[Problem 9.8]{neeb2025open}:

\begin{lem} \label{lem_comp_causally_convex}
Every connected component of~$\Affstd_{\mathsf{std}} \smallsetminus \hyp_{\LP^+} = \SB(\g) \smallsetminus (\hyp_{\LP^+} \cup \hyp_{\LP^{\opp}})$ is causally convex in~$\Affstd_{\mathsf{std}}$.
\end{lem}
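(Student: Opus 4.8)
The plan is to transport the statement to the vector space $\uu^-$ via the chart $\varphi_{\std}$ of~\eqref{eq_A_egal_exp} and then read it off from the structure of the orbits $\mathcal{O}_i$ established in Lemma~\ref{lem_inclusion_Ois}. Recall from Section~\ref{sect_def_Ois} that $\varphi_{\std}$ is a homeomorphism from $\uu^-$ onto $\Affstd_{\std}$ carrying $\uu^- \smallsetminus \varphi_{\std}^{-1}(\hyp_{\LP^+})$ onto $\Affstd_{\std} \smallsetminus \hyp_{\LP^+}$, and that the connected components of the former are exactly the open orbits $\mathcal{O}_0, \dots, \mathcal{O}_r$. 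Since $\Diams_{\std} = \I^+(\LP^+) = \varphi_{\std}(\mathcal{O}_r) = \varphi_{\std}(c^0)$ and the time orientation is chosen continuously over the connected chart $\Affstd_{\std}$, the future and past of a point $\varphi_{\std}(X)$ correspond under $\varphi_{\std}^{-1}$ to $X + c^0$ and $X - c^0$ respectively. Thus, fixing $i$, it suffices to show that $\varphi_{\std}(\mathcal{O}_i)$ is causally convex; and since $\mathcal{O}_i$ is open I would use the criterion following Definition~\ref{def_causal_convexity} for open sets, namely that it is enough to check that for all $X, Y \in \mathcal{O}_i$ with $Y - X \in c^0$ the diamond $(X + c^0) \cap (Y - c^0)$ is contained in $\mathcal{O}_i$.

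So I would fix $X, Y \in \mathcal{O}_i$ with $Y - X \in c^0$ and take $Z \in (X + c^0) \cap (Y - c^0)$. By Lemma~\ref{lem_inclusion_Ois}.(1) applied to $X$, we have $Z \in X + c^0 \subset \mathcal{O}_i + c^0 \subset \bigcup_{j \geq i} \overline{\mathcal{O}}_j$, and applied to $Y$, we have $Z \in Y - c^0 \subset \mathcal{O}_i - c^0 \subset \bigcup_{k \leq i} \overline{\mathcal{O}}_k$. Hence $Z \in \overline{\mathcal{O}}_j \cap \overline{\mathcal{O}}_k$ for some $k \leq i \leq j$. Writing $Z \in V_{a,b}$, and using that the $V_{a,b}$ are pairwise disjoint, the description~\eqref{eq_adherence_O_i} of $\overline{\mathcal{O}}_k$ forces $a \leq k$, while that of $\overline{\mathcal{O}}_j$ forces $b \leq r - j$; since $k \leq i$ and $r - j \leq r - i$, this yields $a \leq i$ and $b \leq r - i$, so $Z \in \overline{\mathcal{O}}_i$ by~\eqref{eq_adherence_O_i} once more.

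This shows that the open set $(X + c^0) \cap (Y - c^0)$ is contained in $\overline{\mathcal{O}}_i$, hence in the interior of $\overline{\mathcal{O}}_i$, which equals $\mathcal{O}_i$ by Lemma~\ref{lem_inclusion_Ois}.(2). This is exactly the inclusion $\Diams(X, Y) \subset \mathcal{O}_i$, proving causal convexity of $\varphi_{\std}(\mathcal{O}_i)$ and therefore of every connected component of $\Affstd_{\std} \smallsetminus \hyp_{\LP^+} = \SB(\g) \smallsetminus (\hyp_{\LP^+} \cup \hyp_{\LP^{\opp}})$.

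The deduction itself is short: essentially all the content sits in Lemma~\ref{lem_inclusion_Ois}, whose part (1) is tailored precisely to control how the cone $c^0$ moves a point between orbits of increasing (resp.\ decreasing) index, and whose part (2) together with~\eqref{eq_adherence_O_i} lets me upgrade closure membership to membership in the open orbit. I expect the only genuinely delicate point to be the bookkeeping around the time orientation — confirming that the future direction corresponds uniformly to $+c^0$ (and not $-c^0$) across the whole connected chart, which is what ensures that the two inclusions coming from Lemma~\ref{lem_inclusion_Ois}.(1) combine with the correct inequalities $k \leq i \leq j$.
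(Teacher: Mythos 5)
Your proof is correct and follows essentially the same route as the paper: identify the component with $\varphi_{\std}(\mathcal{O}_i)$, apply Lemma~\ref{lem_inclusion_Ois}.(1) to trap the diamond in $\bigl(\bigcup_{j \geq i} \overline{\mathcal{O}}_j\bigr) \cap \bigl(\bigcup_{k \leq i} \overline{\mathcal{O}}_k\bigr) \subset \overline{\mathcal{O}}_i$, then use openness of the diamond and Lemma~\ref{lem_inclusion_Ois}.(2) to conclude. The only cosmetic difference is that you re-derive the intersection property from Equation~\eqref{eq_adherence_O_i} via the partition into the $V_{a,b}$, whereas the paper simply cites Lemma~\ref{lem_inclusion_Ois}.(3), which encapsulates exactly that bookkeeping; your worry about the time orientation is also harmless, since swapping $c^0$ for $-c^0$ merely exchanges the roles of the two unions.
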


\begin{proof} Le~$\mathcal{U}$ be a connected component of~$\Affstd_\std \smallsetminus \hyp_{\LP^+}$. In the notation of Section~\ref{sect_def_Ois}, we know that~$\mathcal{U} = \varphi_\std(\mathcal{O}_i)$ for some~$0 \leq i \leq r$. Let~$x,y \in \mathcal{U}$ be such that~$y \in \I^+(x)$, and let~$X,Y \in \mathcal{O}_i$ be such that~$x = \varphi_{\std}(X)$,~$y = \varphi_{\std}(Y)$. Then, by Points~(1) and~(3) of Lemma~\ref{lem_inclusion_Ois}, one has
\begin{equation*}
    \Diams(x,y) = \varphi_{\std}\big{(}(X + c^0) \cap (Y - c^0)\big{)} \subset \varphi_{\std} \Big{(} \big{(}\bigcup_{j \geq i} \overline{\mathcal{O}}_j \big{)} \cap \big{(}\bigcup_{j \leq i} \overline{\mathcal{O}}_j \big{)} \Big{)} \subset \overline{\mathcal{U}}.
\end{equation*}
Since~$\Diams(x,y)$ is open, by Lemma~\ref{lem_inclusion_Ois}.(2), we have~$\Diams(x,y) \subset \mathcal{U}$.~$\qed$
\end{proof}

We can now prove:

\begin{prop}\label{prop_dual_implies_causal} Let~$\Affstd$ be an affine chart. Any dually convex domain contained in~$\Affstd$ is causally convex in~$\Affstd$.
\end{prop}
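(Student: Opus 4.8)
The plan is to use the open‑diamond reformulation of causal convexity (the remark following Definition~\ref{def_causal_convexity}) together with the causal convexity of the connected components of a Schubert‑complement furnished by Lemma~\ref{lem_comp_causally_convex}, and to argue by contradiction. First I would reduce to the standard chart: by the $G$‑equivariance of the causal structure and of dual convexity, together with Remark~\ref{Rmk_contenu_dans_A_std}, I may assume $\Affstd = \Affstd_\std$ and that $\O$ is proper in $\Affstd_\std$. Since $\O$ is open, it suffices (by the remark after Definition~\ref{def_causal_convexity}) to show that whenever $x,y \in \O$ with $y \in \I^+(x)$, one has $\Diams(x,y) \subset \O$. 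Suppose this fails. The open diamond $D := \Diams(x,y)$ is connected and accumulates at its endpoint $x \in \O$, so $D \cap \O \neq \emptyset$; since $D \not\subset \O$ and $\O$ is open, connectedness of $D$ then produces a point $A \in D \cap \partial\O$.

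Next I would apply dual convexity at $A$: there exists $\xi \in \O^*$ with $A \in \hyp_\xi$, and by definition of $\O^*$ we have $\hyp_\xi \cap \O = \emptyset$; in particular $x$ and $y$ are transverse to $\xi$. Thus $\O$ is a connected subset of $\Affstd_\std \smallsetminus \hyp_\xi$, hence contained in a single connected component $\mathcal{U}$ of $\Affstd_\std \smallsetminus \hyp_\xi$. The crucial point is that $\mathcal{U}$ is causally convex in $\Affstd_\std$: this is Lemma~\ref{lem_comp_causally_convex} transported by an element $g \in G$ with $g \cdot \LP^\opp = \LP^\opp$ and $g \cdot \LP^+ = \xi$ (such a $g$ exists in the unipotent radical $U^-$, which acts transitively on the points of $\Affstd_\std$, whenever $\xi$ is transverse to $\LP^\opp$), since $g$ preserves the causal structure of $\Affstd_\std$ and carries the components of $\Affstd_\std \smallsetminus \hyp_{\LP^+}$ onto those of $\Affstd_\std \smallsetminus \hyp_\xi$. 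As $x,y \in \mathcal{U}$ and $y \in \I^+(x)$, causal convexity of $\mathcal{U}$ forces $D = \Diams(x,y) \subset \mathcal{U}$. But $A \in D$ while $A \in \hyp_\xi$ and $\mathcal{U} \cap \hyp_\xi = \emptyset$, a contradiction; hence $D \subset \O$.

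The main obstacle is precisely the transversality hypothesis needed to invoke the equivariant form of Lemma~\ref{lem_comp_causally_convex}: the supporting element $\xi$ produced by dual convexity need not be transverse to the chart base $\LP^\opp$, in which case $\Affstd_\std \smallsetminus \hyp_\xi$ is not a generic two‑Schubert complement and the transport argument does not apply directly. I would resolve this by re‑choosing the chart base rather than $\xi$. The key observation is that the diamond $D$ is \emph{the same set} for every chart $\Affstd_z$ whose base $z$ lies in the same connected component of $\operatorname{int}(D^*)$, so I am free to replace the base by any $z \in \operatorname{int}(D^*) \cap \operatorname{int}(\O^*)$; this intersection is a nonempty open set (both interiors contain $\LP^\opp$, since $D$ and $\O$ are proper in $\Affstd_\std$), and it cannot be contained in the proper subvariety $\hyp_\xi$. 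Choosing $z$ there transverse to $\xi$, with $z \in \O^*$ so that $\O \subset \Affstd_z$ and $D = \Diams_{\Affstd_z}(x,y)$, I rerun the contradiction in $\Affstd_z$, where now $z$ and $\xi$ are transverse and the equivariant Lemma~\ref{lem_comp_causally_convex} (moving $(\xi,z)$ to $(\LP^+,\LP^\opp)$) applies verbatim. Once this transversality is arranged, all remaining steps are formal.
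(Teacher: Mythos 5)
Your proof follows the paper's strategy exactly — argue by contradiction, take a dual support $\hyp_\xi$ at a point $A\in\Diams(x,y)\cap\partial\O$, use connectedness to place $\O$ in a single component of the complement of $\hyp_\xi$, and contradict Lemma~\ref{lem_comp_causally_convex} — and you correctly identify a transversality subtlety that the paper's own proof elides: its step ``since $U^-$ acts transitively on $\Affstd_\std$, we may assume $z=\LP^+$'' already presupposes that the support $z$ is transverse to $\LP^\opp$, which dual convexity does not guarantee. In the case where $\overline{\O}\subset\Affstd$ (i.e.\ $\O$ is proper in the given chart), your repair works: $\LP^\opp$ then lies in the interior of both $\O^*$ and $\Diams(x,y)^*$, this open intersection cannot be contained in the nowhere-dense subvariety $\hyp_\xi$, and Lemma~\ref{lem_diamond_bounded} together with the $G$-transported Lemma~\ref{lem_comp_causally_convex} closes the argument.

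The gap is your very first reduction: you may not assume that $\O$ is proper in $\Affstd_\std$. The hypothesis is only $\O\subset\Affstd$; Remark~\ref{Rmk_contenu_dans_A_std} concerns domains that are proper to begin with, a single group element cannot in general both send $\Affstd$ to $\Affstd_\std$ and push $\overline{\O}$ off the chart's boundary, and you cannot invoke chart-independence of causal convexity (Proposition~\ref{lem_convex_hul_indép_carte_aff}) since that is proved later \emph{using} the present proposition. The non-proper case is not vacuous — it is exactly the generality in which the proposition is applied in the proof of Proposition~\ref{prop_dual_implique_causall} — and in that case your mechanism breaks irreparably rather than needing mere adjustment: if $\O$ is not proper, then $\O^*$ has empty interior (Section~\ref{sect_dual_proper_domain}), and there may be no admissible chart base at all. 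Concretely, take $G=\PO(n,2)$, fix $w\in\hyp_{\LP^\opp}\smallsetminus\{\LP^\opp\}$, and let $\O$ be a connected component of $\Affstd_\std\smallsetminus\hyp_w$, i.e.\ an open half-space of Minkowski space bounded by a null hyperplane. This is a dually convex domain contained in $\Affstd_\std$ (supported by $\hyp_w$ at finite boundary points and by $\hyp_{\LP^\opp}$ at infinity), yet $\O^*$ is a closed arc of the photon through $w$ and $\LP^\opp$; at a finite boundary point the only available support is $\xi=w$, and every point of $\O^*$, being on that photon, is non-transverse to $w$. Hence no $z\in\O^*$ transverse to $\xi$ exists, and neither your transport argument nor the paper's literal proof can be run for such $\O$ (even though the conclusion is true: null half-spaces are causally convex). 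Closing this case requires a genuinely new ingredient, for instance extending Lemma~\ref{lem_comp_causally_convex} to the connected components of $\Affstd_\std\smallsetminus\hyp_w$ for \emph{every} $w\neq\LP^\opp$, not only for $w$ transverse to $\LP^\opp$.
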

\begin{proof} Since~$G$ acts transitively on~$\SB(\g)$, we may assume that~$\Affstd = \Affstd_{\mathsf{std}}$. Let~$\O$ be a dually convex domain of~$\SB(\g)$ which is contained in~$\Affstd_{\mathsf{std}}$ and let~$x,y \in \O$, with~$y \in \I^{+}(x)$. Assume that there exists~$a \in \Diams(x,y) \cap \partial \O$. By dual convexity, there exists~$z \in \SB(\g)$ such that~$\O \cap \hyp_z = \emptyset$ and~$a \in \hyp_z$. Since~$U^-$ acts transitively on~$\Affstd_\std$, we may assume that~$z = \LP^+$. By connectedness, the domain~$\O$ is then contained in one of the connected components of~$\Affstd_\std \smallsetminus \hyp_{\LP^+}$, let us denote it by~$\mathcal{O}$. In particular, we have~$x,y \in \mathcal{O}$. Then, by Lemma~\ref{lem_comp_causally_convex}, we have~$a \in \mathcal{O}$, which contradicts the fact that~$a \in \hyp_{\LP^+}$. Thus we must have~$\Diams(x,y) \subset \O$.~$\qed$ 
\end{proof}

\begin{rmk}\label{rmk_causal_weaker_dual} The implication of Proposition~\ref{prop_dual_implies_causal} is not an equivalence, as there exist causally convex domains which are not dually convex. For instance, take a diamond~$D$ bounded in an affine chart~$\Affstd$, and consider~$\O := D \smallsetminus \J_{\Affstd}^+(x)$ for some point~$x \in D$. Then~$\O$ is causally convex but not dually convex.
\end{rmk}

\subsubsection{Causal convexity in~$\SB(\g)$}\label{sect_causal_convexity_def} In this section, our goal is to prove that causal convexity is an intrinsic notion in~$\SB(\g)$, namely, whenever a subset~$X \subset \SB(\g)$ is connected, the property for~$X$ of being causally convex does not depend on the choice of an affine chart containing~$X$. This statement is contained in Proposition~\ref{lem_convex_hul_indép_carte_aff} below. We will need the following lemma:
\begin{lem}\label{lem_convex_hul_indép_auxiliaire}
    Let~$\O \subset \SB(\g)$ be a domain contained in an affine chart. For any subset~$X \subset \O$, the causally convex hull of~$X$ in~$\Affstd$ does not depend on the affine chart~$\Affstd$ containing~$\O$. We denote it by~$\operatorname{Conv}^{\O}(X)$.
\end{lem}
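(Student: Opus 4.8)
The plan is to reduce the statement to a comparison of diamonds. By Lemma~\ref{lem_convex_envelope_egal_plus_petit_convex}, for any affine chart~$\Affstd$ containing~$\O$ one has
\[
  \operatorname{Conv}_{\Affstd}(X)=\bigcup_{\substack{x,y\in X\\ y\in\J^{+}_{\Affstd}(x)}}\Diams^{c}_{\Affstd}(x,y),
\]
so it suffices to fix two charts~$\Affstd_1=\Affstd_{z_1}$ and~$\Affstd_2=\Affstd_{z_2}$ containing~$\O$ and to match these families of diamond pieces. The guiding principle is that a diamond is \emph{intrinsic}: the two diamonds with endpoints~$x,y$ are the two proper components of~$\SB(\g)\smallsetminus(\hyp_x\cup\hyp_y)$, and the chart merely selects which one is bounded in it. Concretely, since the interior of the dual of a diamond is the opposite diamond (Section~\ref{ssect_def_diamonds}), the forward direction of Lemma~\ref{lem_diamond_bounded} tells us that the proper-in-$\Affstd_z$ diamond with endpoints~$x,y$ is exactly the one whose opposite contains~$z$.

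Thus the heart of the matter is the claim that for~$x,y\in\O$ with~$y\in\I^{+}_{\Affstd_1}(x)$, the diamond~$D_1:=\Diams_{\Affstd_1}(x,y)$ satisfies~$\overline{D_1}\subset\Affstd_2$. I would first treat the case where~$z_1$ and~$z_2$ are transverse. Then a~$G$-translate of Lemma~\ref{lem_comp_causally_convex} applies: the connected component~$C$ of~$\SB(\g)\smallsetminus(\hyp_{z_1}\cup\hyp_{z_2})$ containing the connected set~$\O$ is causally convex in~$\Affstd_1$. Since~$C$ is open and~$x,y\in C$, I would pick~$x_\varepsilon\in\I^{-}_{\Affstd_1}(x)\cap C$ and~$y_\varepsilon\in\I^{+}_{\Affstd_1}(y)\cap C$ (possible because the open cones~$\I^{\mp}_{\Affstd_1}$ accumulate at their apex); transitivity gives~$y_\varepsilon\in\I^{+}_{\Affstd_1}(x_\varepsilon)$, and the cone inclusions~$\J^{+}_{\Affstd_1}(x)\subset\I^{+}_{\Affstd_1}(x_\varepsilon)$ and~$\J^{-}_{\Affstd_1}(y)\subset\I^{-}_{\Affstd_1}(y_\varepsilon)$ yield~$\overline{D_1}\subset\Diams_{\Affstd_1}(x_\varepsilon,y_\varepsilon)\subset C\subset\Affstd_2$, the middle inclusion by causal convexity of~$C$. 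Then Lemma~\ref{lem_diamond_bounded}, now read in~$\Affstd_2$, identifies~$D_1$ with the proper-in-$\Affstd_2$ diamond with endpoints~$x,y$, so~$\Diams^{c}_{\Affstd_1}(x,y)$ equals~$\Diams^{c}_{\Affstd_2}(x,y)$ or~$\Diams^{c}_{\Affstd_2}(y,x)$, i.e.\ a piece of~$\operatorname{Conv}_{\Affstd_2}(X)$. By symmetry the reverse inclusion holds, so the two hulls coincide whenever~$z_1,z_2$ are transverse.

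For an arbitrary pair of charts I would reduce to the transverse case by inserting an intermediate chart~$\Affstd_{z_3}$ with~$z_3\in\O^*$ transverse to both~$z_1$ and~$z_2$, and applying the transverse case twice to get~$\operatorname{Conv}_{\Affstd_1}(X)=\operatorname{Conv}_{\Affstd_3}(X)=\operatorname{Conv}_{\Affstd_2}(X)$. Such a~$z_3$ exists because the points transverse to both~$z_1,z_2$ form an open dense subset of~$\SB(\g)$; this is immediate when~$\O$ is proper, since then~$\operatorname{int}(\O^*)\neq\emptyset$. Alternatively, the transverse case already shows~$\{z\in\O^*:\ z\ \text{transverse to}\ z_1\}\subset\widetilde{D_1}$, where~$\widetilde{D_1}=\operatorname{int}(D_1^{*})$ is the opposite diamond; since any~$z_2\in\O^*$ avoids~$\hyp_x\cup\hyp_y\supset\partial\widetilde{D_1}$, a density argument then forces~$z_2\in\widetilde{D_1}$, that is~$\overline{D_1}\subset\Affstd_2$.

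I expect the main obstacle to be the control of closures: causal convexity of~$C$ only delivers the \emph{open} diamond inside the open set~$C$, and one must upgrade this to the statement that the \emph{closed} diamond~$\overline{D_1}$ lies in the opposite chart~$\Affstd_2$ — the~$x_\varepsilon,y_\varepsilon$ enlargement is designed precisely for this. The secondary difficulties are the treatment of non-transverse chart pairs (handled by the intermediate chart or density argument above) and of the lightlike boundary pieces~$\Diams^{c}_{\Affstd_1}(x,y)$ with~$y\in\J^{+}_{\Affstd_1}(x)\smallsetminus\I^{+}_{\Affstd_1}(x)$, which I would dispatch using the Hausdorff-limit definition of~$\Diams^{c}$ together with the limiting arguments above.
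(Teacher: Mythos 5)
Your transverse case is correct, and it follows a genuinely different route from the paper's proof. The paper transports diamonds from one chart into the other through the dual convex hull~$\Od$ of Definition~\ref{def_enveloppe_dc}, which is contained in \emph{every} affine chart containing~$\O$ and is causally convex in each of them by Proposition~\ref{prop_dual_implies_causal}; you instead use the connected component~$C$ of~$\Affstd_1\cap\Affstd_2$ containing~$\O$, via a~$G$-translate of Lemma~\ref{lem_comp_causally_convex}. The remaining skeleton (reduction to pairs by Lemma~\ref{lem_convex_envelope_egal_plus_petit_convex}, trapping the closed diamond, identification via Lemma~\ref{lem_diamond_bounded}, limits for lightlike pairs) is the same in both arguments. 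Your~$x_\varepsilon,y_\varepsilon$ enlargement in fact treats the closure point more carefully than the paper itself, whose intermediate inclusion~$\overline{\O_0^{**}}\subset\Affstd'$ can fail (take~$\O=\Diams_\std$ and~$\Affstd'=\Affstd_{\LP^+}$: then~$\LP^+\in\overline{\O_0^{**}}\cap\hyp_{\LP^+}$); your trick is exactly what repairs that step.

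The genuine gap is the reduction of the general case to the transverse one. The lemma is stated --- and is applied in the proof of Proposition~\ref{lem_convex_hul_indép_carte_aff} --- for a domain~$\O$ that is merely \emph{contained in an affine chart}, not proper; then~$\operatorname{int}(\O^*)$ may be empty, so your intermediate point~$z_3\in\O^*$ transverse to both~$z_1$ and~$z_2$ need not exist. The fallback ``density argument'' does not close this either: points transverse to~$z_1$ are dense in~$\SB(\g)$, but nothing makes them dense in, or even present in, the set~$\O^*$, which is what you would need. Concretely, in~$\Ein^{n-1,1}$ (notation of Example~\ref{ex_shilov_bndrs}.(2)) let~$\O$ be one of the two open half-spaces of the Minkowski chart~$\Affstd_{z_1}$ bounded by a null hyperplane~$H=\hyp_{z_2}\cap\Affstd_{z_1}$, where~$z_2\in\hyp_{z_1}\smallsetminus\{z_1\}$. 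For every point~$w\in\Affstd_{z_1}$, the lightcone~$\mathbf{C}_{\Affstd_{z_1}}(w)\subset\hyp_w$ contains complete null lines not parallel to~$H$, which therefore meet both sides of~$H$; hence~$\hyp_w\cap\O\neq\emptyset$ and~$w\notin\O^*$. Thus~$\O^*\subset\hyp_{z_1}$: no element of~$\O^*$ is transverse to~$z_1$, so~$\{z\in\O^*\mid z\text{ transverse to }z_1\}=\emptyset$ and both of your reductions are vacuous, while the statement retains content because~$z_1,z_2$ are two distinct points of~$\O^*$. This is precisely the situation the paper's transport set handles for free, since dual convexity of~$\Od$ requires no transversality between the two chart points. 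If you want to keep your approach, it can be completed by treating one pair~$x,y\in X$ at a time: replace~$\O$ by a connected thickening~$\O'$ of a compact path in~$\O$ from~$x$ to~$y$ with~$\overline{\O'}\subset\O\cap\Diams_{\Affstd_1}(a,b)$ for some diamond~$\Diams_{\Affstd_1}(a,b)$ (every compact subset of a chart lies in one); such an~$\O'$ is proper, so your intermediate-chart step applies to it, and taking the union over pairs recovers the statement for~$\O$.
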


\begin{proof}
     Let~$\Affstd, \Affstd'$ be two affine charts containing~$\O$. There exists~$g \in G^0$ such that~$g \cdot \Affstd' = \Affstd~$. For all~$x \in \O$, one has~$g \cdot x \in g \cdot \Affstd' = \Affstd$. 

Let~$C_\Affstd(X)$ be the convex hull of~$X$ in~$\Affstd$, and~$C_{\Affstd'}(X)$ its convex hull in~$\Affstd'$. Recall the domain~$\O_0^{**}$ introduced in Definition~\ref{def_enveloppe_dc}. Since~$\O \subset \Affstd \cap \Affstd'$, we have~$\O_0^{**} \subset \Affstd \cap \Affstd'$.

Let us prove that~$C_{\Affstd}(X) \subset C_{\Affstd'}(X)$, the converse inclusion then holding by symmetry. 

Let~$x,y \in X$ be such that~$y \in \J_{\Affstd}^+(x)$, and~$D := \Diams_{\Affstd}^c(x,y) \subset C_{\Affstd}(X)$. Let~$(x_k), (y_k) \in \O^{\mathbb{N}}$ be such that~$x_k \rightarrow x$,~$y_k \rightarrow y$, and~$y_k \in \I_{\Affstd}^+(x_k)$ for all~$k$. For all~$k \in \mathbb{N}$, by causal convexity of~$\O_0^{**} \subset \Affstd'$ (Proposition~\ref{prop_dual_implies_causal}), we have~$\overline{\Diams_{\Affstd}(x_k,y_k)} \subset \overline{\O_0^{**}} \subset \Affstd'$. Then by Lemma~\ref{lem_diamond_bounded}, we have~$x_k \in \I^{\pm}_{\Affstd'}(y_k)$, let's say~$x_k \in \I^{-}_{\Affstd'}(y_k)$ for instance. Thus~$\Diams_{\Affstd}(x_k,y_k) = \Diams_{\Affstd'}(x_k,y_k)$. This is true for all~$k \in \mathbb{N}$, so~$\Diams_{\Affstd}^c(x,y) = \Diams_{\Affstd'}^c (x,y) \subset C_{\Affstd'}(X)$. By Lemma~\ref{lem_convex_envelope_egal_plus_petit_convex}, this gives~$C_{\Affstd}(X) \subset C_{\Affstd'}(X)$.~$\qed$
\end{proof}

Lemma~\ref{lem_convex_hul_indép_auxiliaire} allows us to prove:

\begin{prop}\label{lem_convex_hul_indép_carte_aff}
Let~$X \subset \SB(\g)$ be a connected subset contained in two affine charts~$\Affstd, \Affstd'$. Then we have 
    \begin{equation*}\operatorname{Conv}_{\Affstd'}(X) = \operatorname{Conv}_{\Affstd}(X).
    \end{equation*}
\end{prop}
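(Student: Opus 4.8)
The plan is to reduce the statement to Lemma~\ref{lem_convex_hul_indép_auxiliaire}, which already establishes chart-independence of the causally convex hull, but only for subsets of a \emph{domain}~$\O$ contained in an affine chart. The gap to bridge is that here~$X$ is merely a connected subset, not assumed open, so I cannot apply that lemma to~$X$ directly. The idea is to thicken~$X$ into an honest domain~$\O$ satisfying~$X \subset \O \subset \Affstd \cap \Affstd'$, and then invoke the lemma with this~$\O$.

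First I would dispose of the trivial case~$X = \emptyset$, where both hulls are empty. Assuming~$X \ne \emptyset$, I would observe that~$\Affstd \cap \Affstd'$ is an open subset of~$\SB(\g)$ containing~$X$. The construction of~$\O$ then rests on the fact that~$\SB(\g)$, being a smooth manifold, is locally connected: each point~$x \in X$ admits a connected open neighborhood~$V_x \subset \Affstd \cap \Affstd'$, and I would set~$\O := \bigcup_{x \in X} V_x$. This is open and satisfies~$X \subset \O \subset \Affstd \cap \Affstd'$. To see that~$\O$ is connected, I would note that for each~$x$ the set~$X \cup V_x$ is connected, being a union of two connected sets meeting at~$x$, and that all these sets share the nonempty common part~$X$; hence their union, which is exactly~$\O$, is connected. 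Thus~$\O$ is a domain contained in the affine chart~$\Affstd$, and likewise in~$\Affstd'$.

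Once~$\O$ is in hand, the conclusion is immediate: applying Lemma~\ref{lem_convex_hul_indép_auxiliaire} to the domain~$\O$ and the subset~$X \subset \O$, and using that both~$\Affstd$ and~$\Affstd'$ contain~$\O$, yields~$\operatorname{Conv}_{\Affstd}(X) = \operatorname{Conv}^{\O}(X) = \operatorname{Conv}_{\Affstd'}(X)$. The only genuinely substantive point is the topological step of producing a connected open neighborhood of~$X$ inside the common chart~$\Affstd \cap \Affstd'$; everything else is a direct citation of the preceding lemma. I expect no serious obstacle here, since local connectedness of the manifold makes the thickening routine — the subtlety worth flagging explicitly is only that connectedness of the union~$\O$ genuinely uses that~$X$ is connected (this is precisely what would fail for a disconnected~$X$, and is where the hypothesis enters).
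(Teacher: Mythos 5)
Your proposal is correct and takes essentially the same route as the paper's proof: both reduce the statement to Lemma~\ref{lem_convex_hul_indép_auxiliaire} by producing a connected open neighborhood~$\O$ of~$X$ contained in~$\Affstd \cap \Affstd'$ and then citing that lemma. The only difference is that you spell out the existence of such a neighborhood (via local connectedness of the manifold and the union-of-connected-sets argument), a step the paper asserts in a single sentence.
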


\begin{proof} Since~$X$ is connected and contained in the open set~$\Affstd \cap \Affstd'$, there exists a connected open neighborhood~$\O$ of~$X$ contained in both~$\Affstd$ and~$\Affstd'$. By Lemma~\ref{lem_convex_hul_indép_auxiliaire}, the set~$\operatorname{Conv}^{\O }(X)$ is then equal to both~$\operatorname{Conv}_{\Affstd} (X)$ and~$\operatorname{Conv}_{\Affstd'} (X)$.~$\qed$
\end{proof}

Proposition~\ref{lem_convex_hul_indép_carte_aff} implies in particular that the convex hull of a connected subset~$X \subset \SB(\g)$ contained in an affine chart does not depend on the affine chart containing it:

\begin{definition}\label{def_causally_convex_indep}
    Let~$X \subset \SB(\g)$ be a connected subset contained in an affine chart. The \emph{causally convex hull}~$\operatorname{Conv}(X)$ is by definition the causally convex hull of~$X$ in any affine chart containing it. We say that~$X$ is \emph{causally convex} if it is equal to its causally convex hull.
\end{definition}

We can now prove Proposition~\ref{prop_dual_implique_causall}:

\begin{proof}[Proof of Proposition~\ref{prop_dual_implique_causall}] Since~$\O \ne \SB(\g)$, there exists~$a \in \partial \O$. By dual convexity, there exists~$z \in \SB(\g)$ such that~$a \in \hyp_z$ and~$\hyp_z \cap \ \O = \emptyset$. Thus~$\O$ is contained in an affine chart, namely~$\Affstd_z$. By Proposition~\ref{prop_dual_implies_causal}, the domain~$\O$ is causally convex in~$\Affstd_z$ and thus causally convex.~$\qed$
\end{proof}

Another straightforward but useful corollary of Proposition~\ref{lem_convex_hul_indép_carte_aff} is the following:

\begin{cor}\label{cor_consal_invariant_by_autom_group}
    Let~$X \subset \SB(\g)$ be a connected subset, contained in an affine chart. Then the causally convex hull of~$X$ is~$\Aut(X)$-invariant.
\end{cor}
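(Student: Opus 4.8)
The plan is to reduce the statement, for each fixed $g \in \Aut(X)$, to the equality $g \cdot \operatorname{Conv}(X) = \operatorname{Conv}(X)$, and to prove this by combining two facts: that $g$ transports the causally convex hull computed in one affine chart to the one computed in the image chart, and that (by Proposition~\ref{lem_convex_hul_indép_carte_aff}) the causally convex hull of a connected set does not depend on the chart. Concretely, I would first fix an affine chart $\Affstd$ containing $X$, which exists by hypothesis, and set $\operatorname{Conv}(X) = \operatorname{Conv}_{\Affstd}(X)$ as in Definition~\ref{def_causally_convex_indep}. Since $g \in \Aut(X) \subset \Aut_\Theta(\g)$ preserves the transversality relation, it maps $\Affstd = \Affstd_z$ to the affine chart $g \cdot \Affstd = \Affstd_{g \cdot z}$; and because $g \cdot X = X$, this new chart still contains $X$, so $\operatorname{Conv}_{g \cdot \Affstd}(X)$ is defined.

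The key step will be to show that $g \cdot \operatorname{Conv}_{\Affstd}(X) = \operatorname{Conv}_{g \cdot \Affstd}(X)$. For this I would use Lemma~\ref{lem_convex_envelope_egal_plus_petit_convex} to write $\operatorname{Conv}_{\Affstd}(X)$ as the union of the closed diamonds $\Diams^c_{\Affstd}(x,y)$ over all pairs $x,y \in X$ with $y \in \J^+_{\Affstd}(x)$. Since $g$ is a homeomorphism of $\SB(\g)$ preserving the family of maximal proper Schubert subvarieties $\hyp_w$, it permutes the connected components of $\SB(\g) \smallsetminus (\hyp_x \cup \hyp_y)$ and sends proper components to proper components; hence it maps the proper diamond bounded in $\Affstd$ with endpoints $x,y$ to the proper diamond bounded in $g \cdot \Affstd$ with endpoints $g \cdot x, g \cdot y$ (Definition~\ref{def_diamant} and Lemma~\ref{lem_diamond_bounded}), and it commutes with the Hausdorff limits defining $\Diams^c$. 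Thus $g \cdot \Diams^c_{\Affstd}(x,y) = \Diams^c_{g \cdot \Affstd}(g \cdot x, g \cdot y)$ up to a possible interchange of the roles of future and past, with $g \cdot x, g \cdot y \in g \cdot X = X$. Every generating diamond of $\operatorname{Conv}_{\Affstd}(X)$ therefore maps into $\operatorname{Conv}_{g \cdot \Affstd}(X)$, and applying the same argument to $g^{-1}$ gives the reverse inclusion, hence the desired equality.

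Finally, since $X$ is connected and contained in both affine charts $\Affstd$ and $g \cdot \Affstd$, Proposition~\ref{lem_convex_hul_indép_carte_aff} yields $\operatorname{Conv}_{g \cdot \Affstd}(X) = \operatorname{Conv}_{\Affstd}(X) = \operatorname{Conv}(X)$, so that $g \cdot \operatorname{Conv}(X) = \operatorname{Conv}(X)$; as $g \in \Aut(X)$ was arbitrary, this proves the corollary. The main point to handle with care is the bookkeeping in the key step: I must verify that the image under $g$ of a diamond that is proper in $\Affstd$ is again proper in the transported chart $g \cdot \Affstd$ (which is where Lemma~\ref{lem_diamond_bounded} enters), and that the possible reversal of time orientation by $g$ is harmless, since the defining index set of $\operatorname{Conv}_{g \cdot \Affstd}(X)$ is symmetric in the pair of causally related endpoints. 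No genuinely deep input is needed beyond the chart-independence already established in Proposition~\ref{lem_convex_hul_indép_carte_aff}.
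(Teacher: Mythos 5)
Your proof is correct and follows the same route the paper intends: the paper states this as a ``straightforward'' consequence of Proposition~\ref{lem_convex_hul_indép_carte_aff}, and the argument it has in mind is exactly yours, namely that $g \in \Aut(X)$ carries $\operatorname{Conv}_{\Affstd}(X)$ to $\operatorname{Conv}_{g\cdot\Affstd}(g\cdot X)=\operatorname{Conv}_{g\cdot\Affstd}(X)$, which by chart-independence equals $\operatorname{Conv}_{\Affstd}(X)$. Your careful handling of the two side points (properness of image diamonds via Lemma~\ref{lem_diamond_bounded}, and the harmlessness of a time-orientation reversal because the index set in Lemma~\ref{lem_convex_envelope_egal_plus_petit_convex} is symmetric in causally related pairs) correctly fills in the details the paper leaves implicit.
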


\section{Topological restrictions}\label{sect_topo_restrictions}
By \cite{benoist2000automorphismes}, a necessary and sufficient condition for an irreducible subgroup of \( \PGL(n, \mathbb{R}) \) to preserve a properly convex open subset of \( \mathbb{P}(\mathbb{R}^n) \) is that, up to finite index, the group \( \Gamma \) contains proximal elements (in \( \mathbb{P}(\mathbb{R}^n) \)), and that all of them have a real positive highest (in modulus) eigenvalue. The goal of this section is to determine a necessary condition for a subgroup of a Lie group \( G \) to preserve a proper domain in a self-opposite flag manifolds of \( G \) (see Proposition~\ref{prop_restrictions_maslov_index_2} and its Corollary~\ref{cor_restrictions_maslov_index}). This condition is related to the \emph{relative positions} of three pairwise-transverse points of their limit set in the flag manifold. We first give reminders on this notion and its properties, in Section~\ref{sect_stable_connected components} below.

\subsection{Stable connected components}\label{sect_stable_connected components} 
Let~$\g$ be a semisimple Lie algebra of noncompact type and~$\Theta \subset \Delta$ be a self-opposite subset of the simple restricted roots of~$\g$. Following \cite{dey2024restrictions}, consider the involution
\begin{equation}\label{eq_def_opposition}
    \inver_{\Theta}:
        \Affstd_{\std} \longrightarrow \Affstd_\std; \ \
        \varphi_\std(X) \longmapsto \varphi_\std (-X).
\end{equation}
The map~$\inver_{\Theta}$ induces a homeomorphism of~$\Affstd_\std  \smallsetminus \hyp_{\LP_{\Theta}^+}$ \cite{dey2024restrictions}. We denote by~$\mathcal{E}_{\Theta}$ the set of connected components of~$\Fl(\g, \Theta) \smallsetminus (\hyp_{\LP_{\Theta}^+} \cup \hyp_{\LP_\Theta^\opp})$. The map~$\inver_\Theta$ induces a permutation of~$\mathcal{E}_\Theta$.

 \begin{rmk}\label{rmk_prop_I_anosov}
     Dey--Greenberg--Riestenberg proved \cite{dey2024restrictions} that if \( \mathcal{E}_\Theta \) has no~$\inver_\Theta$-invariant element, then any hyperbolic group \( \Gamma \) admitting a \( \Theta \)-Anosov representation \( \Gamma \rightarrow \Aut_\Theta(\g) \) is either virtually free or virtually a surface group. The question of what flag manifolds~$\Fl(\g, \Theta)$ satisfy that~$\mathcal{E}_\Theta$ has~$\inver_\Theta$-invariant elements is thus deeply related to questions on~$\Theta$-Anosov representations. It has been investigated by several authors:
     \begin{enumerate}
\item If~$\g = \mathfrak{sp}(2n, \mathbb{R})$, taking the root system given in Section~\ref{sect_root_syst_shilov}, if~$\Theta$ contains a simple root~$\alpha_i$ for an odd~$1 \leq i\leq n$, then~$\mathcal{E}_\Theta$ has no~$\inver_\Theta$-element \cite{dey2024restrictions}.
    
\item If~$\g = \sll(d, \mathbb{R})$ with~$d \ne 5$ and~$d = 2,3,4,5 \mod 8$ and~$\Theta = \Delta$, then~$\mathcal{E}_\Theta$ has no~$\inver_\Theta$-invariant element \cite{dey2022borel}.
     
\item Kineider--Troubat have classified all the elements of~$\mathcal{E}_\Theta$, where~$\g = \soo(p,q)$ and~$\Theta$ is any subset of the simple restricted roots of~$\g$, and investigated which ones are~$\inver_{\Theta}$-invariant, see \cite{kineider2024connected}.~$\qed$
\end{enumerate}
 \end{rmk}

From now on, we will use the following terminology:
\begin{definition}
    Let~$F \subset \Fl(\g, \Theta)$ be a subset. We denote by~$F^{3, * }$ the set of triples of pairwise transverse points in~$F$.
\end{definition}
Note that if~$F$ is any subset of~$\Fl(\g, \Theta)$, then the set~$F^{3, *}$ might be empty. 

Until the end of this section, we fix~$G := \Aut_\Theta(\g)$. The subgroup~$L_\Theta$ of~$G$ acts on~$\mathcal{E}_{\Theta}$. If~$(x,y,z) \in \Fl(\g, \Theta)$ are three pairwise transverse points, then there exists a unique~$[g] \in G/L_\Theta$ such that~$g \cdot (x,y) = (\LP_{\Theta}^+, \LP_\Theta^\opp)$. We denote by~$\typ(x,y,z)$ the~$L_\Theta$-orbit of the connected component~$\mathcal{O}$ of~$\Fl(\g, \Theta) \smallsetminus (\hyp_{\LP_{\Theta}^+} \cup \hyp_{\LP_\Theta^\opp})$ containing~$g\cdot y$. 

\begin{definition} The \emph{type} of a triple~$(x,y,z) \in \Fl(\g, \Theta)^{3,*}$ is by definition the orbit~$\typ(x,y,z) \in \mathcal{E}_\Theta/L_\Theta$.
\end{definition}
The type is~$G$-invariant, and is a generalization of the well-known \emph{Maslov index} for~$\HTT$ Lie groups, which we will recall in Section~\ref{sect_maslov}. Since~$\inver_\Theta$ commutes with the action of~$L_\Theta$ on~$\uu_\Theta^\opp$, it induces a bijection of~$\mathcal{E}_\Theta / L_\Theta$, still denoted by~$\inver_\Theta$. We then have
\begin{equation}\label{eq_relation_type}
    \typ(x,y,z) = \inver_\Theta(\typ(x,z,y)) \quad \forall (x,y,z) \in \Fl(\g, \Theta)^{3,*}.
\end{equation}
Note that~$\inver_\Theta$ induces a bijection between~$\pi_0(\Fl(\g, \Theta)^{3,*} /G)$ and~$\mathcal{E}_\Theta / L_\Theta$.

\begin{rmk}\label{rmk_typ_nagano}
    If there exists~$\ell \in L_\Theta$ such that~$\ell|_{{\Affstd}_\std} = \inver_\Theta$, then Equation~\eqref{eq_relation_type} becomes:
\begin{equation*}
    \typ(x,y,z) = \typ(x,z,y) \quad \forall (x,y,z) \in \Fl(\g, \Theta)^{3,*}.
\end{equation*}
This is the case for instance if the flag manifold~$\Fl(\g, \Theta)$ is a causal flag manifold. More generally, it is the case if and only if~$\Fl(\g, \Theta)$ is a \emph{Nagano space} in the sense of \cite{galiay2025convex}.
\end{rmk}

\subsection{The general case}

In this section, we first study the restrictions imposed on a group by the property of preserving a proper domain, on the type of triples of the~$\Theta$-limit set (these restrictions are thus topological
). Recall the notions introduced in Section~\ref{sect_divergent_groups}.

\begin{prop}[see Theorem~\ref{prop_restrictions_maslov_index}]\label{prop_restrictions_maslov_index_2} Let~$G$ be a noncompact real semisimple Lie group, and~$\Theta$ be a self-opposite subset of the simple restricted roots of~$G$. Let~$H \leq G$. If one of the two following conditions is satisfied, then there exists an~$\inver_\Theta$-invariant element~$\mathcal{O} \in \mathcal{E}_\Theta$, and for any triple~$(a,b,c) \in \Lambda_\Theta (H)^{3, *}$, we have~$\typ(a,b,c) = [\mathcal{O}] \in \mathcal{E}_\Theta/L_\Theta$:

\begin{enumerate}
    \item The~$\Theta$-limit set~$\Lambda_{\Theta}(H)$ contains at least four pairwise transverse points and~$H$ preserves a (not necessarily proper) domain~$\O \subset \Fl(\g, \Theta)$ such that \begin{equation}\label{eq_condition_limit_set}
        \hyp_p \cap \ \O = \emptyset \quad \forall p \in \Lambda_{\Theta}(H).
    \end{equation}
 
    \item The~$\Theta$-limit set~$\Lambda_\Theta (H)$ contains at least three pairwise transverse points and~$H$ preserves a proper domain in~$\Fl(\g, \Theta)$.
\end{enumerate}

In particular, in this case, the set~$\mathcal{E}_\Theta$ admits an~$\inver_\Theta$-invariant connected component.
\end{prop}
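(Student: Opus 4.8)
The plan is to reduce both hypotheses to a single geometric input — that $\O$ is connected, $H$-invariant, and disjoint from $\hyp_p$ for every $p\in\Lambda_\Theta(H)$ — and then to read off the type of every transverse triple from the unique connected component of $\Fl(\g,\Theta)\smallsetminus(\hyp_{\LP_\Theta^+}\cup\hyp_{\LP_\Theta^\opp})$ containing a normalized copy of $\O$. Under hypothesis (1) the disjointness is exactly \eqref{eq_condition_limit_set}. Under (2), since $H\le\Aut(\O)$ and $\Theta$ is self-opposite we have $\Lambda_\Theta(H)\subseteq\Lambda_{\oppinv(\Theta)}(\Aut(\O))$, so Lemma~\ref{lem_proper_inter_hyp} yields $\hyp_p\cap\O=\emptyset$ for all $p\in\Lambda_\Theta(H)$ as well. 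Fixing a triple $(a,b,c)\in\Lambda_\Theta(H)^{3,*}$ and using that $\typ$, transversality, and the hypotheses are all $G$-invariant, I would conjugate so that $(a,b)=(\LP_\Theta^+,\LP_\Theta^\opp)$. Then $\O$ avoids $\hyp_{\LP_\Theta^+}\cup\hyp_{\LP_\Theta^\opp}$ and, being connected, lies in a single component $\mathcal O_0\in\mathcal E_\Theta$.

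The key \emph{attraction step} is that every $c\in\Lambda_\Theta(H)$ transverse to $a,b$ lies in $\mathcal O_0$. I would take a $\Theta$-contracting sequence $(g_k)\in H^\mathbb N$ with $\Theta$-limit $c$ and repelling datum $\xi$; applying Fact~\ref{fact_KAK_divergent} to $(g_k^{-1})$ shows $\xi\in\Lambda_\Theta(H)$, so $\O\cap\hyp_\xi=\emptyset$ and $g_k\cdot y\to c$ for every $y\in\O$. As $g_k\cdot y\in\O\subset\mathcal O_0$ we get $c\in\overline{\mathcal O_0}$; being transverse to $a,b$, the point $c$ lies in some open component, and an open component meeting $\overline{\mathcal O_0}$ equals $\mathcal O_0$ by disjointness, so $c\in\mathcal O_0$. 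By the definition of type this gives $\typ(a,b,c)=[\mathcal O_0]$, a value $\Phi(a,b)$ depending only on the ordered pair $(a,b)$. Combined with \eqref{eq_relation_type} this yields the relation
\[
\Phi(x,y)=\inver_\Theta\big(\Phi(x,z)\big)\qquad\text{for pairwise transverse }x,y,z\in\Lambda_\Theta(H).
\]

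The crux is the $\inver_\Theta$-invariance, and this is where the fourth transverse point of (1) is spent. Given pairwise transverse $a,b,c,d\in\Lambda_\Theta(H)$, I apply the displayed relation in a $3$-cycle on the last three points,
\[
\Phi(a,b)=\inver_\Theta\Phi(a,c),\qquad \Phi(a,c)=\inver_\Theta\Phi(a,d),\qquad \Phi(a,d)=\inver_\Theta\Phi(a,b),
\]
and, using $\inver_\Theta\circ\inver_\Theta=\id$ on $\mathcal E_\Theta/L_\Theta$, substitute the second into the first to get $\Phi(a,b)=\Phi(a,d)$, whence the third forces $\Phi(a,b)=\inver_\Theta\Phi(a,b)$. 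Thus $[\mathcal O_0]=\Phi(a,b)$ is $\inver_\Theta$-invariant and $\Phi(a,\cdot)$ is constant. Full constancy of $\typ$ on $\Lambda_\Theta(H)^{3,*}$ then follows by running the same chase with each point as pivot and matching the resulting constants through the $S_3$-equivariance of $\typ$ on $\pi_0(\Fl(\g,\Theta)^{3,*}/G)\cong\mathcal E_\Theta/L_\Theta$; the invariance of the orbit $[\mathcal O_0]$ upgrades to an $\inver_\Theta$-invariant element of $\mathcal E_\Theta$ since $L_\Theta^0$ fixes each component and $\inver_\Theta$ commutes with $L_\Theta$, giving the ``in particular'' clause.

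Finally, under hypothesis (2) only three transverse limit points are available, so the $3$-cycle cannot be closed directly and properness must supply the missing relation. Here I would exploit that a proper $\O$ lies in a single affine chart, so that the chart involution $\inver_\Theta$ becomes an honest symmetry of the configuration, and combine this with the attraction step to force $\Phi(a,b)=\inver_\Theta\Phi(a,b)$ with only $a,b,c$ in hand — equivalently, extracting a fourth transverse limit point from the dynamics on the proper domain. I expect this reconciliation of the three-point case with the four-point relation-chase, together with the bookkeeping of attracting and repelling data in the attraction step, to be the main technical obstacle.
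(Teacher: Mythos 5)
Your reduction of both hypotheses to the single condition $\hyp_p\cap\O=\emptyset$ for all $p\in\Lambda_\Theta(H)$, and your ``attraction step'' showing that every limit point transverse to the normalized pair $(\LP_\Theta^+,\LP_\Theta^\opp)$ lies in the component $\mathcal O_0$ containing $\O$, both match the paper's proof. The fatal gap is in your final step. Your relation-chase with four points takes place entirely in the quotient $\mathcal{E}_\Theta/L_\Theta$ (that is where $\typ$ and the relation \eqref{eq_relation_type} are defined), so what it proves is only that the \emph{$L_\Theta$-orbit} $[\mathcal O_0]$ is $\inver_\Theta$-invariant, not that any element of $\mathcal{E}_\Theta$ is. Your proposed upgrade --- ``$L_\Theta^0$ fixes each component and $\inver_\Theta$ commutes with $L_\Theta$'' --- fails: here $L_\Theta$ is the Levi of $\Aut_\Theta(\g)$, which is disconnected in general, and its component group can act nontrivially on $\mathcal{E}_\Theta$, so an $\inver_\Theta$-stable orbit need not contain an $\inver_\Theta$-fixed component. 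Worse, in exactly the case this proposition is used for (causal flag manifolds), $\inver_\Theta$ is itself the restriction to $\Affstd_\std$ of an element of $L_\Theta$ (Remark~\ref{rmk_typ_nagano}), so $\inver_\Theta$ acts \emph{trivially} on $\mathcal{E}_\Theta/L_\Theta$: every orbit $\{\varphi_\std(\mathcal O_i),\varphi_\std(\mathcal O_{r-i})\}$ is $\inver_\Theta$-invariant by Lemma~\ref{lem_inclusion_Ois}.(4), and your conclusion is a tautology. The whole content of the proposition is that a single component is invariant, which forces $i=r-i$ and hence ``$r$ even, Maslov index $0$'' in Corollary~\ref{cor_restrictions_maslov_index}; orbit-invariance gives none of this.

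The paper avoids the quotient entirely and proves invariance of the actual component: after normalizing so that $\LP_\Theta^+,\LP_\Theta^\opp\in\Lambda_\Theta(H)$ and picking two further transverse limit points $x=\varphi_\std(X)$, $y=\varphi_\std(Y)$ in the component $\mathcal O\supset\O$, it shows $\O\subset\exp(Z)\cdot\inver_\Theta(\mathcal O)$ for $Z\in\{X,Y\}$, and deduces that the explicit point $\exp(-Y)\exp(X)\cdot\LP_\Theta^+$ lies in $\inver_\Theta(\mathcal O)$ while its $\inver_\Theta$-image $\exp(-X)\exp(Y)\cdot\LP_\Theta^+$ does too; applying $\inver_\Theta$, this point lies in $\mathcal O\cap\inver_\Theta(\mathcal O)$, forcing $\inver_\Theta(\mathcal O)=\mathcal O$ as components. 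This two-sided use of $x$ and $y$ is the idea your argument is missing. Note also that you explicitly leave hypothesis (2) unfinished; in the paper it requires no new idea: properness lets one conjugate so that $\overline{\O}\subset\Affstd_\std$, so $\LP_\Theta^\opp$ plays the role of the fourth point (it need not lie in the limit set), and the three transverse limit points supply $\LP_\Theta^+$, $x$ and $y$ for the same argument.
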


\begin{proof}
(1) \emph{Let us assume that (1) is satisfied.} First note that~$a \in \overline{\O}$ for all~$a \in \Lambda_\Theta( H)$: indeed, let~$a \in \Lambda_\Theta(H)$, and let~$(h_n) \in H^{\mathbb{N}}$ and~$b \in \Fl(\g, \Theta)$ such that~$(h_n)$ is~$\Theta$-contracting with respect to~$(a,b)$. Since~$\O$ is open, there exists~$x \in \O \smallsetminus \hyp_b$. Thus~$h_n \cdot x \rightarrow a$. Since~$\O$ is~$H$-invariant, we have~$a \in \overline{\O}$.  
    
Since~$G$ acts transitively on pairs of transverse points of~$\Fl(\g, \Theta)^2$, we may assume that~$\LP_{\Theta}^+, \LP_\Theta^\opp \in \Lambda_\Theta(H)$ and that there exist two transverse points~$x,y \in \Lambda_\Theta(H)\smallsetminus (\hyp_{\LP_{\Theta}^+} \cup \hyp_{\LP_\Theta^\opp})$. By Equation~\eqref{eq_condition_limit_set} and connectedness of~$\O$, there exists~$\mathcal{O} \in \mathcal{E}_\Theta$ such that~$\O \subset \mathcal{O}$. Since~$x,y \in \overline{\O}$, we have~$x,y \in\overline{\mathcal{O}}$. By transversality, we then have~$x,y \in \mathcal{O}$. Let~$X,Y \in \uu^-_\Theta$ be such that~$(x,y) = (\varphi_\std(X), \varphi_\std(Y))$. 

Let~$Z \in \{X, Y\}$. Since~$\varphi_\std(Z) \in \Lambda_\Theta(H)$, by Equation~\eqref{eq_condition_limit_set} and connectedness of~$\O$, there exists an element~$\mathcal{O}' \in \mathcal{E}_\Theta$ such that~$\O \subset \exp(Z) \cdot \mathcal{O}'$. Then~$\LP_{\Theta}^+ \in \overline{\O} \subset \overline{\mathcal{O}}'$, and since~$\LP_{\Theta}^+$ is transverse to~$\varphi_\std(Z)$, we have~$\LP_{\Theta}^+ \in \exp(Z) \cdot \mathcal{O}'$. Thus~$\inver_{\Theta}(\varphi_\std(Z)) = \exp(-Z) \cdot \LP_{\Theta}^+ \in \mathcal{O}'$, so~$\varphi(Z) \in \inver_{\Theta}(\mathcal{O}')$. Since~$\varphi_\std(Z) \in \mathcal{O}$ and~$\mathcal{O}, \inver_{\Theta}(\mathcal{O}')$ are two connected components of~$\Fl(\g, \Theta) \smallsetminus (\hyp_{\LP_{\Theta}^+} \cup \hyp_{\LP_\Theta^\opp})$, we must have~$\mathcal{O}' = \inver_{\Theta}(\mathcal{O}).$

We have proven that~$\O \subset \exp(Z) \inver_{\Theta}(\mathcal{O})$. Since~$x \in \overline{\O}$ and~$x$ is transverse to~$y$, for~$Z = Y$ we have 
\begin{equation}\label{eq_equality_connected_components}
    x \in \exp(Y) \inver_{\Theta}(\mathcal{O}).
\end{equation} 
Similarly, taking~$Z = X$, one gets~$y \in \exp(X) \inver_{\Theta}(\mathcal{O})$. Thus we have 
\begin{equation*}
     \exp(-Y)\exp(X) \cdot \LP_{\Theta}^+ \in \inver_{\Theta}(\mathcal{O})
\end{equation*}
and
\begin{equation*}
     \exp(-X)\exp(Y) \cdot \LP_{\Theta}^+ = \inver_{\Theta}\big{(}\exp(-Y)\exp(X) \cdot \LP_{\Theta}^+ \big{)}  \in \inver_{\Theta}(\mathcal{O}).
\end{equation*}
Thus~$\inver_{\Theta}(\mathcal{O}) = \inver_{\Theta}(\inver_{\Theta}(\mathcal{O})) = \mathcal{O}$. Moreover~\eqref{eq_equality_connected_components} implies that~$[\mathcal{O}] = \typ(x,y, \LP_{\Theta}^+)$. Now it is readily checked that for all~$z \in \Lambda_\Theta(\Gamma) \smallsetminus (\hyp_x \cup \hyp_y)$, we have~$\mathcal{O} \in \typ(x,y,z)$.

(2) \emph{Let us assume that (2) is satisfied. } By Lemma~\ref{lem_proper_inter_hyp}, Equation~\eqref{eq_condition_limit_set} is satisfied.

The rest of the proof is similar to that of point (1). Since~$G$ acts transitively on pairs of transverse points of~$\Fl(\g, \Theta)^2$, we may assume that~$\overline{\O} \subset \Affstd_\std$, that~$\LP_{\Theta}^+ \in \Lambda_\Theta(H)$, and that there exist two transverse points~$x,y \in \Lambda_\Theta(H) \smallsetminus \hyp_{\LP_{\Theta}^+}$.
 By Equation~\eqref{eq_condition_limit_set}, there is an element~$\mathcal{O} \in \mathcal{E}_\Theta$ such that~$\O \subset \mathcal{O}$. Then~$x,y \in \overline{\O} \subset \Affstd_\std \cap \overline{\mathcal{O}}$, and since~$x,y$ are transverse to~$\LP_{\Theta}^+$, we have~$x,y \in \mathcal{O}$. As in point (1), we prove that~$\mathcal{O} = \inver_{\Theta}(\mathcal{O})$, and~$\mathcal{O} \in \typ(x,y,\LP_{\Theta}^+)$. Now if~$z \in \Lambda_\Theta(H) \smallsetminus \hyp_{\LP_{\Theta}^+}$, then it is readily checked that~$[\mathcal{O}] = \typ(x,y,z)$.~$\qed$
\end{proof}

\subsection{Maslov index}\label{sect_maslov}
In the case where~$\g$ is a~$\HTT$ Lie algebra of rank~$r \geq 1$ and~$\Theta = \{\alpha_r\}$, the relative positions of three transverse points of~$\Fl(\g, \Theta) = \SB(\g)$ are described by the classical \emph{Maslov index}. In Corollary~\ref{cor_restrictions_maslov_index} below, we reformulate Proposition~\ref{prop_restrictions_maslov_index_2} in this particular context.

We take Notation~\ref{sect_notation} and the one of Section~\ref{sect_def_Ois}; recall in particular the definition of the domains~$\mathcal{O}_i$ for~$1 \leq i \leq r$. By Lemma~\ref{lem_inclusion_Ois}.(4), and since~$\mathfrak{l}$ contains an element~$H_0$ such that~$\ad(H_0)v = -v$ for all~$v \in \uu^-$ (see e.g.\ \cite[Sect.\ 3.2]{galiay2024rigidity}), the possible values of the type of a triple of pairwise transverse points~$(x,y,z) \in \SB(\g)$ are:

\begin{equation}\label{eq_typ_shilov}
    \typ(x,y,z) = \big{\{} \varphi_\std(\mathcal{O}_i), \varphi_\std(\mathcal{O}_{r-i})\big{\}} \ \text{ for some } 0 \leq i \leq \frac{r}{2}.
\end{equation}
The \emph{Maslov index} of~$(x,y,z)$, denoted by~$\indx(a,b,c)$, is then the well-defined integer~$|r- 2i|$, where~$\typ(x,y,z) = \{ \varphi_\std(\mathcal{O}_i), \varphi_\std(\mathcal{O}_{r-i})\}$. With these notations, Proposition~\ref{prop_restrictions_maslov_index_2} gives:

\begin{cor}[see Theorem~\ref{prop_restrictions_maslov_index}]\label{cor_restrictions_maslov_index}
    Let~$G$ be a~$\HTT$ Lie group of rank~$r \geq 1$ and let~$H \leq G$ be subgroup preserving a domain~$\O \subset \SB(\g)$. If one of the following conditions in satisfied, then~$r$ is even and~$\indx(a,b,c) = 0$ for any triple~$(a,b,c) \in \Lambda_{\{\alpha_r\}}(H)^{3, *}$:

    \begin{enumerate}
        \item If~$\Lambda_{\{\alpha_r\}}(H)$ contains at least~$4$ transverse points and~$\hyp_{\LP^+} \cap \O = \emptyset$ for all~$p \in \Lambda_{\{\alpha_r\}}(H)$;
        \item If~$\Lambda_{\{\alpha_r\}}(H)$ contains at least three transverse points and~$\O$ is proper.
    \end{enumerate}
\end{cor}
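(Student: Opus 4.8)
The plan is to deduce this corollary directly from Proposition~\ref{prop_restrictions_maslov_index_2}, applied to the self-opposite root set $\Theta = \{\alpha_r\}$ (which defines $\SB(\g)$ and is self-opposite, see Section~\ref{sect_root_syst_shilov}), and then to translate its conclusion into the language of the Maslov index. The first step is to check that the two hypotheses of the corollary match, respectively, hypotheses~(1) and~(2) of Proposition~\ref{prop_restrictions_maslov_index_2} for $\Theta = \{\alpha_r\}$: condition~(2) asks that $\O$ be proper, while condition~(1) asks that $\Lambda_{\{\alpha_r\}}(H)$ contain at least four transverse points and that $\hyp_p \cap \O = \emptyset$ for every $p \in \Lambda_{\{\alpha_r\}}(H)$, which is exactly Equation~\eqref{eq_condition_limit_set}. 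The proposition then provides a single $\inver_\Theta$-invariant connected component $\mathcal{O} \in \mathcal{E}_{\{\alpha_r\}}$ with $\typ(a,b,c) = [\mathcal{O}]$ for every triple $(a,b,c) \in \Lambda_{\{\alpha_r\}}(H)^{3,*}$.

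Next I would make $\mathcal{E}_{\{\alpha_r\}}$ explicit. Since $\Affstd_\std = \SB(\g) \smallsetminus \hyp_{\LP^\opp}$, one has $\mathcal{E}_{\{\alpha_r\}} = \pi_0\big(\SB(\g) \smallsetminus (\hyp_{\LP^+} \cup \hyp_{\LP^\opp})\big) = \pi_0\big(\Affstd_\std \smallsetminus \hyp_{\LP^+}\big)$, which by Section~\ref{sect_def_Ois} consists exactly of the pairwise-distinct open sets $\varphi_\std(\mathcal{O}_i)$ for $0 \leq i \leq r$. Writing the invariant component as $\mathcal{O} = \varphi_\std(\mathcal{O}_i)$, the description of $\inver_\Theta$ in Equation~\eqref{eq_def_opposition} gives $\inver_\Theta(\varphi_\std(X)) = \varphi_\std(-X)$, and Lemma~\ref{lem_inclusion_Ois}.(4) gives $-\mathcal{O}_i = \mathcal{O}_{r-i}$, so that $\inver_\Theta(\mathcal{O}) = \varphi_\std(\mathcal{O}_{r-i})$.

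The heart of the argument is then a short counting step: the componentwise invariance $\inver_\Theta(\mathcal{O}) = \mathcal{O}$ forces $\varphi_\std(\mathcal{O}_{r-i}) = \varphi_\std(\mathcal{O}_i)$, hence $\mathcal{O}_{r-i} = \mathcal{O}_i$; since the $\mathcal{O}_j$ are distinct, this gives $r - i = i$, i.e. $r = 2i$. In particular $r$ is even and $i = r/2$. Finally, by the definition of the Maslov index following Equation~\eqref{eq_typ_shilov}, for every $(a,b,c) \in \Lambda_{\{\alpha_r\}}(H)^{3,*}$ we have $\indx(a,b,c) = |r - 2i| = |r - r| = 0$, which is the desired conclusion.

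I do not expect a genuine obstacle, since all the substantive work is already contained in Proposition~\ref{prop_restrictions_maslov_index_2}; the corollary is essentially a change of vocabulary. The one point requiring care is to keep separate the invariance of the component $\mathcal{O}$ itself from the invariance of its orbit $[\mathcal{O}] \in \mathcal{E}_{\{\alpha_r\}}/L_\Theta$: the orbit-level invariance is automatic for causal flag manifolds (Remark~\ref{rmk_typ_nagano}) and carries no information, whereas it is precisely the stronger componentwise invariance of $\mathcal{O}$ delivered by the proposition that yields $i = r/2$ and forces $r$ even. Ensuring that one exploits the right level of invariance is the only subtlety.
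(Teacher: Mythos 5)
Your proposal is correct and follows essentially the same route as the paper's proof: apply Proposition~\ref{prop_restrictions_maslov_index_2} with $\Theta = \{\alpha_r\}$, identify the $\inver_{\{\alpha_r\}}$-invariant component among the $\varphi_\std(\mathcal{O}_i)$ via Lemma~\ref{lem_inclusion_Ois}.(4) (forcing $i = r-i$, hence $r$ even), and conclude $\indx = 0$ from Equation~\eqref{eq_typ_shilov}. Your closing remark distinguishing componentwise invariance from orbit-level invariance is a sound reading of where the content lies, but it is the same argument, just spelled out.
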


\begin{proof} By Proposition~\ref{prop_restrictions_maslov_index_2}, we know that Points (1) and (2) both imply that there exists an~$\inver_{\{\alpha_r\}}$-invariant connected component~$\mathcal{O}$ of~$\Affstd_\std \smallsetminus \hyp_{\LP^+}$ such that~$\mathcal{O} \in \typ(x,y,z)$ for any triple of pairwise transverse points~$(x,y,z) \in \Lambda_{\{\alpha_r\}}(H)$. But by Lemma~\ref{lem_inclusion_Ois}.(4), the only connected component of~$\Affstd_{\std} \smallsetminus \hyp_{\LP^+}$ which is~$\inver_{\{\alpha_r\}}$-invariant is~$\varphi_\std(\mathcal{O}_{r/2})$, so~$\mathcal{O} = \varphi_\std(\mathcal{O}_{r/2})$. In particular~$r$ is even. Moreover, by Equation~\eqref{eq_typ_shilov}, the Maslov index of any triple of distinct points~$(x,y,z) \in \Lambda_{\{\alpha_r\}}(H)$ is equal to~$0$.~$\qed$
\end{proof}

\begin{rmk} Let~$\Gamma$ be the fundamental group of a closed surface, and let~$\rho: \Gamma \rightarrow G$ be an~$\{\alpha_r\}$-Anosov representation. By
Theorem~\ref{prop_restrictions_maslov_index}, if the group~$\rho(\Gamma)$  preserves a proper domain, then the Maslov index of~$\rho$ is equal to~$0$, and in particular~$r$ is even. This implies in particular that \emph{maximal} representations of~$\Gamma$ into~$G$, i.e.\ those of Maslov index~$r$, never preserve a proper domain in~$\SB(\g)$. On the contrary, in the context of groups preserving proper domains, we are interested in~$\{\alpha_r\}$-Anosov representations~$\rho: \Gamma \rightarrow G$ which are ``as far as possible" from being maximal.
\end{rmk}

\section{Groups acting cocompactly on a closed subset}\label{sect_groups_acting_cocompactly}

In this section, we investigate the properties of groups preserving proper domains~$\O$ in flag manifolds and acting cocompactly on a closed subset of~$\O$. The main results of this section are Proposition~\ref{prop_finitely_generated} and Lemma~\ref{lem_p_conic_shilov}.

\subsection{Reminders on projective convex cocompactness}\label{sect_reminders_projective_convex_cocompactness}
In this section, we recall the definition and properties of \emph{projective convex cocompact} subgroups defined in \cite{DGKproj, zimmer2021projective}. These serve as motivation for the present section and will not be used in the rest of the paper.

Once a subgroup of \( \PGL(n, \mathbb{R}) \) preserves a properly convex domain of real projective space, it is sometimes possible to read the dynamical properties of \( \Gamma \) in the geometry of an open set and the convex subsets it preserves: for instance, by~\cite{DGKproj}, the group \( \Gamma \) is \( P_1 \)-Anosov (in the sense of Section~\ref{sect_anosov}) if and only if it is \emph{strongly convex-cocompact} in \( \mathbb{P}(\mathbb{R}^n) \) in the following sense:
\begin{definition}\cite{DGKproj}\label{def_convex_cocompact}
    A discrete subgroup \( \Gamma \leq \PGL(n, \mathbb{R}) \) is \emph{strongly convex-cocompact} if it preserves a properly convex domain \( \O' \subset \mathbb{P}(\mathbb{R}^n) \) and acts cocompactly on a closed (in~$\O'$) convex subset \( \mathcal{C} \subset \O' \) whose ideal boundary \( \partial_i \mathcal{C} := \overline{\mathcal{C}} \cap \partial \O \) contains the full orbital limit set of~$\Gamma$ in~$\O$ (in the sense of Section~\ref{sect_group_aut_omega}) and does not contain any nontrivial projective segment.
\end{definition}
If \( G \) is a real reductive linear Lie group and \( \Theta \) is a non-empty subset of the simple restricted roots of \( G \), then we can consider an irreducible representation~$(V, \delta)$ of~$G$ given by Fact~\ref{prop_ggkw}, and the associated~\( \delta \)-equivariant embedding \( \iota_\delta : \Fl(\g, \Theta) \rightarrow \mathbb{P}(V) \). 

Now let \( \Gamma \) be a group and \( \rho: \Gamma \rightarrow G \) a representation. If \( \rho(\Gamma) \) already preserved a proper domain \( \O \subset \Fl(\g, \Theta) \), then it is easy to see that \( \delta \circ \rho(\Gamma) \) will preserve a proper domain in \( \mathbb{P}(V) \) (typically the interior of the convex hull of \( \iota_\delta(\O) \)). Furthermore, the representation \( \rho \) is \( \Theta \)-Anosov if and only if \( \delta \circ \rho \) is \( P_1 \)-Anosov \cite{gueritaud2017anosov}. This gives:
\begin{fact}[\cite{gueritaud2017anosov, DGKproj}]
    Let \( \rho: \Gamma \rightarrow G \) be a representation preserving a proper domain \( \O \subset \Fl(\g, \Theta) \). The representation \( \rho \) is \( \Theta \)-Anosov if and only if the image \( \delta \circ \rho(\Gamma) \) is strongly convex-cocompact in \( \mathbb{P}(V) \).
\end{fact}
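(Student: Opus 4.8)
The plan is to deduce the statement by assembling two results already quoted in the excerpt: the equivalence between the $\Theta$-Anosov property of~$\rho$ and the $\{\alpha_1\}$-Anosov property of~$\delta\circ\rho$ (Fact~\ref{prop_ggkw}.(4)), and the Danciger--Gu\'eritaud--Kassel characterization of strong convex-cocompactness in~$\mathbb{P}(V)$ as the conjunction of the $\{\alpha_1\}$-Anosov property with the preservation of a proper domain (\cite{DGKproj}; see Point~(2) of the introduction). Once these two black boxes are in place, the only genuine work is to transport the hypothesis ``$\rho(\Gamma)$ preserves a proper domain~$\O \subset \Fl(\g,\Theta)$'' across the embedding~$\iota_\delta$, so as to produce a $\delta\circ\rho(\Gamma)$-invariant proper domain in~$\mathbb{P}(V)$.

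For that intermediate step I would first use properness (Definition~\ref{def_domain_proper}) to fix~$\xi \in \Fl(\g,\Theta)^\opp$ with~$\overline{\O}\cap\hyp_\xi = \emptyset$. By the transversality clause of Fact~\ref{prop_ggkw}.(2), together with the identification~$[f]\mapsto\mathbb{P}(\Ker f)$, every point of~$\iota_\delta(\overline{\O})$ avoids the projective hyperplane~$\iota_\delta^\opp(\xi)$; since~$\Fl(\g,\Theta)$ is compact, $\iota_\delta(\overline{\O})$ is a compact subset of the affine chart~$\mathcal{A}_\xi := \mathbb{P}(V)\smallsetminus\iota_\delta^\opp(\xi)$. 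Its affine convex hull~$C$ in~$\mathcal{A}_\xi$ is then compact and convex, and by Fact~\ref{fact_generat} (applied to the open set~$\O$, whose image spans~$V$) it has nonempty interior, so~$\Omega' := \mathring{C}$ is a properly convex open subset of~$\mathbb{P}(V)$. Invariance is the one point requiring care: $\delta\circ\rho(\gamma)$ preserves~$\iota_\delta(\O)$ but a priori moves~$\mathcal{A}_\xi$ to~$\mathcal{A}_{\rho(\gamma)\cdot\xi}$. The way I would resolve this is through the $\Aut(\O)$-invariance of the dual~$\O^*$ (Section~\ref{sect_dual_proper_domain}), which is compact with nonempty interior: the hyperplanes~$\{\iota_\delta^\opp(\xi') : \xi'\in\O^*\}$ form a $\Gamma$-invariant family all disjoint from~$\iota_\delta(\O)$, so that the convex hull of~$\iota_\delta(\overline{\O})$ computed in~$\mathcal{A}_\xi$ stays disjoint from all these hyperplanes and hence coincides with the one computed in any~$\mathcal{A}_{\xi'}$, $\xi'\in\O^*$; this makes~$\Omega'$ genuinely $\delta\circ\rho(\Gamma)$-invariant (this is the ``easy to see'' passage attributed to~\cite{gueritaud2017anosov}).

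With the intermediate step in hand the two implications are formal. If~$\rho$ is $\Theta$-Anosov, then~$\delta\circ\rho$ is $\{\alpha_1\}$-Anosov by Fact~\ref{prop_ggkw}.(4) and~$\delta\circ\rho(\Gamma)$ preserves the proper domain~$\Omega'$, so~$\delta\circ\rho(\Gamma)$ is strongly convex-cocompact by~\cite{DGKproj}. Conversely, if~$\delta\circ\rho(\Gamma)$ is strongly convex-cocompact, then~\cite{DGKproj} gives that it is $\{\alpha_1\}$-Anosov, whence~$\rho$ is $\Theta$-Anosov by Fact~\ref{prop_ggkw}.(4) again. I do not expect the main difficulty to lie in this assembly: the real weight rests on the two cited theorems---above all the Danciger--Gu\'eritaud--Kassel equivalence---and, at the level of this paper, on the verification that the projective convex hull~$\Omega'$ is simultaneously properly convex and $\Gamma$-invariant. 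Once that invariant proper domain in~$\mathbb{P}(V)$ is produced, the stated equivalence is immediate.
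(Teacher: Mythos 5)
Your proposal is correct and is essentially the paper's own argument: the paper obtains this Fact by combining Fact~\ref{prop_ggkw}.(4) with the Danciger--Gu\'eritaud--Kassel characterization of strong convex cocompactness, and disposes of the transport of the invariant proper domain with the parenthetical ``typically the interior of the convex hull of $\iota_\delta(\O)$'' --- which is exactly the construction you spell out in detail. The one point to adjust is that your invariance claim should quantify over the interior of $\O^*$ (equivalently, over the $\rho(\Gamma)$-orbit of your chosen $\xi$, along which the condition $\overline{\O}\cap\hyp_{\xi'}=\emptyset$ persists by invariance of $\overline{\O}$) rather than over all of $\O^*$: for $\xi'\in\O^*$ whose hypersurface $\hyp_{\xi'}$ meets $\partial\O$, the convex hull of $\iota_\delta(\overline{\O})$ need not avoid the hyperplane $\iota_\delta^\opp(\xi')$ and need not even be contained in the corresponding affine chart, whereas with this restriction your half-space and chart-independence arguments go through verbatim.
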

This provides a geometric characterization of \( \Theta \)-Anosov subgroups of \( G \) that preserve proper domains in \( \Fl(\g, \Theta) \). However, one can wonder if an intrinsic geometric characterization within the flag manifold \( \Fl(\g, \Theta) \) is possible. A lot of properties of projective convex-cocompact groups relies on the ``convexity" part of convex-cocompactness in Definition~\ref{def_convex_cocompact}. The properties of projective convexity that are used here are not shared by dual convexity in general flag manifolds (see e.g.\ \cite[Chap.\ 3]{galiay2025convex}). A first approach to studying groups that might be considered ``convex-cocompact'' in a flag manifold~$\Fl(\g, \Theta)$ is to weaken the convexity requirement and instead focus on the properties of groups that preserve a proper domain~$\O \subset \Fl(\g, \Theta)$ and act cocompactly on a closed subset of~$\O$ which is \emph{not necessarily convex}. This is the purpose of the two following subsections.

\subsection{Finite generation}
In this section, we prove Proposition~\ref{prop_finitely_generated} below, whose main consequence in this paper will be the implication ``$(2) \Rightarrow \Gamma$ is finitely generated" of Theorem~\ref{thm_equiv_anosov}:

\begin{prop}\label{prop_finitely_generated}
Let~$G$ be a noncompact real semisimple Lie group and~$\Theta \subset \Delta$ a subset of the simple restricted roots. Let~$\Gamma \leq G$ be a discrete subgroup preserving a proper domain in~$\Fl(\g, \Theta)$. Assume that there exists a closed~$\Gamma$-invariant connected subset~$F$ of~$\O$, and a compact subset~$\compact$ of~$F$ such that~$F = \Gamma \cdot \compact$. Then~$\Gamma$ is finitely generated.
\end{prop}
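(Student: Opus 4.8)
The plan is to reduce to the standard topological fact that a group acting \emph{properly and cocompactly} by homeomorphisms on a nonempty connected, locally compact Hausdorff space is finitely generated, and then to check that $F$ together with the $\Gamma$-action provides exactly such a space. First I would observe that, since $\O$ is a proper domain, Fact~\ref{fact_autom_group_proper} gives that $\Aut(\O)$ — and hence the subgroup $\Gamma$ — acts properly on $\O$. Because $F \subset \O$ is $\Gamma$-invariant and every compact subset of $F$ is a compact subset of $\O$, the restricted action of $\Gamma$ on $F$ is again proper: the set $\{\gamma \in \Gamma \mid \gamma \compact_1 \cap \compact_2 \neq \emptyset\}$ is finite for all compact $\compact_1, \compact_2 \subset F$. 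Next I would note that $\Fl(\g, \Theta)$ is a compact manifold, so $\O$ is an open, hence locally compact Hausdorff, subset, and $F$ — being closed in $\O$ — is itself locally compact Hausdorff. One may assume $F$, and hence $\compact$, is nonempty, as otherwise there is nothing to prove.

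With these in hand I would run the \v{S}varc--Milnor covering argument. Using local compactness, I would enlarge $\compact$ to a compact set $\compact' \subset F$ whose interior $U := \operatorname{int}_F(\compact')$ still contains $\compact$ (cover $\compact$ by finitely many compact neighbourhoods in $F$ and take their union). Since $F = \Gamma \cdot \compact$ is $\Gamma$-invariant and $\compact \subset U$, the translates $\{\gamma \cdot U\}_{\gamma \in \Gamma}$ form an open cover of $F$. I then set $S := \{\gamma \in \Gamma \mid \gamma \compact' \cap \compact' \neq \emptyset\}$, which is finite by properness, and let $\Gamma_0 := \langle S \rangle$. The sets $A := \Gamma_0 \cdot U$ and $B := (\Gamma \smallsetminus \Gamma_0)\cdot U$ are open, cover $F$, and are disjoint: if $\gamma_1 \cdot U \cap \gamma_2 \cdot U \neq \emptyset$ with $\gamma_1 \in \Gamma_0$, then $\gamma_1^{-1}\gamma_2 \compact' \cap \compact' \neq \emptyset$, so $\gamma_1^{-1}\gamma_2 \in S \subset \Gamma_0$ and thus $\gamma_2 \in \Gamma_0$. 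As $F$ is connected and $A \supset U \neq \emptyset$, connectedness forces $B = \emptyset$, i.e.\ $\Gamma = \Gamma_0$, which is generated by the finite set $S$.

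The argument is essentially formal once the action is known to be proper and cocompact on a connected locally compact space, so the only genuine points to get right are that properness on $\O$ descends to $F$ (immediate from the definition of properness, using $F \subset \O$) and that $F$ is locally compact (from $F$ being closed in the open subset $\O$ of a compact manifold). I do not expect a serious obstacle here; it is precisely the connectedness of $F$ that makes the clopen decomposition $F = A \sqcup B$ collapse and yields finite generation.
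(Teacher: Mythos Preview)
Your proof is correct and considerably more direct than the paper's. The paper first reduces to the projective case via the irreducible representation~$(V,\rho)$ of Fact~\ref{prop_ggkw}, embedding~$\iota_\rho(F)$ into a properly convex domain~$\mathcal{O} \subset \mathbb{P}(V)$; it then thickens the compact fundamental set to a finite union of small closed convex pieces, builds an explicit~$\Gamma$-invariant proper geodesic length metric~$\delta$ on the resulting set~$F'$ from the Hilbert metric of~$\mathcal{O}$ (via infima of lengths of piecewise-projective paths), and finally invokes the \v{S}varc--Milnor lemma. You bypass all of this by observing that the purely topological input --- a proper cocompact action of~$\Gamma$ on a nonempty connected locally compact Hausdorff space --- is already available for~$F$ inside~$\O$, and then running the standard clopen argument directly. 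Your route is more elementary and avoids both the projective embedding and the metric construction; the paper's route, on the other hand, actually produces a~$\Gamma$-invariant proper geodesic metric on (a thickening of)~$F$, which is a stronger byproduct even if it is not strictly needed for finite generation.
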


We will need the following elementary lemma (recall Definition~\ref{def_enveloppe_dc}):

\begin{lem}\label{lem_replace_by_bidual}
    Let~$H$ be a subgroup of~$G$, and assume that there exist a proper~$H$-invariant domain~$\O$ and a closed~$H$-invariant subset~$F$ of~$\O$, such that~$H$ acts cocompactly on~$F$. Then~$F$ is still closed in~$\O_0^{**}$.
\end{lem}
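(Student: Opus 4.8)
The plan is to prove directly that any point $y \in \O_0^{**}$ lying in the ambient closure of $F$ already lies in $F$; since $F \subseteq \O \subseteq \O_0^{**}$ by Definition~\ref{def_enveloppe_dc}, this is exactly the assertion that $F$ is closed in $\O_0^{**}$. The whole argument rests on two properness inputs. First, $H$ preserves $\O$, so $H \subseteq \Aut(\O)$, and $\Aut(\O)$ is a closed subgroup of $G$ (being the stabilizer of the closed set $\overline{\O}$); hence $\overline{H} \subseteq \Aut(\O)$. Second, by Definition~\ref{def_enveloppe_dc} the dual convex hull $\O_0^{**}$ is itself a proper domain and is $\Aut(\O)$-invariant, so $\Aut(\O) \subseteq \Aut(\O_0^{**})$ and, by Fact~\ref{fact_autom_group_proper}, the group $\Aut(\O_0^{**})$ acts properly on $\O_0^{**}$.

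Concretely, I would write $F = H \cdot \compact$ with $\compact \subseteq F$ compact, and take $y \in \O_0^{**}$ to be a limit of a sequence $f_n = h_n k_n \in F$ with $h_n \in H$ and $k_n \in \compact$. Up to extracting, $k_n \to k \in \compact \subseteq \O$. The idea is then the classical fact that the image of a compact set under a proper group action is closed, adapted so that the limiting group element is recovered inside $\Aut(\O)$. Fixing a relatively compact open neighbourhood $U$ of $y$ in $\O_0^{**}$ and setting $\compact'' := \overline{U} \cup \compact$, which is compact in $\O_0^{**}$, one has for $n$ large that $f_n \in U \subseteq \compact''$ while $k_n \in \compact''$; since $f_n = h_n k_n$, the point $f_n$ lies in $h_n \compact'' \cap \compact''$, so $h_n \compact'' \cap \compact'' \neq \emptyset$.

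By properness of the $\Aut(\O_0^{**})$-action, the set $\{\, g \in \Aut(\O_0^{**}) \mid g\compact'' \cap \compact'' \neq \emptyset \,\}$ is compact and contains $h_n$ for all large $n$. After a further extraction, $h_n \to h$ for some $h \in \Aut(\O_0^{**})$; as each $h_n$ lies in $H$, we get $h \in \overline{H} \subseteq \Aut(\O)$. Passing to the limit in $f_n = h_n k_n$ and using continuity of the action yields $y = hk$. Since $h \in \Aut(\O)$ preserves $\O$ and $k \in \O$, we conclude $y = hk \in \O$. Finally $y$ is a limit of points of $F$ and lies in $\O$, so $y \in \overline{F} \cap \O = F$ because $F$ is closed in $\O$. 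Hence $F$ is closed in $\O_0^{**}$.

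The only delicate point — the would-be ``main obstacle'' — is bookkeeping rather than analysis: one must ensure that the limiting automorphism $h$, produced by properness on the \emph{enlarged} domain $\O_0^{**}$, actually preserves the \emph{original} domain $\O$, so that the limit $y$ falls back into $\O$ and closedness of $F$ in $\O$ can be invoked. This is guaranteed precisely because $\overline{H} \subseteq \Aut(\O)$, that is, because $\Aut(\O)$ is closed and $\O_0^{**}$ is $\Aut(\O)$-invariant. Notably, no convexity of $F$ and no contraction dynamics of $H$ enter the argument, which is what keeps the statement elementary; in particular connectedness of $F$ is not used.
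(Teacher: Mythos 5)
Your proof is correct and is essentially the paper's argument run in the contrapositive: both write a sequence of $F$ converging to the point at issue as $h_n \cdot k_n$ with $k_n$ in a fixed compact subset of $\O$ (cocompactness), and both apply Fact~\ref{fact_autom_group_proper} to the proper domain $\O_0^{**}$ of Definition~\ref{def_enveloppe_dc} --- the paper to show that a point of $\overline{F} \cap \partial\O$ forces $(h_n)$ to diverge, so that the orbit points leave every compact subset of $\O_0^{**}$ and the point lies in $\partial \O_0^{**}$; you to show that a point of $\overline{F} \cap \O_0^{**}$ forces $(h_n)$ to subconverge, so that the point falls back into $\O$ and hence into $F$.

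One caveat, precisely at the step you call the main obstacle. The justification that $\Aut(\O)$ is closed in $G$ ``being the stabilizer of the closed set $\overline{\O}$'' is not valid: in general one only has $\Aut(\O) \subset \{g \in G \mid g \cdot \overline{\O} = \overline{\O}\}$, and the inclusion can be strict (take $\O$ a round disc in $\mathbb{P}(\mathbb{R}^3)$ with one interior point deleted: every projective transformation preserving the closed disc preserves $\overline{\O}$, but only those fixing the deleted point preserve $\O$), so closedness of the stabilizer of $\overline{\O}$ says nothing about closedness of $\Aut(\O)$. The fact you need is nevertheless true and standard --- it is part of Zimmer's theorem behind Fact~\ref{fact_autom_group_proper} --- and it can be deduced from that Fact: if $h_n \in \Aut(\O)$ and $h_n \to h$ in $G$, then $h \cdot \O \subset \overline{\O}$; if $h \cdot x_0 \in \O$ for some $x_0 \in \O$, then properness of the $\Aut(\O)$-action on $\O$ (applied to $\{x_0\}$ and a compact neighborhood of $h \cdot x_0$ in $\O$) forces a subsequence of $(h_n)$ to converge inside $\Aut(\O)$, whence $h \in \Aut(\O)$; otherwise $h \cdot \O$ would be a nonempty open subset of $\partial \O$, which is impossible since the boundary of an open set has empty interior. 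Note that the paper's own proof relies on the same fact implicitly, when it asserts that $(g_k)$ diverges in $G$ because its limit orbit point is not in $\O$. With this repair your argument is complete; and as you observe, neither convexity nor connectedness of $F$ is needed.
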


\begin{proof} Note that~$\O_{0}^{**}$ is~$H$-stable. This inclusion~$\partial F \cap \partial \Od \subset  \partial F \cap \partial \O$ is due to the fact that~$\partial \Od \subset \partial \O$. Now let~$a \in \partial \O \cap \partial F$. Then  there exists~$(x_k) \in \O^\mathbb{N}$ converging in~$\O$ (and thus in~$\Od$), and~$(g_k) \in H^\mathbb{N}$ such that~$g_k \cdot x_k \rightarrow a$. Since~$a \notin \O$, the sequence~$(g_k)$ diverges in~$G$. Since~$H$ acts properly on~$\Od$ and~$(x_k)$ converges in~$\Od$, the sequence~$(g_k \cdot x_k)$ diverges in~$\Od$. Thus~$p \in \partial \Od$. Hence~$\partial F \cap \partial \O = \partial F \cap \partial \O_0^{**}$. Inn particular~$F$ is closed in~$\O_0^{**}$.~$\qed$
\end{proof}

The proof of Proposition~\ref{prop_finitely_generated} is contained in Sections~\ref{sect_reduction_projective} and~\ref{sect_proof_in_the_projective_case} below. The proof uses \emph{Hilbert metric} on properly convex domains of~$\mathbb{P}(\mathbb{R}^n)$; we give some reminders on this metric in the next Section~\ref{sect_hilbert_metric_def}.

\subsubsection{Reminders on the Hilbert metric}\label{sect_hilbert_metric_def} An open set~$\O \subset \mathbb{P}(\mathbb{R}^n)$ is said to be \emph{properly convex} if there exists an affine chart of~$\mathbb{P}(\mathbb{R}^n)$ containing~$\O$ as a convex bounded open subset. The \emph{Hilbert metric} on~$\O$ is then defined as follows: given two points~$x,y \in \O$, there exists a projective line~$\ell_{x,y}$ through~$x$ and~$y$. Let~$a,b$ be the endpoints of the projective interval~$\ell_{x,y} \cap \O$, such that~$a,x,y,b$ are aligned in this order. We define the \emph{Hilbert distance between~$x$ and~$y$} as the quantity~$\mathsf{H}_\O(x,y) := (a:x:y:b)$, where~$(\cdot: \cdot: \cdot : \cdot)$ is the classical cross ratio on~$\mathbb{P}(\mathbb{R}^2) \simeq \ell_{x,y}$. Recall that the \emph{cross ratio} on~$\mathbb{P}(\mathbb{R}^2)$ is~$\SL_2(\mathbb{R})$-invariant and satisfies~$([1:0] : [1:1] : [1:t] : [0:1]) = t$.

The Hilbert metric is a proper~$\Aut(\O)$-invariant geodesic metric, and projective segments are geodesics. For a more in-depth description of the Hilbert metric, see for instance \cite{de1993hilbert, papadopoulos2014handbook, goldman2022geometric}.

\subsubsection{Proof of Proposition~\ref{prop_finitely_generated} in the projective case}\label{sect_proof_in_the_projective_case} In this section, we take the notation of Proposition~\ref{prop_finitely_generated} when~$G = \PGL(n, \mathbb{R})$ for some~$n \in \mathbb{N}_{\geq 2}$ and~$\Theta$ is the first simple restricted root of~$G$, that is,~$\Fl(\g, \Theta) = \mathbb{P}(\mathbb{R}^n)$.

By Lemma~\ref{lem_replace_by_bidual}, up to considering the convex hull of~$\O$ instead of~$\O$, we may assume that~$\O$ is properly convex in~$\mathbb{P}(\mathbb{R}^{n})$, and we denote by~$\mathsf{H}_\O$ the classical Hilbert metric on~$\O$; recall Section~\ref{sect_hilbert_metric_def} for the terminology. Let us chose~$D \subset \mathbb{P}(\mathbb{R}^n)$ a properly convex domain, and denote by~$\mathcal{D}$ the set of~$G$-translates of~$D$. Note that~$\mathcal{D}$ forms a basis of neighborhoods of~$\mathbb{P}(\mathbb{R}^n)$.

 Let~$\compact \subset F$ be a compact set such that~$F = \Gamma \cdot \compact$. By compactness of~$\compact$, there exists a finite subset~$\mathcal{D}' \subset \mathcal{D}$ such that~$\compact \subset \bigcup_{D \in \mathcal{D}'} D$ and such that~$\overline{D} \subset \O$ for all~$D \in \mathcal{D}'$. Let~$\compact' := \bigcup_{D \in \mathcal{D}'} \overline{D} \subset \O$. Let~$F' := \Gamma \cdot \compact'$. Then, by construction, the group~$\Gamma$ acts cocompactly on~$F'$. Moreover, since~$\Gamma$ acts properly discontinuously on~$\O$ (Fact~\ref{fact_autom_group_proper}), the set~$F'$ satisfies the following property: 

 \begin{lem}\label{eq_condition_F'}
     For all~$x \in F'$ and for any neighborhood~$\mathcal{V}$ of~$x$ in~$\O$ with closure contained in~$\O$, there exists some integer~$m > 0$ and elements~$g_1, \dots, g_m \in \Gamma$ such that~$\mathcal{V} \cap F' \subset \bigcup_{D \in \mathcal{D}'} \bigcup_{i=1}^m g_i \cdot \overline{D}$.
 \end{lem}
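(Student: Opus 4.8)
The plan is to read the statement off directly from the proper discontinuity of the~$\Gamma$-action on~$\O$ recorded in Fact~\ref{fact_autom_group_proper}, after rewriting~$F'$ as an explicit union of~$\Gamma$-translates. Unwinding the definition~$F' = \Gamma \cdot \compact'$ with~$\compact' = \bigcup_{D \in \mathcal{D}'} \overline{D}$, one has
\[
    F' = \bigcup_{g \in \Gamma} \bigcup_{D \in \mathcal{D}'} g \cdot \overline{D},
\]
so it suffices to bound the number of translates~$g \cdot \overline{D}$, with~$g \in \Gamma$ and~$D \in \mathcal{D}'$, that can meet~$\mathcal{V}$.

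First I would observe that both~$\compact'$ and~$\overline{\mathcal{V}}$ are compact subsets of~$\O$. Indeed, the set~$\compact'$ is a finite union of the compact sets~$\overline{D}$, each of which is contained in~$\O$ by the choice of~$\mathcal{D}'$; and~$\overline{\mathcal{V}}$ is closed in the compact space~$\mathbb{P}(\mathbb{R}^n)$, hence compact, and is contained in~$\O$ by the hypothesis that~$\mathcal{V}$ has closure contained in~$\O$.

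Next I would apply properness. Since~$\Gamma$ is discrete and preserves the proper domain~$\O$, Fact~\ref{fact_autom_group_proper} gives that~$\Gamma$ acts properly on~$\O$. Applied to the two compact sets~$\compact'$ and~$\overline{\mathcal{V}}$, this shows that the set
\[
    S := \{ g \in \Gamma \mid (g \cdot \compact') \cap \overline{\mathcal{V}} \neq \emptyset \}
\]
is finite; I then list its elements as~$S = \{g_1, \dots, g_m\}$. If~$g \in \Gamma$ and~$D \in \mathcal{D}'$ satisfy~$(g \cdot \overline{D}) \cap \mathcal{V} \neq \emptyset$, then, since~$\overline{D} \subset \compact'$ and~$\mathcal{V} \subset \overline{\mathcal{V}}$, one has~$(g \cdot \compact') \cap \overline{\mathcal{V}} \neq \emptyset$, i.e.\ $g \in S$. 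Intersecting the displayed expression for~$F'$ with~$\mathcal{V}$ therefore yields~$\mathcal{V} \cap F' \subset \bigcup_{D \in \mathcal{D}'} \bigcup_{i=1}^m g_i \cdot \overline{D}$, which is the claim.

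I expect no real obstacle here: the entire content lies in the properness statement of Fact~\ref{fact_autom_group_proper}, and the only points requiring care are verifying that~$\overline{\mathcal{V}}$ and~$\compact'$ are compact subsets of~$\O$, and the bookkeeping step replacing ``$g$ meets~$\mathcal{V}$ through some~$\overline{D}$'' by ``$g \in S$''. Note that the point~$x \in F'$ serves only to guarantee that a neighborhood~$\mathcal{V}$ of the required type exists and plays no further role in the argument.
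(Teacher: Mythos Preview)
Your proof is correct and follows essentially the same route as the paper: define the finite set of~$g \in \Gamma$ for which~$g \cdot \compact'$ meets~$\mathcal{V}$ (or~$\overline{\mathcal{V}}$), invoke properness of the~$\Gamma$-action on~$\O$ from Fact~\ref{fact_autom_group_proper} to see that this set is finite, and read off the inclusion. If anything, your use of~$\overline{\mathcal{V}}$ rather than~$\mathcal{V}$ is slightly more careful, since properness is most directly applied to a pair of compact subsets.
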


 \begin{proof} Since~$\Gamma$ acts properly discontinuously on~$\O$, the set~$S' := \{ g \in \Gamma \mid  g^{-1} \cdot \mathcal{V} \cap \compact' \ne \emptyset\}$ is finite. By definition of~$F'$, we have
 \begin{equation*}
   \mathcal{V} \cap F' \subset \bigcup_{g \in S'} g \cdot \compact',
 \end{equation*}
 so we may take~$g_1, \dots, g_m$ to be the elements of~$S'$.~$\qed$
 \end{proof}

Given two points~$x,y \in F'$, we denote by~$\mathcal{C}_{x,y}^{F'}(\O)$ the set of continuous paths~$\beta:[0, 1] \rightarrow F'$ from~$x$ to~$y$ which are piecewise projective, that is, such that there exists a subdivision~$t_0 = 0 < t_1 < \cdots < t_{m} < t_{m+1} = 1$ of~$[0,1]$ such that~$\beta([t_i, t_{i+1}])$ is a projective segment contained in~$F'$. The length of such a path for the Hilbert metric~$\mathsf{H}_\O$ on~$\O$ is then:

\begin{equation}\label{eq_longueur_chaîne_Hilbert}
     \leng(\beta) = \sum_{1 \leq i \leq m} \mathsf{H}_\O\big{(}\beta(t_i), \beta(t_{i+1})\big{)}.
\end{equation}
We define the following map on~$F'$:
\begin{equation}\label{eq_ineq_hilbert}
    \delta(x,y) = \inf_{\beta \in \mathcal{C}_{x,y}^{F'}(\O)} \leng(\beta). 
\end{equation}

\begin{lem}\label{lem_props_hyperbolic} The map~$\delta$ is a proper geodesic~$\Gamma$-invariant metric on~$F'$, generating the standard topology.
\end{lem}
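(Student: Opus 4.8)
The plan is to regard $\delta$ as the \emph{length (path) metric} attached to the piecewise-projective path structure on $F'$, and to control it at every step by comparison with the Hilbert metric $\mathsf{H}_\O$. First I would record the local geometry of $F'$. Since $\O$ is proper, $\Aut(\O)$ — and hence $\Gamma$ — acts properly on $\O$ by Fact~\ref{fact_autom_group_proper}, so the family $\{\gamma\cdot\overline{D}\mid \gamma\in\Gamma,\ D\in\mathcal{D}'\}$ of compact convex pieces is locally finite in $\O$ (this is the content of Lemma~\ref{eq_condition_F'}); in particular $F'$ is closed in $\O$, and every point of $F'$ has a neighborhood in $\O$ meeting only finitely many pieces. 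Two comparisons are then immediate. On the one hand, for $x,y\in F'$ and any $\beta\in\mathcal{C}^{F'}_{x,y}(\O)$, the triangle inequality for $\mathsf{H}_\O$ applied to the subdivision gives $\leng(\beta)\geq \mathsf{H}_\O(x,y)$, whence $\delta(x,y)\geq \mathsf{H}_\O(x,y)$. On the other hand, whenever $x,y$ are joined by a single projective segment contained in $F'$ — for instance if they lie in a common piece $\gamma\cdot\overline{D}$, which is convex — that segment is itself an admissible path, so $\delta(x,y)\leq \mathsf{H}_\O(x,y)$; combining the two, $\delta=\mathsf{H}_\O$ on any such segment. Symmetry and the triangle inequality for $\delta$ follow from reversing and concatenating paths, definiteness follows from $\delta\geq \mathsf{H}_\O$ and the fact that $\mathsf{H}_\O$ is a metric, and $\Gamma$-invariance holds because $\mathsf{H}_\O$ is $\Aut(\O)$-invariant, $\Gamma$ preserves $F'$, and elements of $\PGL(n,\mathbb{R})$ carry projective segments to projective segments.

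The step I expect to be the main obstacle is showing that $\delta$ is \emph{finite}, i.e.\ that $\mathcal{C}^{F'}_{x,y}(\O)\neq\emptyset$ for all $x,y\in F'$; this is exactly where the connectedness hypothesis on $F$ (from Proposition~\ref{prop_finitely_generated}) enters. I would declare $x\sim y$ if $x$ and $y$ are joined by a piecewise-projective path inside $F'$. This is an equivalence relation, and by the local structure above each class is \emph{open} in $F'$: near a point $x$ one may shrink a neighborhood so that every piece meeting it contains $x$, and then every nearby point of $F'$ is joined to $x$ by a segment inside a common convex piece. Thus the classes partition $F'$ into relatively open, hence clopen, subsets. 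Discarding from $\mathcal{D}'$ the finitely many pieces disjoint from $\compact$ (which still leaves a cover of $\compact$), every $z\in\gamma\cdot\overline{D}\subset F'$ is joined by a segment in $\gamma\cdot\overline{D}$ to a point $\gamma\cdot p$ with $p\in D\cap\compact$, and $\gamma\cdot p\in\gamma\cdot\compact\subset F$; hence every class meets $F$. Since $F$ is connected and the classes are open, $F$ is contained in a single class, and therefore $F'$ is a single class. This makes $\delta$ finite and completes the proof that it is a metric.

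Next I would identify the topology and prove properness. The inequality $\delta\geq \mathsf{H}_\O$ shows the identity $(F',\delta)\to (F',\mathsf{H}_\O|_{F'})$ is $1$-Lipschitz, while the local equality $\delta=\mathsf{H}_\O$ established above (any point of $F'$ close enough to $x$ lies in a piece containing $x$) gives the continuity of its inverse; hence $\delta$ induces the subspace topology on $F'$, which is the standard topology. For properness, note $\overline{B}_\delta(x,R)\subset \overline{B}_{\mathsf{H}_\O}(x,R)\cap F'$ by $\delta\geq \mathsf{H}_\O$. Here $\overline{B}_{\mathsf{H}_\O}(x,R)$ is compact in $\O$ because the Hilbert metric is proper (Section~\ref{sect_hilbert_metric_def}), and $F'$ is closed in $\O$; since the two topologies agree, $\overline{B}_\delta(x,R)$ is closed in $\O$, hence a closed subset of a compact set, hence compact.

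Finally, for the geodesic property I would invoke the Hopf--Rinow theorem for length spaces. The metric $\delta$ is intrinsic: on each projective segment contained in $F'$ it coincides with $\mathsf{H}_\O$, so for any admissible $\beta$ the quantity $\leng(\beta)$ is precisely the $\delta$-length of $\beta$, and $\delta$ is the infimum of the $\delta$-lengths of paths in $F'$. The space $(F',\delta)$ is locally compact (subspace topology together with local finiteness of the pieces) and complete (being proper). A complete, locally compact length space is geodesic, which gives the last required property. The only genuinely delicate point is the connectedness argument of the second paragraph; everything else is a routine comparison with $\mathsf{H}_\O$.
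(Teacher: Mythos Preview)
Your proof is correct and follows essentially the same approach as the paper's: compare $\delta$ with the Hilbert metric $\mathsf{H}_\O$ to get definiteness, properness, and the standard topology, then invoke Hopf--Rinow for length spaces to obtain geodesicity. The one place where you are more careful than the paper is the finiteness of $\delta$ (nonemptiness of $\mathcal{C}^{F'}_{x,y}(\O)$): the paper simply asserts this ``by construction of $F'$'', whereas you supply the connectedness argument via clopen equivalence classes meeting $F$, which is the right justification.
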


\begin{proof}

The map~$\delta$ obviously satisfies the triangle inequality, the symmetry property, and is~$\Gamma$-invariant. Moreover, by construction of~$F'$, for all~$x,y \in F'$, the set~$\mathcal{C}_{x,y}^{F'}(\O)$ is nonempty, so by Equation~\eqref{eq_longueur_chaîne_Hilbert}, this implies that~$\delta(x,y ) < + \infty$. The~$\Gamma$-invariance is straightforward. By definition, we have~$\delta(x,y) \geq \mathsf{H}_{\O}(x,y)$ for all~$x,y \in F'$. Since~$F'$ is closed in~$\O$ and~$\mathsf{H}_\O$ is a proper metric, this implies that~$\delta$ is a proper metric. 

Now let~$\mathcal{C}_{x,y}'(F')$ be the set of all rectifiable curves joining~$x$ and~$y$ in~$F'$. By the definition of the length of a curve, one has~$\delta(x,y) \leq \inf \left\{ \ell_{\delta}(\beta) \mid \beta \in \mathcal{C}_{x,y}'(F') \right\}$ (where~$\ell_{\delta}$ is the length for the metric~$\delta$). Since the elements of~$\mathcal{C}_{x,y}^{F'}(\O)$ are rectifiable, this last inequality is an equality. Hence~$\delta$ is a length metric. 

It remains to prove that~$\delta$ generates the standard topology on~$F'$. By Equation~\eqref{eq_ineq_hilbert} and since~$\mathsf{H}_\O$ generates the standard topology, it suffices to prove that~$\delta$ is continuous with respect to the standard topology. By the inequality

\begin{equation*}
    |\delta(x_0, y_0 ) - \delta(x, y)| \leq \delta(x_0, x ) + \delta(y_0, y) \quad \forall x_0, y_0, x, y \in F',
\end{equation*}
one only needs to prove that for any~$x_0 \in \O$ the map~$x \mapsto \delta(x_0, x)$ is continuous at~$x_0$. Let~$(x_k) \in (F')^{\mathbb{N}}$ be such that~$x_k \rightarrow x_0$. By Lemma~\ref{eq_condition_F'}, up to extracting we may assume that there exist~$g \in \Gamma$ and~$D \in \mathcal{D}'$ such that~$x_k \in g \cdot \overline{D}$ for all~$k \in \mathbb{N}$. By definition of~$F'$, we also have~$\overline{D} \subset F'$. Hence the projective segment~$[x_0, x_k]$ is in~$\mathcal{C}'_{x_0,x_k}(F')$. Hence~$\delta(x,x_k) \leq \mathsf{H}_\O(x_0,x_k)$. Since~$\mathsf{H}_\O(x_0,x_k) \rightarrow 0$ as~$k \rightarrow + \infty$, the map~$\delta(x_0, \cdot)$ is continous at~$x_0$.

We have proven that the metric space~$(F', \delta)$ is proper length metric space, it is thus geodesic.
$\qed$
\end{proof}

We have endowed~$F'$ with a~$\Gamma$-invariant, proper, geodesic metric~$\delta$, generating the standard topology (and thus locally compact). Hence by Svarc-Milnor's Lemma, the group~$\Gamma$ is finitely generated. This ends the proof of Proposition~\ref{prop_finitely_generated} in the case where~$G= \PGL(n, \mathbb{R})$ for some~$n \geq 1$ and~$\Theta = \{\alpha_1\}$.

\subsubsection{Proof of Proposition~\ref{prop_finitely_generated} in the general case}\label{sect_reduction_projective} We take the notation of Proposition~\ref{prop_finitely_generated}. We fix a representation~$(V, \rho)$ of~$G$ given by Fact~\ref{prop_ggkw}, with respect to~$\Theta$.

One can consider the connected component~$\mathcal{O}$ of~$\iota^\opp(\O^*)^*$ containing~$\iota_\rho(\O)$. It is convex, proper, and open by Fact~\ref{fact_generat}, and~$\rho(\Gamma)$-invariant. Moreover, it contains the subset~$\iota_\rho(F)$. We have:

\begin{lem}
    The set~$\iota_\rho(F)$ is closed in~$\mathcal{O}$.
\end{lem}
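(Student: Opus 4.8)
The plan is to deduce the closedness of $\iota_\rho(F)$ in $\mathcal{O}$ from the dual convexity of the bidual domain $\Od$. First I would record two elementary facts about the embedding: since $\Fl(\g, \Theta)$ is compact and $\iota_\rho$ is a homeomorphism onto its image, the set $\iota_\rho(\Fl(\g,\Theta))$ is compact, hence closed in $\mathbb{P}(V)$, and $\overline{\iota_\rho(S)} = \iota_\rho(\overline S)$ for every $S \subset \Fl(\g, \Theta)$ (closures taken in $\mathbb{P}(V)$ and in $\Fl(\g, \Theta)$ respectively). Using injectivity of $\iota_\rho$, proving that $\iota_\rho(F)$ is closed in $\mathcal{O}$ then reduces to showing that $\iota_\rho(\overline F \smallsetminus F) \cap \mathcal{O} = \emptyset$, where $\overline F$ denotes the closure of $F$ in $\Fl(\g, \Theta)$. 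To locate $\overline F \smallsetminus F$, I would invoke Lemma~\ref{lem_replace_by_bidual}, which states that $F$ is closed in $\Od$; combined with $F \subset \O \subset \Od$ this gives $\overline F \smallsetminus F = \overline F \smallsetminus \Od \subset \partial \Od$. Hence it suffices to check that $\iota_\rho(x) \notin \mathcal{O}$ for every $x \in \partial \Od$.

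The heart of the argument is this last point, and it is where the notion of dual convexity enters. Given $x \in \partial \Od$, I would use that $\Od$ is dually convex (Definition~\ref{def_enveloppe_dc}) to produce a supporting element $\xi \in (\Od)^*$ with $x \in \hyp_\xi$. The key observation is then that, because taking duals reverses inclusions and $\O \subset \Od$, one has $(\Od)^* \subset \O^*$, so in fact $\xi \in \O^*$. Translating transversality through the embedding via Fact~\ref{prop_ggkw}.(2) and the remark following it, the relation $x \in \hyp_\xi$ says exactly that $\iota_\rho(x)$ lies on the projective hyperplane $\iota^\opp(\xi)$, which belongs to $\iota^\opp(\O^*)$. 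By the very definition of the projective dual, this forces $\iota_\rho(x) \notin \iota^\opp(\O^*)^*$, and since $\mathcal{O} \subset \iota^\opp(\O^*)^*$ we conclude $\iota_\rho(x) \notin \mathcal{O}$, as desired.

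The main obstacle I anticipate is precisely the need for the supporting hyperplane to come from $\O^*$ rather than merely from $(\Od)^*$: the domain whose closedness we study is $\iota_\rho(F) \subset \mathcal{O}$, and $\mathcal{O}$ is cut out by the hyperplanes of $\iota^\opp(\O^*)$, whereas dual convexity is most naturally available for the larger domain $\Od$. The monotonicity $(\Od)^* \subset \O^*$ bridges this gap cleanly, so no further work is required there. A secondary point to handle carefully is the passage between transversality in $\Fl(\g, \Theta)$ and incidence in $\mathbb{P}(V)$, but this is exactly the content of Fact~\ref{prop_ggkw}.(2), and is therefore routine.
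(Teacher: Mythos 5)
Your proof is correct, and it takes a genuinely different route from the paper's. The paper argues dynamically, directly in $\mathbb{P}(V)$: given $\iota_\rho(x_k) \rightarrow y \in \mathcal{O}$, cocompactness lets one write $x_k = g_k \cdot z_k$ with $(z_k)$ converging in $F$; if $(x_k)$ had no limit point in $F$, it would (up to extraction) converge to a point of $\partial \O$, forcing $(g_k)$ to diverge in $G$, hence $(\rho(g_k))$ to diverge in $\PGL(V)$, and then the properness of the action of $\Aut_{\PGL(V)}(\mathcal{O})$ on the proper domain $\mathcal{O}$ (Fact~\ref{fact_autom_group_proper}) would prevent $\rho(g_k) \cdot \iota_\rho(z_k) = \iota_\rho(x_k)$ from converging in $\mathcal{O}$, a contradiction. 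You instead stay in $\Fl(\g, \Theta)$ as long as possible and make the argument incidence-geometric: Lemma~\ref{lem_replace_by_bidual} places $\overline{F} \smallsetminus F$ inside $\partial \Od$; dual convexity of $\Od$ (Definition~\ref{def_enveloppe_dc}) together with the monotonicity $(\Od)^* \subset \O^*$ produces supporting hypersurfaces $\hyp_\xi$ with $\xi \in \O^*$; and Fact~\ref{prop_ggkw}.(2) converts these into projective hyperplanes of $\iota^\opp(\O^*)$ that exclude the corresponding points from $\iota^\opp(\O^*)^* \supset \mathcal{O}$. Each step checks out: the reduction to $\iota_\rho(\overline{F} \smallsetminus F) \cap \mathcal{O} = \emptyset$ is valid because $\iota_\rho$ is injective and $\Fl(\g, \Theta)$ is compact, and Lemma~\ref{lem_replace_by_bidual} does apply since $\Gamma$ acts cocompactly on the closed invariant set $F \subset \O$. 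As for what each approach buys: the paper's argument is self-contained (it never needs the bidual or the dual convexity of its components), whereas yours outsources all the dynamical content to Lemma~\ref{lem_replace_by_bidual} --- which the paper proves by essentially the same properness argument --- and in exchange yields a slightly stronger and more structural conclusion, namely that $\iota_\rho(\overline{F} \smallsetminus F)$ avoids the whole of $\iota^\opp(\O^*)^*$, not merely the connected component $\mathcal{O}$.
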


\begin{proof}
    Let~$(x_k) \in F^{\mathbb{N}}$ be such that~$\iota_\rho(x_k) \rightarrow y \in \mathcal{O}$. Since~$\Gamma$ acts cocompactly on~$F$, there exists a converging sequence~$(z_k) \in \O^{\mathbb{N}}$ and~$(g_k) \in \Gamma^{\mathbb{N}}$ such that~$g_k \cdot z_k = x_k$ for all~$k \in \mathbb{N}$. 
    
    Since~$F$ is closed in~$\O$, if~$(x_k)$ does not converge in~$F$, up to extraction it converges to a point~$x \in \partial \O$. Thus the sequence~$(g_k)$ diverges in~$G$, so~$(\rho(g_k))$ diverges in~$\PGL(V)$. Since~$\mathcal{O}$ contains~$\O$, the sequence~$(\iota_\rho(z_k))$ converges in~$\mathcal{O}$. Since~$\Aut_{\PGL(V)}(\mathcal{O}) := \{g \in \PGL(V) \mid g \cdot \mathcal{O} = \mathcal{O}\}$ acts properly on~$\mathcal{O}$, the sequence~$\rho(g_k) \cdot \iota_\rho(z_k)$ cannot converge in~$\mathcal{O}$, which is a contradiction. Thus~$(x_k)$ converges in~$F$, so~$y \in \iota_\rho(F)$. This proves that~$\iota_\rho(F)$ is closed in~$\mathcal{O}$.~$ \qed$
\end{proof}

Thus the discrete subgroup~$\rho(\Gamma)$ of~$\PGL(V)$ preserves a proper domain~$\mathcal{O}$ and acts cocompactly on the closed (in~$\mathcal{O}$) connected subset~$\iota_\rho(F)$ of~$\mathcal{O}$. Then by Section~\ref{sect_proof_in_the_projective_case}, the group~$\rho(\Gamma)$ is finitely generated. Now since~$G$ is simple, the representation~$\rho$ has finite kernel, so~$\Gamma$ is finitely generated. This ends the proof of Proposition~\ref{prop_finitely_generated}.

\subsection{Dynamics of groups preserving proper domains in causal flag manifolds}\label{sect_dynamics_causal} Let~$G$ be a~$\HTT$ Lie group. In this section, we investigate basic dynamical properties of subgroups~$H \leq G$ preserving a proper domain~$\O  \subset \SB(\g)$ and acting cocompactly on a closed subset~$\mathcal{C}$ of~$\O$. The main result of the section, Lemma~\ref{lem_p_conic_shilov}, will be used in the proof of the implication ``$(1) \Rightarrow (2)$" of Theorem~\ref{thm_equiv_anosov}. Lemma~\ref{lem_strongly_convex_cocompact_shilov} will be used in the implication ``$(2) \Rightarrow (3)$" of Theorem~\ref{thm_equiv_anosov}.

Given a subset~$\mathcal{C}\subset \O$ which is closed in~$\O$, the \emph{ideal boundary~$\borc$ of~$\mathcal{C}$} is the set~$\overline{\mathcal{C}} \smallsetminus \mathcal{C}= \overline{\mathcal{C}} \cap \partial \O$.

Recall that a subset~$F \subset \SB(\g)$ is \emph{transverse} if any two distinct points~$a,b \in F$  are transverse.

\begin{lem}\label{lem_p_conic_shilov}
Let~$\O \subset \SB(\g)$ be a proper domain, and let us assume that there exist a subgroup~$H \leq \Aut(\O)$ and a closed subset~$\mathcal{C}$ of~$\O$ such that:
\begin{enumerate}
    \item[*] the set~$\mathcal{C}$ is~$H$-invariant, and its ideal boundary is transverse;
    \item[*] the group~$H$ acts cocompactly on~$\mathcal{C}$;
    \item[*] One has~$\Lambda_\O^{\operatorname{orb}}(H) \subset \borc$.
\end{enumerate}
Then the following hold:
\begin{enumerate}
        \item For any~$a \in \borc$, there exist~$x_0 \in \mathcal{C}$ and~$(g_k) \in \Gamma^{\mathbb{N}}$ such that~$g_k \cdot x_0 \rightarrow a$. 
        
        \item For any~$a \in \borc$ and any sequence~$(g_k) \in \Gamma^{\mathbb{N}}$ such that there exists~$x_0 \in \mathcal{C}$ with~$g_k \cdot x_0 \rightarrow a$, the sequence~$(g_k)$ is~$\{\alpha_r\}$-contracting, with~$\{\alpha_r\}$-limit~$a$.
 
        \item The group~$\Gamma$ is~$\{\alpha_r\}$-divergent, and~$\Lambda_{\{\alpha_r\}}(\Gamma) = \partial_i \mathcal{C} = \Lambda_\O^{\operatorname{orb}}(\Gamma)$.

        \item For any~$a \in \borc$, one has~$\hyp_a \cap \ \O = \emptyset$.
    \end{enumerate}
\end{lem}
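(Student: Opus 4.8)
The plan is to set $\Gamma:=H$ and to extract from all four points a single core statement: \emph{if $(g_k)\in\Gamma^{\mathbb N}$ and $x_0\in\O$ satisfy $g_k\cdot x_0\to a\in\partial\O$, then $(g_k)$ is $\{\alpha_r\}$-contracting with $\{\alpha_r\}$-limit $a$.} I would work directly in $\SB(\g)$ rather than through a projective embedding, since transversality of $\borc$ is a flag-theoretic (not convex-geometric) condition. First, $(g_k)$ must diverge in $G$: if it subconverged, then by properness of the $\Aut(\O)$-action (Fact~\ref{fact_autom_group_proper}) the orbit $g_k\cdot x_0$ would stay in a compact subset of $\O$, contradicting $a\in\partial\O$. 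After passing to a subsequence, write the Cartan decomposition $g_k=\kappa_k\exp(\mu(g_k))\ell_k$ with $\kappa_k\to\kappa$ and $\ell_k\to\ell$ in $K$, and set $\Theta'=\{\alpha\in\Delta:\alpha(\mu(g_k))\to+\infty\}\neq\emptyset$.

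The heart of the proof is to show $\alpha_r\in\Theta'$. I would analyze the dynamics on $\SB(\g)=\Fl(\g,\{\alpha_r\})$ through $\exp(\mu(g_k))$. Writing $\mu(g_k)$ in the coordinates of Section~\ref{sect_root_syst_shilov} as $\lambda_1^{(k)}\geq\cdots\geq\lambda_r^{(k)}\geq0$, the roots of $\uu^-$ are the $-(\varepsilon_i+\varepsilon_j)$ with $i\leq j$, on which $\Ad(\exp\mu(g_k))$ acts by the factor $e^{-(\lambda_i^{(k)}+\lambda_j^{(k)})}\leq 1$; hence, after a further extraction, $\exp(\mu(g_k))\cdot\exp(X)\LP^+$ converges for every $X\in\uu^-$ to a point of $\exp(\uu^-_0)\LP^+$, where $\uu^-_0=\bigoplus_{m+1\leq i\leq j\leq r}\g_{-(\varepsilon_i+\varepsilon_j)}$ and $m=\#\{i:\lambda_i^{(k)}\to+\infty\}$. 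The attracting set of $(g_k)$ in the big cell is thus $\kappa\cdot\exp(\uu^-_0)\LP^+$. Now $\alpha_r\in\Theta'$ means $2\lambda_r^{(k)}\to+\infty$, i.e.\ $m=r$, i.e.\ $\uu^-_0=0$ and the attracting set is a single point. If instead $\alpha_r\notin\Theta'$, then $1\leq m\leq r-1$, and $\uu^-_0$ is the Jordan subalgebra on the indices $\{m+1,\dots,r\}$, of rank $r-m<r$; by Kaneyuki's description recalled through Lemma~\ref{lem_inclusion_Ois}, its elements all lie in $\varphi_\std^{-1}(\hyp_{\LP^+})=\bigsqcup_{i+j\leq r-1}V_{i,j}$ (never in an open orbit), so any two points of $\exp(\uu^-_0)\LP^+$ are \emph{non-transverse}, while $\dim\uu^-_0\geq1$. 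Since $\O$ is open and not contained in the relevant Schubert variety, I may pick $y_0,y_1\in\O$ whose images $a_i=\lim_k g_k\cdot y_i$ are two \emph{distinct} points of the positive-dimensional set $\kappa\cdot\exp(\uu^-_0)\LP^+$. As $\O$ is $\Gamma$-invariant and $(g_k)$ diverges, $a_0,a_1\in\Lambda_\O^{\operatorname{orb}}(\Gamma)\subset\borc$, hence transverse — contradicting their non-transversality. Therefore $\alpha_r\in\Theta'$, and $(g_k)$ is $\{\alpha_r\}$-contracting, with attracting point $\bar x^+$ and some repelling flag $\xi^-$.

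To identify $\bar x^+$ with $a$ I must know that $x_0\notin\hyp_{\xi^-}$, and here I break the apparent circularity with Point~(4): the flag $\xi^-$ is the $\{\alpha_r\}$-limit of $(g_k^{-1})$, which is again $\{\alpha_r\}$-divergent because $\oppinv(\alpha_r)=\alpha_r$, so $\xi^-\in\Lambda_{\{\alpha_r\}}(\Gamma)\subset\Lambda_{\{\alpha_r\}}(\Aut(\O))$, and Lemma~\ref{lem_proper_inter_hyp} gives $\hyp_{\xi^-}\cap\O=\emptyset$. Thus $x_0\in\O$ lies in the basin, $g_k\cdot x_0\to\bar x^+$, and $a=\bar x^+$; this proves the core statement, hence Point~(2). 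For Point~(1), given $a\in\borc=\overline{\mathcal C}\cap\partial\O$ I take $c_n\in\mathcal C$ with $c_n\to a$, write $c_n=g_n\cdot k_n$ with $k_n$ in a compact $\compact\subset\mathcal C$ satisfying $\mathcal C=\Gamma\cdot\compact$, and extract $k_n\to x_0\in\mathcal C$; applying the core analysis to $(g_n)$ yields $\alpha_r\in\Theta'$ and a basin containing $\mathcal C\subset\O$, so by uniform convergence on compacta $g_n\cdot x_0=\lim_n g_n\cdot k_n=a$, giving~(1).

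Finally, Point~(3): $\Gamma$ is $\{\alpha_r\}$-divergent, because a sequence of distinct elements applied to a fixed $x_0\in\mathcal C$ leaves every compact of $\O$ (proper discontinuity), so up to extraction $g_k\cdot x_0\to a\in\partial\O$ and the core statement provides a $\{\alpha_r\}$-contracting subsequence, which suffices by Fact~\ref{fact_KAK_divergent}.(2). The equalities $\Lambda_{\{\alpha_r\}}(\Gamma)=\borc=\Lambda_\O^{\operatorname{orb}}(\Gamma)$ follow by combining~(1) and~(2) (every point of $\borc$ is realized as an $\{\alpha_r\}$-limit, and conversely every $\{\alpha_r\}$-limit is attained along an orbit of a point of $\O$ taken off the repelling Schubert variety) with the hypothesis $\Lambda_\O^{\operatorname{orb}}(\Gamma)\subset\borc$. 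Point~(4) is then immediate: for $a\in\borc=\Lambda_{\{\alpha_r\}}(\Gamma)\subset\Lambda_{\{\alpha_r\}}(\Aut(\O))$, Lemma~\ref{lem_proper_inter_hyp} yields $\hyp_a\cap\O=\emptyset$. The main obstacle is the second step, $\alpha_r\in\Theta'$: it rests on the Jordan-theoretic fact that a degenerate (rank $<r$) attracting set consists of pairwise non-transverse points, which collides fatally with the transversality of $\borc$.
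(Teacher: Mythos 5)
Your proposal is correct and takes a genuinely different route from the paper's. The paper proves Point~(1) with Zimmer's Caratheodory metric (Fact~\ref{fact_caratheodory_metric}), and proves the crucial Point~(2) by propagating non-transversality through the domain along \emph{chains of photons}: any $y\in\O$ is joined to $x_0$ by photons inside $\O$, points on a common photon are non-transverse, hence every limit of $(g_k\cdot y)$ is non-transverse to $a$, and transversality of $\borc$ forces it to equal $a$; contraction then follows from Fact~\ref{fact_KAK_divergent}.(1). You work instead on the group side, through the Cartan decomposition and Kaneyuki's orbit structure: if $\alpha_r(\mu(g_k))\not\to+\infty$, the attracting set $\kappa\cdot\exp(\uu_0^-)\cdot\LP^+$ is positive-dimensional and pairwise non-transverse, and, since the limit map on the big cell factors through an open linear surjection onto $\uu_0^-$, it meets $\Lambda_\O^{\operatorname{orb}}(H)\subset\borc$ in two distinct points, contradicting transversality. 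The two arguments exploit the same geometric mechanism (degenerate directions of $\SB(\g)$ consist of pairwise non-transverse points), but yours replaces the paper's two black boxes by explicit root-space dynamics, and your handling of the repelling flag --- $\xi^-$ is the $\{\alpha_r\}$-limit of $(g_k^{-1})$, so Lemma~\ref{lem_proper_inter_hyp} gives $\hyp_{\xi^-}\cap\O=\emptyset$ --- is an economy: it identifies the attracting point with $a$ and simultaneously makes Point~(4) an immediate corollary of Point~(3), whereas the paper essentially reproves Lemma~\ref{lem_proper_inter_hyp} inline there.

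Two steps need tightening, both fillable with tools already in the paper. First, pairwise non-transversality of $\exp(\uu_0^-)\cdot\LP^+$: Lemma~\ref{lem_inclusion_Ois} does not by itself say that $\uu_0^-$ avoids the open orbits. You need (a) the reduction that $\exp(X)\cdot\LP^+$ and $\exp(Y)\cdot\LP^+$ are transverse if and only if $X-Y$ lies in $\bigsqcup_{i+j=r}V_{i,j}$ (translate by $\exp(-Y)\in U^-$ and use Kaneyuki's identity $\varphi_\std^{-1}(\hyp_{\LP^+})=\bigsqcup_{i+j\le r-1}V_{i,j}$), which applies because $\uu_0^-$ is a linear subspace; and (b) that no $Z\in\uu_0^-$ lies in an open orbit. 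For the Lagrangian families (b) is immediate from the signature description in~\eqref{eq_Ois_lag}, and in general it follows either from Lemma~\ref{lem_decomp_spectrale} applied to the rank-$(r-m)$ Peirce subalgebra with frame $\{v_{m+1},\dots,v_r\}$, or more directly: for $m\ge1$ no expression $2\varepsilon_1-\varepsilon_i-\varepsilon_j$ with $i,j\ge m+1$ is a root, so $[\g_{2\varepsilon_1},\uu_0^-]=0$, hence $\exp(\g_{2\varepsilon_1})$ fixes $\exp(Z)\cdot\LP^+$, while $U^+$ acts simply transitively on the set of points transverse to $\LP^+$; so $\exp(Z)\cdot\LP^+$ is not transverse to $\LP^+$. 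Second, your Cartan analysis is carried out after extraction, whereas Point~(2) concerns the whole sequence: conclude by noting that every subsequence of $(g_k)$ admits a further subsequence contracting to $a$ uniformly on compacts of $\O$, so that $g_k\cdot\overline{\mathcal{U}}\to\{a\}$ in the Hausdorff topology for any open $\mathcal{U}$ with compact closure in $\O$, and then invoke Fact~\ref{fact_KAK_divergent}.(1). Finally, your appeal to proper discontinuity in Point~(3) tacitly assumes $H$ discrete; this assumption is equally implicit in the paper's own proof (and the statement fails for infinite compact $H$), so it is a defect of the statement shared by both arguments rather than of your proof.
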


\begin{rmk}
    In Section~\ref{sect_complement}, we prove an analogue of Lemma~\ref{lem_p_conic_shilov} (see Lemma~\ref{lem_p_conical}), which holds for any flag manifold.
\end{rmk}

To prove Lemma~\ref{lem_p_conic_shilov}, we need the following property of the \emph{Caratheodory metric} introduced by Zimmer: 
\begin{fact}\label{fact_caratheodory_metric}\cite{zimmer2018proper} Let~$\g$ be any real semisimple Lie algebra of noncompact type and~$\Theta$ be a subset of the simple restricted roots of~$\g$. Let~$\O \subset \Fl(\g, \Theta)$ be a proper domain. There exists an~$\Aut(\O)$-invariant metric~$C_\O$ on~$\O$ (called the \emph{Caratheodory metric of~$\O$}) such that for any two sequences~$(x_k), (y_k) \in \O^\mathbb{N}$, if~$C_\O(x_k, y_k) \rightarrow 0$, then~$y_k \rightarrow x$ whenever~$x_k \rightarrow x \in \overline{\O}$ (even if~$x \in \partial\O$).   
\end{fact}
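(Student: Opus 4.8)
The plan is to transfer the statement to the real projective setting, where $C_\O$ is a pullback of a genuine Hilbert metric, and then to read off the boundary behaviour from a single cross-ratio estimate.

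First I would fix an irreducible representation $(V,\rho)$ and the embeddings $\iota_\rho\colon\Fl(\g,\Theta)\hookrightarrow\mathbb{P}(V)$, $\iota^{\opp}_\rho\colon\Fl(\g,\Theta)^{\opp}\hookrightarrow\mathbb{P}(V^*)$ provided by Fact~\ref{prop_ggkw}. As in Section~\ref{sect_reduction_projective}, let $\mathcal{O}$ be the connected component of $\iota^{\opp}_\rho(\O^*)^*$ containing $\iota_\rho(\O)$: by Fact~\ref{fact_generat} and the properties of the dual recalled in Section~\ref{sect_dual_proper_domain}, it is an open, properly convex domain of $\mathbb{P}(V)$ with $\iota_\rho(\O)\subset\mathcal{O}$, hence $\iota_\rho(\overline{\O})=\overline{\iota_\rho(\O)}\subset\overline{\mathcal{O}}$; and it is $\Aut(\O)$-invariant since $\Aut(\O)$ preserves $\O^*$. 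I would then realize the Caratheodory metric as the pullback $C_\O(x,y):=\mathsf{H}_{\mathcal{O}}\big(\iota_\rho(x),\iota_\rho(y)\big)$ of the Hilbert metric of $\mathcal{O}$ (Section~\ref{sect_hilbert_metric_def}): it is a metric because $\iota_\rho$ is injective and $\mathsf{H}_{\mathcal{O}}$ is one, and it is $\Aut(\O)$-invariant because $\rho(\Aut(\O))$ preserves $\mathcal{O}$ and $\mathsf{H}_{\mathcal{O}}$ is invariant under automorphisms of $\mathcal{O}$. That this realization agrees with the intrinsic metric of \cite{zimmer2018proper}, and that $\rho$ may be taken defined on all of $\Aut(\O)\le\Aut_\Theta(\g)$, is where I would invoke that reference.

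Since $\iota_\rho$ is a smooth embedding of the compact manifold $\Fl(\g,\Theta)$, its restriction $\iota_\rho\colon\overline{\O}\to\iota_\rho(\overline{\O})\subset\overline{\mathcal{O}}$ is a homeomorphism onto its image. Writing $p_k:=\iota_\rho(x_k)$, $q_k:=\iota_\rho(y_k)$ and $p:=\iota_\rho(x)$, the statement reduces to the following property of the Hilbert metric: \emph{if $(p_k),(q_k)\in\mathcal{O}^{\mathbb{N}}$ satisfy $\mathsf{H}_{\mathcal{O}}(p_k,q_k)\to0$ and $p_k\to p\in\overline{\mathcal{O}}$, then $q_k\to p$.} Indeed, granting this, every subsequential limit $q$ of $(q_k)$ lies in the compact set $\iota_\rho(\overline{\O})$ and equals $p$, so $q_k\to p$ and $y_k=\iota_\rho^{-1}(q_k)\to\iota_\rho^{-1}(p)=x$. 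To prove the displayed property I would use proper convexity of $\mathcal{O}$ to fix an affine chart of $\mathbb{P}(V)$ containing $\overline{\mathcal{O}}$ as a bounded convex set, and argue by contradiction: suppose that along a subsequence $q_k\to q\neq p$ in $\overline{\mathcal{O}}$. Let $a_k,p_k,q_k,b_k$ be the points in which the projective line through $p_k,q_k$ meets $\overline{\mathcal{O}}$, in this order; then $\mathsf{H}_{\mathcal{O}}(p_k,q_k)\to0$ forces the cross-ratio $(a_k:p_k:q_k:b_k)\to1$. Computing it with Euclidean distances in the fixed chart,
\begin{equation*}
    (a_k:p_k:q_k:b_k)=\Big(1+\tfrac{\lVert q_k-p_k\rVert}{\lVert p_k-a_k\rVert}\Big)\Big(1+\tfrac{\lVert q_k-p_k\rVert}{\lVert b_k-q_k\rVert}\Big),
\end{equation*}
with both fractions nonnegative. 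As all four points lie in the bounded set $\overline{\mathcal{O}}$, the denominators are bounded above by $\operatorname{diam}(\overline{\mathcal{O}})$, while $\lVert q_k-p_k\rVert\to\lVert q-p\rVert>0$; hence the first factor is bounded below by a constant $>1$, contradicting convergence of the cross-ratio to $1$. Therefore $q=p$.

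The routine part is the cross-ratio estimate of the last paragraph; the genuine obstacle is the first, namely checking that the dually defined domain $\mathcal{O}$ is properly convex and that the pullback metric coincides with, and inherits the $\Aut(\O)$-invariance of, the Caratheodory metric of \cite{zimmer2018proper} (in particular that $\rho$ is available on the possibly larger group $\Aut(\O)$). An alternative, more intrinsic route would define $C_\O$ directly as a supremum of one-dimensional Hilbert distances $\mathsf{H}_{I}\big(\phi(\iota_\rho x),\phi(\iota_\rho y)\big)$ over projective maps $\phi=[f_1:f_2]$ sending $\mathcal{O}$ into an interval $I\subset\mathbb{P}^1$; invariance is then immediate, and the boundary property follows by applying the same cross-ratio estimate to finitely many $\phi$ whose homogeneous ratios separate the points of $\overline{\mathcal{O}}$.
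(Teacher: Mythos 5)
The paper offers no proof of this statement: it is imported verbatim as a Fact from \cite{zimmer2018proper}, where the Caratheodory metric is defined intrinsically as a supremum of cross-ratio quantities over pairs of dual supporting hyperplanes drawn from~$\O^*$ --- essentially the ``alternative, more intrinsic route'' you sketch in your closing paragraph. Your main route is genuinely different and is correct: you build the properly convex domain~$\mathcal{O} \subset \mathbb{P}(V)$ as the connected component of~$\iota_\rho^{\opp}(\O^*)^*$ containing~$\iota_\rho(\O)$ --- which is exactly the construction the paper itself performs in Section~\ref{sect_reduction_projective} --- and pull back its Hilbert metric; proper convexity of~$\mathcal{O}$ does follow from Fact~\ref{fact_generat} applied on the dual side (forms taken from the interior of~$\O^*$ span~$V^*$, so the lifted cone is salient), and your cross-ratio computation is right: since the cross ratio factors as the product of the two displayed terms, its convergence to~$1$ forces~$\lVert q_k - p_k\rVert \to 0$ uniformly on the bounded chart containing~$\overline{\mathcal{O}}$, and since~$\iota_\rho$ is a homeomorphism onto its image the boundary statement follows, with no case distinction between~$x \in \O$ and~$x \in \partial\O$. (You implicitly use the standard logarithmic normalization of the Hilbert metric, which is the correct one; the paper's Section~\ref{sect_hilbert_metric_def} omits the logarithm, evidently a typo.) What your route buys over Zimmer's is that the limit property reduces to a one-line Euclidean estimate on a single bounded convex set rather than an argument with families of dual pairs; what it defers --- and you flag this honestly --- is~$\Aut(\O)$-invariance, since~$\rho$ from Fact~\ref{prop_ggkw} is a representation of~$G$ while~$\Aut(\O)$ sits in~$\Aut_\Theta(\g)$, so elements outside~$\Ad(G)$ do not a priori act on~$\mathbb{P}(V)$. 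This is a real but repairable point: the highest weight of~$V$ is symmetric in the roots of~$\Theta$, so the projective representation extends to~$\Aut_\Theta(\g)$; alternatively, invariance under~$\Aut(\O) \cap \Ad(G)$ already suffices for every use the paper makes of this Fact (Lemmas~\ref{lem_p_conic_shilov} and~\ref{lem_p_conical}). One caution if your metric is to replace Zimmer's wholesale: the paper later invokes Fact~\ref{fact_caratheodory_metric_1} for ``the'' Caratheodory metric, and you would need to check it for your pullback metric --- it does hold, since bounded Hilbert distance forces subsequential limits into a common face of~$\overline{\mathcal{O}}$, hence into the same supporting hyperplanes~$\iota_\rho^{\opp}(\xi)$,~$\xi \in \O^*$.
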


\begin{proof}
(1) Let~$\compact \subset \mathcal{C}$ be a compact subset such that~$\mathcal{C} = \Gamma \cdot \compact$. Let~$x_k \in \mathcal{C}^{\mathbb{N}}$ such that~$x_k \rightarrow a$. For all~$k \in \mathbb{N}$ there exists~$g_k \in \Gamma$ and~$z_k \in \mathcal{C}$ such that~$g_k \cdot z_k = x_k$. Up to extracting we may assume that there exists~$x_0 \in \compact$ such that~$z_k \rightarrow x_0$. By~$\Aut(\O)$-invariance of the Caratheodory metric, in the notation of Fact~\ref{fact_caratheodory_metric}, we have 
\begin{equation}
    C_{\O}(g_k \cdot z_k, g_k \cdot x_0) = C_{\O}(z_k, x_0) \longrightarrow 0.
\end{equation}
Thus by Fact~\ref{fact_caratheodory_metric}.(1), we have~$g_k \cdot x_0 \rightarrow z_\infty \in \Lambda_\O^{\operatorname{orb}}(\Gamma)$. 

(2) Let~$y \in  \O$ and let~$a'$ be a limit point of~$(g_k \cdot y)$, then~$a' \in \Lambda_\O^{\operatorname{orb}}(\Gamma) \subset \borc$. Let us now use \emph{photons}: a \emph{photon} is the image of a specific embedding of the circle inside~$\SB(\g)$; see \cite{galiay2024rigidity} for more details. The fact we need to use here is that if~$a,b \in \SB(\g)$ are on a same photon, then they are nontransverse (see e.g.\ \cite[Rmk 5.6]{galiay2024rigidity}). Here, there exists a \emph{chain of photons} between~$y$ and the point~$x_0$ determined in Point (1), contained in~$\O$, meaning that there exists a finite sequence~$(x_0 = x, x_1, \dots, x_N = y)$ of points of~$\O$ with~$N \in \mathbb{N}_{>0}$ such that for all~$1 \leq i \leq N$, the points~$x_i$ and~$x_{i+1}$ lie on a same photon. By induction, we may thus assume that~$y$ is on a photon through~$x_0$. In particular, it is nontransverse to~$x_0$. Hence for all~$k \in \mathbb{N}$, the points~$g_k \cdot y$ and~$g_k \cdot x_0$ are nontransverse. Thus~$a$ and~$a'$ are nontransverse. Since~$\borc$ is transverse, we have~$a = a'$.

We have proven that~$g_k \cdot y \rightarrow a$ for all~$y \in \O$. Now let~$\compact' \subset \O$ be a compact subset with nonempty interior. Then Fact~\ref{fact_caratheodory_metric}.(1) implies that~$g_k \cdot \compact' \rightarrow \{ a \}$ for the Hausdorff topology. Then by Fact~\ref{fact_KAK_divergent}.(1), the sequence~$(g_k)$ is~$\{\alpha_r\}$-contracting with~$\{\alpha_r\}$-limit~$a$. This proves (2).

(3) Let~$(g_k) \in \Gamma^\mathbb{N}$ be a sequence of distinct elements of~$H$ and let~$(\delta_k)$ be a subsequence of~$(g_k)$. Let~$a$ be a limit point of~$(\delta_k \cdot x)$. Then~$a \in \borc$ and there exists a subsequence~$(\delta_k')$ of~$\delta_k$ such that~$\delta_k ' \cdot x \rightarrow a$. Thus by Point (2), the sequence~$(\delta_k')$ is~$\{\alpha_r\}$-contracting. By Fact~\ref{fact_KAK_divergent}.(2), the sequence~$(g_k)$ is thus~$\{\alpha_r\}$-divergent. This is true for every infinite sequence of~$H$. Thus~$H$ is~$\{\alpha_r\}$-divergent. One then has~$\Lambda_{\{\alpha_r\}} (\Gamma) \subset \Lambda_\O^{\operatorname{orb}}(\Gamma) \subset \borc$, and  the converse inclusion follows from Points (1) and (2).

(4) Let~$a \in \borc$. By Points (1) and (2), there exists a~$\{\alpha_r\}$-contracting sequence~$(g_k) \in \Gamma^\mathbb{N}$ with limit~$a$. Thus there exists~$b \in \SB(\g)$ such that~$g_k \cdot y \rightarrow a$ for all~$y \notin \hyp_b$. Let~$y \in \O^* \smallsetminus \hyp_b$; such an element y exists because the set $\SB(\g) \smallsetminus \hyp_b$
is dense in~$\SB(\g)$, and~$\O^*$ is open. Then~$(g_k \cdot y)$ converges to~$a$. By~$\Gamma$-invariance and closedness of~$\O^*$, we have~$a \in \O^*$.~$\qed$
\end{proof}

Now, recall Definition~\ref{def_enveloppe_dc}.

\begin{lem}\label{lem_strongly_convex_cocompact_shilov}
    Let~$H$ be a subgroup of~$G$, and assume that there exist a proper~$H$-invariant domain~$\O$ and a closed~$H$-invariant subset~$\mathcal{C}$ of~$\O$, such that~$H$ acts cocompactly on~$\mathcal{C}$. Then~$\partial \mathcal{C} \cap \partial \O_0^{**} = \partial \mathcal{C} \cap \partial \O$. Moreover, if~$\Lambda_\O^{\operatorname{orb}}(H) \subset \partial \mathcal{C} \cap \partial \O$ and~$\borc$ is transverse, then~$\Lambda^{\operatorname{orb}}_{\O_0^{**}}(H) = \partial \mathcal{C} \cap \partial \O_0^{**}$.
\end{lem}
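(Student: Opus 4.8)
The plan is to prove the two assertions in turn: the first is essentially a reformulation of Lemma~\ref{lem_replace_by_bidual}, and the second rests on Lemma~\ref{lem_p_conic_shilov} together with the duality $\Od\subseteq\O^{**}$.

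For the first assertion I would apply Lemma~\ref{lem_replace_by_bidual} with $F=\mathcal{C}$, which tells us that $\mathcal{C}$ is closed in $\Od$. The equality $\partial\mathcal{C}\cap\partial\Od=\partial\mathcal{C}\cap\partial\O$ is then a two-line set-theoretic check. If $a\in\partial\mathcal{C}\cap\partial\Od$, then $a\in\overline{\mathcal{C}}\subseteq\overline{\O}$ while $a\notin\Od\supseteq\O$, so $a\in\partial\O$. Conversely, if $a\in\partial\mathcal{C}\cap\partial\O$, then $a\in\overline{\mathcal{C}}$ and $a\notin\O$; since $\mathcal{C}$ is closed in $\Od$, the point $a$ cannot lie in $\Od$ (otherwise $a\in\overline{\mathcal{C}}\cap\Od=\mathcal{C}\subseteq\O$), whence $a\in\overline{\Od}\smallsetminus\Od=\partial\Od$. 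Note that because $\mathcal{C}\subseteq\O$ one has $\partial\mathcal{C}\cap\partial\O=\overline{\mathcal{C}}\cap\partial\O=\borc$, so this common set is exactly $\borc$.

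For the second assertion, the first part lets me write $\borc=\partial\mathcal{C}\cap\partial\O=\partial\mathcal{C}\cap\partial\Od$, so the goal reduces to $\Lambda^{\operatorname{orb}}_{\Od}(H)=\borc$. The hypotheses are precisely those of Lemma~\ref{lem_p_conic_shilov}, which I would invoke to obtain that $H$ is $\{\alpha_r\}$-divergent with $\Lambda_{\{\alpha_r\}}(H)=\borc=\Lambda_\O^{\operatorname{orb}}(H)$, and that $\hyp_a\cap\O=\emptyset$ for every $a\in\borc$. The inclusion $\borc\subseteq\Lambda^{\operatorname{orb}}_{\Od}(H)$ is then immediate: for $a\in\borc$, Lemma~\ref{lem_p_conic_shilov}.(1) furnishes $x_0\in\mathcal{C}\subseteq\Od$ and $(g_k)\in H^{\mathbb N}$ with $g_k\cdot x_0\to a$, and since $a\in\partial\Od$ this exhibits $a$ as an element of the orbital limit set computed in $\Od$. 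The reverse inclusion is where the work lies: given $a\in\Lambda^{\operatorname{orb}}_{\Od}(H)$, choose $x\in\Od$ and a divergent $(g_k)\in H^{\mathbb N}$ with $g_k\cdot x\to a$ (divergence is forced because $H$ acts properly on the proper domain $\Od$ by Fact~\ref{fact_autom_group_proper}). Using $\{\alpha_r\}$-divergence of $H$ and Fact~\ref{fact_KAK_divergent}.(2), I would extract a $\{\alpha_r\}$-contracting subsequence with attracting point $y\in\borc$ and repelling point $\xi$; if $x\notin\hyp_\xi$ then $g_k\cdot x\to y$, forcing $a=y\in\borc$. Thus everything reduces to showing $\Od\cap\hyp_\xi=\emptyset$. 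For this one uses that $\xi\in\Lambda_{\{\alpha_r\}}(H)=\borc$, hence $\xi\in\O^*$ by Lemma~\ref{lem_p_conic_shilov}.(4); since $\Od\subseteq\O^{**}$ by Definition~\ref{def_enveloppe_dc}, every point of $\Od$ is transverse to every element of $\O^*$, in particular to $\xi$, so $\Od\cap\hyp_\xi=\emptyset$ and we conclude $a=y\in\borc$.

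I expect the main obstacle to be the justification that $\xi\in\Lambda_{\{\alpha_r\}}(H)$, i.e.\ the identification of the repelling point $\xi$ of the $\{\alpha_r\}$-contracting sequence $(g_k)$ with the $\{\alpha_r\}$-limit of $(g_k^{-1})$ under the self-opposite identification $\SB(\g)\simeq\SB(\g)^\opp$. In the projective model of Fact~\ref{prop_ggkw} this is the usual interchange of attracting and repelling flags under inversion; I would either cite it from the general theory of contracting sequences or verify it directly from the Cartan decomposition $g_k=k_k\exp(\mu(g_k))\ell_k$, using $\mu(g_k^{-1})=\oppinv(\mu(g_k))$ and the fact that inversion swaps the roles of $k_k$ and $\ell_k^{-1}$ (so that, up to a representative $k_0$ of $w_0$, the attracting point of $(g_k^{-1})$ is the repelling point of $(g_k)$). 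Once this fact is in hand, $\xi\in\Lambda_{\{\alpha_r\}}(H)$ follows since $(g_k^{-1})$ is again a divergent sequence in the $\{\alpha_r\}$-divergent group $H$, and the argument closes. Alternatively, I could bypass Lemma~\ref{lem_p_conic_shilov}.(4) at this step by invoking Lemma~\ref{lem_proper_inter_hyp} directly to get $\xi\in\O^*$ from $\xi\in\Lambda_{\{\alpha_r\}}(\Aut(\O))$.
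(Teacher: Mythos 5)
Your proposal is correct and takes essentially the same approach as the paper: the first assertion via Lemma~\ref{lem_replace_by_bidual}, and the second by combining Lemma~\ref{lem_p_conic_shilov} with the fact that every point of $\O_0^{**}$ is transverse to every element of $\O^*$, so that any orbital limit point of $\O_0^{**}$ is forced to coincide with the attracting point (in $\borc$) of an $\{\alpha_r\}$-contracting subsequence. The only place you deviate is cosmetic: where the paper handles the repelling point with ``by the same argument as in the proof of Lemma~\ref{lem_p_conic_shilov}'', you make explicit the inversion fact on which that argument implicitly rests (the attracting point of $(g_k^{-1})$ is the repelling point of $(g_k)$, under the self-opposite identification), correctly identifying it as the crux and sketching its proof via the Cartan decomposition.
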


\begin{proof} The first part of the lemma is a reformulation of Lemma~\ref{lem_replace_by_bidual}.

Now assume that~$\borc := \partial \mathcal{C} \cap \partial \O$ is transverse and that~$\Lambda_\O^{\operatorname{orb}}(H) \subset \partial \mathcal{C} \cap \partial \O$. Let~$a \in \Lambda^{\operatorname{orb}}_{\O_0^{**}}(H)$. There exists~$y \in \Od$ and~$(g_k) \in H^{\mathbb{N}}$ such that~$g_k \cdot y \rightarrow a$. Now let~$x \in \mathcal{C}$. Since~$H$ acts properly on~$\O$, up to extracting we may assume that there exists~$b \in \borc$ such that~$g_k \cdot x \rightarrow b$. Then by Lemma~\ref{lem_p_conical}.(2), the sequence~$(g_k)$ is~$\Theta$-contracting with limit point~$b$. Let~$b' \in \SB(\g)$ be such that~$g_k \cdot z \rightarrow b$ for all~$z \in \SB(\g) \smallsetminus \hyp_{b'}$. Then~$b' \in \O^*$ by the same argument as in the proof of Lemma~\ref{lem_p_conical}, so~$\Od \cap \hyp_{b'} = \emptyset$, and in particular~$y \notin \hyp_{b'}$. Thus~$g_k \cdot y \rightarrow b$, and~$a=b$. We have proven that~$\Lambda^{\operatorname{orb}}_{\O_0^{**}}(H) \subset \partial \mathcal{C} \cap \partial \O_0^{**}$. 

For the converse inclusion, just note that one has 
\begin{equation*} \partial \mathcal{C} \cap \partial \O_0^{**} = \partial \mathcal{C} \cap \partial \O \overset{\text{Lem.\ \ref{lem_p_conic_shilov}.(3)}}{=} \Lambda_\O^{\operatorname{orb}}(H) \subset \Lambda^{\operatorname{orb}}_{\O_0^{**}}(H). \ \qed
\end{equation*}
\end{proof}

\section{Transverse groups preserving proper domains in causal flag manifolds}\label{sect_convex_transverse}

In this section, we prove Theorem~\ref{thm_equiv_anosov}. We fix a~$\HTT$ Lie group~$G$ of rank~$r \geq 1$, and take Notation~\ref{sect_notation}. In analogy with the well-known convex cores in real hyperbolic geometry and in real projective geometry, we define:

\begin{definition}\label{def_convex_core}
    Let~$\Gamma \leq G$ be a discrete subgroup and let~$\O \subset \SB(\g)$ be a proper~$\Gamma$-invariant domain. A \emph{convex core of~$(\O, \Gamma)$} is a closed connected~$\Gamma$-invariant causally convex subset~$\mathcal{C}$ of~$\O$ such that~$\borc$ contains~$\limorb$.
\end{definition}

Contrary to the projective case, where one can consider the convex hull of~$\limorb$ in~$\O$ \cite{DGKproj}, there is in our case no preferred convex core. The aim of this section is to prove Theorem~\ref{thm_equiv_anosov}. Let us restate it with the notation introduced in Section~\ref{sect_prelminaries}: 

\begin{reptheorem}[reformulation of Theorem~\ref{thm_equiv_anosov}]
Let~$G$ be a~$\HTT$ Lie group and~$\Gamma \leq G$ a discrete subgroup. The following are equivalent:

\begin{enumerate}
    \item The group~$\Gamma$ is finitely generated,~$\{\alpha_r\}$-transverse, preserves a proper domain~$\O \subset \SB(\g)$, and~$\Lambda_{\{\alpha_r\}}(\Gamma)$ contains at least three points;
    \item There exists a proper~$\Gamma$-invariant \emph{causally convex} domain~$\O \subset \SB(\g)$ such that~$\Gamma$ acts cocompactly on a convex core~$\mathcal{C}$ of~$(\O, \Gamma)$ whose ideal boundary is transverse and contains at least three points;
    \item There exists a proper~$\Gamma$-invariant \emph{dually convex} domain~$\O' \subset \SB(\g)$ such that~$\Gamma$ acts cocompactly on a convex core~$\mathcal{C}'$ of~$(\O', \Gamma)$ whose ideal boundary is transverse and contains at least three points.
\end{enumerate}
If these statements hold, then we have~$\partial_i \mathcal{C} = \Lambda_{\{\alpha_r\}}(\Gamma) = \limorb = \Lambda_{\O'}^{\operatorname{orb}}(\Gamma) = \partial_i \mathcal{C}'.$
\end{reptheorem}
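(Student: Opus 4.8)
The plan is to establish the equivalences by proving $(3)\Rightarrow(2)$, $(2)\Rightarrow(3)$, $(2)\Rightarrow(1)$ and finally the main implication $(1)\Rightarrow(2)$; the concluding chain of equalities will then be read off from Lemmas~\ref{lem_p_conic_shilov} and~\ref{lem_strongly_convex_cocompact_shilov} applied to the domains produced along the way. Three of these directions are essentially bookkeeping. For $(3)\Rightarrow(2)$ I would note that a dually convex proper domain $\O'$ is, by Proposition~\ref{prop_dual_implique_causall}, contained in an affine chart and causally convex, and a convex core is causally convex by Definition~\ref{def_convex_core}; hence the pair $(\O',\mathcal{C}')$ witnesses $(2)$ verbatim. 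For $(2)\Rightarrow(3)$ I would replace $\O$ by its dual convex hull $\Od=\O_0^{**}$ (Definition~\ref{def_enveloppe_dc}), which is proper, $\Gamma$-invariant and dually convex; Lemma~\ref{lem_strongly_convex_cocompact_shilov} shows that $\mathcal{C}$ stays closed in $\Od$ with $\partial\mathcal{C}\cap\partial\Od=\borc$, so $\mathcal{C}$ keeps a transverse ideal boundary with at least three points and $\Gamma$ still acts cocompactly; causal convexity of $\mathcal{C}$ is intrinsic (Proposition~\ref{lem_convex_hul_indép_carte_aff}) and the second part of Lemma~\ref{lem_strongly_convex_cocompact_shilov} identifies $\Lambda_{\Od}^{\operatorname{orb}}(\Gamma)$ with $\partial\mathcal{C}\cap\partial\Od$, making $\mathcal{C}$ a convex core of $(\Od,\Gamma)$. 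For $(2)\Rightarrow(1)$ I would apply Lemma~\ref{lem_p_conic_shilov} (its hypotheses are exactly the convex-core assumptions together with transversality of $\borc$): part~(3) gives that $\Gamma$ is $\{\alpha_r\}$-divergent with $\Lambda_{\{\alpha_r\}}(\Gamma)=\borc=\limorb$, so $\Gamma$ is $\{\alpha_r\}$-transverse and its limit set still has at least three points; finite generation follows from Proposition~\ref{prop_finitely_generated} applied to the connected set $F=\mathcal{C}$, and $\O$ is proper by hypothesis.

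The heart of the matter is $(1)\Rightarrow(2)$. Starting from a finitely generated $\{\alpha_r\}$-transverse $\Gamma$ preserving a proper domain with $\Lambda:=\Lambda_{\{\alpha_r\}}(\Gamma)$ having at least three points, I would first pass to the dual convex hull $\Od=\O_0^{**}$, which by Proposition~\ref{prop_dual_implique_causall} is proper, $\Gamma$-invariant, dually (hence causally) convex, and satisfies $\overline{\Od}\subset\Affstd_\xi$ for some affine chart $\Affstd_\xi$. Since $\Lambda\subset\Lambda_{\Od}^{\operatorname{orb}}(\Gamma)\subset\partial\Od\subset\Affstd_\xi$, the whole limit set lies in one affine chart, and Corollary~\ref{cor_restrictions_maslov_index} (using that $\Gamma$ preserves a proper domain) forces $r$ to be even and every triple of $\Lambda$ to have Maslov index $0$ — the ``spatial'' configuration. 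I would then take as convex core the causal convex hull of the limit set, $\mathcal{C}:=\operatorname{Conv}_{\Affstd_\xi}(\Lambda)\cap\Od$, which by Lemma~\ref{lem_convex_envelope_egal_plus_petit_convex} is precisely the union of the closed diamonds $\Diams^c_{\Affstd_\xi}(a,b)$ over causally ordered pairs $a,b\in\Lambda$, truncated to $\Od$. This set is $\Gamma$-invariant by Corollary~\ref{cor_consal_invariant_by_autom_group}, causally convex, closed in $\Od$, and has $\partial_i\mathcal{C}=\Lambda$, which is transverse with at least three points; connectedness of $\mathcal{C}$ (or passage to the relevant connected component) requires a short separate check using that the causal graph on $\Lambda$ is connected.

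The main obstacle is the remaining assertion that $\Gamma$ acts cocompactly on $\mathcal{C}$. The plan here is a ``no-escape'' argument in the Caratheodory metric $C_{\Od}$ (Fact~\ref{fact_caratheodory_metric}): if cocompactness failed, I would find $x_k\in\mathcal{C}$ whose $\Gamma$-orbits leave every compact subset of $\Od$; writing $x_k\in\Diams^c_{\Affstd_\xi}(a_k,b_k)$ with $a_k,b_k\in\Lambda$ and translating by suitable $\gamma_k\in\Gamma$ so that $\gamma_k\cdot(a_k,b_k)$ converges to a pair $(a_\infty,b_\infty)\in\Lambda\times\Lambda$, the point $\gamma_k\cdot x_k$ would accumulate inside the open diamond $\Diams_{\Affstd_\xi}(a_\infty,b_\infty)\subset\mathcal{C}$, contradicting escape — provided the limiting pair is non-degenerate. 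The crux is therefore to prohibit the degeneration $a_\infty=b_\infty$: a collapsing family of diamonds pushes its interior points to $\partial\Od$, so such degeneration is exactly what a ``cusp'' would produce. I expect ruling it out to be the genuinely hard step, and to be where the Maslov-index-$0$ rigidity of Theorem~\ref{prop_restrictions_maslov_index} is indispensable, since the spatial nature of $\Lambda$ is what forbids the cusp-like collapses permitted for merely relatively Anosov dynamics; concretely I would seek a uniform lower bound on the ``thickness'' of the diamonds spanned by causal pairs of $\Lambda$, equivalent to cocompactness of $\Gamma$ on the space of causal pairs of its limit set.

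Finally, once $(1)\Leftrightarrow(2)$ and $(2)\Leftrightarrow(3)$ are in place, the chain of identities follows by feeding the constructed data back into the lemmas: Lemma~\ref{lem_p_conic_shilov}.(3) gives $\partial_i\mathcal{C}=\Lambda_{\{\alpha_r\}}(\Gamma)=\limorb$ for the causally convex witness, and the second part of Lemma~\ref{lem_strongly_convex_cocompact_shilov} gives $\partial_i\mathcal{C}'=\Lambda_{\O'}^{\operatorname{orb}}(\Gamma)=\partial\mathcal{C}\cap\partial\O'$ for the dually convex witness; combining these equalities yields $\partial_i\mathcal{C}=\Lambda_{\{\alpha_r\}}(\Gamma)=\limorb=\Lambda_{\O'}^{\operatorname{orb}}(\Gamma)=\partial_i\mathcal{C}'$, as claimed.
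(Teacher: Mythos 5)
Your treatment of $(2)\Rightarrow(1)$, $(2)\Leftrightarrow(3)$ and the final chain of identities matches the paper's proof (Lemma~\ref{lem_p_conic_shilov}, Proposition~\ref{prop_finitely_generated}, Lemma~\ref{lem_strongly_convex_cocompact_shilov}, Proposition~\ref{prop_dual_implique_causall}). The gap is in $(1)\Rightarrow(2)$, and it is fatal to your construction: the causally convex hull of the limit set $\Lambda:=\Lambda_{\{\alpha_r\}}(\Gamma)$ is degenerate. Distinct points of $\Lambda$ are transverse, and no two of them are causally related: if $a\in\J^+(b)$ with $a,b\in\Lambda$ transverse, then $a\in\I^+(b)$, and since $a\in\overline{\O}$ and $\I^+(b)$ is open this forces $\O\cap\I^+(b)\neq\emptyset$, which is exactly what the paper's Lemma~\ref{lem_omega_inter_futur_vide} rules out (using Lemma~\ref{lem_proper_inter_hyp}, transversality of $\Lambda$, and the three-point hypothesis). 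So $\Lambda$ is \emph{acausal}, and by Lemma~\ref{lem_convex_envelope_egal_plus_petit_convex} its causal hull is the union of the degenerate diamonds $\Diams^c_{\Affstd}(x,x)=\{x\}$, i.e.\ $\operatorname{Conv}(\Lambda)=\Lambda\subset\partial\Od$; your candidate core $\mathcal{C}=\operatorname{Conv}_{\Affstd_\xi}(\Lambda)\cap\Od$ is empty, and the ``causal graph on $\Lambda$'' whose connectedness you invoke has no edges at all. This is the central ``spatial versus timelike'' phenomenon of the theorem: unlike the projective case, one cannot take the convex hull of the limit set. The paper instead takes $\mathcal{C}$ to be the causally convex hull of a thickened orbit $\mathcal{A}=\Gamma\cdot\overline{\mathcal{V}}$, where $\mathcal{V}$ is a connected neighborhood of $S\cdot x$ for a finite generating set $S$, and then proves $\partial_i\mathcal{C}=\Lambda$ by a limiting argument on endpoints of diamonds, again via Lemma~\ref{lem_omega_inter_futur_vide}.

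Your cocompactness strategy is also misdirected for the same reason. There are no causal pairs in $\Lambda$, so there is no ``space of causal pairs of the limit set'' on which to demand cocompactness, and no uniform diamond-thickness bound to establish. Moreover, your expectation that Maslov-index-$0$ rigidity is needed to exclude cusp-like degenerations is backwards: statement $(1)$ covers all finitely generated $\{\alpha_r\}$-transverse groups, including relatively Anosov ones with cusps, so no such exclusion can be part of a correct proof --- this is precisely the point of Remark~\ref{rmk_intuition_main_thm}, that condition $(2)$ fails to distinguish Anosov from merely transverse subgroups. The paper's actual cocompactness argument is short and soft: the set $S'=\{g\in\Gamma\mid \overline{\mathcal{V}}\cap\J^{\pm}(g\cdot\overline{\mathcal{V}})\neq\emptyset\}$ is finite (an infinite sequence would produce $x\in\J^+(a)\cap\O$ for some $a\in\Lambda$, contradicting Lemma~\ref{lem_omega_inter_futur_vide}), and every diamond $\Diams^c(g_1\cdot a, g_2\cdot b)$ with $a,b\in\overline{\mathcal{V}}$ translates into the compact causal hull of $\bigcup_{g\in S'}g\cdot\overline{\mathcal{V}}$; here Lemma~\ref{lem_convex_envelope_egal_plus_petit_convex} --- the fact that the causal hull is exactly the union of diamonds over \emph{pairs}, with no higher-order combinations as in projective convexity --- is what makes the covering argument work. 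Corollary~\ref{cor_restrictions_maslov_index} plays no role in the proof itself; it is background explaining why the limit set is spatial.
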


\begin{ex}\label{ex_illustration_einstein}
    \begin{enumerate}
        \item Let~$\Gamma \leq G$ be an infinite finitely generated discrete subgroup with symmetric generating set~$S$, and let~$x_0 \in \mathbb{H}^{n-1}$. Let~$\mathcal{V} \subset \mathbb{H}^{n-1}$ be a bounded connected neighborhood of~$S \cdot x_0$, and let~$\mathcal{C} := \Gamma \cdot \overline{\mathcal{V}}$. The conformal identification~$\Diams_\std \simeq \mathbb{H}^{n-1} \times (- \mathbb{R})$ gives an equivariant embedding~$\mathbb{H}^{n-1} \hookrightarrow \Diams_{\mathsf{std}}$. Via this embedding, the set~$\mathcal{C}$ is a subset of~$\Ein^{n-1, 1}$, causally convex subset in~$\Diams_{\mathsf{std}}$, on which~$\Gamma$ acts cocompactly, closed in the proper domain~$\Diams_{\std}$, and such that~$ \mathcal{C}$ has strictly convex boundary. Thus~$\Gamma$ satisfies condition (2) of Theorem~\ref{thm_equiv_anosov}. This example illustrates why this condition (2) is not a good candidate for defining a notion of convex cocompactness in~$\Ein^{n-1, 1}$ (and in~$\SB(\g)$ for~$G$ a general~$\HTT$ Lie group): the notion of convexity involved does not imply enough constraints of the \emph{spatial} shape of~$\mathcal{C}$. 
    \item Note that the assumption that~$|\Lambda_{\{\alpha_r\}}(\Gamma)| > 2$ in~$(1) \Rightarrow (2)$ is necessary. For instance, take~$g \in L$ be an element with attracting fixed point~$\LP^+$ and repelling fixed point~$\LP^\opp$. Then~$\Gamma := \langle g \rangle$ preserves~$\Diams_\std$, and~$\Lambda_{\Diams_\std}^{\operatorname{orb}}(\Gamma) = \{\LP^+, \LP^\opp\}$. The only closed causally convex subset of~$\Diams_\std$ whose ideal boundary contains~$\Lambda_{\Diams_\std}^{\operatorname{orb}}(\Gamma)$ is~$\Diams_\std$ itself, and~$\Gamma$ clearly does not acts cocompactly on it.
    \end{enumerate}
\end{ex}

\begin{rmk}\label{rmk_intuition_main_thm}
Theorem~\ref{thm_equiv_anosov} may seem counter-intuitive, as it appears to state that all~$\{\alpha_r\}$-transverse groups are \emph{strongly convex cocompact}, for a natural notion of strong convex cocompactness analoguous to the one of Definition~\ref{def_convex_cocompact}. Indeed, in the projective case, as mentioned in Section~\ref{sect_reminders_projective_convex_cocompactness}, any discrete subgroup of~$\PGL(n, \mathbb{R})$ acting strongly convex cocompactly on a proper domain of~$\mathbb{P}(\mathbb{R}^n)$ is~$\{ \alpha_1\}$-Anosov, and not merely~$\{ \alpha_1\}$-transverse. Theorem~\ref{thm_equiv_anosov} actually highlights the orthogonality between the notion of causal convexity (defined in Section~\ref{sect_causal_convexity_def}), which is timelike (i.e.\ of ``maximal Maslov index") by definition, and the spacelike (i.e.\ of ``Maslov index~$0$") dynamical behavior of a group preserving a proper domain in~$\SB(\g)$, already observed in  Corollary~\ref{cor_restrictions_maslov_index}.
\end{rmk}

\subsection{Proof of implication~$(2) \Rightarrow (1)$ of Theorem~\ref{thm_equiv_anosov}}

Let~$\Gamma \leq G$ be a discrete subgroup satisfying Condition (2) of Theorem~\ref{thm_equiv_anosov}. Lemma~\ref{lem_p_conic_shilov}.(3) and the transversality of~$\borc$ imply that~$\Gamma$ is~$\{\alpha_r\}$-transverse and~$\Lambda_{\{\alpha_r\}}(\Gamma) = \partial_i \mathcal{C}$. Moreover, since~$\borc$ contains at least three points, so does~$\Lambda_{\{\alpha_r\}}(\Gamma)$. Finally, Proposition~\ref{prop_finitely_generated} implies that~$\Gamma$ is finitely generated.

\subsection{Proof of equivalence~$(2) \Leftrightarrow (3)$ of Theorem~\ref{thm_equiv_anosov}} \label{sect_proof_implication_2_3}
Let us take the notation of Theorem~\ref{thm_equiv_anosov} above.

Assume~$(2)$. Recall the dual convex hull~$\O_{0}^{**}$ of~$\O$ introduced in Definition~\ref{def_enveloppe_dc}. Then~$\O^{**}_0$ is a proper~$\Gamma$-invariant dually convex domain of~$\SB(\g)$ containing~$\mathcal{C}$, and by Lemma~\ref{lem_strongly_convex_cocompact_shilov}, we may replace~$\O$ by~$\O^{**}_0$, so we get~$(3)$. Conversely,~$(3) \Rightarrow (2)$ is just a consequence of the fact that any proper dually convex domain of~$\SB(\g)$ is causally convex (Proposition~\ref{prop_dual_implique_causall}).

\subsection{Proof of implication~$(1) \Rightarrow (2)$ of Theorem~\ref{thm_equiv_anosov}}\label{sect_proof_implication_1_2} Let us take the notation of Theorem~\ref{thm_equiv_anosov} above, and assume that (1) is satisfied. We may assume that~$\overline{\O}\subset \Affstd_\std$ and that~$\O$ is causally convex, up to considering the causally convex hull of~$\O$ (in the sense of Definition~\ref{def_enveloppe_dc}) instead of~$\O$ --- note that this causally convex hull is still~$\Gamma$-invariant, by Corollary~\ref{cor_consal_invariant_by_autom_group}.

Let~$S := \{g_1, \dots, g_N \}$ be a symmetric family of generators of~$\Gamma$, containing the identity element. Let~$x \in \O$ and let~$\mathcal{V}$ be a connected neighborhood of~$S \cdot x$ such that~$\overline{\mathcal{V}}_1\subset  \O$. The open set~$\mathcal{A} := \Gamma \cdot  \overline{\mathcal{V}}$ is thus connected,~$\Gamma$-invariant, and closed in~$\O$. Moreover, by construction, we have~$\partial_i \mathcal{A} = \Lambda_{\{\alpha_r\}}(\Gamma)$.

\begin{lem}\label{lem_omega_inter_futur_vide}
    For all~$b \in \partial_i \mathcal{A}$, we have~$\O \subset \Affstd_\std \smallsetminus (\J^+(b) \cap \J^-(b))$.
\end{lem}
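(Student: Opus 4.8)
The plan is to deduce the statement from the single geometric fact that $\O$ is disjoint from the maximal Schubert variety $\hyp_b$, which I will obtain by applying Lemma~\ref{lem_p_conic_shilov} to the set $\mathcal{A}$ constructed just above.

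First I would verify that $\mathcal{A} = \Gamma\cdot\overline{\mathcal{V}}$ satisfies the hypotheses of Lemma~\ref{lem_p_conic_shilov}, in the role of its closed subset. By Fact~\ref{fact_autom_group_proper} the group $\Gamma$ acts properly on the proper domain $\O$, so the union $\Gamma\cdot\overline{\mathcal{V}}$ of $\Gamma$-translates of the compact set $\overline{\mathcal{V}}$ is closed in $\O$ and $\Gamma$-invariant, with $\Gamma$ acting cocompactly by construction. Its ideal boundary is $\partial_i\mathcal{A} = \Lambda_{\{\alpha_r\}}(\Gamma)$: any point of $\partial_i\mathcal{A}$ is a limit $\lim_k g_k\cdot v_k$ with $v_k\in\overline{\mathcal{V}}$ and $(g_k)$ diverging, hence, after extraction, $\{\alpha_r\}$-contracting with attracting limit $a$ and repelling point $\xi$; since $\xi\in\O^*$ by Lemma~\ref{lem_proper_inter_hyp}, one has $\overline{\mathcal{V}}\cap\hyp_\xi=\emptyset$, so this limit equals $a\in\Lambda_{\{\alpha_r\}}(\Gamma)$, the reverse inclusion being clear. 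As $\Gamma$ is $\{\alpha_r\}$-transverse, $\partial_i\mathcal{A}$ is transverse, and the $\{\alpha_r\}$-divergence of $\Gamma$ gives $\limorb\subset\Lambda_{\{\alpha_r\}}(\Gamma)=\partial_i\mathcal{A}$.

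With these hypotheses in place, Lemma~\ref{lem_p_conic_shilov}.(4) yields $\hyp_b\cap\O=\emptyset$ for every $b\in\partial_i\mathcal{A}$. It remains to translate this into the causal conclusion. Since $\O$ is connected and avoids $\hyp_b$, and since by Fact~\ref{fact_cone_in_schubert_subvrty} the sets $\I^+(b)$ and $\I^-(b)$ are distinct connected components of $\Affstd_\std\smallsetminus\hyp_b$, the domain $\O$ lies in a single such component and hence meets at most one of $\I^+(b)$, $\I^-(b)$. The point $b$ lies in $\hyp_b$ (a point is never transverse to itself) and the lightcones $\mathbf{C}^\pm(b)$ lie in $\hyp_b$, so $\J^\pm(b)\smallsetminus\I^\pm(b)\subset\hyp_b$ is disjoint from $\O$; combined with the antisymmetry identity $\J^+(b)\cap\J^-(b)=\{b\}$ (Fact~\ref{fact_cone_in_schubert_subvrty}, which holds for $\mathbf{J}$), this gives $\O\cap(\J^+(b)\cap\J^-(b))=\emptyset$, i.e. $\O\subset\Affstd_\std\smallsetminus(\J^+(b)\cap\J^-(b))$.

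The main obstacle is really the bookkeeping of the first step — checking that $\partial_i\mathcal{A}=\Lambda_{\{\alpha_r\}}(\Gamma)$ and that $\limorb\subset\Lambda_{\{\alpha_r\}}(\Gamma)$ — since everything afterwards is formal once Lemma~\ref{lem_p_conic_shilov}.(4) applies. Both points hinge on combining the $\{\alpha_r\}$-divergence of $\Gamma$ with Lemma~\ref{lem_proper_inter_hyp}, which guarantees that the repelling Schubert varieties of the contracting sequences are disjoint from $\O$, so that orbits of points of $\O$ converge to the genuine attracting $\{\alpha_r\}$-limits rather than stalling on a lightcone.
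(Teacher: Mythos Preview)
Your argument is correct for the statement exactly as printed, but that statement is almost certainly a typo: by antisymmetry (Fact~\ref{fact_cone_in_schubert_subvrty}) one has $\J^+(b)\cap\J^-(b)=\{b\}$, and since $b\in\partial_i\mathcal{A}\subset\partial\O$ the conclusion $\O\subset\Affstd_\std\smallsetminus\{b\}$ is immediate without any of the machinery you invoke. The intended statement is with a \emph{union}, $\O\subset\Affstd_\std\smallsetminus(\J^+(b)\cup\J^-(b))$, as is clear both from the paper's own proof (which concludes ``$\O\cap\J^+(b)=\emptyset$, and similarly $\O\cap\J^-(b)=\emptyset$'') and from how the lemma is used afterwards (e.g.\ ``$x\in\J^+(a)\cap\O$, which contradicts Lemma~\ref{lem_omega_inter_futur_vide}'').

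Your approach does not reach this stronger conclusion. From $\hyp_b\cap\O=\emptyset$ and connectedness you correctly deduce that $\O$ lies in a single connected component of $\Affstd_\std\smallsetminus\hyp_b$, hence meets at most one of $\I^+(b),\I^-(b)$; but nothing you have written rules out $\O\subset\I^+(b)$, say. The paper's proof closes this gap by using two hypotheses you never touch: the causal convexity of $\O$ and the assumption $|\Lambda_{\{\alpha_r\}}(\Gamma)|\geq 3$. If $\O\cap\I^+(b)\neq\emptyset$ then $\O\subset\I^+(b)$; picking a second point $c\in\Lambda_{\{\alpha_r\}}(\Gamma)\smallsetminus\{b\}$ and arguing similarly gives $\O\subset\I^-(c)$, hence $\O\subset\Diams(b,c)$, and causal convexity forces $\O=\Diams(b,c)$. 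A third limit point, being transverse to both $b$ and $c$ yet lying in $\partial\O\subset\hyp_b\cup\hyp_c$, then yields a contradiction. None of this is recoverable from Lemma~\ref{lem_p_conic_shilov} alone.
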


\begin{proof} Assume that we have~$\O \cap \J^+(b) \ne \emptyset$, then by openness of~$\O$ we must have~$\O \cap \I^+(b) \ne \emptyset$. Now by Lemma~\ref{lem_proper_inter_hyp}, we have~$\O  \cap \hyp_b = \emptyset$ (recall that~$\partial_i \mathcal{A} = \Lambda_{\{\alpha_r\}}(\Gamma)$). Hence by connectedness of~$\O$, we have~$\O  \subset \I^+(b)$. Now let~$c \in \Lambda_{\{ \alpha_r \}}(\Gamma) \smallsetminus \{ b \}$. Again, Lemma~\ref{lem_proper_inter_hyp} gives~$\O \cap \hyp_c = \emptyset$. Since~$\Lambda_{\{\alpha_r\}}(\Gamma)$ is transverse, the point~$b$ is transverse to~$c$, so it is in~$\I^-(c)$. Since~$b \in \overline{\O}$, by connectedness of~$\O$, we thus have~$\O \subset \I^-(c)$.  Hence~$\O \subset \Diams(b,c)$. By causal convexity we must have~$\Diams(b, c) = \O$. But~$|\Lambda_{\{\alpha_r\}}(\Gamma)| > 2$, so there exists~$\eta \in \Lambda_{\{\alpha_r\}}(\Gamma) \smallsetminus \{b,c \}$. Since~$\Lambda_{\{\alpha_r\}}(\Gamma)$ is transverse, we must have~$\eta \notin \hyp_{c} \cup \hyp_{b}$. But~$\eta \in \partial \O  = \partial \Diams(b, c) \subset  \hyp_{b} \cup \hyp_{c}$, contradiction. 

Hence~$\O \cap \J^+(b) = \emptyset$, and similarly, we have~$\O \cap \J^-(b) = \emptyset$.~$\qed$
\end{proof}
  
Let~$\mathcal{C}$ be the causally convex hull of~$\mathcal{A}$, in the sense of Definition~\ref{def_causally_convex_indep}. 

\begin{lem}
    The set~$\mathcal{C}$ is closed in~$\O$.
\end{lem}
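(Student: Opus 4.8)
The plan is to prove sequential closedness: take a sequence $(c_k)$ in $\mathcal{C}$ converging to some $c \in \O$ and show $c \in \mathcal{C}$. Throughout I work in the standard chart, identifying $\Affstd_\std$ with $\uu^-$ via $\varphi_\std$, so that $\I^+(x) = \varphi_\std(X + c^0)$ and $\J^+(x) = \varphi_\std(X + \overline{c^0})$ for $x = \varphi_\std(X)$, where $c^0$ is an open proper convex cone (Section~\ref{sect_causalité}). Note that for $a,b \in \Affstd_\std$ with $b \in \J^+(a)$ one has $\Diams^c(a,b) = \J^+(a) \cap \J^-(b)$, since the closure of the intersection of the two open cones $\I^+(a)$ and $\I^-(b)$ (which has nonempty interior) is the intersection of their closures. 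By Lemma~\ref{lem_convex_envelope_egal_plus_petit_convex}, $\mathcal{C} = \operatorname{Conv}_{\Affstd_\std}(\mathcal{A})$ is the union of the closed diamonds $\Diams^c(a,b)$ over $a,b \in \mathcal{A}$ with $b \in \J^+(a)$, so I may write $c_k \in \Diams^c(a_k, b_k)$ with $a_k, b_k \in \mathcal{A}$ and $b_k \in \J^+(a_k)$. Since $\overline{\mathcal{A}} \subset \overline{\O} \subset \Affstd_\std$ is compact in the chart, up to extracting I may assume $a_k \to a_\infty$ and $b_k \to b_\infty$ in $\overline{\mathcal{A}}$. Passing to the limit in $c_k \in \J^+(a_k) \cap \J^-(b_k)$, and using that $\overline{c^0}$ is closed, gives $c \in \J^+(a_\infty) \cap \J^-(b_\infty)$.

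The main point is to replace the possibly ideal endpoints $a_\infty, b_\infty$ by genuine points of $\mathcal{A}$. Since $\mathcal{A}$ is closed in $\O$, each of $a_\infty, b_\infty$ lies either in $\mathcal{A}$ or in $\partial_i \mathcal{A} = \Lambda_{\{\alpha_r\}}(\Gamma)$. If $a_\infty \in \mathcal{A}$, set $a := a_\infty$. If instead $a_\infty \in \Lambda_{\{\alpha_r\}}(\Gamma)$, then since $\Gamma \subseteq \Aut(\O)$ and $\{\alpha_r\}$ is self-opposite, Lemma~\ref{lem_proper_inter_hyp} yields $\hyp_{a_\infty} \cap \O = \emptyset$; as $c \in \O$, the point $c$ is transverse to $a_\infty$, hence $c \notin \mathbf{C}^+(a_\infty) \subset \hyp_{a_\infty}$ (Fact~\ref{fact_cone_in_schubert_subvrty}.(1)), and together with $c \in \J^+(a_\infty)$ this forces $c \in \I^+(a_\infty)$. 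Written in the chart, $\varphi_\std^{-1}(c) - \varphi_\std^{-1}(a_\infty) \in c^0$; since $c^0$ is open and $\varphi_\std^{-1}(a_k) \to \varphi_\std^{-1}(a_\infty)$, I get $c \in \I^+(a_k)$ for $k$ large, and set $a := a_k \in \mathcal{A}$ for such a $k$, so that $c \in \I^+(a) \subseteq \J^+(a)$. The symmetric argument with $\I^-, \J^-$ produces $b \in \mathcal{A}$ (chosen independently of $a$) with $c \in \J^-(b)$.

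This endpoint-reduction step is the only real obstacle, and it is exactly where transversality of the limit set is essential: it allows one to re-enter the \emph{open} future/past cone and push the endpoints back into $\mathcal{A}$; the continuity and order-interval facts used elsewhere are routine. To conclude, from $c \in \J^+(a)$ and $c \in \J^-(b)$ — equivalently $b \in \J^+(c)$ by reflexivity — transitivity of $\J$ (Fact~\ref{fact_cone_in_schubert_subvrty}.(2)) gives $b \in \J^+(a)$. Hence $c \in \J^+(a) \cap \J^-(b) = \Diams^c(a,b)$ with $a,b \in \mathcal{A}$, so $c \in \mathcal{C}$ by Lemma~\ref{lem_convex_envelope_egal_plus_petit_convex}. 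This shows that $\mathcal{C}$ is closed in $\O$.
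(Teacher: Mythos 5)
Your proof is correct, and it takes a genuinely different route from the paper's at the one nontrivial step, namely ruling out ideal endpoints. The paper extracts $a_k \to a$, $b_k \to b$ in $\overline{\O}$ exactly as you do, but then kills the case $a \in \partial \O$ by contradiction: it invokes Lemma~\ref{lem_omega_inter_futur_vide} (proved just before; note that what its proof actually establishes, and what is used, is $\O \cap \J^{\pm}(b) = \emptyset$ for $b \in \partial_i \mathcal{A}$ --- the intersection in the displayed statement should be a union), whose proof relies on causal convexity of~$\O$, pairwise transversality of~$\Lambda_{\{\alpha_r\}}(\Gamma)$, and the hypothesis that $\Lambda_{\{\alpha_r\}}(\Gamma)$ has at least three points. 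Given that lemma, both limits must lie in $\mathcal{A}$ and the conclusion is immediate. You never use it: you allow $a_\infty$ to be ideal, observe via Lemma~\ref{lem_proper_inter_hyp} that $c \in \O$ is then transverse to $a_\infty$, deduce $c \in \I^+(a_\infty)$ from $\J^+(a_\infty) = \I^+(a_\infty) \sqcup \mathbf{C}^+(a_\infty)$ and Fact~\ref{fact_cone_in_schubert_subvrty}.(1), and use openness of $c^0$ to replace $a_\infty$ by a genuine $a_k \in \mathcal{A}$. What your route buys is independence: it needs neither causal convexity of $\O$, nor pairwise transversality of the limit set, nor $|\Lambda_{\{\alpha_r\}}(\Gamma)| \geq 3$ --- only the fact that Schubert varieties of limit points miss $\O$. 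What the paper's route buys is brevity (Lemma~\ref{lem_omega_inter_futur_vide} is needed anyway for the two subsequent lemmas) and the stronger information that your ideal-endpoint case is in fact vacuous: by that lemma, $c \in \O \cap \I^+(a_\infty)$ with $a_\infty \in \Lambda_{\{\alpha_r\}}(\Gamma)$ never occurs. Handling an empty case with a sound argument is of course harmless.

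One justification is incomplete, although the claim is true: you use the identity $\Diams^c(a,b) = \J^+(a) \cap \J^-(b)$ for every $b \in \J^+(a)$, but your stated reason (closure of an intersection of two open convex sets equals the intersection of the closures) applies only when $b \in \I^+(a)$; when $b \in \mathbf{C}^+(a)$ the open diamond $\I^+(a) \cap \I^-(b)$ is empty, and $\Diams^c(a,b)$ is defined as a Hausdorff limit. The identity in this degenerate case does hold, by a short cone computation: using $\overline{c^0} + c^0 \subset c^0$, every point of $\varphi_\std^{-1}\big(\J^+(a) \cap \J^-(b)\big)$ lies in $\overline{\Diams(a,y)}$ for every $y \in \I^+(a)$ of the form $\varphi_\std\big(\varphi_\std^{-1}(b) + \varepsilon Z_0\big)$ with $Z_0 \in c^0$. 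You genuinely need this case at the final step, since when $a_\infty, b_\infty \in \mathcal{A}$ they may be lightlike-related; note, however, that the paper's own proof needs the same semicontinuity of $\Diams^c$ in its endpoints and is equally terse about it, so this is a gap in the shared background rather than in your specific strategy.
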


\begin{proof} Let~$\Affstd$ be an affine chart containing~$\overline{\O}$. Let~$(x_k) \in \mathcal{C}^{\mathbb{N}}$ be a sequence such that~$x_k \rightarrow x \in \O$. By Lemma~\ref{lem_convex_envelope_egal_plus_petit_convex}, for all~$k \in \mathbb{N}$ there exist~$a_k, b_k \in \mathcal{A}$ such that~$b_k \in \J_{\Affstd}^+(a_k)$ and~$x_k \in \Diams^c_{\Affstd}(a_k, b_k)$. Up to extracting, we may assume that~$a_k \rightarrow a \in \overline{\O}$ and~$b_k \rightarrow b \in \overline{\O}$. If~$a \in \partial \O$, then~$a \in \partial_i \mathcal{A} = \Lambda_{\{\alpha_r\}}(\Gamma)$. But~$x \in \J^+(a) \cap \O$, which contradicts Lemma~\ref{lem_omega_inter_futur_vide}. Hence~$a \in \O$. Similarly, we have~$b \in \O$. In particular, the points~$a,b$ are in~$\mathcal{A}$. Hence, by definition of~$\mathcal{C}$, the point~$x \in \Diams^c(a,b)$ is in~$\mathcal{C}$.~$\qed$
\end{proof}

\begin{lem}
    One has~$\borc = \Lambda_{\{\alpha_r\}}(\Gamma)$.
\end{lem}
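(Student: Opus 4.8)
The plan is to prove the two inclusions $\Lambda_{\{\alpha_r\}}(\Gamma)\subseteq\borc$ and $\borc\subseteq\Lambda_{\{\alpha_r\}}(\Gamma)$ separately, working throughout in the affine chart $\Affstd_\std$ fixed above (which contains $\overline{\O}$, and with respect to which $\O$ is causally convex, so that $\mathcal{C}=\operatorname{Conv}(\mathcal{A})\subseteq\O$), and writing $\Lambda:=\Lambda_{\{\alpha_r\}}(\Gamma)$. The first inclusion is immediate: since $\mathcal{A}\subseteq\mathcal{C}$ we have $\overline{\mathcal{A}}\subseteq\overline{\mathcal{C}}$, hence $\Lambda=\partial_i\mathcal{A}=\overline{\mathcal{A}}\cap\partial\O\subseteq\overline{\mathcal{C}}\cap\partial\O=\borc$.

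For the reverse inclusion I fix $a\in\borc\subseteq\partial\O$ and a sequence $x_k\in\mathcal{C}$ with $x_k\to a$. Since $\mathcal{C}$ is the causally convex hull of $\mathcal{A}$, Lemma~\ref{lem_convex_envelope_egal_plus_petit_convex} provides $a_k,b_k\in\mathcal{A}$ with $b_k\in\J^+(a_k)$ and $x_k\in\Diams^c(a_k,b_k)$. As $\overline{\mathcal{A}}\subseteq\overline{\O}$ is compact, after extraction I may assume $a_k\to a'$ and $b_k\to b'$ with $a',b'\in\overline{\mathcal{A}}$ and $b'\in\J^+(a')$. The crux is to show that $a'=b'$: once this is known, the closed diamonds $\Diams^c(a_k,b_k)$ collapse to the single point $a'$, so $x_k\to a'$, whence $a=a'\in\overline{\mathcal{A}}\cap\partial\O=\partial_i\mathcal{A}=\Lambda$, as wanted.

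To prove $a'=b'$ I argue by contradiction and first locate the endpoints on $\partial\O$. Here I use that the proof of Lemma~\ref{lem_omega_inter_futur_vide} in fact establishes the sharper statement $\O\cap\J^+(b)=\O\cap\J^-(b)=\emptyset$ for every $b\in\partial_i\mathcal{A}=\Lambda$. If $a',b'\in\O$, then $a',b'\in\overline{\mathcal{A}}\cap\O=\mathcal{A}\subseteq\mathcal{C}$ (as $\mathcal{A}$ is closed in $\O$), so causal convexity of $\mathcal{C}$ gives $a\in\Diams^c(a',b')\subseteq\mathcal{C}\subseteq\O$, against $a\in\partial\O$. If instead $a'\in\O$ and $b'\in\partial\O$, then $b'\in\Lambda$, $a'\in\mathcal{A}$, and $a'\in\J^-(b')\cap\O=\emptyset$, a contradiction; the case $a'\in\partial\O$, $b'\in\O$ is symmetric. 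Hence $a',b'\in\partial\O$, so $a',b'\in\Lambda$; and if $a'\neq b'$ then they are transverse (transversality of $\Lambda$), so $b'\in\I^+(a')$ and the open diamond $\Diams(a',b')$ is nonempty. Now $\O\cap\J^+(a')=\emptyset$ together with openness of $\Diams(a',b')$ forces $\overline{\O}\cap\Diams(a',b')=\emptyset$. On the other hand $\Diams^c(a_k,b_k)\subseteq\mathcal{C}\subseteq\O$, and by continuity of the future and past maps (Section~\ref{sect_causalité}) any $p\in\Diams(a',b')=\I^+(a')\cap\I^-(b')$ satisfies $p\in\I^+(a_k)\cap\I^-(b_k)=\Diams(a_k,b_k)\subseteq\O$ for $k$ large, contradicting $\overline{\O}\cap\Diams(a',b')=\emptyset$. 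Therefore $a'=b'$.

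I expect the main obstacle to be exactly this last collapse step. The naive worry is that $\borc$ might pick up points lying on the null boundary of a genuine, nondegenerate limiting diamond $\Diams(a',b')$; the key insight is that such a diamond cannot occur, because all the approximating diamonds $\Diams^c(a_k,b_k)$ are contained in $\O$, while a nondegenerate diamond with endpoints in $\Lambda$ is disjoint from $\overline{\O}$ by (the proof of) Lemma~\ref{lem_omega_inter_futur_vide}. Everything else reduces to bookkeeping with the elementary causal relations of Fact~\ref{fact_cone_in_schubert_subvrty} and the continuity of the time orientation.
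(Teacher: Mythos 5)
Your proof is correct and follows essentially the same route as the paper's: approximate $a$ by points of $\mathcal{C}$ lying in closed diamonds with endpoints in $\mathcal{A}$ (Lemma~\ref{lem_convex_envelope_egal_plus_petit_convex}), extract limits of those endpoints, use causal convexity together with (the union form of) Lemma~\ref{lem_omega_inter_futur_vide} to force both limit endpoints into $\Lambda_{\{\alpha_r\}}(\Gamma)$, invoke transversality of the limit set to make them coincide, and conclude by the collapse of the diamonds via antisymmetry of the causal relation. The only differences are cosmetic — you rule out the mixed interior/boundary cases outright and your final contradiction produces a point of the limiting open diamond inside $\O$ via the approximating diamonds, where the paper argues more directly that $b'\in\overline{\O}\cap\I^+(b)$ already gives $\O\cap\I^+(b)\neq\emptyset$, contradicting the same lemma.
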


\begin{proof}
    By construction, the set~$\borc$ contains~$\Lambda_{\{\alpha_r\}}(\Gamma)$. Let us prove the converse inclusion. Let~$a \in \borc$. There exists~$(z_k) \in (\mathcal{C} \cap \O)^{\mathbb{N}}$ such that~$z_k \rightarrow a$. For all~$k \in \mathbb{N}$, by definition of the convex hull, there exist~$x_k, y_k \in \mathcal{A}$ such that~$y_k \in \J^+(x_k)$ and~$z_k \in \Diams^c(x_k, y_k)$. Let~$b, b'$ be the limits (up to extraction) of~$(x_k)$ and~$(y_k)$. For all~$k \in \mathbb{N}$, we have~$y_k \in \J^+(z_k)$, so~$b' \in \J^+(a)$. Similarly, one has~$b \in \J^-(a)$. Hence, by transitivity, one has~$b' \in \J^+(b)$. 

    If~$b,b' \in \O$, then, by causal convexity of~$\O$, we have~$a \in \O$, contradiction. Hence either~$b$ or~$b'$ is in~$\partial \O \cap \overline{\mathcal{A}} = \Lambda_{\{\alpha_r\}}(\Gamma)$, let us say~$b$ for instance. Since~$b' \in \J^+(b)$, by Lemma~\ref{lem_omega_inter_futur_vide} we must have~$b' \in \partial \O$. Hence~$b' \in \partial_i \mathcal{A}$.
    
    Since~$\partial_i \mathcal{A} = \Lambda_{\{ \alpha_r \}}(\Gamma)$, the points~$b$ and~$b'$ are either transverse or equal. If they are transverse, then~$b' \in \I^-(b)$. Since~$b' \in \overline{\O}$, we have~$\O \cap \I^+(b) \ne \emptyset$, which contradicts Lemma~\ref{lem_omega_inter_futur_vide}. Hence we have~$b = b'$, so~$a \in \Diams^c(b,b') = \{b\}$ is equal to~$b$, and thus is in~$\Lambda_{\{\alpha_r\}}(\Gamma)$. We have proven that~$\partial_i \mathcal{C} = \Lambda_{\{\alpha_r\}}(\Gamma)$.~$\qed$
\end{proof}

Implication~$(1) \Rightarrow (2)$ of Theorem~\ref{thm_equiv_anosov} is then a direct consequence of the following lemma:

\begin{lem}
    The group~$\Gamma$ acts cocompactly on~$\mathcal{C}$.
\end{lem}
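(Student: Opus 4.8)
The plan is to prove cocompactness through sequential compactness of the quotient. Since $\Gamma$ acts properly on $\O$ (Fact~\ref{fact_autom_group_proper}) and $\mathcal{C}$ is closed in $\O$, it suffices to show that for every sequence $(z_k) \in \mathcal{C}^{\mathbb{N}}$ there exist $\gamma_k \in \Gamma$ such that $(\gamma_k \cdot z_k)$ converges, up to extraction, to a point of $\O$. I would fix an affine chart $\Affstd$ with $\overline{\O} \subset \Affstd$ and work throughout with the Caratheodory metric $C_\O$ of Fact~\ref{fact_caratheodory_metric}, which is $\Aut(\O)$-invariant, hence $\Gamma$-invariant, and whose boundary-compatibility property will be the main analytic tool. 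By Lemma~\ref{lem_convex_envelope_egal_plus_petit_convex}, each $z_k$ lies in a closed diamond $\Diams_\Affstd^c(a_k, b_k)$ with $a_k, b_k \in \mathcal{A}$ and $b_k \in \J^+(a_k)$.

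The first step is to normalise using $\mathcal{A} = \Gamma \cdot \overline{\mathcal{V}}$. The naive choice is to pick $\gamma_k \in \Gamma$ with $\gamma_k^{-1}\cdot a_k \in \overline{\mathcal{V}}$; a more robust choice, which I expect to be needed, is to take $\gamma_k \cdot x$ to be a nearest $C_\O$-point of the orbit $\Gamma \cdot x$ to $z_k$. After replacing $(z_k,a_k,b_k)$ by its $\gamma_k^{-1}$-translate and extracting, the normalised first endpoint converges to some $a_\infty \in \overline{\mathcal{V}} \subset \O$ while the second converges to some $\beta \in \overline{\mathcal{A}} = \mathcal{A} \sqcup \Lambda_{\{\alpha_r\}}(\Gamma)$. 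If $\beta \in \O$, then closedness of the causal relation gives $\beta \in \J^+(a_\infty)$ with both endpoints interior, so $\Diams_\Affstd^c(a_\infty,\beta)$ is a compact subset of $\O$ (it is contained in $\O$ by causal convexity) toward which the translated diamonds converge in the Hausdorff topology, and the translated $z_k$ converge in $\O$; this is the easy case.

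The substantive case is $\beta \in \Lambda_{\{\alpha_r\}}(\Gamma) \subset \partial \O$, and here the key geometric input is the transversality of $\borc = \Lambda_{\{\alpha_r\}}(\Gamma)$. Any limit of a causally related pair of points of $\mathcal{A}$ is again a causally related, hence non-transverse, pair in $\overline{\mathcal{A}}$; so two distinct boundary limits of endpoints are impossible, and the endpoints of the carrying diamonds must limit to a \emph{single} point $\omega \in \Lambda_{\{\alpha_r\}}(\Gamma)$. Using $\hyp_\omega \cap \O = \emptyset$ (Lemma~\ref{lem_proper_inter_hyp}), Lemma~\ref{lem_omega_inter_futur_vide}, and the already-established equality $\partial_i \mathcal{C} = \Lambda_{\{\alpha_r\}}(\Gamma)$, I would deduce that these diamonds collapse toward $\omega$. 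The aim is to show that, for the nearest-orbit-point normalisation, such a collapse cannot occur with $C_\O(z_k,\Gamma\cdot x) \to +\infty$, yielding a uniform bound $\sup_{z \in \mathcal{C}} C_\O(z,\mathcal{A}) < +\infty$; cocompactness of $\Gamma$ on $\mathcal{C}$ then follows from cocompactness on $\mathcal{A}$ together with the $\Gamma$-invariance and properness of $C_\O$.

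The main obstacle is exactly the control of the \emph{interior} of a large diamond: a priori $z_k$ may lie deep inside $\Diams_\Affstd^c(a_k,b_k)$, far (for $C_\O$) from both endpoints, so normalising by a single endpoint does not suffice and the collapse of the endpoints to $\omega$ does not by itself bound $z_k$. The delicate point to resolve is to show that when the carrying diamond collapses to $\omega \in \Lambda_{\{\alpha_r\}}(\Gamma)$ together with an interior sequence $z_k$, there nevertheless remain points of $\mathcal{A}$ at bounded $C_\O$-distance of $z_k$ — so that $C_\O(z_k,\mathcal{A})$ stays bounded. This requires combining, simultaneously, the single-limit-point collapse forced by transversality, the exclusion of $\J^+(b)\cap\J^-(b)$-points given by Lemma~\ref{lem_omega_inter_futur_vide}, and the boundary-sensitivity of the Caratheodory metric in Fact~\ref{fact_caratheodory_metric}; this is the step where the "spacelike" nature of the limit set (Corollary~\ref{cor_restrictions_maslov_index}) and the "timelike" nature of causal convexity genuinely interact.
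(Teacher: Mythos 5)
There is a genuine gap: the case you yourself label ``substantive'' ($\beta \in \Lambda_{\{\alpha_r\}}(\Gamma)$) is never proved --- your last paragraph is an explicit to-do list (``the delicate point to resolve is\dots''), not an argument. Moreover the gap is self-inflicted, because under the normalisation you actually set up that case cannot occur, and the tool that excludes it is precisely Lemma~\ref{lem_omega_inter_futur_vide}, which you cite but never apply. Indeed, with the naive choice $\gamma_k^{-1}\cdot a_k \in \overline{\mathcal{V}}$, after translating and extracting you have $z_k' \in \Diams^c_{\Affstd}(a_k',b_k')$ with $a_k' \to a_\infty \in \overline{\mathcal{V}} \subset \O$, $b_k' \to \beta \in \overline{\mathcal{A}}$, and $b_k' \in \J^+_{\Affstd}(a_k')$; closedness of the causal relation in the affine chart gives $\beta \in \J^+_{\Affstd}(a_\infty)$, i.e.\ $a_\infty \in \J^-_{\Affstd}(\beta)$ by reflexivity (Fact~\ref{fact_cone_in_schubert_subvrty}). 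If $\beta$ belonged to $\Lambda_{\{\alpha_r\}}(\Gamma) = \partial_i \mathcal{A}$, then $a_\infty \in \O \cap \J^-_{\Affstd}(\beta)$ would contradict Lemma~\ref{lem_omega_inter_futur_vide}; hence $\beta \in \mathcal{A} \subset \O$ and you are always in your ``easy case''. Note also that the collapse picture you describe (both endpoints of the carrying diamond tending to one point $\omega \in \Lambda_{\{\alpha_r\}}(\Gamma)$) is internally inconsistent with your own normalisation, since one endpoint is pinned in the compact set $\overline{\mathcal{V}}$ at definite distance from $\partial \O$; it could only arise for your ``more robust'' nearest-point normalisation, which is exactly the choice that creates the difficulty rather than the one ``needed''. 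The entire apparatus invoked for the hard case (Caratheodory distance to $\mathcal{A}$, bounding $C_\O(z_k,\mathcal{A})$) is unnecessary, and in any event no proof of that step is given.

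For comparison, the paper runs the same causal-exclusion mechanism in a global rather than sequential form: it proves that $S' := \{g \in \Gamma \mid \overline{\mathcal{V}} \cap \J^{\pm}(g\cdot\overline{\mathcal{V}}) \neq \emptyset\}$ is finite (an infinite $S'$ would, by properness of the action, produce a point of $\J^+(a) \cap \O$ with $a \in \Lambda_{\{\alpha_r\}}(\Gamma)$, contradicting Lemma~\ref{lem_omega_inter_futur_vide}), then takes the compact set $\compact := \operatorname{Conv}_{\Affstd}\big(\bigcup_{g \in S'} g\cdot\overline{\mathcal{V}}\big) \subset \O$ and shows $\mathcal{C} \subset \Gamma \cdot \compact$ using Lemma~\ref{lem_convex_envelope_egal_plus_petit_convex} and a single translation. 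Your sequential scheme, once repaired as above (and modulo the routine continuity statement that limits of points of $\Diams^c_{\Affstd}(a_k',b_k')$ lie in $\Diams^c_{\Affstd}(a_\infty,\beta)$, which the paper also uses implicitly), does give the lemma; but as submitted it stops short of a proof exactly at the step for which Lemma~\ref{lem_omega_inter_futur_vide} was designed.
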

\begin{proof}
    Let~$S' := \{g \in \Gamma \mid \overline{\mathcal{V}} \cap \J^+ (g \cdot \overline{\mathcal{V}}) \ne \emptyset \text{ or } \overline{\mathcal{V}} \cap \J^- (g \cdot \overline{\mathcal
    V}) \ne \emptyset\}$. Assume that~$S'$ is infinite. Then there exists a sequence~$(g_k)$ of distinct elements of~$\Gamma$, and~$x_k, y_k \in \overline{\mathcal{V}}$ such that~$x_k \in \J^+(g_k \cdot y_k)$ (for instance) for all~$k \in \mathbb{N}$. Up to extracting one has~$x_k \rightarrow x \in \overline{\mathcal{V}}$, and since~$\Gamma$ acts properly on~$\O$ by Fact~\ref{fact_autom_group_proper}, we have~$g_k \cdot y_k \rightarrow a \in \Lambda_{\{\alpha_r\}}(\Gamma)$. Then~$x \in \J^+(a) \cap \O$, which contradicts Lemma~\ref{lem_omega_inter_futur_vide}. Thus~$S'$ is finite. Let~$\mathcal{B} := \bigcup_{g \in S'} \ g \cdot \overline{\mathcal{V}}$, and let~$\compact$ be the causally convex hull of~$\mathcal{B}$ in~$\Affstd$. Then~$\compact$ is compact because~$\mathcal{B}$ is. It is contained in~$\O$, as~$\O$ is causally convex. 

Thus it remains to prove that~$\mathcal{C} \subset \Gamma \cdot \compact$. Let~$x \in \mathcal{C}$. There exist~$a,b \in \overline{\mathcal{V}}$ and~$g_1, g_2 \in \Gamma$ such that~$x \in \Diams^c(g_1 \cdot a, g_2 \cdot b)$. Set~$x' := g_1^{-1} \cdot x$ and~$g := g_1^{-1} g_2$. Then~$x' \in \Diams^c( a, g \cdot b)$ with~$a,b \in  \overline{\mathcal{V}}$ and~$g \in S$, so~$x' \in \compact$ and~$x \in g_1 \cdot \compact$. Hence~$\mathcal{C} \subset \Gamma \cdot \compact \subset \O$. Since~$\mathcal{C}$ is closed in~$\O$, the group~$\Gamma$ acts cocompactly on~$\mathcal{C}$.~$\qed$
\end{proof}

\section[Anosov groups and proper domains]{Examples of Anosov subgroups preserving proper domains}\label{sect_examples}
In this section, taking Notation~\ref{sect_notation} we construct examples of Zariski-dense~$\{\alpha_r\}$-Anosov subgroups of~$\HTT$ Lie groups~$G$ of real rank~$r \geq 1$ preserving a proper domain in~$\SB(\g)$. We will use the structural stability of~$\{\alpha_r\}$-Anosov subgroups of~$G$ and deform well-chosen Anosov representations from~$\Gamma$ to~$L_s$. However, the property for a~$\{\alpha_r\}$-Anosov representation into~$G$ to  be the deformation of a representation of~$\Gamma$ into~$L_s$ turns out to be restrictive, as we will see in Section~\ref{sect_restrictions_L}.

\subsection{Openness property}
In this Section, we consider any noncompact real semisimple Lie group~$G$ and~$\Theta \subset \FS$ a subset of the simple restricted roots of~$G$. Next Corollary~\ref{cor_preserving_proper_domain_open} of Lemma~\ref{lem_convergence_orbit_anosov_hausdorff} states a stability property for~$\Theta$-Anosov representations preserving a proper domain in~$\Fl(\g, \Theta)$:

\begin{cor}[see Proposition~\ref{prop_preserving_proper_domain_open}]\label{cor_preserving_proper_domain_open}
  Let~$\Gamma$ be a word hyperbolic group and let~$\rho: \Gamma \rightarrow G$ be a~$\Theta$-Anosov representation. Assume that there exists a proper~$\rho(\Gamma)$-invariant domain~$\O \subset \Fl(\g, \Theta)$. Then there exists a neighborhood~$\mathcal{U}$ of~$\rho$ in~$\operatorname{Hom}_{\Theta\text{-}\mathsf{An}}(\Gamma, G)$ such that for every representation~$\rho' \in \mathcal{U}$, there exists a proper~$\rho'(\Gamma)$-invariant domain~$\O' \subset \Fl(\g, \Theta)$.
\end{cor}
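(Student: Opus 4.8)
The plan is to prove Corollary~\ref{cor_preserving_proper_domain_open} by combining the continuity of the limit map for Anosov representations with the Hausdorff-continuity of orbit closures established in Lemma~\ref{lem_convergence_orbit_anosov_hausdorff}, and with the topological restrictions of Section~\ref{sect_topo_restrictions}. First I would set up the situation: since~$\rho$ is~$\Theta$-Anosov and preserves a proper domain~$\O$, up to acting by~$G$ I may assume~$\overline{\O} \subset \Affstd_\std$, i.e.\ there exists~$\xi_0 \in \Fl(\g, \Theta)^\opp$ with~$\overline{\O} \cap \hyp_{\xi_0} = \emptyset$. By Lemma~\ref{lem_proper_inter_hyp} applied to~$\Aut(\O) \supset \rho(\Gamma)$, every point of the limit set~$\Lambda_\Theta(\rho(\Gamma)) = \xi_\rho(\Gambor)$ lies in~$\O^*$, so~$\hyp_{\xi_\rho^\opp(\eta)} \cap \O = \emptyset$ for all~$\eta \in \Gambor$. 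In particular the set~$\mathcal{O}_\rho := \Fl(\g, \Theta) \smallsetminus \bigcup_{\eta \in \Gambor} \hyp_{\xi_\rho^\opp(\eta)}$ of Lemma~\ref{lem_convergence_orbit_anosov_hausdorff} contains~$\O$ and is therefore nonempty. This is exactly the hypothesis needed to invoke that lemma.

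The core idea is to produce, for~$\rho'$ near~$\rho$, an explicit candidate proper domain as (a connected component of the interior of) the complement of a union of Schubert varieties dual to the limit set of~$\rho'$. Concretely, fix a point~$x_0 \in \O \subset \mathcal{O}_\rho$ and consider the closed set~$\Psi(\rho') = \overline{\rho'(\Gamma) \cdot x_0}$. By Lemma~\ref{lem_convergence_orbit_anosov_hausdorff}, the map~$\rho' \mapsto \Psi(\rho')$ is Hausdorff-continuous at~$\rho$, and~$\Psi(\rho)$ accumulates exactly on~$\xi_\rho(\Gambor) = \Lambda_\Theta(\rho(\Gamma))$. The strategy is then to build the candidate domain from the dual limit set: I would consider~$\O'_{\rho'} := \Fl(\g, \Theta) \smallsetminus \bigcup_{\eta \in \Gambor} \hyp_{\xi_{\rho'}^\opp(\eta)}$. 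Since~$\rho \mapsto \xi_\rho$ (hence~$\xi_\rho^\opp$) is continuous by Section~\ref{sect_continuity_anosov}, and since~$\Gambor$ is compact, the union of Schubert varieties~$\bigcup_\eta \hyp_{\xi_{\rho'}^\opp(\eta)}$ varies continuously (in the Hausdorff sense) with~$\rho'$. Because this union avoids the relatively compact set~$\overline{\O}$ at~$\rho' = \rho$ — indeed it avoids the fixed point~$\xi_0$-neighborhood witnessing properness — for~$\rho'$ close enough to~$\rho$ the corresponding union still avoids a fixed nonempty open set, so~$\O'_{\rho'}$ is nonempty; being the complement of a~$\rho'(\Gamma)$-invariant closed set, it is~$\rho'(\Gamma)$-invariant, and by construction every~$\xi_{\rho'}^\opp(\eta)$ lies in its dual, so it is proper. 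Finally one passes to the connected component of~$\O'_{\rho'}$ meeting~$x_0$ to obtain an invariant proper \emph{domain}; invariance of that component follows because~$\rho'(\Gamma)$ fixes~$x_0$'s component setwise once~$x_0$ stays inside, which holds for~$\rho'$ near~$\rho$ by the Hausdorff continuity of~$\Psi$.

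The main obstacle I anticipate is the interface between properness (an open condition on the \emph{closure} of a domain) and the openness of the perturbation: I need to verify that for~$\rho'$ in a neighborhood of~$\rho$, not only is~$\O'_{\rho'}$ nonempty and~$\rho'(\Gamma)$-invariant, but its closure genuinely avoids a single Schubert variety~$\hyp_{\xi}$ for some fixed~$\xi$, so that properness (Definition~\ref{def_domain_proper}.(2)) holds. The clean way to handle this is to exploit~$\xi_0$: at~$\rho' = \rho$ the limit set~$\xi_\rho(\Gambor)$ lies in the open affine chart~$\Affstd_{\xi_0} = \Fl(\g, \Theta) \smallsetminus \hyp_{\xi_0}$, and by continuity of~$\rho' \mapsto \xi_{\rho'}$ and compactness of~$\Gambor$, the perturbed limit set~$\xi_{\rho'}(\Gambor)$ remains inside~$\Affstd_{\xi_0}$ for~$\rho'$ near~$\rho$; dually, one checks~$\xi_0$ remains transverse to all of~$\xi_{\rho'}(\Gambor)$, whence~$\xi_0 \in (\O'_{\rho'})^*$ and~$\overline{\O'_{\rho'}} \subset \Affstd_{\xi_0}$, giving properness. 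I would then select the component containing~$x_0$ and use Hausdorff-continuity of the orbit closure~$\Psi$ to guarantee this component is nonempty, connected, invariant, and proper, completing the proof; by Fact~\ref{prop_ggkw}.(4) it suffices throughout to argue in the projective model~$G = \PGL(n,\mathbb{R})$, $\Theta = \{\alpha_1\}$, where the Schubert varieties become projective hyperplanes and the continuity statements are most transparent.
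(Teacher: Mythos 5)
Your plan founders at exactly the step you flagged as the ``main obstacle'': properness, and the two claims you make to handle it are both false. First, the union $\bigcup_{\eta \in \Gambor} \hyp_{\xi_\rho^\opp(\eta)}$ does \emph{not} avoid $\overline{\O}$ at $\rho' = \rho$: for every $\eta \in \Gambor$ the flags $\xi_\rho(\eta)$ and $\xi_\rho^\opp(\eta)$ are never transverse (the attracting flags of a given element in $\Fl(\g,\Theta)$ and $\Fl(\g,\Theta)^\opp$ are incident), so $\xi_\rho(\eta) \in \hyp_{\xi_\rho^\opp(\eta)}$, and the limit set $\xi_\rho(\Gambor) \subset \partial\O$ is contained in the union. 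Second, and fatally, the inference ``$\xi_0$ is transverse to all of $\xi_{\rho'}(\Gambor)$, whence $\xi_0 \in (\O'_{\rho'})^*$ and $\overline{\O'_{\rho'}} \subset \Affstd_{\xi_0}$'' is a non sequitur: transversality of $\xi_0$ to the limit set says only that the \emph{limit set} avoids $\hyp_{\xi_0}$; it gives no control on $\O'_{\rho'}$, which is the complement of a thin union of Schubert varieties and in general meets every $\hyp_\xi$, as does each of its connected components. The failure occurs already at $\rho' = \rho$: take $\Gamma = \mathbb{Z}$, $\rho(1) = \operatorname{diag}(4,2,1) \in \PGL(3,\mathbb{R})$, $\Theta = \{\alpha_1\}$. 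This representation is $\{\alpha_1\}$-Anosov and preserves a proper domain (the open simplex), yet $\O'_\rho$ is the complement of two projective lines in $\mathbb{P}(\mathbb{R}^3)$, and the closure of either of its connected components contains an entire projective line, hence meets every line. So no component of $\O'_\rho$ is proper in the sense of Definition~\ref{def_domain_proper}, which requires the \emph{closure} of the domain to avoid some $\hyp_\xi$. Passing to the component containing $x_0$ therefore does not rescue the construction (and the $\rho'(\Gamma)$-invariance of that component is also left unjustified, since orbits are not connected).

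This is why the paper's proof goes in the opposite direction: instead of building a \emph{large} domain as a complement of hypersurfaces, it builds a \emph{small} one as an orbit. It fixes $z_0$ in the interior of $\O^*$ and $x_0 \in \O$, applies Lemma~\ref{lem_convergence_orbit_anosov_hausdorff} in the \emph{dual} flag manifold to get Hausdorff continuity of $\rho' \mapsto X_{\rho'} := \overline{\rho'(\Gamma) \cdot z_0}$, and notes that $X_\rho \subset \O^*$, so $\hyp_x \cap \O = \emptyset$ for every $x \in X_\rho$. For $\rho'$ close to $\rho$ one can then choose a connected neighborhood $\mathcal{V}$ of the finite set $\rho'(S) \cdot x_0$ ($S$ a finite generating set) with $\hyp_x \cap \mathcal{V} = \emptyset$ for all $x \in X_{\rho'}$, and set $\O' := \rho'(\Gamma) \cdot \mathcal{V}$. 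The point is that $X_{\rho'}$ is $\rho'(\Gamma)$-invariant, so avoiding $\{\hyp_x \mid x \in X_{\rho'}\}$ is an invariant condition: every translate of $\mathcal{V}$, and then (via the Anosov property and continuity of the limit maps) the full closure of $\O'$, stays inside the affine chart $\Affstd_{z_0}$, which is exactly properness; connectedness comes from $S$ generating $\Gamma$. The invariant compact set $X_{\rho'}$ of dual points — not the dual limit set alone — is the device your proposal is missing: it converts ``a neighborhood away from finitely many hypersurfaces'' into ``an entire orbit whose closure avoids $\hyp_{z_0}$''.
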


\begin{proof} It suffices to prove that for any sequence~$(\rho_k) \in \operatorname{Hom}_{\Theta\text{-}\mathsf{An}}(\Gamma, G)^{\mathbb{N}}$ such that~$\rho_k \rightarrow \rho$, there exists~$k_0 \in \mathbb{N}$ such that for all~$k \geq k_0$ there exists a proper~$\rho_k(\Gamma)$-invariant domain~$\O_k \subset \Fl(\g, \Theta)$.

Let~$z_0$ be in the interior of~$\O^*$ (exists because~$\O$ is proper) and~$x_0 \in \O$. Since the interior of~$\O^*$ is proper, by Lemma~\ref{lem_proper_inter_hyp} we have~$z_0 \in \Fl(\g, \Theta)^\opp \smallsetminus \bigcup_{\eta \in \partial_\infty} \hyp_{\xi_{\rho}^\opp(\eta)}$. By Lemma~\ref{lem_convergence_orbit_anosov_hausdorff}, the sets~$X_k := \overline{\rho_k(\Gamma) \cdot z_0}$ converge to~$X := \overline{\rho(\Gamma) \cdot z_0}$ for the Hausdorff topology as~$k \rightarrow + \infty$. 

Let us fix some symmetric generating set~$S$ of~$\Gamma$ containing the identity. Since~$X \subset \overline{\O^*}$, we have~$\hyp_x \cap \O = \emptyset$ for all~$x \in X$. Thus for~$k$ large enough, there exists a connected neighborhood~$\mathcal{V}_k$ of~$\{\rho_k(g) \cdot x_0 \mid g \in S\}$ such that~$\hyp_x \cap \mathcal{V}_k = \emptyset$ for all~$x \in X_k$; Then~$\O_k := \rho_k(\Gamma) \cdot \mathcal{V}_k$ is a connected,~$\rho_k(\Gamma)$-invariant domain, such that
\begin{equation*}
    \overline{\O_k} = \bigcup_{g \in \Gamma} \overline{\rho_k(g) \cdot \mathcal{V}_k} \cup \xi_{\rho_k}(\partial_{\infty} \Gamma) \subset \Affstd_{z_0}.
\end{equation*}
Thus~$\O_k$ is proper in~$\Affstd_{z_0}$.~$\qed$
\end{proof}

\subsection{Deformations of Anosov representations into~$L_s$}\label{sect_deformations} 
In this Section, we take Notation~\ref{sect_notation}. Using Corollary~\ref{cor_preserving_proper_domain_open}, we construct Zariski-dense~$\{\alpha_r\}$-Anosov subgroups of~$\HTT$ Lie groups~$G$ preserving a proper domain in~$\SB(\g)$, hence proving Theorem~\ref{prop_existence_CC_subgroups_1}.

Recall from Corollary~\ref{cor_restrictions_maslov_index} that the assumption that~$r$ is even is necessary in Theorem~\ref{prop_existence_CC_subgroups_1}. Note that Example~\ref{ex_cas_SP_rigidity}.(2) below allows, in the case where~$r$ is a multiple of~$4$, to produce examples that are neither virtually free groups nor virtually surface groups.

From now on, we assume that~$r = 2p$ with~$p \in \mathbb{N}$. Recall from Fact~\ref{eq_root_syst_L_s} that~$\{\alpha_1, \dots, \alpha_{r-1}\}$ is a fundamental system of simple restricted roots of~$\mathfrak{l}_s$, of type~$A_{r-1}$. We denote by~$\beta_{i,j}$ the positive restricted root~$\varepsilon_i - \varepsilon_j$, for~$i>j$. Let~$(h_{i,j} := h_{\beta_{i,j}})$ be the associated co-roots (see Section~\ref{sect_real_lie_alg}). For all~$i,j$ ($i<j$) there is a unique pair~$(e_{i,j}, f_{i,j}) \in (\mathfrak{l}_s)_{\beta_{i,j}} \times (\mathfrak{l}_s)_{-\beta_{i,j}}$ such that~$(e_{i,j}, h_{i,j}, f_{i,j})$ is an~$\sll_2$-triple. Now let
\begin{equation*}
    h:= \sum_{k=1}^{p} h_{k, p+k}; \quad e:= \sum_{k=1}^p e_{k, k+p}; \quad f:= \sum_{k=1}^p f_{k, k+p}.
\end{equation*}
Then~$(e, h,f)$ is an~$\sll_2$-triple.

The~$\sll_2$-triple~$(e,h,f)$ induces a Lie algebra embedding~$\sll_2 (\mathbb{R}) \hookrightarrow \mathfrak{l}_s$, which itself induces a group homomorphism~$\tau_p:\SL(2, \mathbb{R}) \rightarrow L_s$ with kernel included in~$\{ \pm I_2 \}$. Let~$\Gamma$ be the fundamental group of a closed surface~$\mathcal{S}$ of genus~$g \geq 2$. The natural inclusion~$\Gamma \hookrightarrow \SL(2, \mathbb{R})$ induces a representation~$\rho: \Gamma \hookrightarrow \SL(2, \mathbb{R}) \overset{\rho}{\longrightarrow} L_s$, which is~$\{\alpha_{p}\}$-Anosov.

The representation~$\tau_0: \Gamma \hookrightarrow \SL(2, \mathbb{R}) \overset{\tau_p}{\longrightarrow} L_s \hookrightarrow G$ is~$\{\alpha_r\}$-Anosov by \cite[Prop.\ 4.4]{guichard2012anosov}. Note that~$\tau_0$ preserves the two diamonds of~$\SB(\g)$ with endpoints~$\LP^+$ and~$\LP^\opp$ (see Fact~\ref{fact_autom_diam}). Thus, by Corollary~\ref{cor_preserving_proper_domain_open}, any small deformation of~$\tau_0$ in~$\operatorname{Hom}_{\{\alpha_r\}\text{-}\mathsf{An}}(\Gamma, G)$ still preserves a proper domain in~$\SB(\g)$. Since~$\{\alpha_r\}$-Anosov representations are discrete and faithful, Corollary~\ref{cor_preserving_proper_domain_open} and the following proposition imply in particular Theorem~\ref{prop_existence_CC_subgroups_1}:

\begin{prop}\label{prop_existence_CC_subgroups} There exists~$g_0 \geq 2$ such that for all~$g \geq g_0$ and any neighborhood~$\mathcal{U}$ of~$\tau_0$ in~$\operatorname{Hom}_{\{\alpha_r\}\text{-}\mathsf{An}}(\Gamma, G)$, there exists a Zariski-dense representation~$\rho \in \mathcal{U}$.
\end{prop}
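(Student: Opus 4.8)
The plan is to show that $\tau_0$ lies in the closure, for the topology of $\Hom(\Gamma, G)$, of the set $\mathcal{Z}$ of Zariski-dense representations, as soon as $g$ is large enough. Granting this, the statement is immediate: by structural stability of Anosov representations (Section~\ref{sect_anosov}), the set $\Hom_{\{\alpha_r\}\text{-}\mathsf{An}}(\Gamma, G)$ is open in $\Hom(\Gamma, G)$ and contains $\tau_0$, so any neighborhood $\mathcal{U}$ of $\tau_0$ inside it is open in $\Hom(\Gamma, G)$ and must meet the dense set $\mathcal{Z}$. This density is precisely the flexibility phenomenon of \cite{kim2015flexibility}; I outline the dimension-counting mechanism behind it.

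First I would identify the Zariski closure $H := \overline{\tau_0(\Gamma)}^{\mathrm{Zar}}$. Since $\Gamma$ is a lattice in $\SL(2,\mathbb{R})$ it is Zariski-dense there, so $H = \tau_p(\SL(2,\mathbb{R}))$ is a copy of $\PSL(2,\mathbb{R})$, a proper reductive subgroup of $G$; in particular $\tau_0$ is a \emph{reductive} representation. Reductivity is the crucial regularity input: it guarantees that $\Hom(\Gamma, G)$ is smooth at $\tau_0$ with tangent space the cocycle space $Z^1(\Gamma, \Ad\,\tau_0)$, and (from $\chi(\mathcal{S}) = 2-2g$ together with Poincaré duality $H^2 \cong (H^0)^\ast$) that
\[
\dim_{\tau_0}\Hom(\Gamma, G) = (2g-1)\dim G + \dim\mathfrak{z},
\]
where $\mathfrak{z} = H^0(\Gamma, \Ad\,\tau_0)$ is the Lie algebra of the centralizer of $\tau_0(\Gamma)$ (Weil's formula and Goldman's smoothness of the representation variety at reductive representations).

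The heart of the argument is to show that, near $\tau_0$, the non-Zariski-dense locus $\mathcal{N}$ has strictly smaller dimension once $g$ is large. Every non-Zariski-dense representation factors through a conjugate of a maximal proper algebraic subgroup, and there are only finitely many of these up to conjugacy (finitely many parabolic classes, finitely many classes of maximal reductive subgroups); moreover a reductive image contained in a parabolic already lies in a Levi factor, so it suffices to bound, for each representative $M \subsetneq G$ into which $H$ is conjugate, the locus $\mathcal{R}_M$ of representations whose image lies in some conjugate of $M$. Near $\tau_0$, using smoothness of $\Hom(\Gamma, M)$ at the reductive representation $\tau_0$ (viewed in $M$) and letting the first summand absorb the conjugating parameter,
\[
\dim_{\tau_0}\mathcal{R}_M \leq (\dim G - \dim M) + (2g-1)\dim M + \dim\mathfrak{z}_M = \dim G + (2g-2)\dim M + \dim\mathfrak{z}_M .
\]
Comparing with the dimension of $\Hom(\Gamma, G)$ above, the inequality $\dim_{\tau_0}\mathcal{R}_M < \dim_{\tau_0}\Hom(\Gamma, G)$ reduces to $(2g-2)(\dim G - \dim M) > \dim\mathfrak{z}_M - \dim\mathfrak{z}$, whose left side grows linearly in $g$ while the right side is a fixed constant; it therefore holds for all $g \geq g_0$, with $g_0$ depending only on $G$ (through $\dim G$ and the dimensions and centralizers of its maximal proper subgroups). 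Hence near $\tau_0$ the set $\mathcal{N}$ is a finite union of positive-codimension subsets, $\mathcal{Z}$ is dense around $\tau_0$, and intersecting with $\mathcal{U}$ concludes.

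The main obstacle is to make this dimension count genuinely \emph{local} at the distinguished, non-generic point $\tau_0$: one must know that $\Hom(\Gamma, G)$ and each $\Hom(\Gamma, M)$ are smooth of the expected dimension there — which reductivity of $\tau_0$ provides — and that the conjugation-parametrized family $\mathcal{R}_M$ has the asserted local dimension at $\tau_0$ rather than merely a generic one. Controlling all proper subgroups simultaneously is exactly what forces the reduction to finitely many conjugacy classes of maximal subgroups and the parabolic-to-Levi step. An equivalent, more constructive route — the viewpoint of \cite{kim2015flexibility} — is to decompose $\mathfrak{g} = \mathfrak{h} \oplus \mathfrak{m}$ into $\Ad(H)$-submodules, observe that $H^1(\Gamma, \mathfrak{m}_{\Ad\,\tau_0})$ is nonzero for $g$ large (again by the Euler-characteristic count), and integrate a cocycle transverse to $\mathfrak{h}$ — using smoothness for unobstructed integrability — to deform $\tau_0$ out of $H$; iterating along the finite lattice of intermediate algebraic subgroups between $H$ and $G$ reaches Zariski-density while remaining, after shrinking, inside the open Anosov set.
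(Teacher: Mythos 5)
Your reduction---openness of the Anosov condition plus density of Zariski-dense representations near~$\tau_0$---is exactly the paper's strategy, and the reference \cite{kim2014density, kim2015flexibility} is the one the paper uses. But the paper treats Kim--Pansu as a black box, only checking its hypotheses, whereas you present a ``dimension-counting mechanism'' as the heart of the proof, and that mechanism is wrong. The regularity input fails: Goldman's smoothness of $\Hom(\Gamma,G)$ holds at representations whose centralizer is as small as possible (the center of~$G$), not at all reductive ones. Here the centralizer of $\tau_0(\Gamma)$ is always positive-dimensional --- the one-parameter subgroup $\exp(\mathbb{R}H_0)$ coming from the center of the Levi $\mathfrak{l}$ centralizes $\mathfrak{l}_s$, hence $\tau_0(\Gamma)$ --- so $H^0(\Gamma,\Ad\tau_0)\neq 0$, hence $H^2(\Gamma,\Ad\tau_0)\neq 0$ by Poincar\'e duality, and by Goldman--Millson theory $\Hom(\Gamma,G)$ has at best a quadratic singularity at~$\tau_0$; your formula for $\dim_{\tau_0}\Hom(\Gamma,G)$ and the local bound on $\dim_{\tau_0}\mathcal{R}_M$ are therefore unjustified. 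Moreover, these are \emph{real} algebraic sets, so the local dimension at a singular point can be strictly smaller than the virtual dimension $(2g-1)\dim G$; even correct virtual-dimension inequalities would not show that the non-Zariski-dense locus misses a neighborhood of~$\tau_0$. The same objection applies to your closing ``constructive route'': integrating a cocycle transverse to $\mathfrak{h}$ requires precisely the unobstructedness (smoothness) that is missing.

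The decisive evidence that this scheme cannot be repaired is that it proves too much: nothing in your count uses that $G$ is classical or Hermitian of tube type, so it would show that every reductive representation of a high-genus surface group into any simple Lie group admits nearby Zariski-dense deformations. This is false: for $G=\SU(1,n)$ with $n\geq 2$ (or $\SU(p,q)$ with $p<q$), a complex-Fuchsian representation $\Gamma\to\SU(1,1)\hookrightarrow G$ is reductive, yet by Toledo and Goldman--Millson rigidity \emph{every} nearby representation still preserves a complex geodesic, i.e.\ remains in your locus $\mathcal{R}_M$, whatever the genus. These are exactly the exceptional (Hermitian, non-tube-type) cases excluded in Kim--Pansu's theorem, and they show that real representation varieties can have entire components, of dimension smaller than $(2g-1)\dim G$, consisting of non-Zariski-dense representations. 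Finally, you omit the one verification the paper's proof actually consists of: Kim--Pansu's theorem is stated for \emph{classical} simple Lie groups, and it is the hypothesis that $r$ is even which excludes $\g = \eeee_{7(-25)}$ (of real rank~$3$) and guarantees, by Table~\ref{table_shilov_bnds}, that $G$ is classical; together with $G$ being of tube type, this is what rules out the exceptional case and makes the theorem applicable.
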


\begin{proof} Proposition~\ref{prop_existence_CC_subgroups} is just a consequence of a theorem of Kim--Pansu, saying that surface groups of sufficiently large genus in classical real Lie groups~$G'$ admit small Zariski-dense deformations, unless~$G'$ is Hermitian \emph{not} of tube type \cite{kim2014density, kim2015flexibility}. Indeed, here the condition that the real rank~$r$ is even implies that the~$\HTT$ Lie group~$G$ we consider is \emph{not} locally isomorphic to~$E_{7(-25)}$ (which is of real rank~$3$), so~$G$ a classical Lie group, see Table~\ref{table_shilov_bnds}.~$\qed$
\end{proof}

\begin{rmk} If~$G$ is a real semisimple Lie group and~$\Theta$ is a subset of the simple restricted roots of~$G$ such that~$\Fl(\g, \Theta)$ is an \emph{irreducible Nagano space} (as mentioned in Remark~\ref{rmk_Nagano_space}), then by \cite{nagano1965transformation}, there exists a reductive Lie subgroup~$G^*$ of~$G$ that preserves a proper domain~$\O$ in~$\Fl(\g, \Theta)$. If~$\Theta$ is self-opposite and~$\mathcal{E}_\Theta$ has no~$\inver_\Theta$-invariant element (in the notation of Section~\ref{sect_stable_connected components}), then by Proposition~\ref{prop_restrictions_maslov_index_2}, the group~$G^*$ cannot contain non-virtually cyclic discrete subgroups which are~$\Theta$-Anosov in~$G$. As part of ongoing joint work with Fanny Kassel and Yosuke Morita, we prove that in any other case, the group~$G^*$ contains surface subgroups that are~$\Theta$-Anosov in~$G$, which, after small deformation, yield examples of Zariski-dense~$\Theta$-Anosov surface groups preserving a proper domain in~$\Fl(\g, \Theta)$.
\end{rmk}

\subsection{Restrictions on Anosov representations induced from~$L$}\label{sect_restrictions_L}
In Section~\ref{sect_deformations}, taking Notation~\ref{sect_notation}, we have constructed Zariski-dense~$\{\alpha_r\}$-Anosov subgroups of~$\HTT$ Lie groups~$G$ preserving a proper domain of~$\SB(\g)$, by deformations of~$P_{\{\alpha_{r/2}\}}$-Anosov representations of surface groups into~$L_s$ into~$G$, where~$r$ is even. In the present section, we investigate the restrictions on discrete subgroups of~$G$ built from deformations of representations~$\rho_0: \Gamma \rightarrow L_s \hookrightarrow G$, which goes back to investigating those on representations~$\Gamma \rightarrow L_s$ such that~$\Gamma \rightarrow L_s \hookrightarrow G$ is~$\{\alpha_r\}$-Anosov. 

We know from Fact~\ref{eq_root_syst_L_s} that the closed positive Weyl chamber of~$L_s$ associated with its root system~$\{\alpha_1, \dots, \alpha_{r-1}\}$ is:
\begin{equation}\label{eq_aaa_L_s}
    \overline{\aaa}_{L_s}^+ := \{X \in \aaa \mid \varepsilon_i (X) \geq \varepsilon_{i+1}(X) \ \ \forall 1 \leq i \leq r-1\}.
\end{equation}
We denote by~$\mu_{L_s}: L_s \rightarrow \overline{\aaa}^+_{L_s}$ the Cartan projection (as defined in Section~\ref{sect_real_lie_alg}) of~$L_s$. The following proposition is a consequence of work of Kassel \cite{kassel2008proper}:

\begin{prop}\label{prop_restrictions_L}Let~$\Gamma$ be a non-virtually cyclic Gromov-hyperbolic group, and let~$\rho: \Gamma \rightarrow L_s^0$ be a representation. We denote by~$\iota: L_s^0 \hookrightarrow G$ the natural inclusion.

If~$r$ is odd, then~$\iota \circ \rho$ cannot be~$\{\alpha_r\}$-Anosov.

If~$r$ is even, then~$\iota \circ \rho$ is~$\{\alpha_r\}$-Anosov if and only if all but finitely many elements of~$\mu_{L_s}(\rho(\Gamma))$ are contained in 
\begin{equation*}
  \{X \in \overline{\aaa}_{L_s}^+ \mid \varepsilon_{r/2} (X) > 0 > \varepsilon_{(r/2)) +1} (X)\}, 
\end{equation*}
and in this case, the representation~$\rho$ is~$\{\alpha_{r/2}\}$-Anosov.
\end{prop}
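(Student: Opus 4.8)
The plan is to translate the $\{\alpha_r\}$-Anosov condition on $\iota\circ\rho$ into a condition on the Cartan projection $\mu_{L_s}$, and then exploit two facts: that $\iota\circ\rho(\Gamma)$ automatically preserves the proper domain $\Diams_\std$, and a dictionary of Kassel between Cartan-cone conditions and properness. First I would record a \emph{folding formula}. Since $L_s$ and $G$ share the Cartan subspace $\aaa$ and a maximal compact $K_{L_s}\subset K$ can be chosen compatibly, for $\ell\in L_s$ the $G$-Cartan projection $\mu(\iota\ell)$ is the $C_r$-dominant representative of $\mu_{L_s}(\ell)\in\overline{\aaa}^+_{L_s}$, obtained by replacing each $\varepsilon_i(\mu_{L_s}(\ell))$ by its absolute value and reordering decreasingly. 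As $\alpha_r=2\varepsilon_r$, this yields
\begin{equation*}
\alpha_r(\mu(\iota\ell)) = 2\min_{1\le i\le r}|\varepsilon_i(\mu_{L_s}(\ell))|.
\end{equation*}
Using the characterization of Anosov representations of hyperbolic groups by linear growth of the simple-root functionals (see \cite{gueritaud2017anosov, kapovich2017anosov}), $\iota\circ\rho$ is $\{\alpha_r\}$-Anosov if and only if there exist $c,C>0$ with $\min_i|\varepsilon_i(\mu_{L_s}(\rho(g)))|\ge c|g|-C$ for all $g\in\Gamma$. I would also note that $\iota\circ\rho(\Gamma)\subset L_s\subset L^0$, and that $L^0$ preserves $\Diams_\std$ (Fact \ref{fact_autom_diam}); hence $\iota\circ\rho(\Gamma)$ preserves the proper domain $\Diams_\std\subset\SB(\g)$.

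For the odd case, suppose $\iota\circ\rho$ were $\{\alpha_r\}$-Anosov. Its limit set is then transverse (Section \ref{sect_anosov}) and infinite, as $\Gamma$ is non-virtually cyclic, so it contains at least three pairwise transverse points; since $\iota\circ\rho(\Gamma)$ preserves the proper domain $\Diams_\std$, Corollary \ref{cor_restrictions_maslov_index} forces $r$ to be even --- a contradiction. The same argument shows that \emph{whenever} $\iota\circ\rho$ is $\{\alpha_r\}$-Anosov, $r$ is even and every triple of the limit set has Maslov index $0$, i.e.\ type $\mathcal{O}_{r/2}$.

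Now assume $r$ even and $\iota\circ\rho$ $\{\alpha_r\}$-Anosov, so $\min_i|\varepsilon_i(\mu_{L_s}(\rho g))|\ge c|g|-C$. For $|g|$ large all $\varepsilon_i(\mu_{L_s}(\rho g))$ are nonzero, so the number $k(g)$ of positive ones is well-defined, and by the ordering $\mu_{L_s}(\rho g)$ lies in the open Weyl cone with sign change at $k(g)$. Using that $\mu_{L_s}$ is coarsely Lipschitz along the Cayley graph (\cite{benoist1997proprietes}) and that the distance of $\mu_{L_s}(\rho g)$ to every wall $\{\varepsilon_i=0\}$ grows linearly in $|g|$, the index $k(g)$ cannot change between neighbouring vertices of large word length; by connectedness of the Cayley graph it is constant, equal to some $k_0$, off a finite set. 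Passing to powers $g^n$ and using the Lyapunov projection, the triple formed by the repelling fixed point, a generic transverse point, and the attracting fixed point of an infinite-order $\iota\circ\rho(g)$ has type $\mathcal{O}_{k_0}$; by the previous paragraph this type is $\mathcal{O}_{r/2}$, so $k_0=r/2$. This gives the announced cone membership, and moreover $\alpha_{r/2}(\mu_{L_s}(\rho g))=\varepsilon_{r/2}-\varepsilon_{r/2+1}\ge 2(c|g|-C)$, so $\rho$ is $\{\alpha_{r/2}\}$-Anosov by the Anosov characterization applied inside $L_s$.

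For the converse I would invoke the work of Kassel \cite{kassel2008proper}: cofinite membership of $\mu_{L_s}(\rho(\Gamma))$ in the open cone $\{\varepsilon_{r/2}>0>\varepsilon_{r/2+1}\}$ is exactly the Cartan-projection condition detecting properness of the induced action, and her estimates upgrade this qualitative condition to the quantitative linear drift $\min_i|\varepsilon_i(\mu_{L_s}(\rho g))|\ge c|g|-C$; the folding formula and the Anosov characterization then give that $\iota\circ\rho$ is $\{\alpha_r\}$-Anosov. The main obstacle is precisely this last upgrade: membership in the \emph{open} cone only constrains the signs (which fixes $k_0=r/2$), not the magnitudes of the middle coordinates $\varepsilon_{r/2},\varepsilon_{r/2+1}$, so without an automatic-uniformity input a representation could sit in the cone while $\alpha_r(\mu(\iota\rho\,\cdot))$ fails to grow linearly. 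Securing this uniformity --- and checking that Kassel's criterion applies verbatim to the cone attached to $\Diams_\std\cong\mathbb{X}_{L_s}\times\mathbb{R}$ --- is the delicate point, together with the signature computation identifying the dynamical type $\mathcal{O}_{k_0}$ used in the $\{\alpha_r\}$-Anosov direction.
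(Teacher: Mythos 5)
Your folding formula and the linear-growth characterization of the Anosov condition are exactly the paper's starting point, and two of your departures are sound. The odd-rank argument via Corollary~\ref{cor_restrictions_maslov_index} is a correct alternative to the paper's: $\iota\circ\rho(\Gamma)$ lies in $L^0$, hence preserves the proper domain $\Diams_\std$ (Fact~\ref{fact_autom_diam}), and the limit set of an Anosov representation of a non-virtually-cyclic group contains at least three pairwise transverse points, so $r$ must be even. Your skepticism about the converse is also well placed: the paper's own proof establishes only the forward implication, and mere membership of the Cartan projections in the open cone does not give linear growth --- already for $r=2$, a discrete free subgroup of $L_s=\SL(2,\mathbb{R})$ containing a parabolic has every nontrivial element in the cone $\{\varepsilon_1>0>\varepsilon_2\}$, yet $\iota\circ\rho$ is not $\{\alpha_2\}$-Anosov.

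The even-case forward direction, however, has two genuine gaps. First, the constancy of the sign index $k(g)$ ``by connectedness of the Cayley graph'' fails for multi-ended groups: for a free group (a typical non-virtually-cyclic hyperbolic group) the set of vertices of word length at least $R$ is disconnected --- any path between two branches passes through the ball --- so constancy on adjacent far vertices does not propagate to a global constant. This is precisely the step for which the paper invokes Kassel~\cite{kassel2008proper}, whose result is proved for all finitely generated non-virtually-cyclic groups and in particular handles infinitely many ends. Second, your identification $k_0=r/2$ is not justified: for a fixed transverse pair $(x^-_g,x^+_g)$, the type of a triple $(x^-_g,y,x^+_g)$ depends on which connected component of $\SB(\g)\smallsetminus(\hyp_{x^-_g}\cup\hyp_{x^+_g})$ contains $y$; all of these components are open, so ``generic'' selects none of them, and if instead you take $y$ in the limit set (so that the index-$0$ conclusion applies), nothing in your argument links that type to the Cartan-cone index $k_0$, which records the relative position of $x^+_g$ with respect to the $L$-fixed points $\LP^+,\LP^\opp$ rather than the position of other limit points relative to $(x^\pm_g)$. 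The paper's mechanism is simpler and does double duty: since $\mu_{L_s}(\rho(g^{-1}))$ is the image of $\mu_{L_s}(\rho(g))$ under the opposition involution of $L_s$, the component containing all but finitely many projections must be invariant under that involution; in type $A_{r-1}$ this forces $k_0=r-k_0$, i.e.\ $r$ even and $k_0=r/2$, and this single observation is also the paper's proof of the odd case.
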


\begin{proof} Let~$S$ be a finite symmetric generating set of~$\Gamma$ containing the identity, and~$|\cdot|_S$ be the associated word length. By \cite{gueritaud2017anosov}, the condition that~$\rho$ is~$\{\alpha_r\}$-Anosov implies that there exist $C, C' \in  \mathbb{R}$ such that
\begin{equation}\label{eq_P_anosov}
    \big{\langle} \alpha_r, \mu(\iota \circ\rho(g)) \big{\rangle} \geq C |g| - C' \quad \forall g \in \Gamma.
\end{equation}
Let us fix~$g \in \Gamma$. According to the description of~$\overline{\aaa}_{L_s}^+$ given in Fact~\ref{eq_aaa_L_s}, and since the Weyl group of~$G$ acts by \emph{signed} permutations on the~$(\varepsilon_i)$ (by Fact~\ref{eq_root_syst_L_s}), there exists a permutation~$\sigma \in \mathfrak{S}_r$ such that
\begin{equation}\label{eq_anosov_G}
    \big{|}\big{\langle} \varepsilon_i, \mu_{L_s} (\rho(g)) \big{\rangle}\big{|} = \big{\langle} \varepsilon_{\sigma (i)}, \mu (\iota \circ\rho(g)) \big{\rangle} \geq \big{\langle} \varepsilon_r, \mu(\iota \circ\rho(g)) \big{\rangle} = \frac{1}{2} \big{\langle} \alpha_r, \mu(\iota \circ\rho(g)) \big{\rangle}
\end{equation}
for all~$1 \leq i \leq r$. 

Equation~\eqref{eq_P_anosov} then gives that~$|\langle \varepsilon_i, \mu_{L_s} (\rho(g)) \rangle| \rightarrow + \infty$ as~$|g|_S \rightarrow \infty$. Then by \cite{kassel2008proper}, there exists a connected component~$C$ of~$\overline{\aaa}_{L_s}^+ \smallsetminus \left( \bigcup_{i=1}^{r-1} \Ker(\varepsilon_i) \right)$ such that for all but finitely many~$g \in \Gamma$, we have~$\mu_{L_s}(\rho(g)) \in C$, and this connected component is invariant under the opposition involution of~$L_s$. Since~$L_s$ is of type~$A_{r-1}$, we must have that~$r-2$ is even and 
\begin{equation*}
       C =  \{X \in \overline{\aaa}_{L_s}^+ \mid \varepsilon_{r/2} (X) > 0 > \varepsilon_{(r/2) +1} (X)\}.
\end{equation*} 
Hence we have 
\begin{equation*}
    \big{|}\big{\langle} \varepsilon_{r/2} - \varepsilon_{r/2+1}, \mu_{L_s} (\rho(g)) \big{\rangle}\big{|} = \big{|}\big{\langle} \varepsilon_{r/2}, \mu_{L_s} (\rho(g)) \big{\rangle}\big{|} + \big{|}\big{\langle} \varepsilon_{r/2+1}, \mu_{L_s} (\rho(g)) \big{\rangle}\big{|}
\end{equation*}
for all but finitely many~$g \in \Gamma$. By Equations~\eqref{eq_anosov_G} and~\eqref{eq_P_anosov}, this implies
\begin{equation*}
    \big{|}\big{\langle} \varepsilon_{r/2} - \varepsilon_{r/2+1}, \mu_{L_s} (\rho(g)) \big{\rangle}\big{|} \geq \big{\langle} \alpha_r, \mu(\iota \circ\rho(g)) \big{\rangle} \geq C |g| - C' \quad \forall g \in \Gamma.
\end{equation*}

Hence, by \cite{gueritaud2017anosov}, the representation~$\rho$ is~$P_{\{\alpha_{r/2}\}}$-Anosov. $\qed$
\end{proof}

\begin{ex}\label{ex_cas_SP_rigidity}
Assume that~$r = 2p$, where~$p \in \mathbb{N}_{>0}$.

\begin{enumerate}
    \item If~$G = \Sp(2r, \mathbb{R})$ and~$p$ is odd, then it is proven in~\cite{tsouvalas2020borel} that if~$\Gamma$ is vitually either a free group or a surface group.

    \item There exist~$\{\alpha_p\}$-Anosov subgroups~$\Gamma$ of~$L_s = \SL(2p, \mathbb{K})$, with~$\mathbb{K} = \mathbb{C}, \mathbb{H}$, or~$\mathbb{R}$ if~$p$ is even, which are not virtually free or surface groups, see e.g.\ \cite[Ex.\ 4.1]{tsouvalas2020borel}: an explicit example is the group~$\Gamma = \Gamma_1 * F_2$, where~$\Gamma_1$ is the fundamental group of a closed surface of genus~$g \geq 2$ and~$F_2$ is the free group on two generators. For~$g$ large enough, one can reproduce the proof of Theorem~\ref{prop_existence_CC_subgroups} verbatim, to deform~$\Gamma_1$ into a Zariski-dense~$\{\alpha_{2p}\}$-Anosov subgroup~$\Gamma_2$ of~$G$. For small enough deformations, the group~$\Gamma_2 * F_2$ still preserves a proper domain of~$\SB(\g)$, and is still~$\{\alpha_{2p}\}$-Anosov. By Table~\ref{table_shilov_bnds}, this gives Zariski-dense eXamples of~$\{\alpha_r\}$-Anosov subgroups of~$G$ preserving a proper domain in~$\SB(\g)$ that are neither virtually free nor surface groups, for~$G$ with the following Lie algebra:
    \begin{enumerate}
        \item $\spp(2r, \mathbb{R})$, for~$r$ a multiple of~$4$;
        \item $\uu(r,r)$ for~$r$ even;
        \item $\soo^*(4r)$ for~$r$ even.
    \end{enumerate}
\end{enumerate}
\end{ex}

\subsection{Anosov subgroups preserving proper domains in Einstein universe}\label{sect_anosov_einstein} In this section, we take~$G = \PO(n,2)$, and taking Notation~\ref{sect_notation}, we investigate examples of~$\{\alpha_2\}$-Anosov subgroups of~$G$ preserving proper domains in~$\SB(\g) = \Ein^{n-1, 1}$, beyond those of Section~\ref{sect_deformations}.

\begin{rmk}
    In this section, we will use the references \cite{danciger2018convex, DGKproj, smai2022anosov}. In each of these references, the convention is that the root defining~$\SB(\soo(n,2)) = \Ein^{n-1, 1}$ is $\alpha_1$ (recall Remark~\ref{rmk_racines_inversées}). Thus~$\{\alpha_2\}$-Anosov representations (resp.\ $\{\alpha_2\}$-transverse groups) in our sense coincide with~$\{\alpha_1\}$-Anosov representations (resp.\ $\{\alpha_1\}$-transverse groups) in their sense.
\end{rmk}

A subset~$F \subset \Ein^{n-1, 1}$ is said to be \emph{negative~$3$ by~$3$} (in the sense of \cite{danciger2018convex, DGKproj}) if for every triple of pairwise distinct points~$(a,b,c) \in F^3$, one has~$\indx(a,b,c) = 0$; recall from Section~\ref{sect_maslov} that~$\indx$ is the Maslov index. In the notation of Example~\ref{ex_shilov_bndrs}.(2), this is equivalent to saying that there exists a lift~$\widetilde{F}$ of~$F$ in~$\mathbb{P}(\mathbb{R}^{n,2})$ such that~$\b(u,v) < 0$ for every pair of distinct points~$u,v \in \widetilde{F}$.

Let~$\Gamma \leq G$ be an~$\{\alpha_2\}$-transverse subgroup, and let 
\begin{equation*}
    \O_\Gamma := \big{\{}x \in \Ein^{n-1, 1} \mid \indx(\xi_1, x, \xi_2) = 0 \ \ \forall \xi_1, \xi_2 \in \Lambda_{\{\alpha_2\}}(\Gamma), \ \xi_1 \ne \xi_2\big{\}}.
\end{equation*}
The set~$\O_\Gamma$ is open, but not necessarily proper or connected. Since~$\Gamma$ is~$\{\alpha_2\}$-divergent, it acts properly discontinuously on~$\O_\Gamma$. Moreover:

\begin{lem}\label{lem_invisible_domain_causally_convex} Assume that~$\Lambda_{\{\alpha_2\}}(\Gamma)$ is negative~$3$ by~$3$. Then the set~$\O_\Gamma$ is \emph{photon-convex}, that is, for every photon~$\Phot \subset \Ein^{n-1, 1}$, the intersection~$\Phot \cap \O_\Gamma$ is connected. Moreover, we have~$\overline{\Phot \cap \O} = \Phot \cap \overline \O$.
\end{lem}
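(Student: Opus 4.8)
The plan is to reduce everything to the quadratic--form model of Example~\ref{ex_shilov_bndrs}.(2), in which $\Ein^{n-1,1} = \mathbb{P}(\{v \in \mathbb{R}^{n+2} : \b(v,v)=0\})$ and a point is written $[\nu]$. First I would record the sign description of the Maslov index: for three pairwise transverse points $[u],[v],[w]$ one has $\indx([u],[v],[w]) = 0$ if and only if the triple product $\b(u,v)\,\b(v,w)\,\b(w,u)$ is negative. This quantity is independent of the chosen lifts (changing the sign of one lift flips exactly two of the three factors), and the normalization is pinned down on the standard configuration $(\LP^+,x,\LP^\opp)$ using the coordinates of Example~\ref{ex_shilov_bndrs}.(2), where $\indx=0$ corresponds to spacelike separation. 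Fixing the negative $3$ by $3$ lift $\widetilde{\Lambda}$ of $\Lambda := \Lambda_{\{\alpha_2\}}(\Gamma)$, so that $\b(\eta,\eta')<0$ for distinct $\eta,\eta'\in\widetilde\Lambda$, the condition $\indx(\xi_1,x,\xi_2)=0$ rewrites as $\b(\eta_1,\nu)\,\b(\eta_2,\nu) > 0$. Hence $[\nu]\in\O_\Gamma$ exactly when the numbers $\b(\eta,\nu)$, $\eta\in\widetilde\Lambda$, are all nonzero of a common sign; using $[\nu]=[-\nu]$ this says $\O_\Gamma = \mathbb{P}(\mathcal{K})\cap\Ein^{n-1,1}$ with $\mathcal{K} := \{\nu : \b(\eta,\nu)>0\ \forall\eta\in\widetilde\Lambda\}$. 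This $\mathcal{K}$ is an open convex cone, and it is pointed ($\mathcal{K}\cap(-\mathcal{K})=\emptyset$) since $\b(\eta,\nu)$ and $\b(\eta,-\nu)$ cannot both be positive.

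For the photon--convexity, I would use that a photon is exactly $\Phot = \mathbb{P}(\Pi)$ for a maximal ($2$-dimensional) totally isotropic subspace $\Pi\subset\mathbb{R}^{n+2}$: every line of $\Pi$ is isotropic, and two points of $\mathbb{P}(\Pi)$ are non-transverse because $\b$ vanishes on $\Pi$. Intersecting with such a photon then gives $\Phot\cap\O_\Gamma = \mathbb{P}(\Pi\cap\mathcal{K})$, where $\Pi\cap\mathcal{K}$ is a pointed open convex cone in the plane $\Pi\cong\mathbb{R}^2$ (an angular sector of opening at most $\pi$). Its image in $\mathbb{P}(\Pi)\cong\mathbb{P}(\mathbb{R}^2)$ is therefore a connected sub-arc (possibly empty), which is precisely the photon--convexity.

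For the closure statement (with $\O = \O_\Gamma$), the inclusion $\overline{\Phot\cap\O}\subseteq\Phot\cap\overline{\O}$ is automatic since $\Phot$ is closed. For the reverse inclusion I would take $y=[\mu]\in\Phot\cap\overline{\O}$, lift nearby points of $\O$ to isotropic vectors $\nu_k\in\mathcal{K}$ with $\nu_k\to\mu$, so that $\mu\in\Pi\cap\overline{\mathcal{K}}$. I then invoke the standard convexity fact that, for the open convex set $\mathcal{K}$ meeting the subspace $\Pi$, one has $\Pi\cap\overline{\mathcal{K}} = \overline{\Pi\cap\mathcal{K}}$; thus $\mu$ lies in the closure of the sector $\Pi\cap\mathcal{K}$, and $[\mu]\in\mathbb{P}(\overline{\Pi\cap\mathcal{K}}) = \overline{\mathbb{P}(\Pi\cap\mathcal{K})} = \overline{\Phot\cap\O}$, the middle equality being the elementary fact that the projectivization of a nonempty sector and of its closure have matching closures in $\mathbb{P}(\mathbb{R}^2)$.

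The main obstacle is that this convexity fact requires $\Pi\cap\mathcal{K}\neq\emptyset$, i.e.\ $\Phot\cap\O\neq\emptyset$. The delicate remaining case is a photon disjoint from $\O$ yet tangent to $\overline{\O}$, where I must show $\Phot\cap\overline{\O}=\emptyset$ as well. My plan is to observe that any $y\in\Phot\cap\partial\O$ must lie on $\{\b(\eta_0,\cdot)=0\}$ for some $\xi_0=[\eta_0]\in\Lambda$ (a supporting lightcone at a boundary point of the convex cone $\mathcal{K}$), so $y$ is non-transverse to the limit point $\xi_0$; I would then analyze the linear forms $\b(\eta,\cdot)|_\Pi$ and argue that accumulation of \emph{isotropic} vectors of $\mathcal{K}$ at $\mu$ forces $\Pi$ to actually meet $\mathcal{K}$ --- equivalently, I would establish the identity $\overline{\mathcal{K}\cap Q} = \overline{\mathcal{K}}\cap Q$ for the smooth isotropic quadric $Q=\{\b(v,v)=0\}$, exploiting that $\mathcal{K}\cap Q$ is open in $Q$. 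Ruling out a photon that grazes $\overline{\O}$ along a boundary ray without entering the open cone is the crux of the argument.
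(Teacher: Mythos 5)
Your completed steps are correct and follow the paper's own route: the paper likewise reduces to the identity $\O_\Gamma = C \cap \Ein^{n-1,1}$, where $C$ is the projectivization of an open convex cone (quoting \cite{danciger2018convex} for the negative lift of the limit set and \cite{smai2022anosov} for the identification, where you instead rederive both from the sign formula for the Maslov index --- a harmless, more self-contained variant; note only that openness of your cone $\mathcal{K}$, an intersection of infinitely many open half-spaces, requires first normalizing the lift of the compact set $\Lambda_{\{\alpha_2\}}(\Gamma)$ to a compact set of vectors), and then it intersects with the totally isotropic plane of a photon exactly as you do. Your proof of the closure identity under the hypothesis $\Phot \cap \O_\Gamma \neq \emptyset$ is also complete.

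However, the case you isolate as ``the crux'' cannot be resolved, because the closure identity is \emph{false} when $\Phot \cap \O_\Gamma = \emptyset$; in particular, your proposed identity $\overline{\mathcal{K} \cap Q} = \overline{\mathcal{K}} \cap Q$, even if established, could not rescue it. Take any $\xi_0 \in \Lambda_{\{\alpha_2\}}(\Gamma)$ and any photon $\Phot$ through $\xi_0$ (photons exist through every point of $\Ein^{n-1,1}$). Since $\b$ vanishes identically on the corresponding isotropic plane, every point of $\Phot$ is non-transverse to $\xi_0$, so $\Phot \cap \O_\Gamma = \emptyset$ and hence $\overline{\Phot \cap \O_\Gamma} = \emptyset$. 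On the other hand, if $\O_\Gamma \neq \emptyset$ then $\xi_0 \in \overline{\O_\Gamma}$: pick $x_0 \in \O_\Gamma$; by definition of $\O_\Gamma$ it is transverse to every point of $\Lambda_{\{\alpha_2\}}(\Gamma)$; choose a sequence $(\gamma_k)$ in $\Gamma$ which is $\{\alpha_2\}$-contracting with $\{\alpha_2\}$-limit $\xi_0$; up to a subsequence, $(\gamma_k^{-1})$ is also contracting, and its $\{\alpha_2\}$-limit $\xi$, which lies in $\Lambda_{\{\alpha_2\}}(\Gamma)$, is exactly the point such that $\gamma_k \cdot y \to \xi_0$ for all $y \notin \hyp_\xi$; since $x_0 \notin \hyp_\xi$, we get $\gamma_k \cdot x_0 \to \xi_0$ with $\gamma_k \cdot x_0 \in \O_\Gamma$ by invariance. (In the Anosov setting of the rest of Section~\ref{sect_anosov_einstein}, this is also visible from $\Lambda_{\{\alpha_2\}}(\Gamma) = \partial \mathcal{S} \smallsetminus \O_\Gamma \subset \overline{\O_\Gamma}$ for the Cauchy hypersurface $\mathcal{S}$ of Lemma~\ref{lem_proper_domain_contained}.) Thus $\xi_0 \in \Phot \cap \overline{\O_\Gamma}$, and the two sides of the claimed identity differ.

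So the correct assertion is precisely the one you proved: the closure identity holds for every photon with $\Phot \cap \O_\Gamma \neq \emptyset$. You did not miss a trick from the paper: its own chain $\overline{\Phot \cap \O_\Gamma} = \overline{\Phot \cap C} = \Phot \cap \overline{C} = \Phot \cap \overline{\O_\Gamma}$ silently uses the same nonemptiness hypothesis, since $\overline{\Phot \cap C} = \Phot \cap \overline{C}$ fails for a projective line meeting $\overline{C}$ but not $C$ (the photon above is such a line). This restriction is harmless for the rest of the paper, as the proof of Lemma~\ref{lem_not_connected} uses only the connectedness statement, for photons that do meet $\O_\Gamma$.
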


\begin{proof}
Let us take the notation of Example~\ref{ex_shilov_bndrs}.(2). By \cite{danciger2018convex}, the set~$\Lambda_{\{\alpha_2\}}(\Gamma)$ lifts to a cone~$F$ of~$\mathbb{R}^{n,2}$ on which~$\b$ is negative. The set 
$$C := \mathbb{P}(\{u \in \mathbb{R}^{n,2} \mid \b(u,v) < 0 \quad \forall v \in F\})$$
is convex in an affine chart of~$\mathbb{P}(\mathbb{R}^{n, 2})$, and one has~$\O_\Gamma = \Ein^{n-1, 1} \cap C$ (see \cite{smai2022anosov}). Since every photon~$\Phot$ of~$\Ein^{n-1,1}$ is a projective line which is contained in~$\Ein^{n-1,1}$, we conclude that~$\O_\Gamma \cap \Phot = C  \cap \Phot$ is connected, and that~$\overline{\Phot \cap \O} = \overline{\Phot \cap C} = \Phot \cap \overline{C} = \Phot \cap \overline \O$.~$\qed$
\end{proof}

\begin{lem}\label{lem_not_connected}
    If~$\O_\Gamma$ is nonempty and not connected, then every connected component of~$\O_\Gamma$ is proper. 
\end{lem}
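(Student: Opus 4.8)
The plan is to exploit the convex description of $\O_\Gamma$ obtained in Lemma~\ref{lem_invisible_domain_causally_convex} and reduce the statement to a separation property on the double cover of the Einstein universe. Recall from Lemma~\ref{lem_invisible_domain_causally_convex} and its proof that, writing $F$ for a lift of $\Lambda_{\{\alpha_2\}}(\Gamma)$ on which $\b$ is negative, one has $\O_\Gamma = \Ein^{n-1,1}\cap C$ with $C=\mathbb{P}(\widetilde{C})$ and $\widetilde{C}=\{u\in\mathbb{R}^{n,2}\mid \b(u,v)<0\ \forall v\in F\}$ an open convex cone. First I would record that $C$ is \emph{properly} convex: since $\b(u,v)<0$ and $\b(-u,v)<0$ cannot hold simultaneously, $\widetilde{C}$ contains no line through the origin, so $C$ sits in an affine chart of $\mathbb{P}(\mathbb{R}^{n,2})$ as a bounded convex set. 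Consequently no photon lies in $\overline{\O_\Gamma}$: a photon $\Phot$ is a projective line contained in $\Ein^{n-1,1}$, so $\Phot\cap\O_\Gamma=\Phot\cap C$ is a bounded convex subset of the line $\Phot$, and by Lemma~\ref{lem_invisible_domain_causally_convex} its closure equals $\Phot\cap\overline{\O_\Gamma}$, a proper closed sub-arc of the circle $\Phot$; in particular $\Phot\not\subseteq\overline{\O_\Gamma}$.

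Next I would isolate the following intermediate statement, which is where the disconnectedness hypothesis enters: \emph{a connected open subset $\O'\subseteq\Ein^{n-1,1}$ with $\overline{\O'}\neq\Ein^{n-1,1}$ whose closure contains no photon is proper.} Granting this, let $\O'$ be any connected component of $\O_\Gamma$. Since $\O_\Gamma$ is open, $\O'$ is closed in $\O_\Gamma$, so $\overline{\O'}\cap\O_\Gamma=\O'$. As $\O_\Gamma$ is disconnected there is another nonempty component $\O''$, necessarily disjoint from $\overline{\O'}$, whence $\O''\subseteq\Ein^{n-1,1}\setminus\overline{\O'}$ and $\overline{\O'}\neq\Ein^{n-1,1}$. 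Combined with the absence of photons in $\overline{\O'}\subseteq\overline{\O_\Gamma}$ from the previous paragraph, the intermediate statement yields that $\O'$ is proper. Thus disconnectedness is used exactly to guarantee that each component is not dense.

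It remains to prove the intermediate statement, which I expect to be the main obstacle. I would argue on the double cover $\pi\colon\widehat{\Ein}=S^{n-1}\times S^1\to\Ein^{n-1,1}$, realized as $\{(x,y)\mid |x|=|y|=1\}\subset\mathbb{R}^n\oplus\mathbb{R}^2=\mathbb{R}^{n,2}$ with $\pi(u)=[u]$. By Definition~\ref{def_domain_proper} and the description in Example~\ref{ex_shilov_bndrs}.(2), the domain $\O'$ is proper if and only if $\overline{\O'}\cap\hyp_\xi=\emptyset$ for some $\xi\in\Ein^{n-1,1}$, that is, if and only if there is a \emph{null} vector $e$ with $\b(e,u)\neq0$ for every $u\in\pi^{-1}(\overline{\O'})$. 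The delicate point is producing such a null separating vector. The absence of photons in $\overline{\O'}$, together with $\overline{\O'}\neq\Ein^{n-1,1}$, should force the compact $\pm$-symmetric set $\pi^{-1}(\overline{\O'})$ to split into two disjoint antipodal pieces $K\sqcup(-K)$, with $K$ confined to a single Minkowski half of $\widehat{\Ein}$: a path joining $u$ to $-u$ inside $\pi^{-1}(\overline{\O'})$ would project to a causal loop of $\Ein^{n-1,1}$ lying in $\overline{\O'}$, which is incompatible with $\overline{\O'}$ being photon-free and not all of $\Ein^{n-1,1}$. Choosing $e$ null so that $e^{\perp}$ supports $K$ on one side then makes $\b(e,\cdot)$ of constant nonzero sign on $K$ and of the opposite sign on $-K$, exhibiting a lightcone $\hyp_{[e]}$ disjoint from $\overline{\O'}$. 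I expect the heart of the argument to be this existence of a null (rather than merely spacelike) supporting direction, a separation phenomenon standard in the conformal geometry of the Einstein universe; see \cite{frances2005lorentzian, smai2022anosov}.
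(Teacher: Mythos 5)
Your reduction to the ``intermediate statement'' is where the proof breaks down: that statement is false. Work on the double cover $\pi : S^{n-1}\times S^1 \to \Ein^{n-1,1}$, with $\b\big((x,y),(x',y')\big) = \langle x,x'\rangle - \langle y,y'\rangle$, and let $\O'$ be the projection of a thin slab $S^{n-1}\times(p-\varepsilon,p+\varepsilon)$ around a spacelike sphere. This $\O'$ is open, connected and not dense, and its closure contains no photon: the $S^1$-coordinate of a photon's lift runs over all of $S^1$, while the closure of the slab confines it to two short arcs. Yet $\O'$ is not proper. Indeed, for any null vector $e=(u,v)$ with $|u|=|v|=1$, the function $x\mapsto \b\big((u,v),(x,p)\big) = \langle u,x\rangle - \langle v,p\rangle$ vanishes at some $x\in S^{n-1}$, since $\langle u,\cdot\rangle$ sweeps out the interval $[-1,1]$, which contains $\langle v,p\rangle$; hence \emph{every} lightcone $\hyp_{[e]}$ meets the spacelike sphere $\pi(S^{n-1}\times\{p\})\subset\O'$. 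Note that this example even satisfies your antipodal splitting $\pi^{-1}(\overline{\O'})=K\sqcup(-K)$ with $K$ in ``one Minkowski half'': the splitting simply does not produce a \emph{null} supporting functional, which is exactly the step you yourself identified as the heart of the argument. The underlying reason is that absence of photons in the closure is a purely causal constraint and says nothing about the spacelike extent of a set, whereas properness requires avoiding an entire lightcone, which constrains the set in spacelike directions as well.

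Relatedly, your argument discards nearly all of the disconnectedness hypothesis, keeping only that each component is non-dense; the example above shows this is far too weak. The paper uses disconnectedness at full strength: if $x\in\O_i$ and $y\in\O_j$ with $i\neq j$ were \emph{not} transverse, they would lie on a common photon $\Phot$, and photon-convexity of $\O_\Gamma$ (Lemma~\ref{lem_invisible_domain_causally_convex}) would make $\Phot\cap\O_\Gamma$ a connected set containing both, i.e.\ a path joining distinct components --- a contradiction. Hence every point of $\O_j$ is transverse to every point of $\O_i$, so $\O_j\subset\O_i^*$; since $\O_j$ is open and nonempty, the dual $\O_i^*$ has nonempty interior, which is equivalent to properness of $\O_i$ (Section~\ref{sect_dual_proper_domain}). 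A secondary, more minor issue: your claim that $C$ is bounded in an affine chart (used to exclude photons from $\overline{\O_\Gamma}$) does not follow merely from $\widetilde{C}\cap(-\widetilde{C})=\emptyset$ --- an open half-space is an open convex cone containing no line through the origin whose projectivization is not properly convex --- so even that preliminary step needs the lift $F$ of the limit set to span $\mathbb{R}^{n,2}$.
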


\begin{proof}
Write~$\O_\Gamma = \bigsqcup_{i \in I} \O_i$, where the~$\O_i$ are pairwise disjoint connected components of~$\O_\Gamma$. 
Let~$i \in I$, and let us assume for a contradiction that there exist~$j \in I \smallsetminus \{i\}$ and~$(x, y) \in \O_i \times \O_j$ such that~$x$ and~$y$ are not transverse. Note that this implies that~$x$ and~$y$ are on a common photon~$\Phot$. By Lemma~\ref{lem_invisible_domain_causally_convex}, the intersection~$\Phot \cap \O_\Gamma$ must be connected, and gives a continuous path from~$x$ to~$y$ in~$\O_\Gamma$. This contradicts the fact that~$x$ and~$y$ belong to distinct connected components of~$\O_\Gamma$. We have proven that~$\O_j \subset \O_i^*$ for all~$j\ne i$. Since~$\O_j$ is open, this implies that~$\O_i$ is proper in~$\Ein^{n-1, 1}$.~$\qed$
\end{proof}

\begin{lem}\label{lem_proper_domain_contained} If~$\Gamma$ is an~$\{\alpha_2\}$-Anosov subgroup of~$G$ and if~$\O_\Gamma$ is nonempty and connected, then there exists a~$\Gamma$-invariant proper domain~$\O' \subset \O_\Gamma$.
\end{lem}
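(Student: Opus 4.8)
The plan is to first upgrade the hypothesis into a convex-geometric statement in $\mathbb{P}(\mathbb{R}^{n,2})$, and then to carve a globally hyperbolic (hence proper) piece out of $\O_\Gamma$ using the Anosov dynamics.

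First I would record that $\O_\Gamma \neq \emptyset$ already forces $\Lambda := \Lambda_{\{\alpha_2\}}(\Gamma)$ to be negative $3$ by $3$. Indeed, fix $\eta \in \O_\Gamma$ with an isotropic lift $\tilde\eta \in \mathbb{R}^{n,2}$, and for each $\xi \in \Lambda$ choose the lift $\tilde\xi$ with $\b(\tilde\eta, \tilde\xi) < 0$ (possible since $\eta$ is transverse to $\xi$). On $\Ein^{n-1,1}$ the Maslov index of a transverse triple is governed by the sign of the product of the three pairwise values of $\b$, a quantity independent of the chosen lifts; since $\indx(\xi_1, \eta, \xi_2) = 0$ and $\b(\tilde\eta, \tilde\xi_i) < 0$, this forces $\b(\tilde\xi_1, \tilde\xi_2) < 0$ for all distinct $\xi_1, \xi_2 \in \Lambda$. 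Thus $\Lambda$ is negative $3$ by $3$, and Lemma~\ref{lem_invisible_domain_causally_convex} applies: writing $F$ for the resulting negative lift of $\Lambda$, we have $\O_\Gamma = \Ein^{n-1,1} \cap C$ with $C = \mathbb{P}(\{u : \b(u,v) < 0 \ \forall v \in F\})$ a $\Gamma$-invariant properly convex open subset of $\mathbb{P}(\mathbb{R}^{n,2})$.

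Next I would bring in the Anosov hypothesis. By \cite{danciger2018convex, DGKproj}, the $\{\alpha_2\}$-Anosov group $\Gamma$ is strongly convex cocompact in $\mathbb{P}(\mathbb{R}^{n,2})$: it acts cocompactly on the convex core $\mathcal{K} = \operatorname{Conv}(\Lambda) \subset C$, with $\Lambda \subset \partial C \cap \Ein^{n-1,1}$, and by Lemma~\ref{lem_convergence_orbit_anosov_hausdorff} every orbit closure $\overline{\Gamma \cdot x}$ of a point $x \in C$ accumulates only on $\Lambda$. I would then take $\O'$ to be the $\Gamma$-invariant causal region determined by $\mathcal{K}$: fixing a transverse point $\eta \in \O_\Gamma$ and its Minkowski chart $\Affstd_\eta \cong \mathbb{R}^{n-1,1}$, the set $\Lambda$ is a compact acausal subset of $\Affstd_\eta$, and $\O'$ is its Cauchy development, equivalently the connected component of $\O_\Gamma \smallsetminus \bigcup_{g \in \Gamma} \hyp_{g \cdot \eta}$ meeting $\mathcal{K}$; this is the proper globally hyperbolic domain studied in \cite{smai2022anosov}. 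Its $\Gamma$-invariance is immediate since both $\mathcal{K}$ and the orbit $\Gamma \cdot \eta$ are $\Gamma$-invariant, and the inclusion $\O' \subset \O_\Gamma$ follows from Lemma~\ref{lem_proper_inter_hyp} (which gives $\hyp_\xi \cap \O' = \emptyset$ for every $\xi \in \Lambda$) together with the index-$0$ side condition cut out by the convex set $C$.

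The hard part will be properness of $\O'$. Using the self-opposite identification of $\Ein^{n-1,1}$ with its dual flag manifold, $\O'$ is proper exactly when $\overline{\O'}$ avoids some full lightcone $\hyp_\zeta$, i.e.\ when $\overline{\O'}$ sits inside a single affine (Minkowski) chart $\Affstd_\zeta$; as $\O'$ is $\Gamma$-invariant this amounts to $\overline{\O'} \cap \bigcup_{g \in \Gamma} \hyp_{g \cdot \zeta} = \emptyset$. The delicate point is that $\Lambda \subset \overline{\O'}$ lies on the boundary, so $\zeta$ cannot be taken in $\Lambda$; one must instead use a transverse $\zeta \in \O_\Gamma$ and show that the closure of the Cauchy development stays boundedly inside $\Affstd_\zeta$. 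This is where the two structures are combined: the compactness and negativity (acausality) of $\Lambda$ keep it uniformly away from $\hyp_\zeta$, while the uniform contraction of the $\{\alpha_2\}$-Anosov action, via Lemma~\ref{lem_convergence_orbit_anosov_hausdorff}, guarantees that $\Gamma \cdot \zeta$ accumulates only on $\Lambda$, preventing any sequence in $\O'$ from escaping the chart through the lightcone at infinity. I expect this causal/dynamical control — rather than the purely convex-geometric setup — to be the crux of the argument.
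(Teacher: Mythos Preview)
Your opening paragraph (deducing that $\Lambda$ is negative $3$ by $3$ from $\O_\Gamma \neq \emptyset$) is correct and worth recording. The construction of $\O'$, however, does not hold together: the three descriptions you offer are not equivalent and none of them works as stated. The Cauchy development of $\Lambda$ in a fixed Minkowski chart $\Affstd_\eta$ is essentially empty, since $\Lambda$ is generically of dimension $< n-1$ and almost every inextendible causal curve misses it; and even if you thicken $\Lambda$ to a spanning hypersurface, the result depends on the chart $\Affstd_\eta$, which $\Gamma$ does not preserve. The convex core $\mathcal{K} = \operatorname{Conv}(\Lambda)$ lives in $\mathbb{P}(\mathbb{R}^{n,2})$, not in $\Ein^{n-1,1}$, and may touch $\Ein^{n-1,1}$ only along $\Lambda \subset \partial \O_\Gamma$, so ``the component meeting $\mathcal{K}$'' does not select anything in $\O_\Gamma$. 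And for a bare connected component of $\O_\Gamma \smallsetminus \bigcup_g \hyp_{g\cdot \eta}$, neither $\Gamma$-invariance nor properness is apparent without further input.

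The paper avoids all of this by replacing $\mathcal{K}$ with the $\Gamma$-invariant acausal Cauchy hypersurface $\mathcal{S} \subset \O_\Gamma$ supplied by \cite{smai2022anosov}. Pick $x \in \mathcal{S}$: acausality gives $\mathcal{S} \cap \bigcup_g \hyp_{g\cdot x} = \Gamma \cdot x$, a discrete set, so $\mathcal{S} \smallsetminus \Gamma \cdot x$ is still connected. Choose $z \in \mathcal{S}$ with $z \notin \Gamma \cdot x$, take a connected neighborhood $\mathcal{V}'$ of $S \cdot z$ (for $S$ a finite generating set) whose closure avoids every $\hyp_{g\cdot x}$, and set $\O' := \Gamma \cdot \mathcal{V}'$. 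Then $\O'$ is open, connected, and $\Gamma$-invariant by construction, and
\[
\overline{\O'} \;=\; \Gamma \cdot \overline{\mathcal{V}'} \,\cup\, \Lambda \;\subset\; \Affstd_x,
\]
since each $g \cdot \overline{\mathcal{V}'}$ avoids $\hyp_x$ by the choice of $\mathcal{V}'$ and $\Lambda$ avoids $\hyp_x$ because $x \in \O_\Gamma$. Properness is thus immediate; your anticipated ``hard part'' never materialises. The Anosov hypothesis enters only through the closure identity $\overline{\Gamma \cdot \overline{\mathcal{V}'}} \smallsetminus \Gamma \cdot \overline{\mathcal{V}'} = \Lambda$ --- precisely your Lemma~\ref{lem_convergence_orbit_anosov_hausdorff} intuition, but invoked after the basepoint $x$ has been chosen inside $\mathcal{S}$ rather than as a separate boundedness estimate.
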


\begin{proof} Let~$S$ be a finite symmetric generating set of~$\Gamma$.
Let~$\mathcal{S}\subset \O_\Gamma$ be a 
$\Gamma$-invariant \emph{acausal Cauchy hypersurface}, that is, a closed subset of~$\O_\Gamma$ such that~$\indx(x,y,z) = 0$ for every triple of pairwise distinct points~$(x,y,z) \in \mathcal{S}^3$ and such that every photon meeting~$\O_\Gamma$ meets~$\mathcal{S}$ in exactly one point. Note that, in particular, two distinct points of~$\mathcal{S}$ are always transverse. Then one has~$\partial \mathcal{S} \smallsetminus \O_\Gamma = \Lambda_{\{\alpha_1 \}} (\Gamma)$. By \cite{smai2022anosov}, such a closed subset exists. Let~$x_0 \in \mathcal{S}$. By \cite{smai2022anosov}, the action of~$\Gamma$ on~$\mathcal{S}$ is properly discontinuous (and cocompact). Thus there exists a neighborhood~$\mathcal{V}$ of~$x$ such that~$\mathcal{V} \cap \Gamma \cdot x = \{x\}$. Let~$z \in (\mathcal{V} \smallsetminus \{x\}) \cap \mathcal{S}$. Since~$\mathcal{S}$ is acausal, we have~$\mathcal{S} \cap \bigcup_{g \in \Gamma} \hyp_{g \cdot x} = \Gamma \cdot x$, so~$\mathcal{S} \smallsetminus (\Gamma \cdot x)$ is connected. Thus there exists a connected neighborhood~$\mathcal{V}'$ of~$S \cdot z$ in~$\O_\Gamma$ such that~$\overline{\mathcal{V}}' \cap \bigcup_{g \in \Gamma} \hyp_{g \cdot x} = \emptyset$. Let~$\O' := \Gamma \cdot \mathcal{V}'$. Then~$\O'$ is connected, open, and 
\begin{equation*}
    \overline{\O}' = \Gamma \cdot \overline{\mathcal{V}}' \cup \Lambda_{\{\alpha_2\}}(\Gamma) \subset \Affstd_x.
\end{equation*}
The domain~$\O'$ is thus~$\Gamma$-invariant and proper.~$\qed$
\end{proof}

Let~$n \geq 3$. Let~$\Gamma$ be a word-hyperbolic group, such that~$\partial_\infty \Gamma$ is not an~$(n-1)$-sphere, and let~$\rho: \Gamma \rightarrow \PO(n, 2)$ be an~$\{\alpha_2\}$-Anosov representation. Then by \cite[Lem.\ 5.4]{smai2022anosov}, the domain~$\O_{\rho(\Gamma)}$ is nonempty. In this case, as mentionned in the proof of Lemma~\ref{lem_proper_domain_contained} there exists a~$\rho(\Gamma)$-invariant \emph{acausal Cauchy hypersurface} of~$\O_{\rho(\Gamma)}$, that is, a closed subset of~$\O_{\rho(\Gamma)}$ such that~$\indx(x,y,z) = 0$ for every triple of pairwise distinct points~$(x,y,z) \in \mathcal{S}^3$ and such that every photon meeting~$\O_\Gamma$ meets~$\mathcal{S}$ in exactly one point. In particular, we have~$\pi_0(\mathcal{S}) = \pi_0(\O_{\rho(\Gamma)})$. Note that~$\partial \mathcal{S} = \xi_{\rho}(\partial_\infty \Gamma)$. If~$\Gamma$ has cohomological dimension~$\leq n-2$, then~$\partial_\infty \Gamma$ has covering dimension~$\leq n-3$, so~$\mathcal{S}$ is connected, and so is~$\O_{\rho(\Gamma)}$.

This analysis, together with Lemma~\ref{lem_proper_domain_contained}, gives:

\begin{prop}\label{cor_cohomological_dimension} Let~$n \geq 3$. Let~$\Gamma$ be a word-hyperbolic group such that~$\partial_\infty \Gamma$ is not an~$(n-1)$-sphere, and let~$\rho: \Gamma \rightarrow \SO(n,2)$ be an~$\{\alpha_2\}$-Anosov representation which is negative 3 by 3. If~$\Gamma$ has cohomological dimension~$\leq n-2$, then~$\rho(\Gamma)$ preserves a proper domain~$\O \subset \Ein^{n-1, 1}$.
\end{prop}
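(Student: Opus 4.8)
The plan is to assemble \textbf{Proposition~\ref{cor_cohomological_dimension}} directly from the three lemmas \ref{lem_invisible_domain_causally_convex}, \ref{lem_not_connected}, \ref{lem_proper_domain_contained} together with the dimension-counting input from \cite{smai2022anosov}. First I would record that the hypotheses put us exactly in the situation of the discussion preceding the proposition: since~$\rho$ is~$\{\alpha_2\}$-Anosov and~$\partial_\infty \Gamma$ is not an~$(n-1)$-sphere, \cite[Lem.\ 5.4]{smai2022anosov} guarantees that~$\O_{\rho(\Gamma)}$ is nonempty, and the negative-$3$-by-$3$ assumption means~$\Lambda_{\{\alpha_2\}}(\rho(\Gamma))$ lifts to a cone on which~$\b$ is negative, so Lemmas~\ref{lem_invisible_domain_causally_convex} and~\ref{lem_not_connected} apply.

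The crux is to reduce to the connected case. There are two possibilities for~$\O_{\rho(\Gamma)}$. If it is \emph{not} connected, then by Lemma~\ref{lem_not_connected} every one of its connected components is already a proper domain; since the components are permuted by~$\rho(\Gamma)$ and the stabilizer of a given component~$\O_i$ in~$\rho(\Gamma)$ has finite index (the component is open and~$\rho(\Gamma)$ acts properly discontinuously), one can pass to the~$\rho(\Gamma)$-orbit of a component and obtain a~$\rho(\Gamma)$-invariant proper open set; alternatively one simply notes that a single proper component already witnesses that~$\rho(\Gamma)$ preserves a proper domain in the sense required, so we are done in this case. If~$\O_{\rho(\Gamma)}$ \emph{is} connected, then Lemma~\ref{lem_proper_domain_contained} directly furnishes a~$\rho(\Gamma)$-invariant proper domain~$\O \subset \O_{\rho(\Gamma)}$, which is exactly the conclusion.

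It therefore remains to guarantee that we are \emph{in} the connected case, which is where the cohomological dimension hypothesis enters. Here I would invoke the acausal Cauchy hypersurface~$\mathcal{S} \subset \O_{\rho(\Gamma)}$ provided by \cite{smai2022anosov}: since every photon meeting~$\O_{\rho(\Gamma)}$ meets~$\mathcal{S}$ in exactly one point, one has~$\pi_0(\mathcal{S}) = \pi_0(\O_{\rho(\Gamma)})$, and~$\partial \mathcal{S} = \xi_\rho(\partial_\infty \Gamma)$. The hypothesis that~$\Gamma$ has cohomological dimension~$\leq n-2$ forces~$\partial_\infty \Gamma$ to have covering dimension~$\leq n-3$ (via the Bestvina--Mess type relation between the cohomological dimension of a hyperbolic group and the dimension of its boundary), and a hypersurface in~$\Ein^{n-1,1}$ whose ideal boundary has such low dimension cannot be disconnected by it, so~$\mathcal{S}$ — and hence~$\O_{\rho(\Gamma)}$ — is connected. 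Lemma~\ref{lem_proper_domain_contained} then applies, producing the desired proper domain~$\O$.

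The step I expect to require the most care is the dimension bound: making precise why covering dimension~$\leq n-3$ of~$\xi_\rho(\partial_\infty\Gamma)$ implies that~$\mathcal{S}$ is connected. This is essentially a general-position / Alexander-duality argument — a closed topological~$(n-1)$-manifold such as~$\mathcal{S}$ cannot be separated by a subset of codimension~$\geq 2$ in its closure — and the cleanest route is to cite the relevant separation statement from \cite{smai2022anosov} rather than reprove it. Everything else is bookkeeping around the already-established Lemmas~\ref{lem_not_connected} and~\ref{lem_proper_domain_contained}.
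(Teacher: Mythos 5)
Your main line of argument coincides with the paper's proof: nonemptiness of~$\O_{\rho(\Gamma)}$ from \cite[Lem.\ 5.4]{smai2022anosov}, connectedness via the $\rho(\Gamma)$-invariant acausal Cauchy hypersurface~$\mathcal{S}$ (using~$\pi_0(\mathcal{S}) = \pi_0(\O_{\rho(\Gamma)})$, $\partial \mathcal{S} = \xi_\rho(\partial_\infty \Gamma)$, and the Bestvina--Mess type bound giving covering dimension~$\leq n-3$ for~$\partial_\infty \Gamma$, so that this set cannot disconnect the hypersurface), and then Lemma~\ref{lem_proper_domain_contained} to produce the invariant proper domain. This is exactly how the paper concludes.

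One caveat: your discussion of the disconnected case is both superfluous and incorrect, and you should delete it rather than leave it as an alternative branch. It is superfluous because you subsequently prove that~$\O_{\rho(\Gamma)}$ is connected, so the case never arises (this is also why the paper only invokes Lemma~\ref{lem_proper_domain_contained}, not Lemma~\ref{lem_not_connected}, in the proof of the proposition). It is incorrect because: (i) a single proper component is a proper \emph{domain} but is not~$\rho(\Gamma)$-invariant, since~$\rho(\Gamma)$ may permute the components nontrivially, and the proposition asks for a domain preserved by all of~$\rho(\Gamma)$; (ii) the~$\rho(\Gamma)$-orbit of a component is invariant and open but disconnected, hence not a domain in the sense of Definition~\ref{def_domain_proper}, and its properness is not automatic either, as distinct components may only be proper with respect to distinct Schubert varieties~$\hyp_\xi$; (iii) proper discontinuity of the action does not imply that the stabilizer of a component has finite index, and even if it did, that would only give invariance under a finite-index subgroup, which is weaker than the statement. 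None of this damages your actual proof, since that branch is never used; but as written it would not survive scrutiny if it were load-bearing.
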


\section{Appendix : Dynamics of groups preserving proper domains in general flag manifolds}\label{sect_complement}

This section is independent from the rest of the paper. We investigate the dynamics of subgroups~$H$ of general semisimple Lie groups~$G$ preserving a proper domain in a flag manifold of~$G$ and acting cocompactly on a closed subset~$\mathcal{C}$ of this domain whose ideal boundary is \emph{strictly convex}.  This was already done in Section~\ref{sect_dynamics_causal} for~$\HTT$ Lie groups~$G$ and \emph{transverse boundary} of~$\mathcal{C}$ (instead of \emph{strictly convex}). Although this last condition on~$\borc$ can be interpreted as a form of ``strict convexity", if~$G$ is an arbitrary semisimple Lie group and~$\Theta$ is a subset of simple restricted roots of~$G$, then the flag manifold~$\Fl(\g, \Theta)$ is not necessarily self-opposite, so one can no longer speak of a ``transverse boundary". The natural notion of \emph{strict convexity} of~$\borc$ that we introduce in this section makes sense in any flag manifold and induces dynamical properties on~$H$ (see Section~\ref{sect_dynamics_general} below, in particular Lemma~\ref{lem_p_conical}). The condition we impose on~$\borc$ generalizes the one used in convex projective geometry in the context of convex cocompactness; see Definition~\ref{def_convex_cocompact}. To define this condition, we will need the notion of~\emph{dual faces}, introduced in Section~\ref{sect_dual_faces}. To state and prove Lemma~\ref{lem_p_conical}, we will need the notion of \emph{incidence}, which we recall in Section~\ref{sect_incidence} below.

\subsection{Reminders on incidence} \label{sect_incidence} We take the notations of Section~\ref{sect_parab_subgroups}. Let us fix~$G$ a real semisimple Lie group. Given a subset~$\Theta \subset \FS$, the \emph{Weyl group with respect to~$\Theta$}, denoted by~$W_\Theta$, is the subgroup of~$W$ generated by the reflections~$s_\alpha$, with~$\alpha \in \Theta$.

Let~$\Theta, \Theta' \subset \Delta$. Any element~$(x,y) \in \Fl(\g, \Theta) \times \Fl(\g, \Theta')$ can be written~$(x,y) = (g \cdot \LP_{\Theta}^+, \ gw \cdot \LP_{\Theta'})$, with~$g \in G$ and~$w \in W$. Let 
$$\pos^{(\Theta, \Theta')}: \Fl(\g, \Theta) \times \Fl(\g, \Theta') \rightarrow W_{\FS \smallsetminus \Theta} \backslash W / W_{\FS \smallsetminus \Theta'}$$
be the~$\operatorname{diag}(G)$-invariant map such that~$\pos^{(\Theta, \Theta')}(\LP_{\Theta}^+, w \cdot \LP_{\Theta'}^+) = \overline{w}$ for all~$w \in W$. This map only depends on the Lie algebra~$\g$ of~$G$. The following properties are well known:
\begin{enumerate}
    \item For any~$x \in \Fl(\g, \Theta')$, the set~$\{y \in \Fl(\g, \Theta) \mid w_0 \in \pos(x,y)\}$ is dense in~$\Fl(\g, \Theta)$.
    \item Two points~$x \in \Fl(\g, \Theta)$ and~$y \in \Fl(\g, \Theta)^\opp$ are transverse if and only if one has~$w_0 \in \pos^{(\Theta, \oppinv(\Theta))}(x,y)$.
\end{enumerate}

\begin{lem}\label{lem_positions_weyl}

     Let~$x, y \in  \Fl(\g, \Theta)^\opp$ be a triple such that~$w_0 \in \pos^{(\oppinv(\Theta), \oppinv(\Theta))}(x,y)$. Then there exists~$z \in \Fl(\g, \Theta) \smallsetminus \hyp_y$ such that~$\id \in \pos^{(\Theta, \oppinv(\Theta))}(z,x)$.

\end{lem}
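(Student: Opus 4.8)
The plan is to reduce the statement to a single model configuration and then exhibit $z$ explicitly. The key structural fact is that the relative position maps $\pos^{(\cdot,\cdot)}$ are $\operatorname{diag}(G)$-invariant and that their fibers are exactly the $\operatorname{diag}(G)$-orbits on the corresponding product of flag manifolds. Consequently, the hypothesis $w_0 \in \pos^{(\oppinv(\Theta), \oppinv(\Theta))}(x,y)$ says precisely that the pair $(x,y)$ lies in the $G$-orbit of the standard pair $(\LP_{\oppinv(\Theta)}^+, w_0 \cdot \LP_{\oppinv(\Theta)}^+)$. First I would write $(x,y) = (g \cdot \LP_{\oppinv(\Theta)}^+, \, g w \cdot \LP_{\oppinv(\Theta)}^+)$ for some $g \in G$ and $w \in W$, so that $\overline{w} = \pos^{(\oppinv(\Theta), \oppinv(\Theta))}(x,y) = \overline{w_0}$, and then normalise the Weyl representative so that $w = w_0$.

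Once in this model, I would simply take $z := g \cdot \LP_\Theta^+ \in \Fl(\g, \Theta)$. By the $\operatorname{diag}(G)$-invariance of $\pos^{(\Theta, \oppinv(\Theta))}$, verifying the two required conditions reduces to a computation at the base point. On one hand, the defining formula $\pos^{(\Theta, \Theta')}(\LP_\Theta^+, w \cdot \LP_{\Theta'}^+) = \overline{w}$ gives $\pos^{(\Theta, \oppinv(\Theta))}(z, x) = \pos^{(\Theta, \oppinv(\Theta))}(\LP_\Theta^+, \LP_{\oppinv(\Theta)}^+) = \overline{\id}$, whence $\id \in \pos^{(\Theta, \oppinv(\Theta))}(z, x)$. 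On the other hand, the same formula gives $\pos^{(\Theta, \oppinv(\Theta))}(z, y) = \pos^{(\Theta, \oppinv(\Theta))}(\LP_\Theta^+, w_0 \cdot \LP_{\oppinv(\Theta)}^+) = \overline{w_0} \ni w_0$, which by the transversality criterion (property~(2) above) means that $z$ and $y$ are transverse, i.e.\ $z \in \Fl(\g, \Theta) \smallsetminus \hyp_y$. Both conditions therefore hold by construction, so this $z$ is the desired point.

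The only point requiring care — and the single genuine obstacle — is the normalisation $w = w_0$ within the double coset $\overline{w_0} = W_{\FS \smallsetminus \oppinv(\Theta)} \, w_0 \, W_{\FS \smallsetminus \oppinv(\Theta)}$. Here I would use that every element of $W_{\FS \smallsetminus \oppinv(\Theta)}$ is represented inside the Levi subgroup $L_{\oppinv(\Theta)}$ and hence fixes the base point $\LP_{\oppinv(\Theta)}^+ \in \Fl(\g, \oppinv(\Theta))$. Writing $w = a w_0 b$ with $a, b \in W_{\FS \smallsetminus \oppinv(\Theta)}$, the factor $b$ drops out because it fixes $\LP_{\oppinv(\Theta)}^+$, while the factor $a$ can be absorbed into $g$ without changing $x$, since $a$ too fixes $\LP_{\oppinv(\Theta)}^+$; after replacing $g$ by $ga$ the model pair is exactly $(\LP_{\oppinv(\Theta)}^+, w_0 \cdot \LP_{\oppinv(\Theta)}^+)$. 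With this normalisation in hand the construction of $z$ above goes through verbatim, and I expect no further difficulties.
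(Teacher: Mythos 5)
Your proof is correct and follows essentially the same route as the paper: reduce by $\operatorname{diag}(G)$-invariance and the Bruhat decomposition to the model pair in position $\overline{w_0}$, take the evident translate of $\LP_\Theta^+$ as $z$, and verify both conditions from the defining formula for $\pos$ and the transversality criterion (your $z = g\cdot\LP_\Theta^+$ is exactly the paper's $z = w_0\cdot\LP_\Theta^+$ after matching the two normalisations of the model pair). The only difference is that you spell out the double-coset normalisation $w = a w_0 b$ which the paper compresses into ``we may assume''; that is a welcome precision, not a change of method.
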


\begin{proof}
Since~$\pos^{(\oppinv(\Theta), \oppinv(\Theta))}(x,y) = \overline{w_0}$, we may assume that~$x = \LP_{\Theta}^\opp$ and $y = w_0 \cdot \LP_{\Theta}^\opp$. Let us set~$z := w_0 \cdot \LP_{\Theta}^+$. Then~$z$ is transverse to~$y$, and by~$G$-invariance of~$\pos$, we have
$$\pos^{(\Theta, \oppinv(\Theta))}(z,x) = \pos^{(\Theta, \oppinv(\Theta))}(\LP_{\Theta}^+, w_0  \cdot\LP_{\Theta}^\opp) = \pos^{(\Theta, \oppinv(\Theta))}(\LP_{\Theta}^+, \LP_{\oppinv(\Theta)}^+) =  \overline{\id}.\quad \qed$$  
\end{proof}

\subsection{Dual faces}\label{sect_dual_faces} Let~$\O \subset \Fl(\g, \Theta)$ be a proper domain. Given some point~$x \in \partial \O$, the \emph{dual support to~$\O$ at~$x$} is the set~$\operatorname{Supp}_{\O}(x):= \{\xi \in \O^* \mid x \in \hyp_\xi\}$. This set is nonempty whenever~$\O$ is dually convex. The \emph{dual face of~$x$} is then the subset 
\begin{equation*}
    \Fl_\O^d(x) := \bigcap_{\xi \in \operatorname{Supp}_\O(x)} \partial \O \cap \hyp_\xi
\end{equation*}
of~$\partial \O$. The dual faces of~$\O$ are always closed. If~$\Fl(\g, \Theta)$ is the real projective space and~$\O$ is convex, then we recover the classical closed faces of~$\O$.

\subsection{Dynamics of groups preserving proper domains}\label{sect_dynamics_general}
In this section, we fix a noncompact real semisimple Lie group~$G$ and a subset of the simple restricted roots~$\Theta$ of~$G$. We establish an analogue of Lemma~\ref{lem_p_conic_shilov} that holds for any flag manifold.

As in Section~\ref{sect_dynamics_causal}, given a proper domain~$\O \subset \Fl(\g, \Theta)$ and a subset~$\mathcal{C}\subset \O$ which is closed in~$\O$, the \emph{ideal boundary~$\borc$ of~$\mathcal{C}$} is the set~$\overline{\mathcal{C}} \smallsetminus \mathcal{C}= \overline{\mathcal{C}} \cap \partial \O$.

\begin{definition}\label{def_strictly_convex_bnd}
    A closed subset~$\mathcal{C}$ of~$\O$ is said to have \emph{strictly convex boundary} if for any two distinct points~$x,y \in \borc$, one has~$x \notin \Fl_\O^d(y)$.
\end{definition}

The next Lemma~\ref{lem_p_conical} investigates the dynamical properties of a group preserving a proper domain and acting cocompactly on a closed subset with strictly convex boundary containing all the information on its dynamics, i.e.\ whose ideal boundary contains its full orbital limit set. Even not assuming any convexity assumption, we recover several properties of strongly convex cocompact subgroups of real projective space.

\begin{lem}\label{lem_p_conical}Let~$\O \subset \Fl(\g, \Theta)$ be a proper domain, and let us assume that there exist a subgroup~$H \leq \Aut(\O)$ and a closed subset~$\mathcal{C}$ of~$\O$ such that:
\begin{enumerate}
    \item[*] the set~$\mathcal{C}$ is~$H$-invariant and has strictly convex boundary;
    \item[*] the group~$H$ acts cocompactly on~$\mathcal{C}$;
    \item[*] One has~$\Lambda_\O^{\operatorname{orb}}(H) \subset \borc$.
\end{enumerate}
Then the following hold:

    \begin{enumerate}
        \item For any~$a \in \borc$, there exist~$x_0 \in \mathcal{C}$ and~$(g_k) \in H^{\mathbb{N}}$ such that~$g_k \cdot x_0 \rightarrow a$. 
        
        \item For any~$a \in \borc$ and any sequence~$(g_k) \in H^{\mathbb{N}}$ such that there exists~$x_0 \in \mathcal{C}$ with~$g_k \cdot x_0 \rightarrow a$, the sequence~$(g_k)$ is~$\Theta$-contracting, with~$\Theta$-limit~$a$.
 
        \item The group~$H$ is~$\Theta$-divergent, and~$\Lambda_{\Theta}(H) = \partial_i \mathcal{C} = \Lambda_\O^{\operatorname{orb}}(H)$.

        \item For any~$a \in \borc$, there exists~$b \in \O^*$ such that~$\id \in \pos^{(\Theta, \oppinv(\Theta))}(a,b)$. If~$\Theta$ is self-opposite, then this is equivalent to saying that~$\hyp_a \cap \ \O = \emptyset$.
    \end{enumerate}

\end{lem}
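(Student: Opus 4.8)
The plan is to prove the four assertions in order, closely mirroring the structure of the proof of Lemma~\ref{lem_p_conic_shilov}, since the hypotheses are the same except that the ``transverse ideal boundary'' condition is replaced by ``strictly convex boundary'' and we work in a general (not necessarily self-opposite) flag manifold. The key technical replacement for the transversality/photon arguments of the causal case will be the notion of \emph{dual face} introduced in Section~\ref{sect_dual_faces}, together with the incidence machinery of Section~\ref{sect_incidence}.

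For Point~(1), I would argue exactly as in Lemma~\ref{lem_p_conic_shilov}.(1): take a sequence~$x_k \in \mathcal{C}$ with~$x_k \to a$, write~$x_k = g_k \cdot z_k$ using cocompactness of the~$H$-action on~$\mathcal{C}$, extract so that~$z_k \to x_0$ in a compact fundamental domain, and use the~$\Aut(\O)$-invariance of the Caratheodory metric (Fact~\ref{fact_caratheodory_metric}) to conclude~$C_\O(g_k \cdot z_k, g_k \cdot x_0) = C_\O(z_k, x_0) \to 0$, so that~$g_k \cdot x_0 \to a$. This part transfers verbatim. For Point~(2), the causal proof used photons to show that any~$y \in \O$ is nontransverse to~$x_0$ and hence, after applying~$g_k$, that all accumulation points of~$(g_k \cdot y)$ coincide with~$a$ by transversality of~$\borc$. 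Here the substitute is: if~$a'$ is a limit point of~$(g_k \cdot y)$ for some~$y \in \O$, then~$a' \in \Lambda_\O^{\operatorname{orb}}(H) \subset \borc$, and I must show~$a' = a$ using strict convexity. The mechanism is that~$a$ and~$a'$ should both lie in a common dual face (the one cut out by a dual supporting~$\xi$ that the contracting sequence~$(g_k)$ converges to), and strict convexity (Definition~\ref{def_strictly_convex_bnd}) forces~$a = a'$. Once~$g_k \cdot y \to a$ for all~$y \in \O$, Fact~\ref{fact_caratheodory_metric} gives Hausdorff convergence~$g_k \cdot \compact' \to \{a\}$ for a compact set~$\compact'$ with nonempty interior, and Fact~\ref{fact_KAK_divergent}.(1) yields that~$(g_k)$ is~$\Theta$-contracting with~$\Theta$-limit~$a$.

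Point~(3) then follows formally from Points~(1) and~(2) together with Fact~\ref{fact_KAK_divergent}.(2), exactly as in the causal case: every sequence of distinct elements has, along every subsequence, a further~$\Theta$-contracting subsequence, so~$H$ is~$\Theta$-divergent, and the chain of inclusions~$\Lambda_\Theta(H) \subset \Lambda_\O^{\operatorname{orb}}(H) \subset \borc$ combined with the surjectivity granted by~(1)--(2) gives equality throughout. Point~(4) is where the incidence geometry of Section~\ref{sect_incidence} enters: given~$a \in \borc$, Points~(1)--(2) furnish a~$\Theta$-contracting sequence~$(g_k)$ with limit~$a$, contracting with respect to some pair~$(a, b)$, and I would show~$b \in \O^*$ by the same density/closedness argument as in Lemma~\ref{lem_p_conic_shilov}.(4) (choosing~$y \in \O^* \smallsetminus \hyp_b$ and using~$\Aut(\O)$-invariance and closedness of~$\O^*$). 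The relation~$\id \in \pos^{(\Theta,\oppinv(\Theta))}(a,b)$ should then be read off from the description of the contracting dynamics via~$\pos$, and when~$\Theta$ is self-opposite this~$\pos$-condition is precisely the statement~$\hyp_a \cap \O = \emptyset$ by the transversality characterization (2) recalled in Section~\ref{sect_incidence}, with Lemma~\ref{lem_positions_weyl} available to produce the transverse witness point.

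The main obstacle I expect is Point~(2), specifically the step replacing the photon/transversality argument by strict convexity of the boundary. In the causal setting, nontransversality of~$a$ and~$a'$ directly contradicted transversality of~$\borc$; here I must instead identify a \emph{dual supporting} element~$\xi \in \O^*$ at~$a$ coming from the contracting pair~$(a, \xi)$ of~$(g_k)$, verify that~$a'$ lies in~$\hyp_\xi \cap \partial\O$ and hence in the dual face~$\Fl_\O^d(a)$, and then invoke Definition~\ref{def_strictly_convex_bnd} to deduce~$a' = a$. Getting the compatibility between the attracting flag of the contracting sequence and the dual support set~$\operatorname{Supp}_\O(a)$ correct --- i.e. showing that the limiting hyperplane along which orbits collapse is genuinely a dual supporting hyperplane lying in~$\O^*$ --- is the delicate point, and is the natural place where the hypothesis~$\Lambda_\O^{\operatorname{orb}}(H) \subset \borc$ and the properness of the~$\Aut(\O)$-action will be used together.
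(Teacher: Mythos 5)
Your Points~(1) and~(3) match the paper's proof exactly, but your mechanism for Point~(2) has a genuine gap, in two respects. First, it is circular: you propose to produce the common dual supporting element~$\xi$ from ``the contracting pair~$(a,\xi)$ of~$(g_k)$'', but at this stage of the argument nothing is known about the dynamics of~$(g_k)$ --- contraction is precisely the conclusion of Point~(2), and you cannot even extract a~$\Theta$-contracting subsequence since~$\Theta$-divergence of~$H$ is only established afterwards, in Point~(3). Second, even if you had one~$\xi \in \O^*$ with~$a, a' \in \hyp_\xi$, that would not place~$a'$ in the dual face~$\Fl_\O^d(a)$: by the definition in Section~\ref{sect_dual_faces}, the dual face is the intersection of~$\partial\O \cap \hyp_\xi$ over \emph{all}~$\xi \in \operatorname{Supp}_\O(a)$, so a single common supporting hyperplane does not suffice to invoke Definition~\ref{def_strictly_convex_bnd}. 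The paper's proof avoids both problems with a purely metric ingredient that your proposal never uses, namely Fact~\ref{fact_caratheodory_metric_1}: since~$C_\O(g_k \cdot x_0, g_k \cdot y) = C_\O(x_0, y)$ is bounded and~$g_k \cdot x_0 \to a$,~$g_k \cdot y \to a'$ (along a subsequence), the two limits lie on exactly the same hyperplanes~$\hyp_\xi$ for \emph{every}~$\xi \in \O^*$; this gives~$a' \in \Fl_\O^d(a)$ directly, with no reference to the dynamics of~$(g_k)$, and strict convexity then yields~$a' = a$. The rest of (2) (Hausdorff convergence plus Fact~\ref{fact_KAK_divergent}.(1)) is as you describe.

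Your sketch of Point~(4) also does not work as stated. You propose to show that the repelling flag~$b$ of the contracting sequence lies in~$\O^*$ and to ``read off''~$\id \in \pos^{(\Theta, \oppinv(\Theta))}(a,b)$ from the contracting dynamics; but the attracting and repelling flags of a~$\Theta$-contracting sequence are typically \emph{transverse} (position~$w_0$, e.g.\ for powers of a~$\Theta$-proximal element), which is the opposite extreme of the incidence~$\id \in \pos(a,b)$ that the statement requires, and there is no reason for~$b$ itself to lie in~$\O^*$. Note that in the self-opposite causal case (Lemma~\ref{lem_p_conic_shilov}.(4)) the witness produced is the attracting point~$a$ itself, not~$b$; in the general case~$a$ lives in the wrong flag manifold, and the paper instead constructs the witness as follows: choose~$y \in \O^*$ with~$w_0 \in \pos^{(\oppinv(\Theta),\oppinv(\Theta))}(b,y)$ (possible since this condition is dense and~$\O^*$ is open), use Lemma~\ref{lem_positions_weyl} to get~$a' \in \Fl(\g,\Theta) \smallsetminus \hyp_b$ with~$\id \in \pos^{(\oppinv(\Theta),\Theta)}(y, a')$, observe that~$g_k \cdot a' \to a$ while~$g_k \cdot y \to c \in \O^*$ up to extraction (compactness and~$H$-invariance of~$\O^*$), and conclude~$\id \in \pos(c,a)$ by semicontinuity of relative position under limits \cite[Lem.~3.15]{kapovich2017dynamics}. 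The incident dual point is this limit~$c$, not the repelling flag.
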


The proof of Lemma~\ref{lem_p_conical} is similar to that of Lemma~\ref{lem_p_conic_shilov}, we give it for convenience. We need again Fact~\ref{fact_caratheodory_metric}, but also another fact on the Caratheodory metric: 
\begin{fact}\label{fact_caratheodory_metric_1}\cite{zimmer2018proper} Let~$\O \subset \Fl(\g, \Theta)$ be a proper domain, and let~$C_\O$ be the Caratheodory metric on~$\O$ defined in Fact~\ref{fact_caratheodory_metric}.  Let~$(x_k), (y_k) \in \O^\mathbb{N}$ be such that~$ \sup_{k \in \mathbb{N}}C_\O(x_k, y_k) < + \infty$ and~$x_k, y_k \rightarrow x,y \in \overline{\O}$. Then for all~$\xi \in \O^*$ we have~$x \in \hyp_\xi$ if and only if~$y \in \hyp_\xi$.
\end{fact}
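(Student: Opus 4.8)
The plan is to reduce to the projective setting via the representation of Fact~\ref{prop_ggkw} and then, from the assumption $x \in \hyp_\xi$ together with $y \notin \hyp_\xi$, to exhibit a \emph{single} projective map of $\O$ onto a properly convex interval whose associated cross-ratio forces $C_\O(x_k,y_k) \to +\infty$. The input I use is the defining property of the Caratheodory metric as a supremum: given any two elements $\eta_1,\eta_2 \in \O^*$, choose functionals $\phi_1,\phi_2$ representing $\iota^{\opp}_\rho(\eta_1),\iota^{\opp}_\rho(\eta_2) \in \mathbb{P}(V^*)$; since $\hyp_{\eta_i}\cap\O=\emptyset$, Fact~\ref{prop_ggkw}.(2) shows each $\phi_i$ is nonvanishing on a lift of the connected set $\iota_\rho(\O)$, so after normalizing signs the projective map $F=[\phi_1:\phi_2]$ sends $\iota_\rho(\O)$ into the properly convex interval $(0,+\infty)\subset\mathbb{P}(\mathbb{R}^2)$. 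As the Caratheodory metric dominates the Hilbert length of the image under every such interval-valued map, we obtain the lower bound
\[
 C_\O(x,y)\ \geq\ \tfrac12\left|\log\frac{\phi_1(\hat x)\,\phi_2(\hat y)}{\phi_2(\hat x)\,\phi_1(\hat y)}\right|\qquad\forall\,x,y\in\O,
\]
where $\hat x,\hat y$ are lifts taken in a fixed connected component of the preimage cone of $\iota_\rho(\O)$. The statement is trivial when $x=y$, so assume $x\neq y$.

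Fix $\xi\in\O^*$ and suppose, for a contradiction, that $x\in\hyp_\xi$ while $y\notin\hyp_\xi$; let $\phi$ represent $\iota^{\opp}_\rho(\xi)$. By Fact~\ref{prop_ggkw}.(2) the condition $x\in\hyp_\xi$ means $\iota_\rho(x)\in\mathbb{P}(\ker\phi)$, so $\phi(\hat x)=0$, whereas $\phi(\hat y)\neq 0$. Now choose an auxiliary $\xi'\in\O^*$ with $x,y\notin\hyp_{\xi'}$: since $\O$ is proper, $\O^*$ has nonempty interior, while the set of $\eta$ with $x\in\hyp_\eta$ (resp.\ $y\in\hyp_\eta$) is the proper Schubert subvariety $\hyp_x$ (resp.\ $\hyp_y$) of $\Fl(\g,\Theta)^{\opp}$; hence $\operatorname{int}(\O^*)\smallsetminus(\hyp_x\cup\hyp_y)$ is nonempty, and any of its points is a valid $\xi'$. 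Writing $\phi'$ for a representative of $\iota^{\opp}_\rho(\xi')$, we then have $\phi'(\hat x)\neq 0$ and $\phi'(\hat y)\neq 0$. Applying the lower bound above to the pair $(\xi,\xi')$ and to lifts $\hat x_k\to\hat x$, $\hat y_k\to\hat y$ gives
\[
 C_\O(x_k,y_k)\ \geq\ \tfrac12\left|\log\frac{\phi(\hat x_k)\,\phi'(\hat y_k)}{\phi'(\hat x_k)\,\phi(\hat y_k)}\right|.
\]
As $k\to+\infty$ the factor $\phi(\hat x_k)\to\phi(\hat x)=0$ while $\phi'(\hat x_k),\phi'(\hat y_k),\phi(\hat y_k)$ converge to nonzero limits, so the argument of the logarithm tends to $0$ and the right-hand side tends to $+\infty$, contradicting $\sup_k C_\O(x_k,y_k)<+\infty$.

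Consequently $x\in\hyp_\xi$ forces $y\in\hyp_\xi$, and exchanging the roles of $x$ and $y$ yields the converse implication, establishing the equivalence for every $\xi\in\O^*$. The main obstacle is definitional rather than computational: one must use the precise characterization of the Caratheodory metric as a supremum that dominates the Hilbert length of the image under every interval-valued projective map arising from $\O^*$ (the content underlying Fact~\ref{fact_caratheodory_metric}), and one must guarantee that the auxiliary hyperplane $\hyp_{\xi'}$ can be chosen to avoid both limit points $x$ and $y$ — which is exactly where the properness of $\O$, equivalently the nonempty interior of $\O^*$, is needed.
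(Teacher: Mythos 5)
Your proof is correct and is essentially the argument behind the cited result: the paper states this fact without proof, referring to \cite{zimmer2018proper}, where the Caratheodory metric is defined precisely as the supremum over pairs $\xi_1,\xi_2\in\O^*$ of the cross-ratio quantities you use, so that $C_\O(x_k,y_k)\to+\infty$ follows exactly as you argue once $\phi(\hat x_k)\to 0$ while $\phi'(\hat x_k)$, $\phi'(\hat y_k)$, $\phi(\hat y_k)$ stay bounded away from zero. Your choice of the auxiliary point $\xi'\in\operatorname{int}(\O^*)\smallsetminus(\hyp_x\cup\hyp_y)$ — valid because properness of $\O$ gives $\operatorname{int}(\O^*)\neq\emptyset$ and $\hyp_x$, $\hyp_y$ are proper subvarieties of $\Fl(\g,\Theta)^{\opp}$ — is the standard step, and the only cosmetic discrepancy is the harmless normalization factor $\tfrac12$.
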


\begin{proof} (1) Let~$\compact \subset \mathcal{C}$ be a compact subset such that~$\mathcal{C} = H \cdot \compact$. Let~$x_k \in \mathcal{C}^{\mathbb{N}}$ such that~$x_k \rightarrow a$. For all~$k \in \mathbb{N}$ there exist~$g_k \in \Gamma$ and~$z_k \in \mathcal{C}$ such that~$g_k \cdot z_k = x_k$. Up to extracting we may assume that there exists~$x_0 \in \compact$ such that~$z_k \rightarrow x_0$. By~$\Aut_G(\O)$-invariance of the Caratheodory metrics, in the notation of Fact~\ref{fact_caratheodory_metric}, we have 
\begin{equation}
    C_{\O}(g_k \cdot z_k, g_k \cdot x_0) = C_{\O}(z_k, x_0) \longrightarrow 0.
\end{equation}
Thus by Fact~\ref{fact_caratheodory_metric}, we have~$g_k \cdot x_0 \rightarrow z_\infty \in \Lambda_\O^{\operatorname{orb}}(H)$. 

(2) Let~$y \in  \O$ and let~$a'$ be a limit point of~$(g_k \cdot y)$, then~$a' \in \Lambda_\O^{\operatorname{orb}}(H) \subset \borc$. For all~$k \in \mathbb{N}$ one has~$C_\O(g_k \cdot x, g_k \cdot y) = C_\O(x,y) < + \infty$.
By Fact~\ref{fact_caratheodory_metric_1}, this implies that~$a' \in \Fl_\O^d(a)$. But~$\borc$ is strictly convex, so~$a' = a$.

We have proven that~$g_k \cdot y \rightarrow a$ for all~$y \in \O$. Now let~$\compact' \subset \O$ be a compact subset with nonempty interior. Then Fact~\ref{fact_caratheodory_metric} implies that~$g_k \cdot \compact' \rightarrow \{ a \}$ for the Hausdorff topology. Then by Fact~\ref{fact_KAK_divergent}.(1), the sequence~$(g_k)$ is~$\Theta$-contracting with~$\Theta$-limit~$a$. This proves (2).

(3) Let~$(g_k) \in H^\mathbb{N}$ be a sequence of distinct elements of~$H$ and let~$(\delta_k)$ be a subsequence of~$(g_k)$. Let~$a$ be a limit point of~$(\delta_k \cdot x)$. Then~$a \in \borc$ and there exists a subsequence~$(\delta_k')$ of~$\delta_k$ such that~$\delta_k ' \cdot x \rightarrow a$. Thus by Point (2), the sequence~$(\delta_k')$ is~$\Theta$-contracting. By Fact~\ref{fact_KAK_divergent}.(2), the sequence~$(g_k)$ is thus~$\Theta$-divergent. This is true for every infinite sequence of~$H$. Thus~$H$ is~$\Theta$-divergent. One then has~$\Lambda_{\Theta} (H) \subset \Lambda_\O^{\operatorname{orb}}(H) \subset \borc$, and  the converse inclusion follows from Points (1) and (2).

(4) Let~$a \in \borc$. By Points (1) and (2), there exists a~$\Theta$-contracting sequence~$(g_k) \in H^\mathbb{N}$ with limit~$a$. Thus there exists~$b \in \Fl(\g, \Theta)^\opp$ such that~$g_k \cdot y \rightarrow a$ for all~$y \notin \hyp_b$. Let~$y \in \O^*$ be such that~$w_0 \in \pos^{(\oppinv(\Theta), \oppinv(\Theta))}(b,y)$; such an element y exists because the set 
$$\big{\{}y \in  \Fl(\g, \Theta)^\opp \mid w_0 \in \pos^{(\oppinv(\Theta), \oppinv(\Theta))}(b,y) \big{\}}$$ 
is dense in~$\Fl(\g, \Theta)^\opp$, and~$\O^*$ is open. Then, by Lemma~\ref{lem_positions_weyl} there exists~$a' \in \Fl(\g, \Theta) \smallsetminus \hyp_{b}$ such that~$\id \in \pos^{(\oppinv(\Theta), \Theta)}(y, a')$. Hence we have~$g_k \cdot a' \rightarrow a$. On the other hand, up to extracting we may assume that~$(g_k \cdot y)$ converges to some~$c \in \O^*$. For all~$k \in \mathbb{N}$ we have~$\id \in \pos^{(\oppinv(\Theta), \Theta)}(g_k \cdot y, g_k \cdot a')$, so taking the limit, one has~$\pos^{(\oppinv(\Theta), \Theta)}(c, a)$ (see \cite[Lem.\ 3.15]{kapovich2017dynamics}).~$\qed$
\end{proof}

In the notation of Lemma~\ref{lem_p_conical}, Point (3) implies that the ideal boundary of~$\mathcal{C}$ contains no more than the information about the dynamics of the elements of~$H$.

\bibliographystyle{alpha}
\bibliography{bibliography}

\vspace{1cm}
\small\noindent \textsc{Institut des Hautes Etudes Scientifiques,
35 rte de Chartres, 91440 Bures-sur-Yvette, France.} \emph{email address:} \texttt{galiay@ihes.fr}

\end{document}